\documentclass{amsart}
\usepackage{soul}
\usepackage{color}
\usepackage{amsmath}
\usepackage{xcolor,cancel}
\usepackage{array, booktabs, multicol}

\usepackage{kotex}
\usepackage{amssymb}
\usepackage{amsthm}
\usepackage{amscd}
\usepackage{bm}
\usepackage{soul}
\usepackage{eucal} 
\usepackage{enumerate}

\usepackage{shuffle}
\usepackage[utf8]{inputenc}
\usepackage[all]{xy}

\usepackage{kotex}
\allowdisplaybreaks

\usepackage{hyperref}
\hypersetup{
  colorlinks   = true,          
  urlcolor     = blue,          
  linkcolor    = blue,          
  citecolor   = red             
}
\usepackage{cleveref}

\theoremstyle{plain}
\newtheorem{theorem}{Theorem}[section]
\newtheorem{conjecture}[theorem]{Conjecture}
\newtheorem{lemma}[theorem]{Lemma}
\newtheorem{proposition}[theorem]{Proposition}
\newtheorem{corollary}[theorem]{Corollary}

\newtheorem{theoremx}{Theorem}

\theoremstyle{definition}
\newtheorem{definition}[theorem]{Definition}

\numberwithin{equation}{section}

\newcommand{\ppar}{${}$\par}

\newcommand\fantome[1]{}

\def\bN{\mathbb N}

\def\fa{\mathfrak{a}}
\def\fb{\mathfrak{b}}

\def\fu{\mathfrak{u}}

\def\Fq{\mathbb F_q}

\newcommand{\depthlex}{(\depth, \lex)}

\DeclareMathOperator{\Si}{Si}
\DeclareMathOperator{\Ind}{Ind}

\DeclareMathOperator{\Init}{Init}

\DeclareMathOperator{\Li}{Li}

\newcommand{\F}{\mathbb{F}}
\newcommand{\C}{\mathbb{C}}

\newcommand{\fs}{\mathfrak{s}}
\newcommand{\ft}{\mathfrak{t}}
\newcommand{\fn}{\mathfrak{n}}
\newcommand{\fm}{\mathfrak{m}}
\newcommand{\fp}{\mathfrak{p}}

\newcommand{\N}{\ensuremath \mathbb{N}}

\DeclareMathOperator{\depth}{depth}
\DeclareMathOperator{\lex}{lex}

\author[B.-H. Im]{Bo-Hae Im}
\address{
Dept. of Mathematical Sciences, KAIST,
291 Daehak-ro, Yuseong-gu,
Daejeon 34141, South Korea
}
\email{bhim@kaist.ac.kr}

\author[H. Kim]{Hojin Kim}
\address{
Normandie Université,
Université de Caen Normandie - CNRS,
Laboratoire de Mathématiques Nicolas Oresme (LMNO), UMR 6139,
14000 Caen, France.
}
\email{hojin.kim@unicaen.fr}

\author[T. Ngo Dac]{Tuan Ngo Dac}
\address{
Normandie Université,
Université de Caen Normandie - CNRS,
Laboratoire de Mathématiques Nicolas Oresme (LMNO), UMR 6139,
14000 Caen, France.
}
\email{tuan.ngodac@unicaen.fr}

\title[The threshold for linear independence of MZV's]{The threshold for linear independence of multiple zeta values in positive characteristic}

\subjclass[2010]{Primary 11M32; Secondary 11G09, 11J93, 11M38, 11R58}

\keywords{multiple zeta values, zeta and $L$-functions in characteristic $p$, linear independence, arithmetic of function fields}

\date{\today}

\begin{document}


\begin{abstract}
A fundamental conjecture formulated by Thakur in 2009, which has guided significant developments in function field arithmetic, asserts that multiple zeta values (MZV's) in positive characteristic of fixed weight are linearly independent over $\Fq$​. 

In this paper we settle this conjecture by determining the precise threshold for this independence. We prove that linear independence holds for all weights up to $2q$, while for weight $2q+1$ we establish the existence of a unique and explicit $\Fq$-linear relation. This result provides the first counterexample to Thakur’s conjecture. Our proof relies on a new connection between MZV's and Carlitz multiple polylogarithms over $\Fq$, generalizing a central result of \cite{IKLNDP24}. We also introduce a modification of the algorithm from \cite{ND21} that yields a weight-preserving operator acting on $\Fq$-linear relations, providing the algebraic framework for these results.
\end{abstract}

\maketitle
\tableofcontents


\section{Introduction} \label{sec: introduction}

\subsection{Euler's multiple zeta values} \ppar

Multiple zeta values (MZV's) introduced by Euler are real positive numbers defined by the series
\[ \zeta(n_1,\dots,n_r):=\sum_{0<k_1<\dots<k_r} \frac{1}{k_1^{n_1} \dots k_r^{n_r}}, \quad \text{where } n_i \geq 1, n_r \geq 2. \]
Here $r$ denotes the depth and $w=n_1+\dots+n_r$ denotes the weight of the presentation $\zeta(n_1,\dots,n_r)$. These values generalize the special values $\zeta(n)$ for $n \geq 2$ of the Riemann zeta function. In the 1990's Zagier~\cite{Zag94} revitalized the study of MZV's, initiating a period of intensive research across many mathematical fields including arithmetic geometry, number theory, $K$-theory and knot theory (see, e.g., \cite{Bro12,Del10,DG05,Dri90,GKZ06,Hof97,IKZ06,Iha89,Iha91,LM96,Ter02,Ter06}). At the same time, MZV's emerged in various branches of theoretical physics, such as quantum field theory, deformation quantization and high-energy physics (see, e.g., \cite{BPP20,Bro96a,BK97,Bro09,CDPP24,Kon99,Kon03}).

As detailed in \cite{BGF} MZV's have a rich algebraic structure and are equipped with stuffle and shuffle relations. A notable example is the identity discovered by Euler:
$$\zeta(3)=\zeta(1,2).$$

A fundamental objective in this theory is to characterize all $\mathbb Q$-linear and algebraic relations among MZV's. In \cite{IKZ06} Ihara, Kaneko, and Zagier conjectured that all linear relations between MZV's arise solely from the interaction of stuffle and shuffle relations. Precise conjectures formulated by Zagier \cite{Zag94} and Hoffman \cite{Hof97} predict the dimension and an explicit basis for the $\mathbb Q$-vector space spanned by MZV's of fixed weight. Despite significant breakthroughs in recent decades by Terasoma \cite{Ter02}, Deligne-Goncharov \cite{DG05} and Brown \cite{Bro12}, we are very far from the complete resolution of this problem. We refer the reader to \cite{BGF, Del13, Zag94} for a comprehensive exposition.

\subsection{Multiple zeta values in positive characteristic} \ppar

Following the well-established analogy between number fields and function fields (see \cite{Iwa69,MW83,Wei39}), we consider the setting of function fields in positive characteristic. Let $A=\Fq[\theta]$ be the polynomial ring in the variable $\theta$ over a finite field $\Fq$ of $q$ elements and characteristic $p>0$, and let $A_+$ denote the set of monic polynomials in $A$. Let $K=\Fq(\theta)$ be the fraction field of $A$ equipped with the rational point~$\infty$. We denote by $K_\infty=\Fq(\!(1/\theta)\!)$ the completion of $K$ at $\infty$ and by $\C_\infty$ the completion of a fixed algebraic closure $\overline K$ of $K$ at $\infty$.

In 1935 Carlitz \cite{Car35} initiated the analogue of the theory by introducing the zeta values $\zeta_A(n)$ for $n \in \N$ defined as
\[ \zeta_A(n) := \sum_{a \in A_+} \frac{1}{a^n} \in K_\infty. \]
These values are considered as the function field analogues of classical Riemann zeta values. Building upon Carlitz's seminal work, for any tuple of positive integers $\mathfrak s=(s_1,\ldots,s_r) \in \N^r$, Thakur \cite{Tha04} defined the characteristic $p$ multiple zeta value (MZV) $\zeta_A(\fs)$ by
\begin{equation*}
\zeta_A(\fs):=\sum \frac{1}{a_1^{s_1} \dots a_r^{s_r}} \in K_\infty,
\end{equation*}
where the sum runs over tuples $(a_1,\ldots,a_r) \in A_+^r$ satisfying $\deg a_1 > \dots > \deg a_r$. The integers $r$ and $w(\fs)=s_1+\dots+s_r$ are called the depth and weight of the presentation $\zeta_A(\fs)$, respectively. We note that each MZV does not vanish (see \cite{Tha09b}) and that Carlitz zeta values are exactly depth one MZV's. Since their introduction, MZV's in positive characteristic have occupied a central position in function field arithmetic and have been the subject of intensive investigation (see \cite{AT90,AT09,CPY19,GP21,Gos96,IKLNDP23,IKLNDP24b,NDNCP26,Pap08,Pel12,Tha04,Tha09,Tha10, Tha17,Tha20,Wad41,Yu91}).

As mentioned by Thakur \cite{Tha04}, in the function field setting there are two natural analogues of $\mathbb{Q}$: the finite field $\Fq$ and the function field $K$. Thakur \cite{Tha09} also discovered the so-called fundamental relation which is  the function field counterpart to Euler's celebrated identity:
\[
\zeta_A(q)+D_1 \zeta_A(1,q-1)=0,
\]
where $D_1=\theta^q-\theta \in K$. Thakur subsequently formulated {\it two fundamental conjectures} regarding the linear relations among MZV's over both $\Fq$ and $K$, highlighting a dichotomy between linear independence over $\Fq$ and linear dependence over $K$, and some contrast with the classical theory. These conjectures shape the field by providing the key principles behind nearly all major results as detailed below.

\subsection{Linear independence over $\Fq$} \ppar

The first fundamental conjecture proposed by Thakur~\cite{Tha09} asserts the 
linear independence of MZV's over the finite field $\Fq$.

\begin{conjecture}[Thakur 2009]
\label{conj: independence over Fq}
Multiple zeta values in positive characteristic are linearly independent over $\Fq$.
\end{conjecture}

We refer the reader to~\cite{Tha09,Tha17} for an extensive discussion, including 
heuristic reasons based on the motivic interpretation developed in~\cite{And86,AT90,AT09} 
and the first evidence supporting this conjecture. In~\cite{Tha17} further numerical 
evidence is provided: an abundance of explicit $K$-linear relations among MZV's is 
presented, yet none of them is an $\Fq$-linear relation. We note that it suffices to prove the linear independence for MZV's of a fixed weight by the work of Chang \cite{Cha14}.

The prevailing intuition suggests that the function field setting is inherently more 
rigid than its classical counterpart, leading to a profound contrast with the classical 
theory of MZV's. As noted by Thakur~\cite{Tha09,Tha17}, the validity of this conjecture 
would carry several far-reaching structural implications 
(see \cite[\S 2]{ND21}, \cite[\S 5]{Tha09}, \cite[\S 6]{Tha17}):

\begin{enumerate}
    \item It would establish the principle of ``only one shuffle''.
    \item It would imply the associativity of the shuffle algebra of MZV's.
    \item It would ensure that the depth filtration on the space of MZV's is well-defined.
\end{enumerate}

This conjecture was long expected to be true and has served as a driving force 
for many significant developments in function field arithmetic. It has shaped 
the development of the theory of MZV's in positive characteristic, motivating 
foundational work on their algebraic structure, their motivic relations to 
periods and their connections to Carlitz multiple polylogarithms. In this 
sense the conjecture has been as valuable for the mathematics it inspired as 
for the question it posed. The study of linear independence over $\Fq$ is 
moreover intimately connected to the notion of universality as developed in 
the recent work of Pellarin~\cite{Pel25} (see also~\cite{CNDP23}).

\subsection{Linear dependence over $K$} \ppar \label{sec: independence over K}

By contrast, the situation changes completely for linear relations among MZV's over 
$K$ as there are plenty of them. We refer the reader to~\cite{Tha17} for more details.

Similar to the classical setting, a central objective is to characterize all $K$-linear 
relations among MZV's. However, due to the previously mentioned ``only one shuffle'' 
principle, determining a conjectural framework for these relations remained a significant 
challenge. It was only in 2018, following the work of Todd~\cite{Tod18}, that Todd and 
Thakur~\cite[\S 8]{Tha17} formulated the second fundamental conjecture for the dimension 
and basis of the $K$-span of MZV's of fixed weight. This is now considered the function 
field analogue of the Zagier-Hoffman conjectures.

\begin{conjecture}[Todd-Thakur 2018]
\label{conj: Zagier Hoffman} \ppar

\begin{enumerate}[\normalfont (1)]
\item \textup{(Zagier's conjecture in positive characteristic)} Let $d(w)$ be defined by
\begin{align*}
d(w) = \begin{cases}
1 & \text{if } w = 0, \\
2^{w-1} & \text{if } 1 \leq w \leq q-1, \\
2^{w-1} - 1 & \text{if } w = q,
\end{cases}
\end{align*}
and $d(w) = \sum_{i=1}^{q} d(w-i)$ for $w > q$.

For any $w \in \N$, let $\mathcal{Z}_w$ denote the $K$-span of MZV's of weight $w$. Then
\[ \dim_K \mathcal{Z}_w = d(w). \]

\item \textup{(Hoffman's conjecture in positive characteristic)}
A $K$-basis for $\mathcal{Z}_w$ is given by the set $\mathcal{T}_w$ consisting of 
MZV's $\zeta_A(s_1, \ldots, s_r)$ of weight $w$ such that $s_i \leq q$ for 
$1 \leq i < r$ and $s_r < q$.
\end{enumerate}
\end{conjecture}

In~\cite{ND21} the algebraic part of these conjectures was established 
(see \cite[Theorem~A]{ND21}): the third author proved that $\dim_K \mathcal{Z}_w \leq d(w)$ 
for all $w \in \N$. The method relies on the fundamental relation and Todd's operators. 
Furthermore, an algorithm was developed in~\cite{ND21} to express any MZV as a $K$-linear 
combination of elements of $\mathcal{T}_w$ relative to a suitable ordering. For the 
transcendental part, the author applied the Anderson-Brownawell-Papanikolas (ABP) 
criterion devised in~\cite{ABP04} to prove sharp lower bounds for weights $w \leq 2q-2$ 
(see~\cite[Theorem~D]{ND21}). However, extending this method to general weights presented 
substantial difficulties.

Subsequently, in~\cite{IKLNDP24} the authors and their colleagues succeeded in proving 
the transcendental part of Conjecture~\ref{conj: Zagier Hoffman}, namely 
$\dim_K \mathcal{Z}_w \geq d(w)$ for all $w \in \N$. To do so, we proved a variant of 
the aforementioned algorithm and uncovered a key connection between MZV's and Carlitz 
multiple polylogarithms (CMPL's) over $K$ (see~\S\ref{sec: CMPL} for a precise definition of 
CMPL's).
By applying the ABP criterion to the latter class of objects, we successfully resolved 
Conjecture~\ref{conj: Zagier Hoffman} in full generality (see also \cite{CCM23} where the same result was proved using the same method, objects and proof strategy but the presentation is different).

We mention that a characteristic zero analogue of 
the aforementioned algorithm was recently investigated in~\cite{HMSW25}. 

\subsection{Main results and ingredients of the proofs} \ppar

In this paper we settle the remaining Conjecture~\ref{conj: independence over Fq}. While our initial objective was to provide further evidence for this conjecture using the framework developed in~\cite{ND21,IKLNDP24}, we instead identify an explicit $\Fq$​-linear relation consisting of $12$ terms of weight $2q+1$. This relation, whose existence is invisible without the new framework developed here, provides a counterexample to Conjecture~\ref{conj: independence over Fq}. This result reinforces our belief that Thakur's MZV's are the correct analogues of Euler's MZV's: they share similar deep structures even when the methods required to prove them follow fundamentally different pathways.

Our first main result is as follows.

\begin{theoremx} \label{thm: weight 2q+1}
There exists a unique and explicit $\Fq$-linear relation among MZV's of weight $2q+1$. 
More precisely,
\begin{align*}
  &\zeta_A(q+2, q-1) + 2\zeta_A(3, 2q-2) + \zeta_A(q, 2, q-1) - \zeta_A(1, q, q) \\
  &- \zeta_A(1, 1, 2q-1) - \zeta_A(1, q-1, 1, q) + \zeta_A(q+1, 1, q-1) 
  + \zeta_A(2, q-1, 1, q-1) \\
  &+ \zeta_A(2, q, q-1) + \zeta_A(2, 1, 2q-2) + \zeta_A(q, 1, 1, q-1) 
  - \zeta_A(1, 1, q-1, q) = 0.
\end{align*}
\end{theoremx}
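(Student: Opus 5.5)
The statement has two parts: the displayed 12-term identity holds, and it is the only $\Fq$-linear relation in weight $2q+1$. I will handle them separately, working throughout with Carlitz multiple polylogarithms (CMPL's) at the point $(1,\dots,1)$, i.e.\ $\Li_{\fs}(1,\dots,1)$. This should be the $\Fq$-analogue of the CMPL framework of \cite{IKLNDP24}.

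\textbf{Step 1: an $\Fq$-rational transfer from MZV's to CMPL's.} I would first prove an $\Fq$-version of the central result of \cite{IKLNDP24}. Recall the Anderson--Thakur identity
\[ \zeta_A(\fs) = \sum_{\mathfrak i} \prod_j H_{s_j-1}(\theta)\,\Li_{\mathfrak i}(\dots), \]
where the $H_{s_j-1}$ are the Anderson--Thakur polynomials. For weights $\le 2q+1$ every entry satisfies $s_j\le 2q$, so every $H_{s_j-1}$ has degree at most $1$ in $t$. Evaluating at $t=\theta$ then expresses each $\zeta_A(\fs)$ as an explicit combination of $\Li_{\ft}(1,\dots,1)$, with coefficients that are polynomials in $\theta$ of controlled shape. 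The task is to redo the bookkeeping so that an $\Fq$-linear combination of MZV's of weight $w$ corresponds to an $\Fq$-linear combination of CMPL's of weight $w$, twisted by powers of $\theta$ and Frobenius twists.

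\textbf{Step 2: existence of the relation.} I substitute the 12 MZV's into this dictionary. Everything then reduces to CMPL's of depth $\le 4$ and weight $2q+1$. I would cancel terms using three tools:
\begin{itemize}
\item the stuffle and shuffle relations for CMPL's;
\item $\Li_1(1)$-type evaluations;
\item the fundamental relation $\zeta_A(q)+D_1\zeta_A(1,q-1)=0$, together with its depth-raised versions, i.e.\ Todd's operators applied to $\zeta_A(q+1)$, $\zeta_A(1,q)$ and so on.
\end{itemize}
An alternative route is to run the algorithm of \cite{ND21} on each term, writing it in the basis $\mathcal T_{2q+1}$ with coefficients in $K$, and check that the $K$-coefficients sum to zero. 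Since $\mathcal T_w$ is a $K$-basis (Conjecture~\ref{conj: Zagier Hoffman}, proved in \cite{IKLNDP24}), vanishing of all coefficients is equivalent to the identity. The relevant binomial coefficients mod $p$ must be tracked uniformly in $q$.

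\textbf{Step 3: uniqueness.} I would use the weight-preserving operator on $\Fq$-relations obtained from the modified algorithm. For each MZV $\zeta_A(\fs)$ of weight $2q+1$, write its expansion in $\mathcal T_{2q+1}$ with coefficients in $K$. An $\Fq$-relation $\sum_{\fs} c_{\fs}\zeta_A(\fs)=0$ is then equivalent, by the $K$-independence of $\mathcal T_{2q+1}$, to a linear system over $\Fq$. This system is obtained by expanding each $K$-coefficient in the basis $\theta^k$ and clearing denominators. The goal is to show the solution space has dimension exactly $1$.

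I would organize the system by a leading-term (depth, lex) filtration. Most MZV's have a leading $\mathcal T$-term with a coefficient of positive $\theta$-degree not shared by any other. This forces their $c_{\fs}=0$ inductively, much as in the proof that weights $\le 2q$ admit no relation, which I assume is proven earlier in the paper. Only a small residual set survives: the 12 indices, plus possibly a few others. The residual system must then be solved by hand.

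The main obstacle is this leading-term elimination. Showing that all but 12 indices are forced to vanish requires a precise, uniform-in-$q$ description of the top $\theta$-degree part of the algorithm's output in weight $2q+1$, where the entries $q+1$ and $q+2$ first make Frobenius twists and carries interact. I would first try to prove a triangularity lemma: for $\fs\notin\mathcal T_{2q+1}$, the expansion of $\zeta_A(\fs)$ contains a specific $\mathcal T$-element with a coefficient whose $\theta$-degree strictly exceeds that occurring for all smaller indices.
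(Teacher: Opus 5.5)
\textbf{There is a genuine gap: your proposal leaves the real content of the theorem, uniqueness, unproved.}

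\textbf{Step 3.} Your uniqueness argument rests on a ``triangularity lemma'' that you only hope to establish, and on a ``residual system'' that you never write down. As stated (for every non-basis index), the lemma would also rule out the relation of Step~2. So the real work is to locate, uniformly in $q$, exactly where it fails. Tracking the full $K$-expansions of the algorithm of \cite{ND21} with that precision is exactly what the paper says is out of reach. Nor is the independence for $w\le 2q$, which you want to imitate, proved by a leading-$\theta$-degree argument; it comes out of the same machinery as the uniqueness.

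\textbf{The missing idea} is to avoid full expansions altogether:
\begin{enumerate}
\item Work with CMPL's, where $\Si_d(a)\Si_d(b)=\Si_d(a+b)$ keeps the $*$-products manageable.
\item For an $\Fq$-relation $\mathcal R$ with smallest tuple $\fs=s(\mathcal R)$, use the two-level key expression (Proposition~\ref{prop: key expression}) only to eliminate the non-basis $\Fq$-level tuples.
\item The leftover $\Fq$-level basis terms must vanish: expand the $D_1$-part in $\mathcal J_w$ with $\Fq[D_1]$-coefficients and use $K$-independence.
\item Dividing what remains by $D_1$ gives a new weight-$w$ relation $\Ind(\mathcal R)$, all of whose tuples exceed $\fs$.
\item Taking $s(\mathcal R)$ maximal forces $\Ind(\mathcal R)=0$. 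A case analysis on the Type of $\fs$, producing a $D_1$-level tuple $\widetilde{\ft}$ that nothing else can cancel, then shows $s(\mathcal R)=(q+2,q-1)$ when $w\le 2q+1$.
\item A variant of this extremal argument shows every nontrivial relation has leading tuple $(q+2,q-1)$. Two normalized relations would then differ by one with a larger leading tuple, which is impossible.
\end{enumerate}

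\textbf{Step 2.} You never actually carry out the cancellation. In the paper, existence is a formal check of the explicit binary relation of Proposition~\ref{prop: weight 2q+1 existence}. Its $D_1$-level terms cancel identically via $(s)*(s_1,\fs_-)=(s+s_1,\fs_-)+(s,s_1,\fs_-)+(s_1,(s)*\fs_-)$.

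\textbf{Step 1} is not a valid bridge. The Anderson--Thakur/Chang formula expresses $\Gamma_{s_1}\cdots\Gamma_{s_r}\zeta_A(\fs)$ through CMPL's evaluated at points built from the coefficients of the $H_{s_j-1}$, with coefficients in $A$. It does not write $\zeta_A(\fs)$ as an $\Fq$-combination of the $\Li(\ft)=\Li_{\ft}(1,\dots,1)$. Any ``twisting by powers of $\theta$'' destroys exactly the $\Fq$-structure you need to compare relation spaces. Also, the tuple $(2q+1)$ violates your bound $s_j\le 2q$.

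What is needed is Theorem~\ref{thm: new connection}. Each $S_d(\fs)$ is an $\Fq$-combination of $\Si_d(\ft)$'s of the same weight, and conversely. This is proved by induction on weight from Chen's formula, $\Si_d(a)\Si_d(b)=\Si_d(a+b)$, and the $\Fq$-rational product rules for power sums. Both families are indexed by the same $2^{w-1}$ tuples, so equal $\Fq$-spans give relation spaces of equal dimension. That transfers uniqueness from CMPL's to MZV's. The displayed MZV relation then follows by rewriting $\Li(q+2,q-1)$, $\Li(q+1,1,q-1)$ and $\Li(1,1,2q-1)$ via Chen's formula.
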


This result is optimal in the sense that $2q+1$ is the minimal weight for which an 
$\Fq$-linear relation exists. Indeed, we establish the following result:

\begin{theoremx} \label{thm: weight up to 2q}
The MZV's of weight $w$ are linearly independent over $\Fq$ for all $w < 2q+1$.
\end{theoremx}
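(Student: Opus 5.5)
Our plan is to reduce the $\Fq$-linear independence of MZV's of weight $w\le 2q$ to a statement about an explicit linear map between finite-dimensional $\Fq$-vector spaces. We use the algorithm of \cite{ND21}, modified so that it preserves weight. Let $V_w$ be the $\Fq$-vector space with basis the formal symbols $[\fs]$, where $\fs$ runs over tuples of weight $w$. Each such tuple is one of the $2^{w-1}$ compositions of $w$. An $\Fq$-relation $\sum a_\fs \zeta_A(\fs)=0$ is a vector in the kernel of the evaluation map $V_w\to K_\infty$.

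\textbf{Step 1 (rewriting in the basis).} The algorithm writes each $\zeta_A(\fs)$ as a $K$-linear combination of the basis $\mathcal T_w$ of Conjecture~\ref{conj: Zagier Hoffman}(2). That conjecture is known to hold by \cite{ND21,IKLNDP24}. Since $\mathcal T_w$ is a $K$-basis of $\mathcal Z_w$, a relation $\sum a_\fs\zeta_A(\fs)=0$ with $a_\fs\in\Fq$ holds if and only if
\[ \sum_\fs a_\fs\, c_{\fs,\ft}=0 \in K \quad \text{for every } \ft\in\mathcal T_w, \]
where $c_{\fs,\ft}\in K$ are the coefficients produced by the algorithm. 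This turns the problem into pure linear algebra: we must show that the $\Fq$-span of the coefficient vectors $(c_{\fs,\ft})_\ft$ has dimension $2^{w-1}$.

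\textbf{Step 2 (controlling the coefficients for $w\le 2q$).} For $w<q$ every tuple already lies in $\mathcal T_w$, so there is nothing to prove. For $w\ge q$ the only rewriting comes from the fundamental relation $\zeta_A(q)=-D_1\zeta_A(1,q-1)$ and its Todd-operator generalizations. When $w\le 2q$, each tuple contains at most two entries $\ge q$, or one entry of size up to $2q$. So the reduction involves at most two applications of the basic relations, together with the explicit formulas for entries in $[q,2q]$ (for instance $\zeta_A(q+j)$ involves $D_1$ and $\ell_1$-type coefficients). Accordingly, each $c_{\fs,\ft}$ is a polynomial in $D_1$, $\ell_1=\theta-\theta^q$, and related quantities, of bounded degree.

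\textbf{Step 3 (triangularity).} We order the tuples by the (depth, lex) order used in the algorithm. We then show that each $\fs\notin\mathcal T_w$ contributes to some "leading" basis element $\ft(\fs)$ with a coefficient whose $\theta$-degree is distinct from the degree of any $\Fq$-combination of the other contributions. The map $\fs\mapsto(\ft(\fs),\text{leading degree})$ should be injective for $w\le 2q$. Then the $\Fq$-linear independence follows by taking the minimal $\fs$ in a putative relation and comparing the top-degree parts of its coefficients.

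\textbf{Main obstacle.} We expect the hard step to be Step 3: proving that no cancellation among the coefficients is possible over $\Fq$. The reason is that at $w=2q+1$ such a cancellation genuinely occurs, as Theorem~\ref{thm: weight 2q+1} shows. So the argument must use the bound $w\le 2q$ sharply, presumably through the fact that at most one "carry" of size $q$ can arise.

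Our first attempt will be to work with the CMPL formulation over $\Fq$. There, the relevant coefficients are polynomials in $\theta$ with explicit leading terms, and a degree or valuation comparison should suffice. The case $w=2q$ would need a separate, careful bookkeeping of the tuples containing two entries equal to $q$.
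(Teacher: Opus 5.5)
Your reduction in Step 1 is correct, but the argument stops where the content of the theorem begins. Step 3 is only an expectation (``should be injective''), and as formulated it would not suffice even if true. Injectivity of $\fs\mapsto(\ft(\fs),\text{leading degree})$ does not exclude cancellation inside a relation. You would need a genuine triangularity statement: no tuple $\fs'$ larger than the minimal $\fs$ of the relation contributes to the coefficient of $\Li(\ft(\fs))$ in that degree. The condition ``a degree distinct from that of any $\Fq$-combination of the other contributions'' is not something you can verify. Proving such triangularity means controlling the complete expansion of every tuple in the basis. The paper describes exactly this route as out of reach for MZV's, because of the extra terms Chen's formula creates.

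Even for CMPL's, leading terms cannot be read off from one step of the algorithm, because outputs are fed back in and cancel. By Proposition~\ref{prop: key expression}, $\Li(q,q)=-D_1\Li(q+1,q-1)-D_1\Li(q,1,q-1)$ and $\Li(q+1,q-1)=-\Li(q,1,q-1)-D_1\Li(1,(q-1)*(1,q-1))$. Hence $\Li(q,q)=D_1^2\Li(1,(q-1)*(1,q-1))$: the first-order term has cancelled. The right side again contains non-basis tuples such as $(1,1,2q-2)$, so your claim in Step 2 that at most two applications are needed is also inaccurate.

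Finally, working with CMPL's requires a transfer back to MZV's, which you do not supply. The known connection (Theorem~\ref{thm: connection}) is only established over $K$, and already in weight $q+1\le 2q$ one has $\Li(q+1)=\zeta_A(q+1)+\zeta_A(2,q-1)$. What is needed is the new fact that the $\Fq$-spans of MZV's and CMPL's of each weight coincide (Theorem~\ref{thmx: new connection}). It is proved by induction on the weight from Chen's formula and $\Si_d(a)\Si_d(b)=\Si_d(a+b)$.

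The idea that replaces your Step 3 avoids final coefficients altogether. Write a relation as $\Li(\fs)+\sum_{\fs_i>\fs}a_i\Li(\fs_i)=0$ with $\fs=s(\mathcal R)$. Use the weight-preserving key expression to push every $\Fq$-level non-basis term upward in the (depth, lex)-order. The remaining $\Fq$-level basis part must vanish: the $D_1$-level part reduces to a $D_1\Fq[D_1]$-combination of the $\Li(\ft)$ with $\ft\in\mathcal J_w$, and these are $K$-linearly independent (Theorem~\ref{thm: Zagier Hoffman}). Dividing the $D_1$-level part by $D_1$ gives a new $\Fq$-linear relation $\Ind(\mathcal R)$, all of whose tuples are strictly larger than $s(\mathcal R)$. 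So if $\mathcal R$ is chosen with $s(\mathcal R)$ maximal, $\Ind(\mathcal R)$ must vanish formally.

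A case analysis on the type of $s(\mathcal R)$ then exhibits a witness among its own $D_1$-level tuples that no larger tuple can produce at $D_1$-level. Examples are $(\fn^+,q+r-1)$ when $\fn\neq\emptyset$ and $\fm=\emptyset$, or $(1,q+r-1,\fm)$ when $\fn=\emptyset$ and $r>1$. The weight bound enters at this point, and the only exception is $s(\mathcal R)=(q+2,q-1)$, which has weight $2q+1$. So the effective comparison is a first-order, formal one in powers of $D_1$, combined with an extremal choice of relation. It is not a top-degree comparison of coefficients in the final basis.
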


We briefly comment on the methods used. As noted by Thakur~\cite{Tha09,Tha17}, 
Conjecture~\ref{conj: independence over Fq} is fundamentally different in nature from 
Conjecture~\ref{conj: Zagier Hoffman}, i.e., linear independence versus linear dependence. He provided heuristic arguments based on the 
motivic interpretation developed in~\cite{And86,AT90,AT09} to support the linear 
independence of MZV's over $\Fq$. For linear independence over $K$, the Anderson--Brownawell--Papanikolas 
criterion~\cite{ABP04} is a powerful and fruitful tool. Unfortunately, no analogue of 
this criterion exists for linear independence over $\Fq$.

Our strategy involves refining the following approach: first, applying the algorithm 
introduced in~\cite{ND21} to express an $\Fq$-linear relation among MZV's in the basis 
$\mathcal{T}_w$ (as defined in Conjecture~\ref{conj: Zagier Hoffman}) so as to obtain 
a $K$-linear relation, and second, showing that this relation is non-trivial. However, 
controlling the coefficients in the algorithm from~\cite{ND21} with sufficient precision 
is made impossible by the combinatorial complexity of the MZV shuffle relations.

To circumvent these difficulties, we introduce several key new ideas and ingredients. The first is a 
generalization of the connection between MZV's and CMPL's over $\Fq$ (rather than over 
$K$ as in \cite{IKLNDP24}). The method in \cite{IKLNDP24} which is based on that of \cite{ND21} does not extend to $\Fq$.
Our proof is direct and completely avoids the 
methods of~\cite{ND21,IKLNDP24}. This allows us to reduce the problem to the study of $\Fq$-linear 
relations among CMPL's for which the shuffle relations are more tractable.

\begin{theoremx} \label{thmx: new connection}
For any $w \in \N$, the $\Fq$-vector spaces spanned by MZV's and CMPL's of weight $w$ 
coincide.
\end{theoremx}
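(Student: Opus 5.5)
We first fix notation. For $\fs=(s_1,\dots,s_r)\in\N^r$ and $d\ge 0$ we put
\[ S_d(\fs)=\sum_{\substack{a_1,\dots,a_r\in A_+\\ d=\deg a_1>\dots>\deg a_r\ge 0}}\frac{1}{a_1^{s_1}\cdots a_r^{s_r}} \]
and $S_{<d}(\fs)=\sum_{e<d}S_e(\fs)$, so that $\zeta_A(\fs)=\sum_{d\ge0}S_d(\fs)$. The CMPL is
\[ \Li(\fs)=\sum_{d_1>\dots>d_r\ge0}\frac{1}{\ell_{d_1}^{s_1}\cdots\ell_{d_r}^{s_r}}, \qquad \ell_d=\prod_{i=1}^d(\theta-\theta^{q^i}). \]
We write $\Si_d(\fs)$ for the partial sum over $d_1=d$. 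The two power sums $S_d(s)$ and $1/\ell_d^s$ are related by Carlitz's classical identity $S_d(s)=1/\ell_d^s$ for $1\le s\le q$. For general $s$ we instead have Anderson–Thakur type expansions, which express $S_d(s)$ as an $\Fq$-combination of products of $1/\ell_d^{s'}$ with lower-depth terms.

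The plan is to show both inclusions at the level of partial sums and in fixed weight. Concretely, we would prove a stronger, "truncated" statement by induction on weight and depth. For every $\fs$ of weight $w$, there should exist finitely many tuples $\ft_i$ of weight $w$ and coefficients $a_i\in\Fq$, independent of $d$, such that $S_d(\fs)=\sum_i a_i\,\Si_d(\ft_i)$ for all $d$. The converse statement, with the roles of $S$ and $\Si$ exchanged, is proved in the same way. Summing over $d$ then gives the two inclusions of $\Fq$-spans.

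\textbf{Step 1: depth one.} We need $S_d(s)\in\operatorname{span}_{\Fq}\{\Si_d(\ft):w(\ft)=s\}$. For $s\le q$ this is Carlitz's identity. For $s>q$ we would write $s=\sum_j c_j q^j$ with $0\le c_j\le q-1$ and use $S_d(q^j s')=S_d(s')^{q^j}$. Combined with Thakur's expressions of $S_d(s)$ via the Anderson–Thakur polynomials, this should give a recursion of the shape
\[ S_d(s)=\frac{1}{\ell_d^{s}}\Bigl(\text{an } \Fq\text{-combination of } \ell_d^{\,\cdot}\, S_{<d}(\cdot) \text{ of total weight } s\Bigr). \]
The crucial point to verify is that all coefficients produced are in $\Fq$, not merely in $K$. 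We expect this to come from the identity $\ell_d/\ell_{d-1}^q\cdot(\text{stuff})$ and $D_d/\ell_d$ being ratios of the shape $\theta-\theta^{q^d}$. These would cancel against the Frobenius twists of lower $\ell$'s.

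\textbf{Step 2: higher depth.} We use $S_d(s_1,\dots,s_r)=S_d(s_1)\,S_{<d}(s_2,\dots,s_r)$ and, on the other side, $\Si_d(t_1,\fs')=\ell_d^{-t_1}\Li_{<d}(\fs')$. Applying Step 1 to $S_d(s_1)$ and the induction hypothesis to $S_{<d}(\fs')$ leaves products of the form $\Si_d(\ft)\cdot\Li_{<d}(\fu)$. We rewrite these via the stuffle product for CMPL partial sums, $\Li_{<d}(\fu)\Li_{<d}(\fv)=\sum\Li_{<d}(\text{stuffles})$, which has coefficients in $\Fq$, indeed in $\{0,1\}$ up to sign. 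We also use that each product $\ell_e^{-a}\ell_e^{-b}=\ell_e^{-(a+b)}$ stays within CMPL's. Since these operations are weight-preserving and have $\Fq$-coefficients, the induction closes. For the reverse inclusion, we invert Step 1 triangularly with respect to depth: the leading term of $S_d(s)$ is $\ell_d^{-s}$ with coefficient $1$, the rest having higher depth and smaller first entries. This allows an induction on $(\text{weight},\text{depth},\text{first entry})$.

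The main obstacle is Step 1 for $s>q$. There we must show that the Anderson–Thakur expansion can be organized so that no element of $K\setminus\Fq$ survives, which is exactly where \cite{IKLNDP24} only obtained $K$-coefficients. We would first attempt this with the recursion $S_d(s)=\sum_{0\le e<d}(\cdot)$ derived from $\ell_d=\ell_{d-1}(\theta-\theta^{q^d})$, checking it numerically in small cases such as $s=q+1$ and $s=2q-1$ before proving the general pattern.
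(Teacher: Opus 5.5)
Your Step 2 (depth $\ge 2$ via $S_d(\fs)=S_d(s_1)S_{<d}(\fs_-)$ and the $\Fq$-product formula for CMPL power sums) is essentially what the paper does. However, Step 1 is not proved. This is the depth-one case $S_d(s)$ with $s>q$, which you yourself call the main obstacle. You only describe a hoped-for reorganization of Anderson--Thakur expansions, plus numerical checks. As stated, that route is also unlikely to close. The Anderson--Thakur formulas naturally carry coefficients in $K$ (Carlitz factorials, coefficients of the Anderson--Thakur polynomials), and you give no mechanism that eliminates them. The Frobenius identity $S_d(q^js')=S_d(s')^{q^j}$ only handles pure $q$-power multiples; it cannot assemble $S_d(s)$ from its base-$q$ digits, because $S_d(a)S_d(b)\neq S_d(a+b)$ in general. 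Everything else depends on Step 1, including your reverse inclusion obtained by ``inverting Step 1 triangularly''. So the argument has a genuine gap at exactly this point.

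The missing idea is that the discrepancy $S_d(a)S_d(b)-S_d(a+b)$ is explicitly known with $\Fq$-coefficients. This is Chen's formula: $S_d(s)S_d(t)=S_d(s+t)+\sum_i\Delta^i_{s,t}S_d(s+t-i,i)$, where the $\Delta^i_{s,t}\in\Fq$ are explicit signed binomial coefficients. Taking $t=1$ gives
\[ S_d(w)=S_d(w-1)\,\Si_d(1)-\sum_i \Delta^i_{w-1,1}\,S_d(w-i,i). \]
Run the induction on weight alone. At weight $w$, treat depth $\ge 2$ first; your Step 2 only uses the hypothesis at strictly smaller weights. Then the depth-two terms $S_d(w-i,i)$ are already $\Fq$-combinations of $\Si_d$'s of weight $w$, and $S_d(w-1)$ is covered by induction. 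The product $\Si_d(\ft)\Si_d(1)$ is resolved by the $\Fq$-product formula for CMPL power sums. No analysis of large $s$ via Anderson--Thakur is needed.

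The reverse inclusion is equally direct and needs no triangular inversion. Write $\Si_d(w)=\Si_d(w-1)\Si_d(1)$, apply induction to $\Si_d(w-1)$, and use $\Si_d(1)=S_d(1)$. Then expand the product $S_d(\ft)S_d(1)$ with the $\Fq$-product formula for MZV power sums, which itself follows from Chen's formula.
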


We further introduce two new ingredients: a modification of the algorithm from~\cite{ND21} 
and a new weight-preserving operator $\Ind$ acting on $\Fq$-linear relations.

Combining all these ingredients, we successfully carry out the strategy for CMPL's, 
leading to the counterexample and the linear independence for weights $w \le 2q$ presented in Theorems~\ref{thm: weight 2q+1} 
and~\ref{thm: weight up to 2q}, respectively.

\subsection*{Addendum (September 2026): beyond the threshold} \label{sec: addendum} \ppar

In the appendix we present our subsequent work beyond the threshold $w=2q+1$. 

First, in \S \ref{sec: BCP relations} we construct a family of $\Fq$-linear relations among CMPL's called primitive relations (or $P$ relations for short). This family $P(\fs)$ is indexed by nonempty tuples $\fs$ and the relation discovered in Theorem \ref{thm: weight 2q+1} is derived from $P(1)$ (i.e., $\fs=(1)$), see Prop. \ref{prop: P1}.

Second, applying Todd's operators $\mathcal B^*$ and $\mathcal C$, we obtain many relations called BCP relations. We show in Theorem \ref{thm: relations from P, B, C} that when $w<3q$, they form a basis for all $\Fq$-linear relations of CMPL's of weight $w$, thus also a basis for all $\Fq$-linear relations of MZV's of weight $w$ by Theorem \ref{thmx: new connection}. This extends Theorems \ref{thm: weight 2q+1} and \ref{thm: weight up to 2q}.

Finally, we speculate that for arbitrary weights, the vector space of all $\Fq$-linear relations of MZV's of weight $w$ is generated by BCP relations and write down the expected dimension of this vector space in \S \ref{sec: speculations}.

At the end of August 2026, we were in the process of working towards the general weights when we became aware of the paper \cite{HHLL26} (arXiv:2608.26636v1) in which the authors have been working on the same problem independently. 

We would like to say some words about our appendix and the work of \cite{HHLL26} (we thank the authors of \cite{HHLL26} for explanations of their work and helpful discussion):
\begin{enumerate}
\item The work \cite{HHLL26} started from and builds upon the first version of this article posted in arXiv (arXiv:2604.25451v1) in April 2026 as the authors of \cite{HHLL26} informed us. In this paper they construct the so-called quadruple-carry relations. Then with AI assistance they prove that the vector space of all $\Fq$-linear relations of MZV's of weight $w$ is generated by quadruple-carry relations and completely determined the dimension of this vector space. 

\item Compared to our work, we believe that BCP relations mentioned above are closely related to quadruple-carry relations and that the vector space spanned by BCP relations and that spanned by quadruple-carry relations coincide. The latter is supported by the fact that these vector spaces would have the same dimension by \cite{HHLL26} and \S \ref{sec: speculations}.

\item Our proofs and constructions were obtained entirely without AI assistance. 
\end{enumerate}

\subsection*{Plan of the paper} \ppar

The mathematical content and proofs of this paper, i.e., the definitions, the constructions, the statements and the proofs were obtained entirely without AI assistance.

The paper is organized as follows. In Section~\ref{sec: Brown} we review the definitions of MZV's and CMPL's, 
as well as analogues of Brown's theorem. Sections~\ref{sec: bridge} and~\ref{sec: algorithm revisited} detail the 
connection between MZV's and CMPL's over $\Fq$ (Theorem~\ref{thmx: new connection}), the modified algorithm and the 
new operator $\Ind$. Section~\ref{sec: small weights} provides a key result for weights 
$w \leq 2q+1$ (see Theorem~\ref{thm: small weights}). Finally, Section~\ref{sec: main results} uses Theorem~\ref{thm: small weights} and Theorem~\ref{thmx: new connection}
and provides the proofs of Theorems~\ref{thm: weight 2q+1} 
and~\ref{thm: weight up to 2q}.

In the appendix we present our subsequent work beyond the threshold $w=2q+1$. In \S \ref{sec: BCP relations} we construct $\Fq$-linear relations called BCP relations and use them to get all the $\Fq$-linear relations of weight $w<3q$ (see Theorem \ref{thm: relations from P, B, C}), extending Theorems \ref{thm: weight 2q+1} and \ref{thm: weight up to 2q}. We also present speculations for general $w$.

\subsection*{Acknowledgments} 
The third author (T. ND.) would like to express his gratitude to Federico Pellarin for continuous support and encouragement. 

The first author (B.-H. Im) was supported by the Basic Science Research Program through the National Research Foundation of Korea (NRF) grant funded by the Korea government (MSIT)(NRF-2023R1A2C1002385, or RS-2023-NR076333). H. Kim (H. K.) and T. Ngo Dac (T. ND.) were partially supported by the Excellence Research Chair FLCARPA funded by the Normandy Region. T. ND. was partially supported by the ANR grant PPAL ANR-25-CE40-4664.

Finally, all the authors thank the International Joint Lab LMNO-KAIST for support during this project.


\section{Analogues of Brown's theorem} \label{sec: Brown}

In \cite{ND21} one of the authors studies Zagier-Hoffman's conjectures for MZV's in positive characteristic formulated by Todd and Thakur (see Conjecture \ref{conj: Zagier Hoffman}). These conjectures predict the dimension and an explicit basis for the vector space $\mathcal Z_w$ over $K$ spanned by MZV's of fixed weight $w$. One of the main results in \cite{ND21} is an analogue of Brown's theorem in \cite{Bro12} which states that the vector space $\mathcal Z_w$ is generated by the conjectural basis (see \cite[Theorem A]{ND21}). This proves half of Zagier-Hoffman's conjectures in positive characteristic.

Based on \cite{ND21},  the other half of these conjectures was settled in \cite{IKLNDP24} (and independently in \cite{CCM23}). The key insight is a connection between MZV's and CMPL's; the proof of which relies heavily on the aforementioned analogue of Brown's theorem and its proof.

The main goal of this section is to review the notion of MZV's, that of CMPL's and then the previous connection between MZV's and CMPL's (see Theorem \ref{thm: connection}).

\subsection{Multiple zeta values (MZV's) in positive characteristic} \ppar

We recall the notation introduced in \S \ref{sec: introduction}. Let $q$ be a power of prime $p$ and $\Fq$ be a finite field of order $q$. Let $A=\Fq[\theta]$ be the polynomial ring in $\theta$ over $\Fq$ and $A_+$ the set of monic polynomials in $A$. We denote by $K=\Fq(\theta)$ the fraction field of $A$ equipped with the rational point~$\infty$. Let $K_\infty=\Fq(\!(1/\theta)\!)$ be the completion of $K$ at~$\infty$ and $\C_\infty$ be the completion of a fixed algebraic closure $\overline K$ of $K$ at $\infty$. Further, let $v_\infty$ be the discrete valuation on $K$ associated to the place $\infty$ normalized as $v_\infty(\theta) = -1$ and $|\cdot|_\infty = q^{-v_\infty(\cdot)}$ be the corresponding norm on $K$.

Let $\fs=(s_1,\dots,s_n)$ be a tuple of positive integers. Then $w(\fs)=s_1+\dots+s_n$ (resp. $n$) is called the weight (resp. the depth) of $\fs$. For a nonempty tuple $\fs$, we put $\fs_-:=(s_2,\dots,s_n)$.

For $d \in \mathbb Z$ we recall the definition of power sums given as in \cite[\S 2]{ND21}
\begin{equation*}
S_d(\fs)=\sum \frac{1}{a_1^{s_1} \ldots a_r^{s_r}} \in K_\infty
\end{equation*}
where the sum runs through the set of tuples $(a_1,\ldots,a_r) \in A_+^r$ with $d=\deg a_1>\ldots>\deg a_r$. Further, we define
\begin{equation*}
S_{<d}(\fs)=\sum \frac{1}{a_1^{s_1} \ldots a_r^{s_r}} \in K_\infty
\end{equation*}
where the sum is over $(a_1,\ldots,a_r) \in A_+^r$ with $d>\deg a_1>\ldots>\deg a_r$. Thus
\begin{align*}
& S_{<d}(\fs) =\sum_{i=0}^{d-1} S_i(\fs), \\
& S_{d}(\fs) =S_d(s_1) S_{<d}(\fs_-)=S_d(s_1) S_{<d}(s_2,\dots,s_r).
\end{align*}
Here, by convention we define empty sums to be $0$ and empty products to be $1$. In particular, $S_{<d}$ of the empty tuple is equal to $1$.

We recall the multiple zeta value (MZV) introduced by Thakur \cite{Tha04} as follows:
\begin{equation*}
    \zeta_A(\fs)  = \sum \limits_{d \geq 0} S_d(\fs) = \sum\limits_{\substack{a_1, \dots, a_n \in A_{+} \\ \deg a_1> \dots > \deg a_n\geq 0}} \dfrac{1}{a_1^{s_1} \dots a_n^{s_n}}  \in K_{\infty}.
\end{equation*}
We refer the reader to \cite{Tha17,Tha20} for excellent surveys of these objects.

\subsection{Brown's theorem for MZV's} \ppar

We also recall Chen's formula (see \cite{Che15}): for $s, t \in \mathbb{N}$, we have
\begin{equation} \label{eq: sum MZV}
S_d(s) S_d(t)  = S_d(s+t)  + \sum \limits_i \Delta^i_{s,t} S_d(s+t-i, i),
\end{equation}
where
\begin{equation} \label{eq:Delta Chen}
    \Delta^i_{s,t} = \begin{cases}
			(-1)^{s-1} {i - 1  \choose s - 1} +  (-1)^{t-1} {i-1 \choose t-1} & \quad  \text{if } q - 1 \mid i \text{ and } 0 < i < s + t, \\
            0 & \quad \text{otherwise.}
		 \end{cases}
\end{equation}

It follows that (see \cite{ND21}):
\begin{proposition} \label{prop: product MZV}
Let $\fs,\ft$ be two tuples. Then
\begin{enumerate}[\normalfont (1)]
    \item There exist $f_i \in \mathbb{F}_q$ and tuples $\mathfrak{u}_i$ with $\depth(\mathfrak{u}_i) \leq \depth(\fs) + \depth(\mathfrak{t})$ for all~$i$  such that
    \begin{equation*}
        S_d(\fs) S_d(\ft)  = \sum \limits_i f_i S_d(\mathfrak{u}_i)  \quad \text{for all } d \in \mathbb{Z}.
    \end{equation*}
    \item There exist $f'_i \in \mathbb{F}_q$ and tuples $\mathfrak{u}'_i$ with $\depth(\mathfrak{u}'_i) \leq \depth(\fs) + \depth(\mathfrak{t})$ for all~$i$  such that
    \begin{equation*}
        S_{<d}(\fs) S_{<d}(\ft) = \sum \limits_i f'_i S_{<d}(\mathfrak{u}'_i) \quad \text{for all } d \in \mathbb{Z}.
    \end{equation*}
    \item There exist $f''_i \in \mathbb{F}_q$ and tuples $\mathfrak{u}''_i$ with  $\depth(\mathfrak{u}''_i) \leq \depth(\fs) + \depth(\mathfrak{t})$ for all $i$  such that
    \begin{equation*}
        S_d(\fs) S_{<d}(\ft) = \sum \limits_i f''_i S_d(\mathfrak{u}''_i)  \quad \text{for all } d \in \mathbb{Z}.
    \end{equation*}
\end{enumerate}
\end{proposition}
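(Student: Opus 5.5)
We prove the three statements together by induction on $\depth(\fs)+\depth(\ft)$, with Chen's formula \eqref{eq: sum MZV} as the only input. If either tuple is empty the statements are trivial, since $S_d(\emptyset)=0$ for $d\neq 0$ in the relevant conventions and $S_{<d}(\emptyset)=1$; in part (1) with an empty tuple we simply take the empty sum or the tuple itself, as appropriate. All coefficients produced will be integers, namely the $\Delta^i_{s,t}$ and sums of them, reduced mod $p$, so they lie in $\Fq$. Moreover, every tuple produced will be obtained by merging entries of $\fs$ and $\ft$ or by splitting a merged entry into two pieces. This is what gives the depth bound $\depth(\fs)+\depth(\ft)$.

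\textbf{Step 1 (part (1)).} Write $\fs=(s,\fs_-)$ and $\ft=(t,\ft_-)$. Using $S_d(\fs)=S_d(s)S_{<d}(\fs_-)$ we get
\[
S_d(\fs)S_d(\ft)=S_d(s)S_d(t)\cdot S_{<d}(\fs_-)S_{<d}(\ft_-).
\]
Apply Chen's formula to $S_d(s)S_d(t)$. This gives $S_d(s+t)+\sum_i\Delta^i_{s,t}S_d(s+t-i)S_{<d}(i)$, where every term has depth $\le 2$. Apply part (2) by induction to $S_{<d}(\fs_-)S_{<d}(\ft_-)$, whose total depth is two less. This produces a combination of $S_{<d}(\fu)$ with $\depth\fu\le \depth\fs+\depth\ft-2$.

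We are then left with products $S_d(s+t)S_{<d}(\fu)=S_d(s+t,\fu)$, which already have the required form. We also get products $S_d(s+t-i)\,S_{<d}(i)S_{<d}(\fu)$. For these, apply part (2) again to $S_{<d}(i)S_{<d}(\fu)$; the total depth is $1+\depth\fu<\depth\fs+\depth\ft$, so induction applies. The output has depth $\le \depth\fs+\depth\ft-1$, and prefixing $s+t-i$ gives the bound.

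\textbf{Step 2 (part (2)).} Write $S_{<d}=\sum_{e<d}S_e$. Then
\[
S_{<d}(\fs)S_{<d}(\ft)=\sum_{e<d}\bigl(S_e(\fs)S_e(\ft)+S_e(\fs)S_{<e}(\ft)+S_{<e}(\fs)S_e(\ft)\bigr).
\]
The first summand is handled by part (1) at the same total depth. So the order of proof within one induction level is: first (1), which uses (2) only at strictly smaller depth, and then (2) and (3).

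The mixed terms $S_e(\fs)S_{<e}(\ft)=S_e(s)\,S_{<e}(\fs_-)S_{<e}(\ft)$ are treated by applying (2) inductively to $S_{<e}(\fs_-)S_{<e}(\ft)$, whose total depth is smaller. After that we prefix $s$. The coefficients are independent of $e$, so summing over $e<d$ gives $S_{<d}$ of the resulting tuples.

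\textbf{Step 3 (part (3)).} This is exactly the mixed-term computation of Step 2, carried out without summing over $e$.

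The main obstacle is organizing the induction so that no circularity arises. Concretely, (1) at total depth $n$ must call (2) only at depth $<n$, while (2) at depth $n$ calls (1) at depth $n$ and (3) at depth $<n$. This needs to be checked carefully, together with the fact that the coefficients are independent of $d$, which holds because the $\Delta^i_{s,t}$ do not depend on $d$.
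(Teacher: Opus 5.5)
Your proof is correct. It is a simultaneous induction on $\depth(\fs)+\depth(\ft)$ that feeds Chen's formula \eqref{eq: sum MZV} into (1), gets (3) from (2) at lower total depth, and gets (2) by splitting $S_{<d}=\sum_{e<d}S_e$ into diagonal and mixed terms, in the non-circular order you describe. This is essentially the argument the paper means when it says the proposition follows from Chen's formula, deferring the details to \cite{ND21}.
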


We denote by $\mathcal{Z}$ the $K$-vector space generated by the MZV's and $\mathcal{Z}_w$ the $K$-vector space generated by the MZV's of weight $w$. It follows from Proposition~\ref{prop: product MZV} that $\mathcal{Z}$ is a $K$-algebra.

The fundamental relation $R_1$ is given by
\begin{equation}\label{eq:R1}
S_d(q)+D_1 S_{d+1}(1,q-1)=0,
\end{equation}
where $D_1=\theta^q-\theta \in K$. This is the analogue of the celebrated relation discovered by Euler: $\zeta(3)=\zeta(1,2)$.

As a direct consequence, we obtain a $K$-linear relation among MZV's,
\[
\zeta_A(q)+D_1 \zeta_A(1,q-1)=0.
\]
In fact, it is proved in \cite{ND21} that this is the $K$-linear relation of smallest weight among MZV's (see Theorem \ref{thm: generating set} below).

\begin{definition} \label{defn: basis MZV}
Let $w \in \N$.
\begin{enumerate}[\normalfont (1)]
\item We define $\mathcal J_w$ to be the set of tuples $\fs=(s_1,\ldots,s_r)$ of weight $w$ with $1\leq s_i\leq q$ for $1\leq i\leq r-1$ and $s_r<q$.
\item We also define $\mathcal J=\cup_{w\ge1} \mathcal J_w$.
\end{enumerate}
\end{definition}

The main results of \cite{ND21} read as follows:
\begin{theorem} \label{thm: generating set}
For $w \in \N$, we denote by $\mathcal T_w$ the set of MZV's $\zeta_A(\fs)$ with $\fs \in \mathcal J_w$. Then
\begin{enumerate}[\normalfont (1)]
\item The $K$-vector space $\mathcal Z_w$ is generated by $\mathcal T_w$.
\item  If $w \leq 2q-2$, the elements in $\mathcal T_w$ are linearly independent over $K$. Therefore, a $K$-basis for $\mathcal Z_w$ is given by  $\mathcal T_w$.
\end{enumerate}
\end{theorem}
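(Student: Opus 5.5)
The statement is a recall of the main results of \cite{ND21}. Part (1) is algebraic, and part (2) is transcendental. I would prove them separately.

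For (1), I would show that every $\zeta_A(\fs)$ of weight $w$ is a $K$-linear combination of elements of $\mathcal T_w$. The argument should work at the level of power sums: express $S_d(\fs)$ as a $K$-combination of $S_d(\ft)$ with $\ft \in \mathcal J_w$, uniformly in $d$, and then sum over $d$.

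The basic tool is the fundamental relation \eqref{eq:R1}, $S_d(q) = -D_1 S_{d+1}(1,q-1)$. I would generalize it in two ways.

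\begin{enumerate}[\normalfont (1)]
\item First, replace a first entry $s_1 > q$ (or $s_1 = q$ in the last position) by tuples with smaller entries. To do this, multiply $R_1$ by $S_d(s_1 - q)$ and by $S_{<d}(\fs_-)$, and expand using Chen's formula \eqref{eq: sum MZV} together with Proposition \ref{prop: product MZV}(3).
\item Second, handle the shift $S_{d+1}$ versus $S_d$ by summing over $d$. Write $S_{<d+1} = S_{<d} + S_d$, so that relations of the form $S_d(\cdots) = \sum_i c_i S_{d+1}(\cdots) + \ldots$ become statements about $S_{<d}$. These can then be inserted into deeper positions of a tuple.
\end{enumerate}

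This yields Todd-type operators, in the spirit of $\mathcal B^*$ and $\mathcal C$, which rewrite a tuple with some entry $> q$, or with last entry $\ge q$, into a $K$-combination of tuples that are smaller for a suitable ordering. The natural choice is the ordering $\depthlex$: greater depth first, then lexicographic with larger entries counted as bigger.

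The induction then proceeds as follows. Pick the first offending position $i$, i.e.\ one with $s_i > q$ for $i < r$ or $s_r \ge q$. Apply the appropriate relation at that position, with the prefix $(s_1, \dots, s_{i-1})$ absorbed via $S_d(\fs) = S_d(s_1) S_{<d}(\fs_-)$ iterated. Every new tuple is then either in $\mathcal J_w$ or strictly smaller than $\fs$ in the ordering. Since there are finitely many tuples of weight $w$, the process terminates.

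The main obstacle is proving this termination and decrease. Chen's formula produces correction terms $\Delta^i_{s,t} S_d(s+t-i, i)$ that may recreate large entries. One must check that these always have larger depth or smaller lex position, and that the needed relations hold uniformly in $d$ with coefficients in $K$ independent of $d$. I would first establish a clean lemma for the relation at $S_{<d}$ level (the "$R_k$" family), and only then run the induction.

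For (2), assume $\sum_{\fs \in \mathcal J_w} a_{\fs} \zeta_A(\fs) = 0$ with $a_{\fs} \in K$ and $w \le 2q-2$. I would use the Anderson--Thakur / Anderson--Brownawell--Papanikolas framework.

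\begin{enumerate}[\normalfont (1)]
\item Realize each $\zeta_A(\fs)$, up to a nonzero factor $\Gamma$, as a specialization at $t = \theta$ of a rigid analytic solution of a Frobenius difference system $\Psi^{(-1)} = \Phi \Psi$, built from Anderson--Thakur polynomials $H_{s_i - 1}$.
\item Apply the ABP criterion. This lifts the relation to a relation $\sum_{\fs} a_{\fs}(t)\, \mathcal L_{\fs}(t) + \ldots = 0$ of functions. It then forces a $\tau$-difference equation on the coefficients, of the form $\delta^{(-1)} = \delta \Phi + \ldots$ with $\delta$ rational in $t$.
\item Analyze this equation by degree comparison. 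In weight $w \le 2q-2$, the Anderson--Thakur polynomials have degree $< q$, so no cancellation can occur.
\end{enumerate}

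The degree analysis should show that all $a_{\fs}$ vanish, except that it must be reconciled with the only allowed obstructions, which come from $\zeta_A(q-1)$-type Carlitz period factors. Those are excluded by the conditions defining $\mathcal J_w$. The delicate step here is the degree bookkeeping that gives the bound $2q-2$. I expect it to hold precisely because no product of two entries reaches $q-1$ carries in that range.
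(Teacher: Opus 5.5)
Your plan follows \cite{ND21} in outline, but both parts have genuine gaps.

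\textbf{Part (1).} The construction is mis-specified. If you multiply $R_1$ by $S_d(s_1-q)S_{<d}(\fs_-)$, its second term becomes $D_1S_{d+1}(1,q-1)\,S_d(s_1-q,\fs_-)$. That is a product of power sums at two different levels, so what you get is not a binary relation. The correct multiplier is $S_{<d+1}(s_1-q,\fs_-)$, i.e.\ Todd's $\mathcal C$. Its extra summand $S_{<d}(s_1-q,\fs_-)$ supplies the term $S_d(q,s_1-q,\fs_-)$ and collapses the $D_1$-part to a single level-$(d+1)$ power sum. A nonempty prefix is then absorbed by $\mathcal B^*$.

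The more serious problem is termination. Even with the right operators, the $(\depth,\lex)$-order cannot be your decreasing measure for MZV's. Take $\fs=(1,q+1,1)$ and apply $\mathcal C_{(1,1)}$ and then $\mathcal B^*_1$ to $R_1$, with $S$ in place of $\Si$.
\begin{enumerate}[\normalfont (a)]
\item Chen's formula gives $S_d(q)S_d(1)=S_d(q+1)+S_d(2,q-1)$, and the expansion of $S_{<d}(q-1)S_{<d}(1)$ contains $S_{<d}(q)$. Together these produce $\zeta_A(1,2,q)$ with nonzero coefficient.
\item The $D_1$-part $D_1\zeta_A\bigl(2,(q-1)*(1,1)\bigr)$ contains $D_1\zeta_A(2,1,q)$.
\end{enumerate}
Both tuples lie outside $\mathcal J_w$ and have the same depth as $\fs$. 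However, $(1,2,q)$ is lexicographically smaller than $\fs$ and $(2,1,q)$ is larger. So under either orientation of $(\depth,\lex)$, some new non-basis tuple fails to be strictly smaller. The ``main obstacle'' you flag is therefore a real obstruction, not a routine check.

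The $(\depth,\lex)$ induction of \S\ref{sec: algorithm revisited} works only for CMPL's, where $\Si_d(a)\Si_d(b)=\Si_d(a+b)$ has no correction terms. \cite{ND21} handles MZV's with a different ordering and the extra operator $\mathcal{BC}$. Within this paper the simplest repair has three steps:
\begin{enumerate}[\normalfont (a)]
\item run your algorithm on CMPL's (Theorem~\ref{thm: algebraic CMPL});
\item write each $S_d(\fs)$ as an $\Fq$-combination of $\Si_d$'s of the same weight (Proposition~\ref{prop: MZV});
\item use $\Li(\fs)=\zeta_A(\fs)$ for $\fs\in\mathcal J_w$ (Lemma~\ref{lemma:identities S Si}).
\end{enumerate}

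\textbf{Part (2).} The paper gives no proof here; it quotes \cite[Theorem~D]{ND21}, which is an actual ABP-criterion computation. Going beyond $2q-2$ required the new ideas of \cite{IKLNDP24}. Your sketch names the framework but leaves the entire content as an unspecified ``degree comparison''. That content is solving the Frobenius difference system attached to $(\zeta_A(\fs))_{\fs\in\mathcal J_w}$ and showing it forces every $a_\fs=0$.

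The heuristics you offer for this step are also wrong. Every $\fs\in\mathcal J_w$ has all entries $\le q$, in every weight. Hence the relevant Anderson--Thakur polynomials $H_{s_i-1}$ and factorials $\Gamma_{s_i}$ all equal $1$, and $\zeta_A(\fs)=\Li(\fs)$ (Lemma~\ref{lemma:identities S Si}). Degrees of Anderson--Thakur polynomials therefore cannot be what singles out $w\le 2q-2$.

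The ``obstruction'' is misidentified as well. $\zeta_A(q-1)$ is itself an element of $\mathcal T_{q-1}$. What the definition of $\mathcal J_w$ excludes is tuples ending in $q$, i.e.\ the fundamental relation $R_1$ and its consequences. As written, (2) announces a method rather than giving an argument.
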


Theorem \ref{thm: generating set} (1) could be considered as an analogue of Brown's theorem proved in \cite{Bro12} in our context. The proof is based on an algorithm which express an arbitrary MZV as a linear combination over $K$ of MZV's in the set $\mathcal T_w$ (see \S \ref{sec: algorithm revisited}).

\subsection{Carlitz multiple polylogarithms (CMPL's)} \ppar \label{sec: CMPL}

We now review the notion of multiple polylogarithms in positive characteristic (or Carlitz multiple polylogarithms). We put $\ell_0 = 1$ and $\ell_d = \prod^d_{i=1}(\theta - \theta^{q^i})$ for all $d \in \mathbb{N}$. For $\mathfrak s = (s_1 , \dots, s_n) \in \mathbb{N}^n$ and $d \in \mathbb Z$ we introduce analogues of power sums
\begin{align*}
\Si_d(\fs) &=\sum_{d=i_1>\ldots>i_r \geq 0} \frac{1}{\ell_{i_1}^{s_1} \ldots \ell_{i_r}^{s_r}} \in K_\infty, \\
\Si_{<d}(\fs) &=\sum_{d>i_1>\ldots>i_r \geq 0} \frac{1}{\ell_{i_1}^{s_1} \ldots \ell_{i_r}^{s_r}} \in K_\infty.
\end{align*}
Thus
\begin{align*}
& \Si_{<d}(\fs) =\sum_{i=0}^{d-1} \Si_i(\fs), \\
& \Si_{d}(\fs) =\Si_d(s_1) \Si_{<d}(\fs_-)=\Si_d(s_1) \Si_{<d}(s_2,\dots,s_r).
\end{align*}

We introduce the Carlitz multiple polylogarithm (CMPL for short) by
\begin{equation*}
    \Li(\fs) := \sum \limits_{d \geq 0} \Si_d(\fs) = \sum \limits_{d \geq 0} \ \sum\limits_{d=d_1> \dots > d_n\geq 0} \dfrac{1}{\ell_{d_1}^{s_1} \dots \ell_{d_n}^{s_n}}   \in K_{\infty}.
\end{equation*}
We set
$\Li(\emptyset)  = 1$. We call $\depth(\fs) = n$ the depth and $w(\fs) = s_1 + \dots + s_n$ the weight of the presentation $\Li(\fs)$. We see easily that for all $a,b \in \mathbb N$,
\begin{equation} \label{eq:product depth1}
\Si_d(a) \Si_d(b)=\Si_d(a+b)
\end{equation}
Using this product formula for the product of two power sums of depth $1$, one can define the product of two power sums of arbitrary depth. We show:

\begin{proposition}  \label{prop: product CMPL}
Let $\mathfrak a=(a_1,\dots,a_r)$ and $\mathfrak b=(b_1,\dots,b_k)$ be two tuples of positive integers.
\begin{enumerate}[\normalfont (1)]
\item  There exist constants $f_i \in \F_q$ and tuples of positive integers $\mathfrak c_i$ with $\depth(\mathfrak c_i) \leq \depth(\mathfrak a)+\depth(\mathfrak b)$ for all $i$, such that for all $d \in \mathbb Z$,
\begin{equation*} \label{eq:Sid}
\Si_d(\mathfrak a) \, \Si_d(\mathfrak b)=\sum_i f_i \Si_d(\mathfrak c_i).
\end{equation*}
\item There exist constants $f_i' \in \F_q$ and tuples of positive integers $\mathfrak c_i'$ with $\depth(\mathfrak c_i') \leq \depth(\mathfrak a)+\depth(\mathfrak b)$ for all $i$, such that for all $d \in \mathbb Z$,
\begin{equation} \label{eq:Sid minus}
\Si_{<d}(\mathfrak a) \Si_{<d}(\mathfrak b)=\sum_i f_i' \Si_{<d}(\mathfrak c_i').
\end{equation}
\item There exist $f''_i \in \mathbb{F}_q$ and tuples $\mathfrak{c}''_i$ with  $\depth(\mathfrak{c}''_i) \leq \depth(\mathfrak a) + \depth(\mathfrak b)$ for all $i$  such that for all $d \in \mathbb Z$,
    \begin{equation*}
        \Si_d(\mathfrak a) \Si_{<d}(\mathfrak b) = \sum \limits_i f''_i \Si_d(\mathfrak{c}''_i)  \quad \text{for all } d \in \mathbb{Z}.
    \end{equation*}
\end{enumerate}
\end{proposition}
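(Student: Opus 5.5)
We prove the three parts together by induction on $\depth(\mathfrak a)+\depth(\mathfrak b)$, using the depth-one product formula \eqref{eq:product depth1} and the decomposition $\Si_d(\fs)=\Si_d(s_1)\,\Si_{<d}(\fs_-)$. We use the conventions that empty tuples contribute $\Si_{<d}(\emptyset)=1$, and that $\Si_d(\emptyset)$ is never needed since $\Si_d$ is only applied to nonempty tuples. Throughout, all coefficients produced will be $\pm1$ or sums of such, hence lie in $\Fq$; the point is that no constant such as $\Delta^i_{s,t}$ or $D_1$ enters, in contrast with Chen's formula \eqref{eq: sum MZV}. Since $\Si_d$ is determined by the index $d$ alone, the identities we obtain will hold uniformly in $d\in\mathbb Z$; for $d<0$ both sides vanish, and for $d=0$ the formulas degenerate consistently.

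\emph{Part (1).} Write $\mathfrak a=(a_1,\mathfrak a_-)$ and $\mathfrak b=(b_1,\mathfrak b_-)$. Then
\[
\Si_d(\mathfrak a)\Si_d(\mathfrak b)=\Si_d(a_1)\Si_d(b_1)\,\Si_{<d}(\mathfrak a_-)\Si_{<d}(\mathfrak b_-)=\Si_d(a_1+b_1)\,\Si_{<d}(\mathfrak a_-)\Si_{<d}(\mathfrak b_-).
\]
By part (2) applied to $\mathfrak a_-,\mathfrak b_-$ (total depth smaller by $2$), the last product equals $\sum_i f'_i\Si_{<d}(\mathfrak c'_i)$ with $\depth(\mathfrak c'_i)\le r+k-2$. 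Hence
\[
\Si_d(\mathfrak a)\Si_d(\mathfrak b)=\sum_i f'_i\,\Si_d(a_1+b_1,\mathfrak c'_i),
\]
and each tuple $(a_1+b_1,\mathfrak c'_i)$ has depth at most $r+k-1$.

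\emph{Part (3).} We have $\Si_d(\mathfrak a)\Si_{<d}(\mathfrak b)=\Si_d(a_1)\,\bigl(\Si_{<d}(\mathfrak a_-)\Si_{<d}(\mathfrak b)\bigr)$. Apply (2) to the bracketed product, whose total depth is $r+k-1$, and prepend $a_1$ to each resulting tuple. The depths stay at most $r+k$.

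\emph{Part (2).} Expand $\Si_{<d}=\sum_{j<d}\Si_j$ to get
\[
\Si_{<d}(\mathfrak a)\Si_{<d}(\mathfrak b)=\sum_{j<d}\Bigl(\Si_j(\mathfrak a)\Si_j(\mathfrak b)+\Si_j(\mathfrak a)\Si_{<j}(\mathfrak b)+\Si_{<j}(\mathfrak a)\Si_j(\mathfrak b)\Bigr),
\]
which is the stuffle decomposition according to whether the top indices coincide or which one is larger. Each of the three terms is rewritten as an $\Fq$-combination $\sum f\,\Si_j(\mathfrak u)$ with coefficients and tuples independent of $j$: the first by (1), the second and third by (3). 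Summing over $j<d$ then turns each $\Si_j(\mathfrak u)$ into $\Si_{<d}(\mathfrak u)$.

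\emph{Ordering of the induction.} The only delicate point is that the three parts call one another. Part (3) at total depth $n$ invokes (2) at total depth $n-1$. Part (2) at depth $n$ invokes (1) at depth $n$, which in turn uses (2) at depth $n-2$, and (3) at depth $n$. So a strong induction on $n=r+k$ works, proving (1), then (3), then (2) at each level. The base cases are handled by the empty-tuple conventions: $\Si_{<d}(\emptyset)\Si_{<d}(\mathfrak b)=\Si_{<d}(\mathfrak b)$ trivially, and the formulas remain valid when $\mathfrak a_-$ or $\mathfrak b_-$ is empty. This bookkeeping of the induction order is the only real obstacle; the computations themselves are immediate.
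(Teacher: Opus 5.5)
Your proof is correct and is essentially the argument the paper intends. The paper writes out no separate proof; it only remarks that the depth-one formula \eqref{eq:product depth1} determines the product in arbitrary depth. That remark amounts to exactly your simultaneous induction on $\depth(\mathfrak a)+\depth(\mathfrak b)$, using $\Si_d(\fs)=\Si_d(s_1)\Si_{<d}(\fs_-)$ and the three-way split of $\Si_{<d}(\mathfrak a)\Si_{<d}(\mathfrak b)$ over the top index. It is the same scheme as for Proposition~\ref{prop: product MZV}, with Chen's formula replaced by the simpler depth-one product.

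Your induction order is the right bookkeeping: parts (1) and (3) at total depth $n$ come from part (2) at smaller depth, then part (2) follows at depth $n$. Your handling of empty tuples and of $d\le 0$ is consistent with the convention $\Si_{<d}(\emptyset)=1$.
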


We keep the notation as in Eq. \eqref{eq:Sid minus} and write
\begin{equation} \label{eq: star}
\mathfrak a*\mathfrak b=\sum_i f_i' \mathfrak c_i'.
\end{equation}

By Proposition~\ref{prop: product CMPL}, the $\Fq$-vector space (resp. $K$-vector space) generated by the CMPL's is an $\Fq$-algebra (resp. $K$-algebra).

We end this section by pointing out useful identities among these power sums.
\begin{lemma} \label{lemma:identities S Si}
For $\fs=(s_1,\ldots,s_r) \in \mathbb N^r$ with $1 \leq s_1,\ldots,s_r \leq q$, we have $S_d(\fs)=\Si_d(\fs)$. In particular, $\zeta_A(\fs)=\Li(\fs)$.
\end{lemma}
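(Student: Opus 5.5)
The plan is to reduce to depth one and then induct on the depth using the recursive structure shared by $S_d$ and $\Si_d$. Both families satisfy
\[
S_d(\fs)=S_d(s_1)\,S_{<d}(\fs_-), \qquad \Si_d(\fs)=\Si_d(s_1)\,\Si_{<d}(\fs_-),
\]
with $S_{<d}=\sum_{i<d}S_i$ and $\Si_{<d}=\sum_{i<d}\Si_i$. Hence it suffices to prove the depth one statement
\[
S_d(s)=\frac{1}{\ell_d^{s}}=\Si_d(s) \qquad \text{for } 1\le s\le q \text{ and all } d\ge 0.
\]
Granting this, the general case follows by induction on the depth $r$, since every $s_i$ lies in $[1,q]$. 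Summing over $d$ then gives $\zeta_A(\fs)=\Li(\fs)$.

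For depth one, I would use the Carlitz-style polynomial
\[
e_d(x)=\prod_{a\in A,\ \deg a<d}(x-a).
\]
This is an $\Fq$-linear polynomial, namely a linear combination of the powers $x^{q^i}$. Its derivative is the constant $c_d$, the coefficient of $x$. Taking the logarithmic derivative gives
\[
\sum_{\deg a<d}\frac{1}{x-a}=\frac{c_d}{e_d(x)}=:f(x).
\]
Substituting $x=\theta^d$ recovers the classical identity $S_d(1)=1/\ell_d$ of Carlitz. I would either cite this identity or verify it from the known values of $c_d$ and $e_d(\theta^d)$.

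The key step is to pass from $s=1$ to $s\le q$. Ordinary derivatives would require dividing by $(s-1)!$, which fails when $s\ge p$, so I would avoid them. Instead I would use a formal variable $\varepsilon$ together with the $\Fq$-linearity of $e_d$:
\[
e_d(x+\varepsilon)=e_d(x)+e_d(\varepsilon)=e_d(x)+c_d\varepsilon+O(\varepsilon^q).
\]
This gives
\[
f(x+\varepsilon)\equiv\sum_{j=0}^{q-1}(-1)^j\frac{c_d^{j+1}\varepsilon^j}{e_d(x)^{j+1}} \pmod{\varepsilon^q}.
\]
On the other hand, expanding each term of the sum directly gives
\[
\sum_a\frac{1}{x-a+\varepsilon}=\sum_{j\ge0}(-1)^j\varepsilon^j\sum_a\frac{1}{(x-a)^{j+1}}.
\]
Comparing the coefficients of $\varepsilon^j$ for $0\le j\le q-1$ yields
\[
\sum_a\frac{1}{(x-a)^{k}}=f(x)^k \qquad \text{for } 1\le k\le q.
\]
Specializing at $x=\theta^d$, the elements $\theta^d-a$ with $\deg a<d$ run exactly over the monic polynomials of degree $d$. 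Therefore $S_d(k)=S_d(1)^k=1/\ell_d^k$.

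The main obstacle is this extension from $s=1$ to $s\le q$ in characteristic $p$. The truncation modulo $\varepsilon^q$ is exactly what enforces the bound $s\le q$, which is why the hypothesis cannot be relaxed. The remaining steps, namely the depth induction and the value $S_d(1)=1/\ell_d$, are routine or classical.
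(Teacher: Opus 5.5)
Your proof is correct. The paper gives no argument of its own for this lemma; it only cites Thakur's \S3.1--3.3. Your proposal is a self-contained version of the classical argument behind that reference, in three steps.
\begin{enumerate}
\item The shared recursions $S_d(\fs)=S_d(s_1)S_{<d}(\fs_-)$ and $\Si_d(\fs)=\Si_d(s_1)\Si_{<d}(\fs_-)$ reduce everything to depth one.
\item The logarithmic derivative of the $\Fq$-linear polynomial $e_d$ gives Carlitz's identity $S_d(1)=1/\ell_d$.
\item The additivity $e_d(x+\varepsilon)=e_d(x)+e_d(\varepsilon)$ upgrades this to $S_d(k)=S_d(1)^k$ for $k\le q$.
\end{enumerate}

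Your formal variable $\varepsilon$ is the hyperderivative calculus in disguise: the coefficient of $\varepsilon^j$ in $g(x+\varepsilon)$ is the $j$-th hyperderivative of $g$. This is exactly the right device for avoiding division by $(k-1)!$.

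What your version buys over the bare citation is that it shows precisely where the hypothesis $s_i\le q$ enters. The first non-linear term of $e_d(\varepsilon)$ has degree $q$, so it first affects the case $k=q+1$. This is consistent with the paper's later computation $\Si_d(q+1)=S_d(q+1)+S_d(2,q-1)$ in the proof of Proposition~\ref{prop: MZV linear dependence in 2q+1 case}.

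The one external input you still rely on is the evaluation $c_d/e_d(\theta^d)=1/\ell_d$. It follows from two classical formulas:
\begin{itemize}
\item $e_d(\theta^d)=D_d$, the product of all monic polynomials of degree $d$;
\item $c_d=(-1)^dD_d/L_d$, where $L_d=\prod_{i=1}^d(\theta^{q^i}-\theta)$.
\end{itemize}
Together with $(-1)^dL_d=\ell_d$, these give $c_d/e_d(\theta^d)=(-1)^d/L_d=1/\ell_d$. Citing these (or Carlitz directly) closes the argument.
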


\begin{proof}
See \cite[\S 3.1-3.3]{Tha09}.
\end{proof}

We now state Brown's theorem for CMPL's. By Lemma \ref{lemma:identities S Si} the fundamental relation $R_1$ given in~\eqref{eq:R1} can be expressed as follows:
\begin{equation} \label{eqn: fundamental relation Si}
\Si_d(q)+D_1 \Si_{d+1}(1,q-1)=0,
\end{equation}
where we recall that $D_1=\theta^q-\theta \in K$.

For $w \in \N$ we recall (see Definition \ref{defn: basis MZV}) that $\mathcal J_w$ is the set of tuples $\fs=(s_1,\ldots,s_r)$ of weight $w$ with $1\leq s_i\leq q$ for $1\leq i\leq r-1$ and $s_r<q$. In \cite{IKLNDP24} we follow the same line as in the proof of \cite[Theorem A]{ND21} and show:

\begin{theorem} \label{thm: algebraic CMPL}
Let $w \in \N$. Then every CMPL of weight $w$ can be written as an $\Fq[D_1]$-linear combination of CMPL's $\Li(\fs)$ with $\fs \in \mathcal J_w$.
\end{theorem}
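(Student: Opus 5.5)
The plan is to run the algorithm of \cite{ND21} (the proof of Theorem \ref{thm: generating set}(1)) at the level of the power sums $\Si_d$ rather than $S_d$, and to track how coefficients are produced. Concretely, I will prove the following stronger statement by induction: for every tuple $\fs$ of weight $w$ there are polynomials $f_{\ft}(X)\in\Fq[X]$, indexed by $\ft\in\mathcal J_w$, such that for all $d$
\[
\Si_d(\fs)=\sum_{\ft\in\mathcal J_w} f_{\ft}(D_1)\,\Si_d(\ft),
\]
with the analogous identity for $\Si_{<d}$. Summing over $d\ge 0$ then gives Theorem \ref{thm: algebraic CMPL}. The induction is on the weight $w$, and inside a fixed weight on a well-founded order on tuples. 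I will use the order $(\depth,\lex)$ of \cite{ND21}, with depth increasing and entries compared from the left.

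For the basic move, let $\fs=(s_1,\dots,s_r)$ not lie in $\mathcal J_w$. Take the first index $i$ with $s_i>q$, or $i=r$ with $s_r\ge q$. Write $s_i=q+m$ with $m\ge 0$, so that $\Si_{d}(s_i)=\Si_d(q)\Si_d(m)$ by \eqref{eq:product depth1}. Now replace $\Si_d(q)$ using \eqref{eqn: fundamental relation Si}:
\[
\Si_d(q)=-D_1\,\Si_{d+1}(1,q-1).
\]
Since $\Si_{d+1}(1,q-1)=\Si_{d+1}(1)\Si_{d}(q-1)$, this gives
\[
\Si_d(s_i)=-D_1\,\Si_{d+1}(1)\,\Si_d(q-1+m).
\]
Substituting this into the nested sum defining $\Si_d(\fs)$ shifts one index. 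The prefix $(s_1,\dots,s_{i-1})$ then has to be recombined with the extra $\Si_{d+1}(1)$ and with $\Si_{<d+1}$-type sums, and this is done with the product formulas of Proposition \ref{prop: product CMPL}, whose coefficients lie in $\Fq$. The net effect is that $\Si_d(\fs)$ becomes $-D_1$ times a $\Fq$-combination of tuples of the same weight. These tuples have strictly larger depth, or the same depth with a smaller offending entry at position $i$ (entry $q-1+m<q+m$) and a $1$ inserted in front.

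The only coefficients that ever appear are elements of $\Fq$ from Proposition \ref{prop: product CMPL} and powers of $D_1$ from the fundamental relation. Therefore the coefficients stay in $\Fq[D_1]$, which is exactly the point of the theorem. They do not produce arbitrary elements of $K$, as would happen with Chen's $\Delta$'s or the general $K$-algebra structure.

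The main obstacle is termination. Depth can grow, so I need a bound: the depth is at most $w$, since all entries are $\ge 1$. Within a fixed depth I need the lexicographic-type measure to decrease, but the products in Proposition \ref{prop: product CMPL} may create new large entries to the left of position $i$. To handle this, I will first treat the leftmost problematic entry. I will also prove an auxiliary lemma: multiplying $\Si_d(1)\Si_{<d}(\fu)$ with $\fu\in\mathcal J$ yields only tuples whose entries to the left are at most $q$, except possibly for the first entry. This is the analogue of the ``$\mathcal B$, $\mathcal C$ operator'' bookkeeping in \cite{ND21}. If that ordering argument fails, my fallback is to induct primarily on the depth counted from the right, i.e. on $\sum_j \max(s_j-q,0)$, together with the number of entries equal to $q$ in the last slot. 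Each application of the fundamental relation lowers this quantity, and the recombination steps do not raise it.
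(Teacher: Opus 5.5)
\paragraph{The key identity and the strengthened statement both fail.} The identity at the centre of your argument is false. By definition, $\Si_{d+1}(1,q-1)=\Si_{d+1}(1)\,\Si_{<d+1}(q-1)$, not $\Si_{d+1}(1)\,\Si_d(q-1)$.

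With the correct identity, substituting into $\Si_d(q)\Si_d(m)\Si_{<d}(\fm)$ produces terms of the form $\Si_{d+1}(1)\,\Si_d(\fu)$. These are not power sums. The power sum $\Si_{d+1}(1,\fu)$ needs the factor $\Si_{<d+1}(\fu)$, and the discrepancy $\Si_{d+1}(1)\Si_{<d}(\fu)$ has the same non-closed shape. So Proposition \ref{prop: product CMPL} cannot recombine them.

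More fundamentally, the strengthened statement you want to induct on is false. Take $\fs=(q)$ and $d=0$. Then $\Si_0(q)=1$, while $\Si_0(\ft)=0$ for every $\ft\in\mathcal J_q$, since all such $\ft$ have depth at least $2$. The same failure occurs for $\Si_{<d}$ at $d=1$. Because $R_1$ links level $d$ to level $d+1$, same-level identities cannot be the induction statement. You have to manipulate binary relations and sum over $d$ only at the end.

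\paragraph{The missing idea.} The paper multiplies $R_1$ by $\Si_{<d+1}(r,\fm)$, which is the operator $\mathcal C_{(r,\fm)}$.
\begin{itemize}
\item The factor $\Si_{<d+1}$ is exactly what makes the $D_1$-side close up: $\Si_{d+1}(1)\Si_{<d+1}(q-1)\Si_{<d+1}(r,\fm)=\Si_{d+1}(1,(q-1)*(r,\fm))$.
\item The price is an extra term $\Si_d(q)\Si_{<d}(r,\fm)=\Si_d(q,r,\fm)$ on the other side.
\item The prefix $\fn$ is then attached with $\mathcal B^*_{s_{k-1}},\dots,\mathcal B^*_{s_1}$. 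Here $\Si_d(s_{k-1})\Si_{<d+1}(1,\dots)$ splits into the $\fn^+$-term and the $(\fn,1,\dots)$-term.
\end{itemize}
This yields Proposition \ref{prop: key expression}, including the $D_1$-free term $-\mathbf{1}_{r\ne0}\Li(\fn,q,r,\fm)$.

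Your claim that $\Li(\fs)$ becomes $-D_1$ times an $\Fq$-combination is therefore also wrong. For $\fs=(q+1)$ it would make every basis coefficient divisible by $D_1$. But by uniqueness in Theorem \ref{thm: Zagier Hoffman}, the coefficient of $\Li(q,1)$ is $\equiv -1 \bmod D_1$.

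\paragraph{Termination.} With the key expression, termination needs no auxiliary lemma. By Lemma \ref{lemma: order}, every tuple on the right-hand side is strictly larger than $\fs$ in the $(\depth,\lex)$-order: it either has larger depth, or has equal depth and is lexicographically larger via $\fn^+$. So descending induction starting from $(\{1\}^w)\in\mathcal J_w$ works.

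Your fallback measure $\sum_j\max(s_j-q,0)$ does not decrease, because $(q-1)*(\cdot)$ creates new entries above $q$. For example, from $\fs=(q+1,2)$ one obtains the tuple $(1,1,q+1)$, which has the same value of the measure.
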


We stress that there is an explicit algorithm which expresses an arbitrary CMPL as a linear combination over $K$ of CMPL's $\Li(\fs)$ with $\fs \in \mathcal J_w$. This algorithm will be revisited in \S \ref{sec: algorithm revisited}.

\subsection{Fundamental connection between MZV's and CMPL's} \ppar

As already mentioned in the introduction 
we recall the following connection (see \cite[Theorem~5.9]{IKLNDP24}) which plays a central role in the proof of Zagier-Hoffman's conjectures in positive characteristic in \cite[Theorem A]{IKLNDP24} (see also~\cite{CCM23}):

\begin{theorem} \label{thm: connection}
Let $w \in \N$. Then the $K$-vector spaces spanned by MZV's and CMPL's of weight $w$ coincide.
\end{theorem}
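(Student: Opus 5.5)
The plan is to reduce both sides to one common finite generating family, namely the values indexed by the set $\mathcal J_w$ of Definition~\ref{defn: basis MZV}. Let $\mathcal Z_w$ be the $K$-span of MZV's of weight $w$, and let $\mathcal L_w$ be the $K$-span of CMPL's of weight $w$. I will show that both equal the $K$-span $V_w$ of the elements $\zeta_A(\fs)$ with $\fs \in \mathcal J_w$.

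\emph{The MZV side.} By Theorem~\ref{thm: generating set}(1), the analogue of Brown's theorem, $\mathcal Z_w$ is generated over $K$ by $\mathcal T_w = \{\zeta_A(\fs) : \fs \in \mathcal J_w\}$. Conversely, every element of $\mathcal T_w$ is an MZV of weight $w$. Hence $\mathcal Z_w = V_w$.

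\emph{The CMPL side.} By Theorem~\ref{thm: algebraic CMPL}, every CMPL of weight $w$ is an $\Fq[D_1]$-linear combination of the $\Li(\fs)$ with $\fs \in \mathcal J_w$. Since $\Fq[D_1] \subset K$, $\mathcal L_w$ is contained in the $K$-span of $\{\Li(\fs) : \fs \in \mathcal J_w\}$. The reverse inclusion is trivial, because each $\Li(\fs)$ with $\fs \in \mathcal J_w$ is itself a CMPL of weight $w$. So $\mathcal L_w$ is exactly the $K$-span of these $\Li(\fs)$.

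\emph{The bridge.} Every $\fs = (s_1,\dots,s_r) \in \mathcal J_w$ satisfies $1 \le s_i \le q$ for all $i$: the first $r-1$ entries are at most $q$, and $s_r < q$. Lemma~\ref{lemma:identities S Si} therefore applies and gives $S_d(\fs) = \Si_d(\fs)$ for all $d$. Summing over $d \ge 0$ yields $\zeta_A(\fs) = \Li(\fs)$. Hence the two generating families coincide termwise, so $\mathcal L_w = V_w = \mathcal Z_w$.

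The only genuine content lies in the two generation statements, Theorem~\ref{thm: generating set}(1) and Theorem~\ref{thm: algebraic CMPL}. These rest on the algorithm of \cite{ND21}, which uses the fundamental relation \eqref{eq:R1} and Chen's formula \eqref{eq: sum MZV} to rewrite any tuple in terms of tuples in $\mathcal J_w$. The main thing to check is that this algorithm stays within weight $w$. For the CMPL version, the point is that \eqref{eqn: fundamental relation Si} and the product rule \eqref{eq:product depth1} preserve weight, and that the only coefficients introduced are polynomials in $D_1$. Taking both theorems as given, the argument above is complete.

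Note that this argument does not yield the $\Fq$-version, Theorem~\ref{thmx: new connection}. There the coefficients lie in $\Fq[D_1]$ rather than $\Fq$, which is why a different, direct argument is needed later in the paper.
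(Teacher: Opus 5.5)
Your proposal is correct and follows the paper's own proof: Theorem~\ref{thm: generating set}(1), Theorem~\ref{thm: algebraic CMPL} and Lemma~\ref{lemma:identities S Si} together show that both $K$-spans are generated by $\mathcal T_w$, so they coincide. Your check that every $\fs\in\mathcal J_w$ has all entries between $1$ and $q$, which is what makes Lemma~\ref{lemma:identities S Si} applicable, is the one detail the paper leaves implicit.
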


\begin{proof}
By Theorems \ref{thm: generating set}, \ref{thm: algebraic CMPL} and Lemma \ref{lemma:identities S Si}, these two vector spaces are spanned by the generating set $\mathcal T_w$ given as in Theorem \ref{thm: generating set}. Therefore, they coincide.
\end{proof}

Then the Zagier-Hoffman's conjectures in positive characteristic were proved in \cite[Theorem A]{IKLNDP24} (see also \cite{CCM23}):
\begin{theorem} \label{thm: Zagier Hoffman}
Let $w \in \N$. Then the $K$-vector spaces spanned by MZV's and CMPL's of weight $w$ have the basis consisting of CMPL's $\Li(\fs)$ with $\fs \in \mathcal J_w$.
\end{theorem}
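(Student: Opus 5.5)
The statement combines a spanning result, already available above, with a linear independence result over $K$, which is the genuinely new input; I would treat these in that order.

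\textbf{Spanning.} By Theorem~\ref{thm: connection} the two $K$-spans coincide, so it suffices to treat the $K$-span of CMPL's of weight $w$. By Theorem~\ref{thm: algebraic CMPL} every $\Li(\fs)$ of weight $w$ is an $\Fq[D_1]$-linear combination of the $\Li(\fs)$ with $\fs\in\mathcal J_w$. Every tuple in $\mathcal J_w$ has all entries $\le q$, so Lemma~\ref{lemma:identities S Si} gives $\Li(\fs)=\zeta_A(\fs)$ for these tuples. Hence $\{\Li(\fs):\fs\in\mathcal J_w\}=\mathcal T_w$ spans both spaces, and it remains to show these elements are $K$-linearly independent.

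\textbf{Independence: the set-up.} I would use the Anderson--Brownawell--Papanikolas criterion~\cite{ABP04}, for which the key step is to realize each $\Li(\fs)$ as a special value of a Frobenius difference system. For $\fs=(s_1,\dots,s_r)$ I would introduce deformation series in an auxiliary variable $t$,
\[ \mathcal L_{\fs}(t)=\sum_{d_1>\dots>d_r\ge 0}\prod_{j}\frac{\Omega^{s_j}(t)^{(d_j)}}{\ell_{d_j}^{s_j}}, \]
suitably twisted by powers of the Anderson--Thakur function $\Omega$, with $(\cdot)^{(d)}$ the $d$-fold Frobenius twist. These satisfy a unipotent $\sigma$-difference system $\Psi^{(-1)}=\Phi\Psi$. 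Here $\Phi$ is built from $(t-\theta)^{s_j}$ and is defined over $\overline K[t]$, and evaluating the system at $t=\theta$ recovers $\Li(\fs)/\tilde\pi^{w}$ up to explicit factors. I would assemble all $\fs\in\mathcal J_w$, together with their tails $\fs_-$, into a single block system.

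\textbf{Independence: induction.} Suppose $\sum_{\fs\in\mathcal J_w}a_{\fs}\Li(\fs)=0$ with $a_{\fs}\in K$ not all zero. ABP lifts this to a relation $\sum f_{\fs}(t)\,\mathcal L_{\fs}(t)+(\text{lower-depth terms})=0$ with $f_{\fs}\in\overline K[t]$. Comparing the $\sigma$-equations turns this into a Frobenius difference equation for the coefficients of top depth. From that equation I would show, by induction on depth (and on weight through the tails), that the top-depth coefficients must satisfy $f^{(-1)}=f\cdot(t-\theta)^{m}$-type equations for some exponent $m$. Such equations have nonzero polynomial solutions only when $(q-1)\mid m$, and then only in the shape $\tilde\pi$-power times rational functions.

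\textbf{Main obstacle.} The hard part is excluding these residual solutions. The obstruction is exactly the phenomenon behind the fundamental relation $\Si_d(q)+D_1\Si_{d+1}(1,q-1)=0$: whenever a final entry equals $q$ (or a multiple of $q-1$ appears), a genuine solution can exist. The defining condition of $\mathcal J_w$ ($s_i\le q$ for $i<r$, $s_r<q$) should rule this out. I would first check that any surviving solution forces a relation among strictly smaller tuples that again lie in $\mathcal J$, so the induction hypothesis kills it, and then verify that this closes the descent.
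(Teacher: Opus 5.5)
Your spanning argument is fine, but the independence half has a genuine gap. The step you call the main obstacle is never carried out, and the mechanism you propose for it fails.

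\textbf{The system involves prefixes, not tails.} With $\Li(\fs)$ summed over $d_1>\dots>d_r\ge 0$, a Frobenius twist lowers every index and splits off the \emph{smallest} one. The correct deformation is $\mathcal L_{\fs}=\sum\prod_j(\Omega^{s_j})^{(d_j)}$; your extra factor $\ell_{d_j}^{-s_j}$ would give $\Li(2s_1,\dots,2s_r)$ at $t=\theta$. With it one gets $\mathcal L_{\fs}^{(-1)}=\mathcal L_{\fs}+(t-\theta)^{s_r}\Omega^{s_r}\mathcal L_{(s_1,\dots,s_{r-1})}$. So the block system is forced to contain the prefixes of your tuples, not their tails.

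\textbf{$\mathcal J$ is not prefix-closed.} It is closed under tails, but $(q,q-1)\in\mathcal J_{2q-1}$ has prefix $(q)$. There $\Li(q)+D_1\Li(1,q-1)=0$, so no induction hypothesis is available.

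\textbf{The residual solutions are real.} The lower-level equations are inhomogeneous, of the form $g=(t-\theta)^m(g^{(-1)}+c)$ with $c\in\Fq(t)$. For $(q-1)\mid m$ they have nonzero rational solutions, e.g.\ $g=c\,(t-\theta)^q/(t-t^q)$ for $m=q-1$, which reflects $\zeta_A(q-1)\in K\tilde\pi^{q-1}$. You give no argument that these cancel. The equation $f^{(-1)}=f(t-\theta)^m$ you wrote has no nonzero rational solution for $m\ge 1$, by comparing degrees, so that step is misdescribed too.

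\textbf{This route is known to stall.} Since $\Li(\fs)=\zeta_A(\fs)$ on $\mathcal J_w$ and the Anderson--Thakur polynomials are trivial for entries $\le q$, your system is essentially the one used in \cite{ND21} for $\mathcal T_w$. As the introduction recalls, that analysis only gives independence for $w\le 2q-2$.

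The missing idea is to prove independence for a \emph{different} family and then conclude by counting. The cited result, \cite[Theorem A]{IKLNDP24}, applies ABP to the prefix-closed family of CMPL's $\Li(\fs)$ with $q\nmid s_i$ for all $i$, and shows these are $K$-linearly independent. This family has exactly $d(w)$ elements, like $\mathcal J_w$: both have generating function $(1-x)(1-x^q)/(1-2x+x^{q+1})$.

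By Theorem~\ref{thm: connection} this independent family lies in $\mathcal Z_w$. That space is spanned by $\mathcal T_w=\{\Li(\fs):\fs\in\mathcal J_w\}$, which has cardinality $d(w)$. Hence $\dim_K\mathcal Z_w=d(w)$, and the spanning set $\mathcal T_w$ is a basis.

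This counting step is exactly why the MZV/CMPL connection is needed. If ABP worked directly on $\mathcal J_w$, CMPL's would play no role, since on $\mathcal J_w$ they coincide with the MZV's in $\mathcal T_w$.
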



\section{Proof of Theorem \ref{thmx: new connection}} \label{sec: bridge}

In this section we prove Theorem \ref{thmx: new connection} which is a generalization of Theorem \ref{thm: connection} and states that the $\Fq$-spans of MZV's and CMPL's coincide (see Theorem \ref{thm: new connection} below).  The method used in the proof of Theorem \ref{thm: connection} in \cite{IKLNDP24} is based on that of \cite{ND21} and does not extend to $\Fq$. Instead, our proof of Theorem \ref{thmx: new connection}  is more direct and uses completely different techniques from those developed in \cite{ND21}, in particular, avoiding analogues of Brown's theorems, i.e., Theorems~\ref{thm: generating set} and \ref{thm: algebraic CMPL}.

\subsection{Statement of the result} \ppar

The main result of this section (Theorem \ref{thmx: new connection}) reads as follows:

\begin{theorem} \label{thm: new connection}
Let $w \in \N$. Then,
\begin{enumerate}[\normalfont (1)]
\item Every MZV of weight $w$ can be written as a linear combination over $\Fq$ of CMPL's.
\item Every CMPL of weight $w$ can be written as a linear combination over $\Fq$ of MZV's.
\end{enumerate}

In particular, the vector spaces over $\Fq$ spanned by MZV's and CMPL's of weight~$w$ coincide.
\end{theorem}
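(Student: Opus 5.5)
We prove both statements at the level of power sums, by strong induction on weight and depth. In each direction we show that a power sum of one type is an $\Fq$-linear combination of power sums of the other type, uniformly in $d$. Summing over $d\ge 0$ then gives the statements for $\zeta_A$ and $\Li$.

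\emph{Depth one.} The key input is the classical expansion of $S_d(s)$ in terms of the $\ell_i$, following Carlitz and Anderson--Thakur. Lemma~\ref{lemma:identities S Si} gives $S_d(s)=\Si_d(s)$ for $s\le q$. For general $s$, write $s=\sum_j q^{j}\,s_j$ in base $q$, with $0\le s_j\le q-1$. Since Frobenius gives $S_d(s)^{q}=S_d(qs)$, we obtain
\[
S_d(s)=\prod_j S_d(q^j s_j)=\prod_j S_d(s_j)^{q^j}.
\]
Similarly $\Si_d(a)\Si_d(b)=\Si_d(a+b)$ by \eqref{eq:product depth1}. The obstruction is that $\Si_d(s_j)^{q^j}\neq \Si_d(q^js_j)$, because $\ell_d^{q}\neq \ell_d$ up to $\Fq$-scalars. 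So the plan is instead to use Chen's formula \eqref{eq: sum MZV} and its $\Si$-counterpart, the depth-one case of Proposition~\ref{prop: product CMPL}. We would proceed by induction on $s$: write $S_d(s)=S_d(s-q)S_d(q)$ minus correction terms of the form $\Delta^i\,S_d(s-i,i)$, which have coefficients in $\Fq$. Each correction term has the same weight and depth two, but a smaller first entry. We then compare this with the analogous expansion of $\Si_d(s-q)\Si_d(q)=\Si_d(s)$. The result should be an identity of the form
\[
S_d(s)=\Si_d(s)+\sum f_i\,\Si_d(\fu_i),\qquad f_i\in\Fq,
\]
where the $\fu_i$ have weight $s$ and depth at least $2$. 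Our expectation is that $S_d(s)-\Si_d(s)$ is expressible through the telescoping identity behind the fundamental relation \eqref{eqn: fundamental relation Si}, namely $S_d(q)=\Si_d(q)$ together with $S_d(1,q-1)=\Si_d(1,q-1)$.

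\emph{General depth.} For $\fs=(s_1,\fs_-)$ we use
\[
S_d(\fs)=S_d(s_1)\,S_{<d}(\fs_-).
\]
By induction on depth, $S_{<d}(\fs_-)$ is an $\Fq$-combination of the $\Si_{<d}(\ft)$, since summing the level-$j$ identities over $j<d$ preserves them. By the depth-one case, $S_d(s_1)$ is an $\Fq$-combination of the $\Si_d(\fu)$. Proposition~\ref{prop: product CMPL}(3) then turns each product $\Si_d(\fu)\Si_{<d}(\ft)$ into an $\Fq$-combination of the $\Si_d(\mathfrak c)$, and all weights are preserved. This proves (1).

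For (2) we argue symmetrically. The depth-one identity is triangular: $\Si_d(s)$ equals $S_d(s)$ minus terms $\Si_d(\fu_i)$ of higher depth and the same weight. We induct on weight, and within a fixed weight on decreasing depth; depth is bounded by the weight, so the induction terminates. This expresses each $\Si_d(\fs)$ in terms of the $S_d$, now using Proposition~\ref{prop: product MZV}(3) for the products.

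\emph{Main obstacle.} The main obstacle is the depth-one comparison, namely showing that $S_d(s)-\Si_d(s)$ lies in the $\Fq$-span of the $\Si_d$ (respectively the $S_d$) of the same weight. We would first try to settle $q<s\le 2q$ explicitly, using $R_1$ and Chen's $\Delta$, and then induct on $s$ by multiplying by $S_d(1)=\Si_d(1)$ and repeatedly applying the product formulas.
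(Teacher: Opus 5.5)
Your reduction of general depth to depth one is correct and is what the paper does. You write $S_d(\fs)=S_d(s_1)S_{<d}(\fs_-)$ with both factors of weight $<w$, then apply Proposition~\ref{prop: product CMPL}(3), and symmetrically Proposition~\ref{prop: product MZV}(3).

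\textbf{The gap.} The depth-one case is left as an unresolved ``main obstacle'', described only by an expected shape, $S_d(s)=\Si_d(s)+(\text{higher depth})$. Your argument for (2) is built on that unproven triangular identity, so as written neither (1) nor (2) is established. Some of the routes you float are also misdirected:
\begin{itemize}
\item $R_1$ carries the coefficient $D_1\notin\Fq$, so it cannot by itself produce $\Fq$-combinations.
\item The factorization $S_d(s)=\prod_j S_d(q^j s_j)$ does not follow from Frobenius and is false. Chen's formula gives $S_d(1)S_d(q)=S_d(q+1)+S_d(2,q-1)$.
\end{itemize}

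\textbf{The fix.} You never need to compare $S_d(s)$ with $\Si_d(s)$; you only need membership in the $\Fq$-span. Run a single induction on the weight $w$, and at weight $w$ treat depth $\ge 2$ first, which only uses smaller weights. Then handle $(w)$ by Chen's formula \eqref{eq: sum MZV}, for instance
\[ S_d(w)=S_d(w-1)S_d(1)-\sum_i \Delta^i_{w-1,1}\,S_d(w-i,i), \qquad \Delta^i_{w-1,1}\in\Fq. \]
\begin{itemize}
\item The correction terms $S_d(w-i,i)$ are depth-two tuples of weight $w$ whose entries have weight $<w$. They are therefore already covered by the depth-$\ge 2$ step at weight $w$.
\item For the main term, write $S_d(w-1)$ as an $\Fq$-combination of the $\Si_d$ by induction. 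Use $S_d(1)=\Si_d(1)$, then apply Proposition~\ref{prop: product CMPL}(1).
\end{itemize}
Your own splitting $S_d(s-q)S_d(q)$ with $S_d(q)=\Si_d(q)$ works equally well once this is observed.

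Direction (2) is even easier and needs no triangularity. The identity $\Si_d(w)=\Si_d(w-1)\Si_d(1)$ holds exactly by \eqref{eq:product depth1}. So the induction hypothesis for $\Si_d(w-1)$, together with $\Si_d(1)=S_d(1)$ and Proposition~\ref{prop: product MZV}(1), finishes it. This is precisely the paper's argument in Propositions~\ref{prop: MZV} and~\ref{prop: CMPL}.
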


The rest of this section is devoted to proving this theorem. We just mention that Theorem \ref{thm: new connection} immediately implies Theorem \ref{thm: connection} and could be seen as a stronger version of Theorem \ref{thm: connection}.

\subsection{Proof of Theorem \ref{thm: new connection}} \label{sec: proof main result} \ppar

\subsubsection{}
First, we show:
\begin{proposition} \label{prop: MZV}
Let $\fs$ be a tuple of weight $w$. Then there exist $a_i \in \mathbb{F}_q$ and tuples~$\fs_i$ of weight $w$ such that for all $d \in \mathbb N$,
\[
S_d(\fs)=\sum _i a_i \Si_d(\fs_i).
\]
\end{proposition}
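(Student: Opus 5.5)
The plan is a strong induction on the weight $w$, and within a fixed weight we treat depth one before higher depth. Since the identity is required for every $d$, it can be summed over $d'<d$. So any expression $S_d(\fs)=\sum_i a_i\Si_d(\fs_i)$ valid for all $d$ also gives $S_{<d}(\fs)=\sum_i a_i \Si_{<d}(\fs_i)$ for all $d$. For $d<0$ both sides are empty sums, hence zero, so nothing needs checking there.

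We also need weight to be preserved by all the product formulas. For Chen's formula \eqref{eq: sum MZV} this is visible directly: every term has weight $s+t$. For Proposition \ref{prop: product CMPL}, the tuples $\mathfrak c_i,\mathfrak c_i',\mathfrak c_i''$ are built from the components of $\mathfrak a,\mathfrak b$ by interleaving them and merging entries via \eqref{eq:product depth1}. They can therefore be chosen of weight $w(\mathfrak a)+w(\mathfrak b)$, and I would note this explicitly, since the statement as recorded only tracks depth.

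\emph{Higher depth.} Let $\fs=(s_1,\dots,s_r)$ with $r\ge 2$ have weight $w$. Write $S_d(\fs)=S_d(s_1)S_{<d}(\fs_-)$. Both $(s_1)$ and $\fs_-$ have weight $<w$, so by induction
\[
S_d(s_1)=\sum_j b_j\Si_d(\fu_j), \qquad S_{<d}(\fs_-)=\sum_k c_k\Si_{<d}(\mathfrak v_k),
\]
where the second identity comes from summing the depth-wise statement over $d'<d$. Multiplying out and applying Proposition \ref{prop: product CMPL}(3) expresses each product $\Si_d(\fu_j)\Si_{<d}(\mathfrak v_k)$ as an $\Fq$-combination of $\Si_d$ of tuples of weight $w$. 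This proves the statement for $\fs$.

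\emph{Depth one.} For $s\le q$, Lemma \ref{lemma:identities S Si} gives $S_d(s)=\Si_d(s)$ directly. For $s>q$, apply Chen's formula with the pair $(s-1,1)$:
\[
S_d(s)=S_d(s-1)S_d(1)-\sum_i \Delta^i_{s-1,1}\, S_d(s-i)\,S_{<d}(i),
\]
with $\Delta^i_{s-1,1}\in\Fq$ and $0<i<s$. Every factor on the right has weight $<s$, so each is covered by the induction hypothesis; we use the summed version for $S_{<d}(i)$. Proposition \ref{prop: product CMPL}(1) handles $S_d(s-1)S_d(1)$ and Proposition \ref{prop: product CMPL}(3) handles $S_d(s-i)S_{<d}(i)$, both yielding $\Fq$-combinations of $\Si_d$ of weight-$s$ tuples. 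The key point is that the depth-two terms never have to be treated as depth-two objects of weight $s$: they are split immediately into products of strictly smaller weight, so no circularity arises.

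The main obstacle I expect is bookkeeping rather than anything conceptual. First, the weight-homogeneity of Proposition \ref{prop: product CMPL} must be verified, which is clear from the recursive stuffle-type construction. Second, the $S_{<d}$ versions must hold uniformly in $d$, including small $d$ where the sums are empty. If the recursion for Proposition \ref{prop: product CMPL} is not explicitly weight-graded, I would reprove it by induction on $\depth(\mathfrak a)+\depth(\mathfrak b)$, using \eqref{eq:product depth1} at each step.
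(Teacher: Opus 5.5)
Your proposal is correct and follows essentially the same route as the paper. Both argue by induction on the weight: for depth $\geq 2$ you factor $S_d(\fs)=S_d(s_1)S_{<d}(\fs_-)$ and apply Proposition~\ref{prop: product CMPL}, and for depth one you use Chen's formula with the pair $(w-1,1)$. The differences are only cosmetic. You split the depth-two Chen terms inline as $S_d(s-i,i)=S_d(s-i)S_{<d}(i)$, whereas the paper invokes its depth-$\geq 2$ case at weight $w$, which amounts to the same thing. You also shortcut $s\le q$ via Lemma~\ref{lemma:identities S Si}, and you state explicitly the weight-homogeneity of Proposition~\ref{prop: product CMPL}, which the paper uses tacitly.
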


\begin{proof}
The proof is by induction on $w$. If $w=0$, then $\fs=\emptyset$ and we are done. If $w=1$, then it follows from Lemma \ref{lemma:identities S Si}. We assume $w \geq 2$ and suppose that for all tuples $\fs'$ of weight $w(\fs')=w'<w$, there exist $a_i' \in \mathbb{F}_q$ and tuples $\fs_i'$ of weight $w'$ such that for all $d \in \mathbb N$,
\[
S_d(\fs')=\sum _i a_i' \Si_d(\fs_i').
\]

We claim that for all tuples $\fs$ of weight $w$, there exist $a_i \in \mathbb{F}_q$ and tuples $\fs_i$ of weight $w$ such that for all $d \in \mathbb N$,
\[
S_d(\fs)=\sum _i a_i \Si_d(\fs_i).
\]
In fact, we first consider the case where $\depth(\fs) \geq 2$. We write $\fs=(s_1,\fs_-)$. By the induction hypothesis, there exist $b_i,c_j \in \mathbb{F}_q$ and tuples $\ft_i, \mathfrak{u}_j$ of weight $s_1$ and $w(\fs_-)$ such that for all $d \in \mathbb N$,
\begin{align*}
S_d(s_1) =\sum _i b_i \Si_d(\ft_i), \quad \text{ and } \quad
S_d(\fs_-) =\sum _j c_j \Si_d(\mathfrak{u}_j).
\end{align*}
Thus
\[
S_{<d}(\fs_-) =\sum _j c_j \Si_{<d}(\mathfrak{u}_j).
\]
It follows that
\begin{align*}
S_d(\fs)=S_d(s_1) S_{<d}(\fs_-)=\sum _i b_i \Si_d(\ft_i) \sum _j c_j \Si_{<d}(\mathfrak{u}_j).
\end{align*}
So  we are done by Proposition \ref{prop: product CMPL}.

To conclude, we have to prove the claim for $\fs=(w)$. By Eqs. \eqref{eq: sum MZV} and \eqref{eq:Delta Chen},
\begin{equation*}
S_d(w)= S_d(w-1) S_d(1)  - \sum \limits_j \Delta^i_{w-1,1} S_d(w-j, j)
\end{equation*}
with $\Delta^i_{w-1,1} \in \Fq$. By Lemma \ref{lemma:identities S Si}, we know that $S_d(1)=\Si_d(1)$. By induction hypothesis and the previous discussion, there exist $b_i' ,c_k' \in \mathbb{F}_q$ and tuples $\ft_i', \mathfrak{u}_k'$ of weight~$w-1$ and~$w$ such that for all $d \in \mathbb N$,
\begin{align*}
S_d(w-1) &=\sum _i b_i' \Si_d(\ft_i'), \\
\sum \limits_j \Delta^i_{w-1,1} S_d(w-j, j) &=\sum _k c'_k \Si_d(\mathfrak{u}_k).
\end{align*}
It follows that
\begin{align*}
S_d(w) = \sum _i b_i' \Si_d(\ft_i') \Si_d(1) - \sum _k c'_k\Si_d(\mathfrak{u}_k).
\end{align*}
By Proposition \ref{prop: product CMPL} again, we are done.
\end{proof}

\subsubsection{}
Next, we prove:
\begin{proposition} \label{prop: CMPL}
Let $\fs$ be a tuple of weight $w$. Then there exist $a_i \in \mathbb{F}_q$ and tuples~$\fs_i$ of weight $w$ such that for all $d \in \mathbb N$,
\[
\Si_d(\fs)=\sum _i a_i S_d(\fs_i).
\]
\end{proposition}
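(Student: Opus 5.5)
Mirroring Proposition~\ref{prop: MZV}, we argue by induction on the weight $w$, with the roles of $S_d$ and $\Si_d$ exchanged. The cases $w=0$ and $w=1$ follow from Lemma~\ref{lemma:identities S Si}. The inductive hypothesis is that for every tuple $\fs'$ of weight $w'<w$ there exist $a'_i\in\Fq$ and tuples $\fs'_i$ of weight $w'$ with $\Si_d(\fs')=\sum_i a'_i S_d(\fs'_i)$ for all $d$. Summing over $d'<d$ then gives the same identity with $\Si_{<d}$ and $S_{<d}$.

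\textbf{Depth at least $2$.} Write $\fs=(s_1,\fs_-)$, so that $\Si_d(\fs)=\Si_d(s_1)\Si_{<d}(\fs_-)$. Since $w(\fs_-)<w$, the factor $\Si_{<d}(\fs_-)$ is an $\Fq$-combination of $S_{<d}(\fu_j)$ with $w(\fu_j)=w(\fs_-)$. For the first factor we have $s_1<w$, so $\Si_d(s_1)$ is an $\Fq$-combination of $S_d(\ft_i)$ with $w(\ft_i)=s_1$. Each product $S_d(\ft_i)S_{<d}(\fu_j)$ is an $\Fq$-combination of $S_d(\cdot)$ by Proposition~\ref{prop: product MZV}(3). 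We must check that Proposition~\ref{prop: product MZV} preserves weight, i.e.\ that all resulting tuples have weight $w(\ft_i)+w(\fu_j)$. This holds because Chen's formula \eqref{eq: sum MZV} is homogeneous in weight, and the products in Proposition~\ref{prop: product MZV} are built recursively from it.

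\textbf{Depth $1$.} For $\fs=(w)$ with $w\ge 2$, equation \eqref{eq:product depth1} gives $\Si_d(w)=\Si_d(w-1)\Si_d(1)$. By the induction hypothesis $\Si_d(w-1)=\sum_i b_i S_d(\ft_i)$ with $w(\ft_i)=w-1$, and $\Si_d(1)=S_d(1)$. Hence $\Si_d(w)=\sum_i b_i S_d(\ft_i)S_d(1)$. Proposition~\ref{prop: product MZV}(1) turns each product into an $\Fq$-combination of $S_d(\fu)$ with $w(\fu)=w$, again by weight homogeneity of Chen's formula. This case is in fact simpler than in Proposition~\ref{prop: MZV}, because the depth one product for $\Si_d$ has no correction terms.

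\textbf{Main obstacle.} The only delicate point is bookkeeping: confirming that the products in Proposition~\ref{prop: product MZV} have coefficients in $\Fq$ (not in $K$) and are weight-homogeneous. Both follow from the explicit form of $\Delta^i_{s,t}\in\Fq$ in \eqref{eq:Delta Chen} and from the recursive construction of the product via $S_d(\fs)=S_d(s_1)S_{<d}(\fs_-)$. If necessary, this can be recorded as a short lemma strengthening Proposition~\ref{prop: product MZV} by adding $w(\fu_i)=w(\fs)+w(\ft)$, proved by induction on total depth.
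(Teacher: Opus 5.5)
Your proposal is correct and follows the paper's proof essentially verbatim. It uses induction on weight, handles depth $\geq 2$ via $\Si_d(\fs)=\Si_d(s_1)\Si_{<d}(\fs_-)$ and Proposition~\ref{prop: product MZV}(3), and handles depth one via $\Si_d(w)=\Si_d(w-1)\Si_d(1)$ with $\Si_d(1)=S_d(1)$ and Proposition~\ref{prop: product MZV}(1); your explicit check that these products have $\Fq$-coefficients and are weight-homogeneous (from the form of Chen's formula) makes precise a point the paper uses only implicitly.
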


\begin{proof}
The proof is by induction on $w$. If $w=0$, then $\fs=\emptyset$ and we are done. If $w=1$, then it follows from Lemma \ref{lemma:identities S Si}. We assume $w \geq 2$ and suppose that for all tuples $\fs'$ of weight $w(\fs')=w'<w$, there exist $a_i' \in \mathbb{F}_q$ and tuples $\fs_i'$ of weight $w'$ such that for all $d \in \mathbb N$,
\[
\Si_d(\fs')=\sum _i a_i' S_d(\fs_i').
\]

We claim that for all tuples $\fs$ of weight $w$, there exist $a_i \in \mathbb{F}_q$ and tuples $\fs_i$ of weight $w$ such that for all $d \in \mathbb N$,
\[
\Si_d(\fs)=\sum _i a_i S_d(\fs_i).
\]
In fact, we first consider the case where $\depth(\fs) \geq 2$. We write $\fs=(s_1,\fs_-)$. By the induction hypothesis, there exist $b_i,c_j \in \mathbb{F}_q$ and tuples $\ft_i, \mathfrak{u}_j$ of weight $s_1$ and $w(\fs_-)$ such that for all $d \in \mathbb N$,
\begin{align*}
\Si_d(s_1) =\sum _i b_i S_d(\ft_i), \quad \text{ and } \quad
\Si_d(\fs_-) =\sum _j c_j S_d(\mathfrak{u}_j).
\end{align*}
Thus
\[
\Si_{<d}(\fs_-) =\sum _j c_j S_{<d}(\mathfrak{u}_j).
\]
It follows that
\begin{align*}
\Si_d(\fs)=\Si_d(s_1) \Si_{<d}(\fs_-)=\sum _i b_i S_d(\ft_i) \sum _j c_j S_{<d}(\mathfrak{u}_j).
\end{align*}
So we are done by Proposition \ref{prop: product MZV}.

To conclude, we have to prove the claim for $\fs=(w)$. By Eq.  \eqref{eq:product depth1},
\begin{equation*}
\Si_d(w)= \Si_d(w-1) \Si_d(1).
\end{equation*}
By Lemma \ref{lemma:identities S Si}, we know that $\Si_d(1)=S_d(1)$. By induction hypothesis, there exist $b_i' \in \mathbb{F}_q$ and tuples $\ft_i'$ of weight $w-1$ such that for all $d \in \mathbb N$,
\begin{align*}
\Si_d(w-1) &=\sum _i b_i' S_d(\ft_i').
\end{align*}
It follows that
\begin{align*}
\Si_d(w) = \sum _i b_i' S_d(\ft_i') S_d(1).
\end{align*}
By Proposition \ref{prop: product MZV} again, we are done.
\end{proof}

\subsubsection{}
Finally, Theorem \ref{thm: new connection} follows immediately from Propositions \ref{prop: MZV} and \ref{prop: CMPL}.


\section{Algorithm revisited and a weight-preserving operator} \label{sec: algorithm revisited}

In~\cite{ND21} the third author established an analogue of Brown's theorem in \cite{Bro12}. 
The key ingredient is an algorithm expressing any MZV as a $K$-linear combination of 
MZV's that are larger with respect to a suitable ordering. Notably, in the classical 
setting, Brown's theorem guarantees the existence of such an expression but does not 
provide a constructive algorithm (see \cite{Bro12,Del13}). We mention that a characteristic zero analogue of 
this algorithm was recently investigated in~\cite{HMSW25}. We refer the reader to \cite{Bro12,Del13,HMSW25} for a more comprehensive discussion.

In this section we revisit the algorithmic framework introduced in~\cite{ND21} 
and~\cite{IKLNDP24} (see also~\cite{CCM23}). As an application, we present an 
alternative proof of Theorem~\ref{thm: algebraic CMPL} which serves as an analogue 
of Brown's theorem for CMPL's. More importantly, this algorithm gives rise to a new 
weight-preserving operator $\Ind$ on $\Fq$-linear relations among CMPL's 
(see~\S\ref{sec: new operator}).

\subsection{Binary relations} \ppar \label{sec: binary relations}

We adopt the notation of binary relations from \cite{Tod18} (see also \cite{IKLNDP24, ND21}).
\begin{definition} \label{defn: binary relations}\ppar

\begin{enumerate}[\normalfont (1)]
\item {\it A binary relation} is an $\mathbb{F}_q[D_1]$-linear combination of the form
\[ \sum_{i}a_i \Si_d(\fs_i) + \sum_j b_j \Si_{d+1}(\ft_j)=0\quad \text{for  $d\in \mathbb{Z}$},\]
where $a_i, b_j \in \mathbb{F}_q[D_1]$ and $\fs_i$, $\ft_j$ are tuples of the same weight.
\item  A binary relation is said to be {\it a fixed relation} if $b_j =0$ for all $j$.
\end{enumerate}
\end{definition}

Let $\mathfrak{BR}_w$ be the set of all binary relations of weight $w$. Also, we use the notation $R(d)$ for the specialization at the finite level,
$$\sum a_i\Si_d(\fs_i) + \sum b_j \Si_{d+1}(\ft_j)=0$$
for a fixed $d\in \mathbb{Z}$.

The relation given in Eq. \eqref{eqn: fundamental relation Si}
\begin{align*}
  R_1: \quad \Si_d(q) + D_1\Si_{d+1}(1,q-1)=0 \quad\text{for all $d\in\mathbb{Z}$,}
\end{align*}
is an example of binary relations of weight $q$.

We also recall the operators $\mathcal{B}^*$ and $\mathcal{C}$ acting on binary relations (see \cite{ND21,Tod18}). 
Let $n>0$ be an integer and let $R \in \mathfrak{BR}_w$. We define $\mathcal{B}_n^*\colon \mathfrak{BR}_{w}\to \mathfrak{BR}_{w+n}$ given by
$$\mathcal{B}^*_n(R)(d) = \Si_d(n)\sum_{j<d}R(j).$$
This extends to
$$\mathcal{B}^*_{(n_1, \dots, n_\ell)} := \mathcal{B}^*_{n_1}\circ \dots \circ \mathcal{B}^*_{n_\ell},$$
for positive integers $n_i$.

For a tuple of positive integers $\mathfrak{m} = (m_1, \dots, m_\ell)$, we also define $\mathcal{C}_{\mathfrak{m}}\colon \mathfrak{BR}_{w}\to \mathfrak{BR}_{w+ w(\mathfrak{m})}$ given by
$$\mathcal{C}_{\mathfrak{m}}(R)(d) = R(d)\Si_{<d+1}(\mathfrak{m}).$$

\subsection{A modified key expression} \ppar \label{sec: key expression}

Let $\fs=(s_1,\dots,s_\ell)$ be a tuple of weight $w$. We suppose that $\fs$ does not belong to the set $\mathcal J_w$ defined as in Definition \ref{defn: basis MZV}. Then there exists
\begin{itemize}
\item either $1 \leq k \leq \ell$ such that $1 \leq s_1,\dots,s_{k-1} \leq q$ and $s_k>q$,

\item or $1 \leq s_1,\dots,s_{\ell-1} \leq q$ and $s_\ell=q$.
\end{itemize}

We express
\begin{equation} \label{eq: decomposition}
\fs=(\frak n,q+r,\frak m),
\end{equation}
where
\begin{itemize}
\item $\frak n=(s_1,\dots,s_{k-1})$ with $1 \leq s_1,\dots,s_{k-1} \leq q$,
\item either $(r,\frak m)$ is a tuple of positive integers or $(r,\frak m)=\emptyset$ (i.e., $r=0$ and $\frak m=\emptyset$).
\end{itemize}

When $\mathfrak{n} \ne \emptyset$, we set
$$\frak n^+:=(s_1,\dots,s_{k-1}+1).$$

\begin{definition} \label{def: types}
Let $\fs$ be a tuple as given above, that means $\fs=(s_1,\dots,s_\ell)$ be a tuple of weight $w$ which  does not belong to the set $\mathcal J_w$ defined as in Definition \ref{defn: basis MZV}.

We express $\fs=(\frak n,q+r,\frak m)$ as in Eq. \eqref{eq: decomposition} and define the \textsf{Type} of $\fs$ as follows:
\begin{align*}
 \textsf{(Type 0)}&& \fs&= (w), \\
 \textsf{(Type 1)}&&\fs&= (\mathfrak{n}, q), &&  \text{ with }\depth(\mathfrak{n})\ge1, \\
 \textsf{(Type 2)}&&\fs&= (\mathfrak{n}, q+r), &&   \text{ with } r\ge1, \ \depth(\mathfrak{n})\ge1,\\
 \textsf{(Type 3)}&&\fs&= (q+r, \mathfrak{m}), &&   \text{ with } r\ge1, \ \depth(\mathfrak{m})\ge1,\\
 \textsf{(Type 4)}&&\fs&= (\mathfrak{n}, q+r, \mathfrak{m}), &&  \text{ with } r\ge1, \ \depth(\mathfrak{n})\ge1, \ \depth(\mathfrak{m})\ge1.
\end{align*}
\end{definition}

We slightly modify the algorithm given as in \cite{ND21}. We distinguish three cases depending on the \textsf{Type} of $\fs$.

\subsubsection{Case 1: $\fs$ is of \textsf{Type 2} or \textsf{Type 4}. } \ppar

In this case, $\fn \ne \emptyset$ and $r \ge 1$, and $\fs=(s_1,\dots,s_\ell)=(s_1,\dots,s_{k-1},q+r,\frak m)$ with $r\geq 1$, $k\geq 2$ and possibly having $\frak m = \emptyset$. First, $\mathcal{C}_{(r,\frak m)}(R_1)$ yields
\begin{align*}
0 &= (\Si_d(q) + D_1\Si_{d+1}(1,q-1)) \Si_{<d+1}(r,\frak m) \\
&= \Si_d(q+r,\frak m)+\Si_d(q,r,\frak m)+D_1 \Si_{d+1}(1,(q-1)*(r,\frak m)).
\end{align*}
Here the operation $*$ is given as in Eq. \eqref{eq: star}.

Next, we apply $\mathcal{B}^*_{s_{k-1}}$ to get a fixed relation
\begin{align*}
0 = & \Si_d(s_{k-1})(\Si_{<d}(q+r,\frak m)+\Si_{<d}(q,r,\frak m)+D_1 \Si_{<d+1}(1,(q-1)*(r,\frak m))) \\
= &\Si_d(s_{k-1},q+r,\frak m)+\Si_d(s_{k-1},q,r,\frak m) \\
& +D_1 \Si_d(s_{k-1}+1,(q-1)*(r,\frak m)) +D_1 \Si_d(s_{k-1},1,(q-1)*(r,\frak m)).
\end{align*}

Finally, we apply $\mathcal{B}^*_{s_1}\circ \dots \circ \mathcal{B}^*_{s_{k-2}}$ to get
\begin{align*}
0 = & \Si_d(s_1,\dots,s_{k-1},q+r,\frak m)+\Si_d(s_1,\dots,s_{k-1},q,r,\frak m) \\
& +D_1 \Si_d(s_1,\dots,s_{k-1}+1,(q-1)*(r,\frak m)) +D_1 \Si_d(s_1,\dots,s_{k-1},1,(q-1)*(r,\frak m)).
\end{align*}

Thus taking the sum over all $d$ yields
\begin{align*}
0 = &\Li(\fs)+\Li(s_1,\dots,s_{k-1},q,r,\frak m) \\
& +D_1 \Li(s_1,\dots,s_{k-1}+1,(q-1)*(r,\frak m)) +D_1 \Li(s_1,\dots,s_{k-1},1,(q-1)*(r,\frak m)).
\end{align*}

\subsubsection{Case 2: $\fs$ is of \textsf{Type 0} or \textsf{Type 3}.} \ppar

In this case, $\fn = \emptyset$, and $\fs=(q+r,\frak m)$ with $r\geq 1$ and possibly having $\frak m = \emptyset$. We note that $\mathcal{C}_{(r,\frak m)}(R_1)$ yields
\begin{align*}
0 &= (\Si_d(q) + D_1\Si_{d+1}(1,q-1)) \Si_{<d+1}(r,\frak m)  \\
&= \Si_d(q+r,\frak m)+\Si_d(q,r,\frak m)+D_1 \Si_{d+1}(1,(q-1)*(r,\frak m)).
\end{align*}

Thus taking the sum over all $d$ yields
\begin{align*}
0 =\Li(\fs)+\Li(q,r,\frak m) +D_1 \Li(1,(q-1)*(r,\frak m)).
\end{align*}

\subsubsection{Case 3: $\fs$ is of \textsf{Type 1}.} \ppar

In this case, $\fn \ne \emptyset$, $r = 0$ and $\fm=\emptyset$, and $\fs=(s_1,\dots,s_{k-1},q)$ with $k\geq 2$.

First, we directly apply $\mathcal{B}^*_{s_{k-1}}$ to the relation $R_1$ and get a fixed relation
\begin{align*}
0 &= \Si_d(s_{k-1})(\Si_{<d}(q)+D_1 \Si_{<d+1}(1,q-1)) \\
&= \Si_d(s_{k-1},q) +D_1 \Si_d(s_{k-1}+1,q-1) +D_1 \Si_d(s_{k-1},1,q-1).
\end{align*}

Next, we apply $\mathcal{B}^*_{s_1}\circ \dots \circ \mathcal{B}^*_{s_{k-2}}$ to get
\begin{align*}
0 = & \Si_d(s_1,\dots,s_{k-1},q) \\
& +D_1 \Si_d(s_1,\dots,s_{k-1}+1,q-1) +D_1 \Si_d(s_1,\dots,s_{k-1},1,q-1).
\end{align*}

Thus taking the sum over all $d$ yields
\begin{align*}
0 = & \Li(\fs) \\
&  +D_1 \Li(s_1,\dots,s_{k-1}+1,q-1) +D_1 \Li(s_1,\dots,s_{k-1},1,q-1).
\end{align*}

The following proposition summarizes the previous discussion.
\begin{proposition} \label{prop: key expression}
Let $\fs=(s_1,\dots,s_\ell)$ be a tuple of weight $w$ which does not belong to $\mathcal J_w$  as in Definition \ref{defn: basis MZV} and is given by
\begin{equation*}
\fs=(\frak n,q+r,\frak m),
\end{equation*}
where
\begin{itemize}
\item $\frak n=(s_1,\dots,s_{k-1})$ with $1 \leq s_1,\dots,s_{k-1} \leq q$,
\item either $(r,\frak m)$ is a tuple of positive integers or $(r,\frak m)=\emptyset$ (i.e., $r=0$ and $\frak m=\emptyset$).
\end{itemize}
We recall
$$\frak n^+=(s_1,\dots,s_{k-1}+1).$$
Then
\begin{align}
\begin{split}
  \Li(\fs) =& -\mathbf{1}_{r\ne0}\cdot \Li(\fn, q, r, \fm) \\
  & - D_1 \mathbf{1}_{\fn \ne \emptyset}\cdot \Li(\fn^{+}, (q-1)*(r, \fm)) - D_1 \Li(\fn, 1, (q-1)*(r,\fm)).\end{split} \label{eq: revisted expression}
\end{align}
\end{proposition}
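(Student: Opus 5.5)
The plan is to treat Proposition \ref{prop: key expression} as a uniform repackaging of the three case computations just carried out, so the proof amounts to verifying that each case specializes formula \eqref{eq: revisted expression} correctly. I would split according to the \textsf{Type} of $\fs$ from Definition \ref{def: types}. \textsf{Types} 2 and 4 (Case 1) have $\fn\neq\emptyset$ and $r\ge1$. \textsf{Types} 0 and 3 (Case 2) have $\fn=\emptyset$ and $r\ge1$. \textsf{Type} 1 (Case 3) has $\fn\ne\emptyset$, $r=0$ and $\fm=\emptyset$. These exhaust the tuples not in $\mathcal J_w$. The case $\fn=\emptyset$, $r=0$ is impossible: it would give $\fs=(q)$, and $(q)$ has $q$ as its last entry, so it is not in $\mathcal J_w$, yet it is also not of any listed \textsf{Type}. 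I would note that $(q)$ with the convention $\fn=\emptyset$ formally fits \textsf{Type 0} with $w=q$, and the relation $R_1$ itself gives $\Li(q)=-D_1\Li(1,q-1)$. This agrees with the formula when $\mathbf 1_{r\ne0}=0$, $\mathbf 1_{\fn\ne\emptyset}=0$, and $(q-1)*\emptyset$ is read as $(q-1)$. So the formula covers it as well.

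The key steps, in order, are as follows. For Case 1, I would start from $\mathcal C_{(r,\fm)}(R_1)$. I would expand $\Si_d(q)\Si_{<d+1}(r,\fm)=\Si_d(q)(\Si_d(r,\fm)+\Si_{<d}(r,\fm))$ using \eqref{eq:product depth1} and the identity $\Si_d(\fs)=\Si_d(s_1)\Si_{<d}(\fs_-)$, giving $\Si_d(q+r,\fm)+\Si_d(q,r,\fm)$. I would rewrite $\Si_{d+1}(1,q-1)\Si_{<d+1}(r,\fm)=\Si_{d+1}(1)\Si_{<d+1}((q-1)*(r,\fm))$ using the definition \eqref{eq: star} of $*$ via Proposition \ref{prop: product CMPL}(2). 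Next I apply $\mathcal B^*_{s_{k-1}}$. Summing $R(j)$ over $j<d$ turns $\Si_{j}$ terms into $\Si_{<d}$ and $\Si_{j+1}$ terms into $\Si_{<d+1}=\Si_d+\Si_{<d}$. Multiplying the latter by $\Si_d(s_{k-1})$ and using \eqref{eq:product depth1} produces the two $D_1$-terms with $\fn^+$ and $(\fn,1,\dots)$. Then I apply $\mathcal B^*_{s_1}\circ\dots\circ\mathcal B^*_{s_{k-2}}$, which simply prepends entries because the relation is now fixed. Finally I sum over $d\ge0$, which is legitimate by convergence in $K_\infty$ and because $\Si_d=0$ for $d<0$. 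Cases 2 and 3 are the same computation, with the $\mathcal B^*$ step omitted or the $\mathcal C$ step omitted, respectively.

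The remaining step is matching. In each case I solve the summed relation for $\Li(\fs)$ and check that the indicator functions $\mathbf 1_{r\ne0}$ and $\mathbf 1_{\fn\neq\emptyset}$ switch off exactly the terms absent in that case. I must also interpret $(q-1)*(r,\fm)$ as $(q-1)$ when $(r,\fm)=\emptyset$, which is consistent with $\Si_{<d}(\emptyset)=1$. The only delicate point, which I expect to be the main obstacle, is the bookkeeping of the shift $d\mapsto d+1$ inside $\mathcal B^*$. Specifically, one must verify that $\sum_{j<d}\Si_{j+1}(\cdot)=\Si_{<d+1}(\cdot)$ and that the extra $\Si_d$-part combines with $\Si_d(s_{k-1})$ to give exactly $\Si_d(s_{k-1}+1,\dots)$. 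Everything else is linear, with coefficients in $\Fq[D_1]$.
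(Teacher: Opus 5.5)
Your proposal is correct and follows the paper's argument: the proposition is exactly the summary of the three case computations. These are $\mathcal C_{(r,\fm)}(R_1)$ followed by $\mathcal B^*_{s_{k-1}}$ and then $\mathcal B^*_{s_1}\circ\dots\circ\mathcal B^*_{s_{k-2}}$ for \textsf{Types} 2 and 4, only the $\mathcal C$ step for \textsf{Types} 0 and 3, and only the $\mathcal B^*$ steps for \textsf{Type} 1, each followed by summing over $d$. One small correction: $\fs=(q)$ is not an impossible case, since it is \textsf{Type 0} with $w=q$ and $r=0$, which the paper's Case 2 (written for $r\ge1$) silently skips, and your direct treatment of it via $R_1$ with the convention $(q-1)*\emptyset=(q-1)$ is the right way to cover it.
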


We emphasize an important difference with the algorithm presented in \cite{ND21}. The attentive reader may notice that we do not use the operator $\mathcal{BC}$. Further, in the key expression for $\Li(\fs)$ we have only two levels:
\begin{itemize}
\item the $\Fq$-level tuples: $\mathbf{1}_{r\ne0}\cdot (\fn, q, r, \fm)$
\item the $D_1$-level tuples: $\mathbf{1}_{\fn \ne \emptyset}\cdot (\fn^{+}, (q-1)*(r, \fm))$ and $(\fn, 1, (q-1)*(r,\fm))$.
\end{itemize}

\subsection{The $(\depth,\lex)$-order and a modified algorithm } \ppar

We introduce a new order which is suitable for the previous algorithm.
\begin{definition} \label{defn: order}
Let $\fs$ and $\ft$ be two tuples of the same weight. 
\begin{enumerate}[\normalfont (1)]
\item We define a new order $<$ called {\it the $(\depth,\lex)$-order} on tuples as follows: We say that $\fs < \ft$ in the $(\depth,\lex)$-order if
\begin{itemize}
\item either $\depth(\ft) > \depth(\fs)$,
\item or $\depth(\ft)= \depth(\fs)$ and $\ft \succ \fs$ in the lexicographical order.
\end{itemize}

\item We say that $\fs \leq \ft$ if either $\fs=\ft$ or $\fs<\ft$.

\item Finally, we say that $\ft > \fs$ (resp. $\ft \geq \fs$) if $\fs < \ft$ (resp. $\fs \leq \ft$).
\end{enumerate}
\end{definition}

We obtain immediately the following lemma which implies that the algorithm must terminate after a finite number of steps.
\begin{lemma} \label{lemma: order}
We keep the notation as in Proposition \ref{prop: key expression} and consider Eq. \eqref{eq: revisted expression}. Recall that $\fs=(\fn,q+r,\fm)$. Let $\ft$ be a tuple among the terms which occur in the expansion of Eq. \eqref{eq: revisted expression}:
\begin{itemize}
\item $\mathbf{1}_{r\ne0}\cdot (\fn,q,r, \fm)$,
\item $\mathbf{1}_{\fn \ne \emptyset}\cdot (\fn^{+},(q-1)*(r, \fm))$,
\item $(\fn,1,(q-1)*(r, \fm))$.
\end{itemize}
Then we have $\ft>\fs$.
\end{lemma}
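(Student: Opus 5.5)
The plan is a direct case check on the three families of tuples in Eq.~\eqref{eq: revisted expression}. First observe that all of them have the same weight $w$ as $\fs$, so the $(\depth,\lex)$-order applies. Then compare depths first, and use the lexicographic order only when the depths coincide. Put $\ell=\depth(\fs)=\depth(\fn)+1+\depth(\fm)$.

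The one ingredient I need is an explicit description of the product $*$ from Eq.~\eqref{eq: star} when one factor has depth one. By Eq.~\eqref{eq:product depth1}, $\Si_d(a)\Si_d(b)=\Si_d(a+b)$, with no correction terms. Hence $\Si_{<d}(\fa)\Si_{<d}(\fb)$ expands as the quasi-shuffle (stuffle) of $\fa$ and $\fb$, in which merged entries are simply added. Decompose $\Si_{<d}$ into sums of $\Si_i$ and expand inductively on depth, using $\Si_d(\fs)=\Si_d(s_1)\Si_{<d}(\fs_-)$.

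Consequently $(q-1)*(r,\fm)$ is an $\Fq$-combination of two kinds of tuples $\fu$. The first kind comes from inserting $q-1$ as a new entry, giving $\depth(\fu)=\depth(\fm)+2$. The second kind comes from adding $q-1$ to one existing entry of $(r,\fm)$, giving $\depth(\fu)=\depth(\fm)+1$. In the second kind the first entry of $\fu$ is $q-1+r$ if the merge happens at the first position, and $r$ otherwise. If $(r,\fm)=\emptyset$, then simply $\fu=(q-1)$. In every case $\depth(\fu)\ge \depth(\fm)+1$.

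Now treat the three families in turn.
\begin{enumerate}[\normalfont (1)]
\item For $(\fn,q,r,\fm)$ with $r\ne0$, the depth is $\ell+1>\ell$.
\item For $(\fn,1,\fu)$, the depth is $\depth(\fn)+1+\depth(\fu)\ge \ell+1$; this includes the case $(r,\fm)=\emptyset$.
\item For $(\fn^+,\fu)$ with $\fn\neq\emptyset$, the depth is $\depth(\fn)+\depth(\fu)\ge\ell$.
\end{enumerate}
In family (3) the inequality is strict when $\fu$ is of the first kind, so only the equal-depth case remains there. In that case the two tuples agree in the first $k-2$ entries. At position $k-1$ the new tuple has $s_{k-1}+1>s_{k-1}$, so $(\fn^+,\fu)\succ\fs$ lexicographically, whatever $\fu$ is. Hence $(\fn^+,\fu)>\fs$.

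The only step needing real care is the stuffle description of $*$, in particular the lower bound $\depth(\fu)\ge\depth(\fm)+1$. I would establish it by a short induction on $\depth(\fa)+\depth(\fb)$ from Eq.~\eqref{eq:product depth1}. Everything else is immediate bookkeeping. Terms whose coefficients cancel to zero can only help the argument.
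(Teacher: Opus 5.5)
Your proposal is correct and is essentially the paper's argument. The paper's proof also rests on the observation that every tuple occurring in $(q-1)*\fa$ has depth $\depth(\fa)$ or $\depth(\fa)+1$, and then calls the depth comparisons (together with the lexicographic comparison at position $k-1$ in the equal-depth case $(\fn^+,\fu)$) immediate. You simply make explicit the stuffle description of $*$ coming from $\Si_d(a)\Si_d(b)=\Si_d(a+b)$ and that lexicographic step, which the paper leaves to the reader.
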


\begin{proof}
We recall that for any nonempty tuple $\mathfrak a$, $(q-1)*\mathfrak a$ is written as in Eq. \eqref{eq: star}:
\begin{equation*} 
(q-1)*\mathfrak a=\sum_i f_i \mathfrak a_i.
\end{equation*} 
Here $f_i \in \Fq$ and tuples $\mathfrak a_i$ are given by Proposition \ref{prop: product CMPL}. Thus for all $i$, either $\depth{\mathfrak a_i}=\depth(\mathfrak a)$ or $\depth{\mathfrak a_i}=\depth(\mathfrak a)+1$. The lemma follows immediately.
\end{proof}

\begin{proof}[Proof of Theorem \ref{thm: algebraic CMPL} using Proposition \ref{prop: key expression}] \ppar

We now use Proposition \ref{prop: key expression} and present an alternative proof of Theorem \ref{thm: algebraic CMPL}. The proof is by descending induction on $\fs$ of weight $w$ with respect to the $(\depth,\lex)$-order $<$ on tuples.

First, we consider $\fs=(\{1\}^{w})$ which is the largest tuple of weight $w$ with respect to the order $<$, then it belongs to $\mathcal J_w$ and we are done.

Next, we fix a tuple $\fs$ of weight $w$. We assume that for any tuple $\ft$ of weight $w$ such that $\ft>\fs$, we can express $\Li(\ft)$ as an $\Fq[D_1]$-linear combination of CMPL's $\Li(\ft')$ with $\ft' \in \mathcal J_w$. In fact, if $\fs$ belongs to $\mathcal J_w$, then we are done. Otherwise, by Proposition \ref{prop: key expression}, we express $\Li(\fs)$ as an $\Fq[D_1]$-linear combination of CMPL's $\Li(\ft)$ with $\ft > \fs$. Then we are done by induction hypothesis.
\end{proof}

We end this section by another consequence of the previous modified key expression which plays the central role in the sequel.

\begin{proposition} \label{prop: key expression 2}
Let $\fs=(s_1,\dots,s_\ell)$ be a tuple of weight $w$ which does not belong to $\mathcal J_w$  as in Definition \ref{defn: basis MZV} such that
\begin{equation*}
\fs=(\frak n,q+r,\frak m),
\end{equation*}
where
\begin{itemize}
\item $\frak n=(s_1,\dots,s_{k-1})$ with $1 \leq s_1,\dots,s_{k-1} \leq q$,
\item either $(r,\frak m)$ is a tuple of positive integers or $(r,\frak m)=\emptyset$ (i.e., $r=0$ and $\frak m=\emptyset$).
\end{itemize}
When $\mathfrak{n} \ne \emptyset$, we recall
$$\frak n^+=(s_1,\dots,s_{k-1}+1).$$

More generally, for any tuple $\ft$ of weight $w$ not in $\mathcal J_w$, we denote by $\fn_{\ft}, r_{\ft},$ and~$ \fm_{\ft}$ the data associated with $\ft$, so that $\ft=(\frak n_{\ft},q+r_{\ft},\frak m_{\ft})$.

Then there exist constants $b_{\fu}, a_{\ft} \in \Fq$ such that
\begin{align}
\begin{split}
  \Li(\fs)=& \sum_{\fu>\fs, \, \fu \in \mathcal J_w} b_{\fu} \Li(\fu) \\
  &- D_1 \mathbf{1}_{\fn \ne \emptyset}\cdot \Li(\fn^{+}, (q-1)*(r, \fm)) - D_1 \Li(\fn, 1, (q-1)*(r,\fm)) \\
  &+ \sum_{\ft>\fs, \, \ft \notin \mathcal J_w} D_1 a_{\ft} \left (\mathbf{1}_{\fn_{\ft} \ne \emptyset}\cdot \Li(\fn_{\ft}^{+}, (q-1)*(r_{\ft}, \fm_{\ft})) + \Li(\fn_{\ft}, 1, (q-1)*(r_{\ft},\fm_{\ft})) \right).\end{split} \label{eq: D_1 expression}
\end{align}
\end{proposition}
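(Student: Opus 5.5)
We argue by descending induction on $\fs$ with respect to the $(\depth,\lex)$-order, exactly as in the proof of Theorem~\ref{thm: algebraic CMPL} above, but now keeping separate track of the $\Fq$-level and the $D_1$-level contributions. The key observation is that Proposition~\ref{prop: key expression} only ever produces $\Fq$-coefficients at the $\Fq$-level. The single $\Fq$-level term is $-\mathbf{1}_{r\ne0}\Li(\fn,q,r,\fm)$. The $D_1$-level terms are exactly the two terms displayed in the second line of \eqref{eq: D_1 expression}, which is the specific form we want to preserve. The $D_1$-level terms are deliberately \emph{not} expanded further. The claim to prove is therefore the following statement $(\ast)$ for every tuple $\fs$ of weight $w$ with $\fs\notin\mathcal J_w$: $\Li(\fs)$ equals an $\Fq$-combination of $\Li(\fu)$ with $\fu>\fs$ in $\mathcal J_w$, plus its own two $D_1$-terms, plus $\Fq$-multiples of the $D_1$-terms attached to tuples $\ft>\fs$ with $\ft\notin\mathcal J_w$.

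For the induction, take $\fs\notin\mathcal J_w$ and assume $(\ast)$ for all $\ft>\fs$ with $\ft\notin\mathcal J_w$. Apply Proposition~\ref{prop: key expression} to $\Li(\fs)$.

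If $r=0$, the $\Fq$-level term vanishes. The identity is then exactly \eqref{eq: D_1 expression}, with all $b_{\fu}=0$ and all $a_{\ft}=0$.

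If $r\ne0$, set $\fs'=(\fn,q,r,\fm)$. By Lemma~\ref{lemma: order} we have $\fs'>\fs$, since it has larger depth. There are two subcases.
\begin{enumerate}
\item If $\fs'\in\mathcal J_w$, it contributes $b_{\fs'}=-1$.
\item If $\fs'\notin\mathcal J_w$, we apply the induction hypothesis to $\fs'$. This writes $\Li(\fs')$ as an $\Fq$-combination of $\Li(\fu)$ with $\fu>\fs'>\fs$ in $\mathcal J_w$, plus the $D_1$-terms of $\fs'$ with coefficient $-1$, plus $\Fq$-multiples of $D_1$-terms of tuples $\ft>\fs'>\fs$ not in $\mathcal J_w$.
\end{enumerate}
Multiplying by $-1$ and substituting gives the required shape. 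The $D_1$-terms of $\fs'$ are absorbed with $a_{\fs'}=1$, which is legitimate because $\fs'>\fs$ and $\fs'\notin\mathcal J_w$. Transitivity of the order ensures that every index satisfies $>\fs$.

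The only subtle points are the following.
\begin{itemize}
\item Transitivity and strictness of the $(\depth,\lex)$-order. This is clear since the order is a total order: lexicographic order within each fixed depth, with larger depth counted as larger.
\item Well-foundedness of the induction. There are only finitely many tuples of weight $w$, and the largest tuple $(\{1\}^w)$ lies in $\mathcal J_w$, so the descending induction starts correctly.
\item Notational matching. The $D_1$-terms of each tuple $\ft$ must be defined from its own decomposition $(\fn_\ft,q+r_\ft,\fm_\ft)$, and $\fs'$ has its own such decomposition. The decomposition of $\fs'$ need not relate simply to that of $\fs$: its first entry $>q$ or its terminal $q$ may sit at the $q$ in position $k$ or later. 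This causes no difficulty, because the statement uses $\fn_\ft$, $r_\ft$ and $\fm_\ft$ intrinsically for each $\ft$.
\end{itemize}

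The main point to state carefully is that the coefficients are genuinely in $\Fq$. Only the recursion along the $\Fq$-level chain $\fs\to\fs'\to\cdots$ is unfolded, and each step contributes only a factor $-1$. No $D_1$-level term is ever re-expanded, so no powers of $D_1$ appear.
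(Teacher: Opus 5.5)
Your proof is correct and follows the paper's approach: the paper likewise reapplies Proposition~\ref{prop: key expression} to any $\Fq$-level tuple not in $\mathcal J_w$ until none remain, relies on Lemma~\ref{lemma: order} for termination, and never expands the $D_1$-level terms. Your descending induction on the $(\depth,\lex)$-order just makes that iteration explicit. It also correctly notes that each step produces a single $\Fq$-level tuple with coefficient $-1$, which gives $a_{\fs'}=1$.
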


\begin{proof}
We apply Proposition \ref{prop: key expression} for $\Li(\fs)$. By the previous discussion, there are two levels for the tuples appearing in the expression: the $\Fq$-level tuples and the $D_1$-level tuples. We want to eliminate the $\Fq$-level tuples by applying Proposition~\ref{prop: key expression} again. Suppose that  there exists an $\Fq$-level tuple $\Li(\ft)$ such that $\ft \notin \mathcal J_w$. Then we apply Proposition \ref{prop: key expression} for $\ft$, and so on. This will terminate after a finite number of steps by Lemma \ref{lemma: order}. At the end, we get
\begin{itemize}
\item either the $\Fq$-level tuples which give rise to the terms $\sum_{\fu>\fs, \, \fu \in \mathcal J_w} a_{\fu} \Li(\fu)$;

\item or the $D_1$-level tuples which give rise to the remaining terms.
\end{itemize}
Therefore, we are done.
\end{proof}

\subsection{An operator on $\Fq$-linear relations} \ppar \label{sec: new operator}

In this section we introduce an operator $\Ind$ on $\Fq$-linear relations among CMPL's of weight $w$. The key ingredient is the modified key expression as presented in Proposition~\ref{prop: key expression 2}. We note that this operator preserves the weight $w$ in contrast with the operators $\mathcal B^*$ and $\mathcal C$ on binary relations in \S \ref{sec: binary relations}.

\begin{definition}\ppar
\begin{enumerate}[\normalfont (1)]
\item We define an {\it $\Fq$-linear relation} to be a relation given by
\begin{equation} \label{eq: Fq relation}
\mathcal{R}: \quad \sum_i a_i \Li(\fs_i) =0,
\end{equation}
where $a_i \in \Fq$ and $\fs_i$ are tuples of weight $w$.
\item  We say that this $\Fq$-linear relation $\mathcal{R}$ is {\it nontrivial} if not all $a_i$ are zero.
\item  Let $\mathcal{R}$ be a nontrivial $\Fq$-linear relation. Then we define $s(\mathcal{R})$ to be the smallest tuple in the (depth-lex)-order among the tuples $\mathfrak{s}_i$’s with the condition $a_i \ne 0$ in Eq. \eqref{eq: Fq relation}.
    \end{enumerate}
\end{definition}

Let $\mathcal{R}$ be a nontrivial $\Fq$-linear relation as in Eq. \eqref{eq: Fq relation}. We suppose that $a_i \ne 0$ for all $i$ and put $\fs=s(\mathcal R)$ which is the smallest tuple among $\fs_i$. Finally we express $\mathcal R$ as follows:
$$\Li(\fs)+\sum_{\fs<\fs_i} a_i \Li(\fs_i) =0.$$

By Proposition \ref{prop: key expression 2}, there exist constants $b_{\fu}, a_{\ft} \in \Fq$ such that
\begin{align} \label{eq: induced relation}
\begin{split}
  0=& \sum_{\fu>\fs, \, \fu \in \mathcal J_w} b_{\fu} \Li(\fu) \\
  &+ D_1 \mathbf{1}_{\fn \ne \emptyset}\cdot \Li(\fn^{+}, (q-1)*(r, \fm)) + D_1 \Li(\fn, 1, (q-1)*(r,\fm)) \\
  &+ \sum_{\ft>\fs, \, \ft \notin \mathcal J_w} D_1 a_{\ft} \left (\mathbf{1}_{\fn_{\ft} \ne \emptyset}\cdot \Li(\fn_{\ft}^{+}, (q-1)*(r_{\ft}, \fm_{\ft})) + \Li(\fn_{\ft}, 1, (q-1)*(r_{\ft},\fm_{\ft})) \right).\end{split}
\end{align}

We claim that
\begin{equation} \label{eq: Fq terms}
\sum_{\fu>\fs, \, \fu \in \mathcal J_w} b_{\fu} \Li(\fu)=0.
\end{equation}
In fact, suppose that it is not the case. By Theorem \ref{thm: algebraic CMPL}, we can express the $D_1$-level terms as $\Fq[D_1]$-linear combinations of CMPL's $\Li(\ft)$ with $\ft \in \mathcal J_w$. It follows that there exist constants $c_{\ft} \in \Fq[D_1]$ such that
\begin{align*}
\begin{split}
  0= \sum_{\fu>\fs, \, \fu \in \mathcal J_w} b_{\fu} \Li(\fu) + \sum_{\ft>\fs, \, \ft \in \mathcal J_w} D_1 c_{\ft} \Li(\ft).\end{split}
\end{align*}
By Theorem \ref{thm: Zagier Hoffman} we know that $\Li(\ft)$ with $\ft \in \mathcal J_w$ are linearly independent over $K$. But we obtain a contradiction by looking at the coefficient of $\Li(\fu)$ with $b_\fu \ne 0$.

Combining Eqs. \eqref{eq: induced relation} and \eqref{eq: Fq terms} and dividing by $D_1$ yield the following $\Fq$-linear relation which we denote by $\Ind(\mathcal R)$:
\begin{align} \label{eq: induced relation 2}
\begin{split}
  0=& \mathbf{1}_{\fn \ne \emptyset}\cdot \Li(\fn^{+}, (q-1)*(r, \fm)) +  \Li(\fn, 1, (q-1)*(r,\fm)) \\
  &+ \sum_{\ft>\fs, \, \ft \notin \mathcal J_w} a_{\ft} \left (\mathbf{1}_{\fn_{\ft} \ne \emptyset}\cdot \Li(\fn_{\ft}^{+}, (q-1)*(r_{\ft}, \fm_{\ft})) + \Li(\fn_{\ft}, 1, (q-1)*(r_{\ft},\fm_{\ft})) \right).\end{split}
\end{align}

We end this section with some remarks. First, we warn the reader that this $\Fq$-linear relation $\Ind(\mathcal R)$ may be trivial. Next, suppose that $\Ind(\mathcal R)$ is nontrivial. We note that all tuples in this new $\Fq$-linear relation $\Ind(\mathcal R)$ are strictly greater than~$\fs$. It follows that
$$s(\Ind(\mathcal R))>\fs=s(\mathcal R).$$

Therefore, we obtain:
\begin{proposition} \label{prop: trivial induced relation}
We keep the above notation. Let $w \in \mathbb N$. We consider the set of nontrivial $\Fq$-linear relations among CMPL's of weight $w$. We assume this set is nonempty and denote by $\mathcal R$ the nontrivial $\Fq$-linear relation such that $s(\mathcal R)$ is the largest tuple among such $\mathbb{F}_q$-relations.

Then the induced $\Fq$-linear relation $\Ind(\mathcal R)$ given in~Eq.~\eqref{eq: induced relation 2}  is trivial.
\end{proposition}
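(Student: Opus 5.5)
The argument is by contradiction, and it rests on the remark just before the statement. Suppose $\Ind(\mathcal R)$ is nontrivial. Then it is itself a nontrivial $\Fq$-linear relation among CMPL's of weight $w$. I would first confirm that all tuples appearing in Eq.~\eqref{eq: induced relation 2} have weight $w$. The tuples $(\fn^{+},(q-1)*(r,\fm))$ and $(\fn,1,(q-1)*(r,\fm))$ have weight $w(\fn)+1+(q-1)+w(r,\fm)=w(\fs)$, because the $*$-product of Proposition~\ref{prop: product CMPL} preserves weight. The same computation applies to each $\ft$ appearing in the third line, since each such $\ft$ has weight $w$.

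Next I would use Lemma~\ref{lemma: order} to show that every tuple occurring with nonzero coefficient in $\Ind(\mathcal R)$ is strictly greater than $\fs=s(\mathcal R)$ in the $(\depth,\lex)$-order. For the two leading terms, the lemma applied to $\fs$ gives this directly. For the terms indexed by $\ft$, the lemma applied to $\ft$ shows that the resulting tuples are $>\ft$, and $\ft>\fs$. Transitivity of the $(\depth,\lex)$-order, which is clear since it is a lexicographic combination of two total orders, then gives the claim.

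A point to check is that cancellations after collecting like terms can only remove tuples, never create smaller ones. So if the collected relation is nontrivial, its smallest tuple still satisfies $s(\Ind(\mathcal R))>s(\mathcal R)$. This contradicts the maximality of $s(\mathcal R)$ among nontrivial relations.

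The only real obstacle is justifying that this maximum exists. I would note that there are finitely many tuples of weight $w$, namely compositions of $w$. Hence the set $\{s(\mathcal R)\}$ is a finite, nonempty set of tuples, which has a largest element under the total $(\depth,\lex)$-order. I would also confirm that the derivation of $\Ind(\mathcal R)$ given earlier, in particular the vanishing in Eq.~\eqref{eq: Fq terms} obtained from Theorem~\ref{thm: Zagier Hoffman}, applies to this chosen $\mathcal R$ after normalizing the coefficient of $\Li(\fs)$ to $1$. This normalization is possible because the coefficient is nonzero and $\Fq$ is a field.
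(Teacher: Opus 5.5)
Your proposal is correct and follows the paper's own argument. If $\Ind(\mathcal R)$ were nontrivial, then every tuple in Eq.~\eqref{eq: induced relation 2} is strictly greater than $\fs=s(\mathcal R)$, by Lemma~\ref{lemma: order} together with $\ft>\fs$, so $s(\Ind(\mathcal R))>s(\mathcal R)$, contradicting the maximality of $s(\mathcal R)$. Your extra checks only make explicit what the paper leaves implicit: preservation of weight, the fact that collecting like terms can only delete tuples, and the existence of the maximum because there are finitely many tuples of weight $w$.
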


\begin{proof}
Suppose that $\Ind(\mathcal R)$ is nontrivial. Then
$$s(\Ind(\mathcal R))>s(\mathcal R).$$
We obtain a contradiction by the choice of $\mathcal R$ and so the proof  is completed.
\end{proof}


\section{Linear relations of weights up to $2q+1$}  \label{sec: small weights}

In this section we keep the same notation as in the previous section, and prove the following theorem which will be used to prove Theorems \ref{thm: weight 2q+1} and \ref{thm: weight up to 2q}.
\begin{theorem} \label{thm: small weights}
Let $w \in \mathbb N$ such that $w \leq 2q+1$. Let $\mathcal R$ be a nontrivial $\Fq$-linear relation such that $\Ind(\mathcal R)$ is trivial. Then we have
$$s(\mathcal R)=(q+2,q-1).$$
\end{theorem}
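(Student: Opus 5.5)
The plan is to reduce the statement to a combinatorial comparison of leading tuples. Set $\fs=s(\mathcal R)$ and normalize $\mathcal R$ as $\Li(\fs)+\sum_{\fs_i>\fs}a_i\Li(\fs_i)=0$.

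\emph{Step 1: $\fs\notin\mathcal J_w$.} Suppose instead $\fs\in\mathcal J_w$. Apply Proposition~\ref{prop: key expression 2} (or directly the algorithm of Theorem~\ref{thm: algebraic CMPL}) to each $\Li(\fs_i)$ with $\fs_i>\fs$. By Lemma~\ref{lemma: order}, each such term becomes an $\Fq[D_1]$-combination of $\Li(\fu)$ with $\fu\in\mathcal J_w$ and $\fu>\fs$. The coefficient of $\Li(\fs)$ in the resulting expansion is then $1$, which contradicts Theorem~\ref{thm: Zagier Hoffman}. So $\fs\notin\mathcal J_w$, and we may write $\fs=(\fn,q+r,\fm)$ with a \textsf{Type} in the sense of Definition~\ref{def: types}.

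\emph{Step 2: reformulate triviality of $\Ind(\mathcal R)$.} Triviality means that, in \eqref{eq: induced relation 2}, the contribution of $\fs$,
\[
\mathbf{1}_{\fn\ne\emptyset}\,(\fn^+,(q-1)*(r,\fm))+(\fn,1,(q-1)*(r,\fm)),
\]
is cancelled formally by the contributions of tuples $\ft>\fs$ with $\ft\notin\mathcal J_w$. I would single out a distinguished tuple $\fs^\flat$ in the contribution of $\fs$ that occurs with nonzero coefficient. The natural candidate is the smallest tuple in the $\depth\lex$-order. It is either $(\fn^+,q-1+r,\fm)$ when $\fn\neq\emptyset$, or $(1,q-1+r,\fm)$ when $\fn=\emptyset$, coming from the ``merged'' term of $(q-1)*(r,\fm)$.

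I must compute its coefficient in $\Fq$ from Proposition~\ref{prop: product CMPL} (the merged term has coefficient $1$), and check it is not cancelled internally. Then I would determine which $\ft>\fs$, $\ft\notin\mathcal J_w$, can produce $\fs^\flat$ among their own $D_1$-level tuples. Since $w\le 2q+1$ and $\ft\notin\mathcal J_w$, the tuple $\ft$ has an entry $\ge q$, and the remaining weight is at most $q+1$. This makes the list of candidate $\ft$ short, and it can be read off by inverting the maps $\ft\mapsto(\fn_\ft^+,\dots)$ and $\ft\mapsto(\fn_\ft,1,\dots)$.

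\emph{Step 3: case analysis by \textsf{Type}.} I would go through \textsf{Types} 0 to 4 and show that in every case except $\fs=(q+2,q-1)$, no admissible $\ft>\fs$ produces $\fs^\flat$. If needed, I would replace $\fs^\flat$ by another extremal tuple, for instance the lex-smallest tuple of maximal depth $(\fn,1,r,q-1,\fm)$, when the first candidate is hit. It then survives in $\Ind(\mathcal R)$, contradicting triviality.

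For \textsf{Types} 1, 2 and 4, the entry $\fn^+$ or the inserted $1$ right after $\fn$ pins down $\fn_\ft=\fn$ and forces $\ft$ to agree with $\fs$ up to the big entry. That makes $\ft\le\fs$, which is impossible. For \textsf{Type} 0, $\fs=(w)$, the contribution is $(1,(q-1)*(w-q))$. This is a depth-two or depth-three family, and the only depth-one or depth-two $\ft$ with a large entry can be listed directly.

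\emph{Main obstacle.} The hardest step is \textsf{Type} 3, $\fs=(q+r,\fm)$, at weight exactly $2q+1$. There the products $(q-1)*(r,\fm)$ produce carries through the $\Delta$-type terms of depth $2$ once $q-1+r\ge q$. Tuples such as $\ft=(3,2q-2)$, $(q+1,1,q-1)$ and $(2,q,q-1)$ can then reproduce $\fs^\flat$ with matching coefficients. I would carry out this subcase by explicitly expanding $(q-1)*(r,\fm)$ for $r\in\{1,2\}$ and the few possible $\fm$ of weight $\le q$. I would then verify that exact cancellation is possible only for $r=2$, $\fm=(q-1)$, giving $\fs=(q+2,q-1)$. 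For all other $(r,\fm)$, I expect some extremal tuple to remain uncancelled.
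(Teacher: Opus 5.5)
Your Step 1 is correct and makes explicit something the paper uses tacitly. For $r\ge 2$, or when $(r,\fm)$ is a single entry (Types 0, 1, 2), your witness $(\fn^+,q+r-1,\fm)$, resp.\ $(1,q+r-1,\fm)$, is exactly the paper's (Types 1/2, Cases 3.1 and 4.1). There is, however, a genuine gap.

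\textbf{The gap: the cases $r=1$, $\fm\neq\emptyset$.} This is where most of the theorem's content lies. Here your witness has an entry exactly $q$, and it can also be produced at the $D_1$-level by an admissible $\ft>\fs$, $\ft\notin\mathcal J_w$. Two examples:
\begin{itemize}
\item For $\fs=(q+1,q-1)$ (weight $2q$), the tuple $(1,q,q-1)$ equals $(\fn_\ft^+,q-1)$ for $\ft=(1,q-1,q)$.
\item For $\fs=(1,q+1,q-1)$ (weight $2q+1$, \textsf{Type 4}), the tuple $(2,q,q-1)$ is produced by $\ft=(2,q-1,q)$.
\end{itemize}
In both cases the witness's coefficient in $\Ind(\mathcal R)$ involves $a_{\ft}$ and may vanish, so no contradiction follows. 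In particular, your claim that for \textsf{Type 4} the prefix $\fn^+$ pins down $\fn_\ft=\fn$ is false: $\fn_\ft^+$ can run past the entry $q$.

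Two further points on the witness choice:
\begin{itemize}
\item Your candidate is not actually the $\depthlex$-smallest tuple of the contribution. Merging $q-1$ into the last entry of $\fm$ is lex-smaller. That choice fails too: for $\fs=(q+1,2,q-2)$ with $q\ge 3$, the tuple $(1,1,2,2q-3)$ is also produced by $\ft=(1,1,1,2q-2)$.
\item Your fallback fails as well. For $\fs=(q+1,q-1)$, the lex-smallest maximal-depth tuple $(1,1,q-1,q-1)$ has coefficient $2$, which is zero in characteristic $2$. It is also produced by $\ft=(1,1,q-2,q)$ for $q\ge 3$.
\end{itemize}

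\textbf{The missing idea.} When $r=1$ and $\fm\ne(\{1\}^\ell)$, the paper merges $q-1$ into the \emph{first} entry $m_k>1$ of $\fm$. The witness is $(\{1\}^{k+1},m_k+q-1,m_{k+1},\dots)$, resp.\ $(\fn^+,\{1\}^{k},m_k+q-1,m_{k+1},\dots)$. Its unique entry $\ge q+1$ is preceded by a block of $1$'s, so the weight bound $w\le 2q+1$ forces any preimage $\ft$ to equal $\fs$. When $\fm=(\{1\}^\ell)$, it takes $(\{1\}^{\ell+1},q)$, resp.\ $(\fn^+,\{1\}^\ell,q)$. This tuple is $\le$ every $\ft>\fs$, so Lemma~\ref{lemma: special case} applies.

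\textbf{Your ``main obstacle'' is misdiagnosed.}
\begin{itemize}
\item The CMPL product has no $\Delta$-terms, since $\Si_d(a)\Si_d(b)=\Si_d(a+b)$.
\item Tuples such as $(3,2q-2)$ and $(2,q,q-1)$ occur only on the MZV side of the final relation, not among the CMPL tuples.
\item The exceptional case $(q+2,q-1)$ arises because its witness $(1,q+1,q-1)$ equals $(\fn_\ft^+,q-1)$ for $\ft=(1,q,q)$.
\item Checking ``the few possible $\fm$ of weight $\le q$'' is not a finite verification uniform in $q$. A structural argument like the paper's is needed.
\end{itemize}
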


As a direct consequence, we obtain:
\begin{corollary} \label{cor: small weights}
Let $w \in \mathbb N$ such that $w \leq 2q+1$. We consider the set of nontrivial $\Fq$-linear relations among CMPL's of weight $w$. We suppose that this set is non-empty and denote by $\mathcal R$ the nontrivial $\Fq$-linear relation such that $s(\mathcal R)$ is the largest tuple. Then we have
$$s(\mathcal R)=(q+2,q-1).$$
\end{corollary}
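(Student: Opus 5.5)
The corollary follows directly from Theorem~\ref{thm: small weights} combined with Proposition~\ref{prop: trivial induced relation}; the plan is simply to chain these two results.

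\emph{Step 1.} Fix $w \leq 2q+1$. Assume the set of nontrivial $\Fq$-linear relations among CMPL's of weight $w$ is nonempty.

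\emph{Step 2: choose an extremal relation.} Such relations involve only finitely many tuples of weight $w$, so there are only finitely many possible values of $s(\mathcal R)$. The $(\depth,\lex)$-order of Definition~\ref{defn: order} is a total order on tuples of fixed weight, so the largest value $s(\mathcal R)$ exists. Choose $\mathcal R$ attaining it.

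\emph{Step 3: apply Proposition~\ref{prop: trivial induced relation}.} For this extremal $\mathcal R$, the induced relation $\Ind(\mathcal R)$ of Eq.~\eqref{eq: induced relation 2} is trivial. The reason is that, were it nontrivial, it would satisfy $s(\Ind(\mathcal R))>s(\mathcal R)$, contradicting maximality.

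\emph{Step 4: apply Theorem~\ref{thm: small weights}.} Now $\mathcal R$ is a nontrivial $\Fq$-linear relation of weight $w\le 2q+1$ with $\Ind(\mathcal R)$ trivial. The theorem therefore gives $s(\mathcal R)=(q+2,q-1)$.

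There is no genuine obstacle here, since all the substance lies in Theorem~\ref{thm: small weights}. The one bookkeeping point is to note that the relation is normalized before $\Ind$ is applied. One divides by the nonzero leading coefficient, which is allowed because it lies in $\Fq^\times$, so that the coefficient of $\Li(s(\mathcal R))$ is $1$. This is the form in which Proposition~\ref{prop: key expression 2} was used to define $\Ind$. Normalizing does not change $s(\mathcal R)$ or whether the relation is nontrivial, so the maximal choice in Step 2 is unaffected.

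A side remark: $(q+2,q-1)$ has weight $2q+1$. So the corollary also shows that the set of nontrivial relations is empty for every $w\le 2q$, which is what will be used later to prove Theorem~\ref{thm: weight up to 2q}.
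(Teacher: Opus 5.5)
Your proposal is correct and follows the paper's proof exactly: Proposition~\ref{prop: trivial induced relation} gives that $\Ind(\mathcal R)$ is trivial for the extremal relation, and Theorem~\ref{thm: small weights} then forces $s(\mathcal R)=(q+2,q-1)$. Your extra bookkeeping is what the paper leaves implicit, namely that the maximum exists (there are finitely many tuples of weight $w$ and the $(\depth,\lex)$-order is total on them) and that the leading coefficient can be normalized to $1$.
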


\begin{proof}
By Proposition \ref{prop: trivial induced relation} we know that $\Ind(\mathcal R)$ is trivial. Hence the corollary follows immediately from Theorem \ref{thm: small weights}.
\end{proof}

The rest of this section is devoted to proving Theorem \ref{thm: small weights}.

\subsection{Proof of Theorem \ref{thm: small weights}: setup} \ppar

We write
\begin{equation*}
\mathcal R: \quad \sum_i a_i \Li(\fs_i) =0,
\end{equation*}
where $a_i \in \Fq$ and $\fs_i$ are tuples of weight $w \leq 2q+1$ and put
$$\fs=s(\mathcal R).$$
We can suppose that $\mathcal R$ is given by
\[ \Li(\fs)+\sum_{\fs<\fs_i} a_i \Li(\fs_i) =0.\]
As before, we express
\begin{equation*}
\fs=(\frak n,q+r,\frak m),
\end{equation*}
where
\begin{itemize}
\item $\frak n=(s_1,\dots,s_{k-1})$ with $1 \leq s_1,\dots,s_{k-1} \leq q$,
\item either $(r,\frak m)$ is a tuple or $(r,\frak m)=\emptyset$ (i.e.,  $r=0$ and $\frak m=\emptyset$).
\end{itemize}
Since $w(\fs) \leq 2q+1$, it follows that
\begin{equation} \label{eq:bound for weight}
w(\fn)+w(r,\fm) \leq q+1.
\end{equation}
Recall that
$$\frak n^+=(s_1,\dots,s_{k-1}+1).$$

By Proposition \ref{prop: key expression},
\begin{align}
\begin{split}
  \Li(\fs) =& -\mathbf{1}_{r\ne0}\cdot \Li(\fn, q, r, \fm) \\
  & - D_1 \mathbf{1}_{\fn \ne \emptyset}\cdot \Li(\fn^{+}, (q-1)*(r, \fm)) - D_1 \Li(\fn, 1, (q-1)*(r,\fm)).\end{split} \label{eq: revisted expression 1}
\end{align}

By assumption, the following induced $\Fq$-linear relation $\Ind(\mathcal R)$ given as in Eq. \eqref{eq: induced relation 2} is trivial:
\begin{align} \label{eq: trivial induced relation}
\begin{split}
  0=& \mathbf{1}_{\fn \ne \emptyset}\cdot \Li(\fn^{+}, (q-1)*(r, \fm)) +  \Li(\fn, 1, (q-1)*(r,\fm)) \\
  &+ \sum_{\ft>\fs, \, \ft \notin \mathcal J_w} a_{\ft} \left (\mathbf{1}_{\fn_{\ft} \ne \emptyset}\cdot \Li(\fn_{\ft}^{+}, (q-1)*(r_{\ft}, \fm_{\ft})) + \Li(\fn_{\ft}, 1, (q-1)*(r_{\ft},\fm_{\ft})) \right)\end{split},
\end{align}
where $a_{\ft} \in \Fq$.

\subsection{Proof of Theorem \ref{thm: small weights}: strategy of the proof} \ppar

The strategy of the proof is as follows: We recall that the \textsf{Type} of $\fs$ is defined as in Definition \ref{def: types}. Following the \textsf{Type} of $\fs$ we will choose a tuple~$\widetilde{\ft}$  which appears in Eq. \eqref{eq: revisted expression 1} among  the $D_1$-level tuples, i.e., among
$$\mathbf{1}_{\fn \ne \emptyset}\cdot (\fn^{+}, (q-1)*(r, \fm)) +  (\fn, 1, (q-1)*(r,\fm)).$$
Then we show that in the corresponding equation \eqref{eq: trivial induced relation}, the coefficient of $\widetilde{\ft}$ is nonzero. This contradicts the fact that Eq. \eqref{eq: trivial induced relation} is trivial.

Before stating the key propositions to cover the cases of each of \textsf{Types} of $s(\mathcal R)$, we give an auxiliary and useful lemma.

\begin{lemma} \label{lemma: special case}
Assume that $\widetilde{\mathfrak t}$ is a tuple that satisfies that $\fs < \ft$ implies $\widetilde{\ft} \le \ft$. Then $\widetilde{\ft}$ cannot appear in Eq. \eqref{eq: revisted expression 1} among  the $D_1$-level tuples.
\end{lemma}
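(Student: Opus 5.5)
The plan is to read the statement as referring to the \emph{other} $D_1$-level contributions in Eq.~\eqref{eq: trivial induced relation}, namely the terms indexed by $\ft>\fs$, $\ft\notin\mathcal J_w$. On this reading, the claim is that $\widetilde{\ft}$ never occurs among the $D_1$-level tuples produced by Eq.~\eqref{eq: revisted expression 1} applied to such a $\ft$. Consequently, the coefficient of $\Li(\widetilde{\ft})$ in Eq.~\eqref{eq: trivial induced relation} comes only from the leading block
\[
\mathbf{1}_{\fn \ne \emptyset}\cdot (\fn^{+}, (q-1)*(r, \fm)) + (\fn, 1, (q-1)*(r,\fm))
\]
attached to $\fs$ itself. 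This is exactly what the strategy needs: it reduces the nonvanishing of that coefficient to a computation involving $\fs$ alone.

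The argument has two steps. First, fix any $\ft>\fs$ with $\ft\notin\mathcal J_w$ that occurs with $a_{\ft}\neq 0$ in Eq.~\eqref{eq: trivial induced relation}, and write $\ft=(\fn_{\ft},q+r_{\ft},\fm_{\ft})$. Apply Lemma~\ref{lemma: order} to $\ft$ in place of $\fs$. Every tuple appearing in the expansions of $\mathbf{1}_{\fn_{\ft}\ne\emptyset}\cdot(\fn_{\ft}^+,(q-1)*(r_{\ft},\fm_{\ft}))$ and $(\fn_{\ft},1,(q-1)*(r_{\ft},\fm_{\ft}))$ is then strictly greater than $\ft$. This holds term by term after expanding $*$ via Proposition~\ref{prop: product CMPL}, since each $\mathfrak a_i$ has depth $\depth(\mathfrak a)$ or $\depth(\mathfrak a)+1$.

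Second, the hypothesis gives $\widetilde{\ft}\le\ft$, because $\ft>\fs$. So every $D_1$-level tuple $\ft'$ coming from $\ft$ satisfies $\ft'>\ft\ge\widetilde{\ft}$. The $(\depth,\lex)$-order is a total strict order on tuples of a fixed weight: depth is compared first, and the lexicographic order is total on tuples of equal depth. Hence $\ft'\neq\widetilde{\ft}$. This shows that $\widetilde{\ft}$ is absent from all these contributions, and the coefficient of $\Li(\widetilde{\ft})$ in Eq.~\eqref{eq: trivial induced relation} equals its coefficient in the $\fs$-block.

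I do not expect a real obstacle here. The only delicate point is making sure Lemma~\ref{lemma: order} is applied to every monomial tuple in the $*$-expansion, not just to the formal expression. One must also check that possible cancellations inside $(q-1)*(r_{\ft},\fm_{\ft})$ do not matter, and they do not, because a tuple that does not appear at all certainly does not contribute. The substantive work, namely verifying that the $\fs$-block coefficient of a suitably chosen $\widetilde{\ft}$ is nonzero in $\Fq$, is left to the case analysis by \textsf{Type} that follows.
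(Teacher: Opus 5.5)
Your isolation argument is correct and is exactly the inner claim of the paper's proof. For $\ft>\fs$ with $\ft\notin\mathcal J_w$, Lemma~\ref{lemma: order} places every tuple of the $\ft$-block strictly above $\ft$, and $\ft\ge\widetilde{\ft}$ by hypothesis, so $\widetilde{\ft}$ does not occur there. The problem is that you have reinterpreted the statement, and as a result you never prove what it actually asserts. Eq.~\eqref{eq: revisted expression 1} is the key expression of $\Li(\fs)$ for the fixed $\fs=s(\mathcal R)$. So the lemma says that $\widetilde{\ft}$ does not occur in the $\fs$-block
\[
\mathbf{1}_{\fn \ne \emptyset}\cdot (\fn^{+}, (q-1)*(r, \fm)) + (\fn, 1, (q-1)*(r,\fm)).
\]
This does not follow from the isolation claim alone. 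It rests on the standing hypothesis of Theorem~\ref{thm: small weights} that $\Ind(\mathcal R)$ is trivial, and your proposal never uses that hypothesis. Without it the conclusion is false. For $\fs=(\fn,q)$ of \textsf{Type 1}, the tuple $\widetilde{\ft}=(\fn^+,q-1)$ satisfies the hypothesis of the lemma and occurs with coefficient $1$ in the $\fs$-block. This is exactly how Proposition~\ref{prop: small weights type 1} uses the lemma to produce a contradiction.

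The missing step is short. Your isolation claim shows that the coefficient of $\Li(\widetilde{\ft})$ in Eq.~\eqref{eq: trivial induced relation} equals its coefficient in the $\fs$-block. Since Eq.~\eqref{eq: trivial induced relation} is the trivial relation, that coefficient is $0$. Hence $\widetilde{\ft}$ does not appear among the $D_1$-level tuples of Eq.~\eqref{eq: revisted expression 1}. The paper phrases this by contradiction: if $\widetilde{\ft}$ appeared, its coefficient in $\Ind(\mathcal R)$ would be nonzero. With this line added, your proof coincides with the paper's. Your reading is understandable, since Claims (C) in Cases 3.3 and 4.3 invoke the lemma only for the isolation statement you proved. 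But as stated, and as used in Proposition~\ref{prop: small weights type 1}, the lemma's conclusion is non-appearance in the $\fs$-block.
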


\begin{proof}
Suppose that $\widetilde{\ft}$ appears in Eq. \eqref{eq: revisted expression 1} among  the $D_1$-level tuples, i.e., among
$$\mathbf{1}_{\fn \ne \emptyset}\cdot (\fn^{+}, (q-1)*(r, \fm)) +  (\fn, 1, (q-1)*(r,\fm)).$$

We claim that in Eq. \eqref{eq: trivial induced relation}, $\widetilde{\ft}$ cannot appear among
$$\mathbf{1}_{\fn_{\ft} \ne \emptyset}\cdot (\fn_{\ft}^{+}, (q-1)*(r_{\ft}, \fm_{\ft})) + (\fn_{\ft}, 1, (q-1)*(r_{\ft},\fm_{\ft})$$
for some $\ft>\fs$ and $\ft \notin \mathcal J_w$. In fact, since $\ft>\fs$, the hypothesis implies that $\ft \ge \widetilde{\ft}$. But all tuples appearing in the expression
$$\mathbf{1}_{\fn_{\ft} \ne \emptyset}\cdot (\fn_{\ft}^{+}, (q-1)*(r_{\ft}, \fm_{\ft})) + (\fn_{\ft}, 1, (q-1)*(r_{\ft},\fm_{\ft})$$
are strictly greater than $\ft$, thus strictly greater than $\widetilde{\ft}$.

Therefore, the coefficient of $\widetilde{\ft}$ is nonzero. This contradicts the fact that Eq. \eqref{eq: trivial induced relation} is trivial.
\end{proof}

\subsection{Proof of Theorem \ref{thm: small weights}: \textsf{Type 1} and \textsf{Type 2}} \ppar

\begin{proposition} \label{prop: small weights type 1}
With the above notation, the tuple $\fs=s(\mathcal R)$ is neither of \textsf{Type~1} nor \textsf{Type~2}.
\end{proposition}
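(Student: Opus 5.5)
The plan is to apply Lemma~\ref{lemma: special case} in both cases. For each \textsf{Type} I pick a $D_1$-level tuple $\widetilde{\ft}$ from Eq.~\eqref{eq: revisted expression 1} that is the immediate successor of $\fs$ in the $(\depth,\lex)$-order among tuples of weight $w$. That is, I will show that every $\ft>\fs$ of weight $w$ satisfies $\ft\geq \widetilde{\ft}$. I also need $\widetilde{\ft}$ to appear with coefficient $1$ (not cancelled) among the $D_1$-level tuples of $\fs$. The lemma then shows that the coefficient of $\widetilde{\ft}$ in Eq.~\eqref{eq: trivial induced relation} is nonzero, which contradicts the triviality of $\Ind(\mathcal R)$.

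\textsf{Type 1}: here $\fs=(\fn,q)$ with $\fn=(s_1,\dots,s_{k-1})\neq\emptyset$. I take $\widetilde{\ft}=(\fn^+,q-1)=(s_1,\dots,s_{k-2},s_{k-1}+1,q-1)$. The $D_1$-level tuples are $(\fn^+,q-1)$ and $(\fn,1,q-1)$. The second has depth $k+1$, while $\widetilde{\ft}$ has depth $k$, so $\widetilde{\ft}$ occurs with coefficient exactly $1$. For the successor property, let $\ft>\fs$ have weight $w$.
\begin{itemize}
\item If $\depth(\ft)>k$, then $\ft>\widetilde{\ft}$ directly.
\item If $\depth(\ft)=k$, then $\ft\succ\fs$ lexicographically. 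Let $j$ be the first index where $\ft$ and $\fs$ differ.
\begin{itemize}
\item If $j=k$, then the last entries differ while the first $k-1$ agree, which is impossible since the weights are equal.
\item If $j\le k-1$, then $t_j>s_j$. If $j<k-1$, or if $t_{k-1}\geq s_{k-1}+2$, we get $\ft\succ\widetilde{\ft}$.
\item Otherwise $\ft$ agrees with $\widetilde{\ft}$ in its first $k-1$ entries, so equal weight forces $\ft=\widetilde{\ft}$.
\end{itemize}
\end{itemize}
Thus the hypothesis of Lemma~\ref{lemma: special case} holds.

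\textsf{Type 2}: here $\fs=(\fn,q+r)$ with $r\ge1$ and $\fn\ne\emptyset$. First I compute $(q-1)*(r)$ from the definition. Splitting $\Si_{<d}(q-1)\Si_{<d}(r)$ according to whether $i<j$, $i>j$ or $i=j$, and using Eq.~\eqref{eq:product depth1}, gives
\[
(q-1)*(r)=(q-1,r)+(r,q-1)+(q-1+r).
\]
I take $\widetilde{\ft}=(\fn^+,q-1+r)$, which has depth $k$. All the other $D_1$-level tuples, namely $(\fn^+,q-1,r)$, $(\fn^+,r,q-1)$ and the three tuples $(\fn,1,\cdot)$, have depth at least $k+1$. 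Hence $\widetilde{\ft}$ appears with coefficient $1$. The same lexicographic argument as in \textsf{Type 1}, with last entry $q+r$ and weight fixed, shows that $\widetilde{\ft}$ is the immediate successor of $\fs$. Lemma~\ref{lemma: special case} then applies.

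I expect the main point requiring care to be the successor verification. One must check that no tuple of larger depth lies below $\widetilde{\ft}$; this holds because depth dominates in the order and $\widetilde{\ft}$ has the same depth as $\fs$. One must also check that within the same depth the weight constraint leaves no room between $\fs$ and $\widetilde{\ft}$. It is also worth noting that the entry $s_{k-1}+1$ may equal $q+1$. This is harmless, since Lemma~\ref{lemma: special case} imposes no condition on $\widetilde{\ft}$ belonging to $\mathcal J_w$. The bound \eqref{eq:bound for weight} is not needed for these two types.
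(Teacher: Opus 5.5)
Your proof is correct and matches the paper's argument: the paper also takes $\widetilde{\ft}=(\fn^+,q+r-1)$, treating both types at once with $r\ge 0$, observes that $\fs<\ft$ implies $\widetilde{\ft}\le\ft$, and concludes by Lemma~\ref{lemma: special case}. You additionally carry out the immediate-successor check and show by a depth comparison that $\widetilde{\ft}$ is not cancelled among the $D_1$-level tuples; the paper leaves both of these implicit.
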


\begin{proof}
Assume that $\fs=s(\mathcal R)$ is of \textsf{Type 1} or \textsf{Type 2}. Thus $\fm=\emptyset $ and $\fs = (\fn, q+r)$ with a non-empty tuple $\fn$  consisting of integers $1, \dots, q$, and $r\ge0$.

We put
$$\widetilde{\ft}= (\fn^+, q+r-1).$$
Then for any tuple $\ft$, we note that  $\fs < \ft$ implies $\widetilde{\ft} \le \ft$. Also, the corresponding Eq.~\eqref{eq: revisted expression 1} is
  \begin{align*}
    &\Li(\fs) + \textbf{1}_{r\ne0} \Li(\fn, q, r) \\
    & + D_1 \Li(\widetilde{\ft}) + D_1 \Li(\fn^+, q-1, r)+ D_1 (\fn^+, r,q-1)
    + D_1 \Li(\fn, 1, (q-1)*(r)) =0.
  \end{align*}
By Lemma~\ref{lemma: special case}, we get a contradiction.
\end{proof}

\subsection{Proof of Theorem \ref{thm: small weights}: \textsf{Type 0} and \textsf{Type 3}} \ppar

\begin{proposition}\label{prop: small weights type 3}
Suppose that the tuple $\fs=s(\mathcal R)$ is of \textsf{Type 0} or \textsf{Type 3}. Then
$$\fs = (q+2, q-1),$$
which is of \textsf{Type 3}.
\end{proposition}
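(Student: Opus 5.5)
Here $\fn=\emptyset$, so $\fs=(q+r,\fm)$ with $r\ge1$, and by \eqref{eq:bound for weight} we have $r+w(\fm)\le q+1$. Proposition~\ref{prop: key expression} reduces to
\[
\Li(\fs)=-\Li(q,r,\fm)-D_1\Li(1,(q-1)*(r,\fm)).
\]
The only $D_1$-level tuples are therefore the tuples $(1,\fa)$, where $\fa$ runs over the terms of $(q-1)*(r,\fm)$.

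The first step is to expand this product with the recursive stuffle rule coming from \eqref{eq:product depth1} and Proposition~\ref{prop: product CMPL}:
\[
(q-1)*(r,\fm)=(q-1+r,\fm)+(q-1,r,\fm)+(r,(q-1)*\fm).
\]
In particular, the $D_1$-level tuples include $(1,q-1+r,\fm)$, of depth $\depth(\fs)+1$, together with several tuples of depth $\depth(\fs)+2$ that are lexicographically larger or smaller than it. The multiplicities lie in $\Fq$. They can cancel, because a tuple can occur in both the second and the third summand, for instance when $\fm$ begins with $q-1$.

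The plan is to fix a natural candidate $\widetilde{\ft}$ and show that its coefficient in \eqref{eq: trivial induced relation} is nonzero, in the spirit of Lemma~\ref{lemma: special case}. I would take
\[
\widetilde{\ft}=(1,q-1+r,\fm),
\]
the smallest $D_1$-level tuple in the $(\depth,\lex)$-order. It occurs with coefficient $1$ in the $\fs$-part, since it only arises from the first summand (its second entry $q-1+r$ exceeds $q-1$ and $r$).

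Lemma~\ref{lemma: special case} does not apply directly, because there are tuples $\ft>\fs$ of the same depth as $\fs$ that are not $\ge\widetilde{\ft}$. So the second step is to classify the $\ft>\fs$ with $\ft\notin\mathcal J_w$ whose $D_1$-level tuples can contain $\widetilde{\ft}$. Every tuple in $\mathbf 1_{\fn_\ft\ne\emptyset}(\fn_\ft^+,\dots)$ begins with an entry $\ge2$. A tuple $(\fn_\ft,1,\dots)$ with $\fn_\ft\neq\emptyset$ begins with $\fn_\ft$, so it equals $\widetilde{\ft}$ only if $\fn_\ft=(1)$, which forces second entry $1\ne q-1+r$, or if $\fn_\ft=(1,q-1+r,\dots)$ with $q-1+r\le q$. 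The latter forces $r=1$ and $\depth(\ft)>\depth(\fs)$, and then comparing depths and the remaining entries against the weight bound $w\le 2q+1$ should exclude it. If $\fn_\ft=\emptyset$, then $\ft=(q+r_\ft,\fm_\ft)$ and $(q-1+r,\fm)$ must be a term of $(q-1)*(r_\ft,\fm_\ft)$. Using the displayed expansion, this gives either $(r_\ft,\fm_\ft)=(r,\fm)$, i.e.\ $\ft=\fs$, which is excluded, or $r_\ft=q-1+r$ with $\fm$ a term of $(q-1)*\fm_\ft$. The second option means $\ft=(2q-1+r,\fm_\ft)$ with $w(\fm_\ft)=w(\fm)-(q-1)$. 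Since the weight is at most $2q+1$, only very few pairs $(r,\fm)$ allow this, essentially $r\le2$ and $w(\fm)\ge q-1$, i.e.\ $\fm$ of weight $q-1$ or $q$.

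For all remaining pairs $(r,\fm)$ the coefficient of $\widetilde{\ft}$ is $1\ne0$, contradicting the triviality of $\Ind(\mathcal R)$.

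The third step, and the main obstacle, is the finite list of exceptional pairs: $r\in\{1,2\}$, $\fm$ of weight $q-1$ or $q$, together with Type~$0$ with $w\le 2q+1$. In these cases the coefficient $a_\ft$ of $\ft=(2q-1+r,\fm_\ft)$ is a priori unknown and may cancel $\widetilde{\ft}$. I would then switch to a second candidate, a depth-$(\depth(\fs)+2)$ tuple such as $(1,q-1,r,\fm)$ or $(1,r,q-1,\dots)$. Its coefficient would be computed explicitly from the Chen-type multiplicities in $(q-1)*(q-1)$ and similar products, which reduce mod $p$ to $\pm1,\pm2$. The aim is to show that some such tuple survives unless $(r,\fm)=(2,(q-1))$, so that $\fs=(q+2,q-1)$. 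This is exactly where the coefficients degenerate: $(q-1)*(2,q-1)$ contains $(q+1,q-1)$, $(q-1,2,q-1)$ and $(2,2q-2)+2(2,q-1,q-1)$. Here the tuple $\ft=(2q+1)$ of Type~$0$ can absorb $\widetilde{\ft}=(1,q+1,q-1)$, and the other candidates cancel as well. I expect the careful bookkeeping of these finitely many small-weight cases to be the bulk of the work.
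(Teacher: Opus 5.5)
Your framework is the paper's, and for $r\ge 2$ your candidate $\widetilde{\ft}=(1,q-1+r,\fm)$ is exactly the paper's choice (Case 3.1). However, the proposal has a genuine gap at $r=1$. Two errors in your second step hide where that gap lies.

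First, it is not true that every tuple in $(\fn_\ft^+,(q-1)*(r_\ft,\fm_\ft))$ begins with an entry $\ge 2$. When $\depth(\fn_\ft)\ge 2$ only the last entry is raised, so for $r\le 2$ the choice $\fn_\ft=(1,q-2+r)$ gives the prefix $(1,q-1+r)$. This produces three absorbers:
\begin{itemize}
\item $\ft=(1,q,q)$ for $\fs=(q+2,q-1)$;
\item $\ft=(1,q-1,q)$ for $\fs=(q+1,q-1)$, of weight $2q$;
\item $\ft=(1,q-1,q+1)$ for $\fs=(q+1,q)$.
\end{itemize}
In particular, $(2q+1)$ is not what absorbs $(1,q+1,q-1)$. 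It has depth $1$, so $(2q+1)<(q+2,q-1)$ in the $\depthlex$-order and is not an admissible $\ft$.

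Second, in the case $\fn_\ft=\emptyset$ you did not use $\ft>\fs$. The terms of $(q-1)*\fm_\ft$ have depth at least $\depth(\fm_\ft)$, so you need $\depth(\fm)=\depth(\fm_\ft)\ge 1$. Together with $r+w(\fm)\le q+1$, only $\fs=(q+1,q)$, $\ft=(2q,1)$ survives.

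Done correctly, the analysis shows your candidate has a forced nonzero coefficient unless $\fs\in\{(q+2,q-1),(q+1,q-1),(q+1,q)\}$. Type~0 is never exceptional: $(1,w-1)$ is the smallest tuple of depth $2$, so Lemma~\ref{lemma: special case} applies directly.

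So the whole difficulty sits in $\fs=(q+1,q-1)$ and $\fs=(q+1,q)$. These must be ruled out for the proposition to hold, and there your candidate can genuinely be cancelled. Your third step leaves them, inside a much larger family, to unspecified bookkeeping, and the fallbacks you propose fail. For $\fs=(q+1,q-1)$ the $D_1$-level tuples are $(1,q,q-1)$, $(1,q-1,1,q-1)$, $(1,1,2q-2)$ and $2\,(1,1,q-1,q-1)$.
\begin{itemize}
\item The first two are also produced by $\ft=(1,q-1,q)$.
\item The last has coefficient $0$ when $p=2$, and is also produced by $\ft=(1,1,q-2,q)$ when $q\ge 3$.
\end{itemize}

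The missing idea is this: for $r=1$, take a depth $\depth(\fs)+1$ tuple from the third summand $(1,r,(q-1)*\fm)$, adding $q-1$ to the first entry of $\fm$ that exceeds $1$. With $k$ the first index such that $m_k>1$, set $\widetilde{\ft}=(\{1\}^{k+1},m_k+q-1,m_{k+1},\dots,m_\ell)$. When $\fm=(\{1\}^\ell)$, set $\widetilde{\ft}=(\{1\}^{w-q},q)$, which is handled by Lemma~\ref{lemma: special case}. This is what the paper does in Cases 3.2 and 3.3.

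For the two problem tuples this gives $(1,1,2q-2)$ and $(1,1,2q-1)$, which no admissible $\ft$ produces. Alternatively, you could keep your candidate for $r=1$ and use these two replacements only for those tuples.
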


\begin{proof}
Since $\fs$ is of \textsf{Type 0} or \textsf{Type 3},   $\fn=\emptyset$ and we express $\fs = (q+r, \fm)$ with $r\ge1$ and $\fm = (m_1, \dots, m_\ell)$ a tuple of positive integers which may be empty.

Recall that by Proposition~\ref{prop: trivial induced relation}, the $\Fq$-linear relation $\Ind(\mathcal R)$ given by
\begin{align*}
\begin{split}
  0=& \Li(1, (q-1)*(r,\fm)) \\
  &+ \sum_{\ft>\fs, \, \ft \notin \mathcal J_w} a_{\ft} \left (\mathbf{1}_{\fn_{\ft} \ne \emptyset}\cdot \Li(\fn_{\ft}^{+}, (q-1)*(r_{\ft}, \fm_{\ft})) + \Li(\fn_{\ft}, 1, (q-1)*(r_{\ft},\fm_{\ft})) \right).\end{split}
\end{align*}
is trivial.

Suppose that $\fs \ne (q+2, q-1)$. We will choose a tuple $\widetilde{\ft}$  which appears among
\begin{align*}
(1, (q-1)*(r,\fm)).
\end{align*}
Then we show that in the previous equation of $\Ind(\mathcal R)$, the coefficient of $\widetilde{\ft}$ is nonzero.

We distinguish three cases.

\medskip
\noindent
    \textbf{Case 3.1: $r>1$.}

    We choose
    $$\widetilde{\ft} = (1, q+r-1, \fm).$$
    We note that $\widetilde{\ft}$ occurs as the first term in $(1, (q-1)*(r,\fm))$, and also
      $\widetilde{\ft}\ne(1, r, m_1, \dots, m_{i-1}, m_i + q-1, m_{i+1}, \dots, m_\ell)$ for any of $i$ as their second entries $q+r-1$ and $r$ differ.

\medskip
\noindent
      \textbf{Claim (A).} We claim that $\widetilde{\ft}=(1, q+r-1, \fm)$ does not occur in the expansion~of
      \begin{align*}
      & \mathbf{1}_{\fn_\ft\ne\emptyset}\cdot({\frak n}_{\ft}^+,(q-1)*(r_{\ft},\frak m_{\ft})), \quad \text{nor}\\
      &  (\frak n_{\ft},1,(q-1)*(r_{\ft},\frak m_{\ft})),
      \end{align*}
      for any $\ft=(\fn_{\ft}, q+r_{\ft}, \fm_{\ft})$ with $\ft > \fs$.

      \begin{proof}[Proof of the claim (A)]
      Recall that we have $r>1$.

      First, suppose that $\widetilde{\ft}=(1, q+r-1, \fm)$ appears among $\mathbf{1}_{\fn_\ft\ne\emptyset}\cdot({\frak n}_{\ft}^+,(q-1)*(r_{\ft},\frak m_{\ft}))$. Thus $\fn_{\ft} \ne 0$ and we need to find a prefix of the initial entries of $(1, q+r-1, \fm)$ to be $\fn_{\ft}^+$ for some non-empty $\fn_{\ft}$. As $\fn_{\ft}$ consists of positive integers less than or equal to~$q$, it follows that ${\frak n}_{\ft}^+=(1,q+r-1)$. Thus $r=2$ and ${\frak n}_{\ft}=(1,q)$. We get $\fs = (q+2, \fm)$, $\widetilde{\ft} = (1, q+1, \fm)$ and $\ft=(1,q,q+r_{\ft},\frak m_{\ft})$. Then $\widetilde{\ft} = (1, q+1, \fm)$ is among $(1,q+1,(q-1)*(r_{\ft},\frak m_{\ft}))$. As $w \leq 2q+1$, it implies that $(r_{\ft},\frak m_{\ft})=\emptyset$. Thus we get $\fs = (q+2, q-1)$, which is a contradiction.

      Next, suppose that $\widetilde{\ft}= (1, q+r-1, \fm)$ appears among the other part $({\frak n}_{\ft},1,(q-1)*(r_{\ft},\frak m_{\ft}))$. Thus the depth equality implies
      \[\depth(\fn_{\ft}) + \depth(\fm_{\ft})+2 \le \depth(\widetilde{\ft})\le \depth(\fn_{\ft}) + \depth(\fm_{\ft})+3.\]
      But
      \begin{align*}
      & \depth(\widetilde{\ft}) = \depth(\fs)+1, \\
      & \depth(\fn_{\ft}) + \depth(\fm_{\ft})+1 = \depth(\ft).
      \end{align*}
      Therefore, we conclude that
      \[\depth(\fs) \ge \depth(\ft).\]
      Since $\ft> \fs$, we have $\depth(\fs) = \depth(\ft)$ and $\ft \succ \fs$ where $\succ$ is the lexicographical order. In particular, $\widetilde{\ft}= (1, q+r-1, \fm)$ appears among either
      \[(\fn_{\ft}, 1, q+r_{\ft} -1, \fm_{\ft}) \]
      or
      \[(\fn_{\ft}, 1, r_{\ft}, m^{\ft}_1, \dots, m^{\ft}_{i-1},
      m^{\ft}_i + q-1, m^{\ft}_{i+1}, \dots, m^{\ft}_{\ell_{\ft}}
       )\] for some $i = 1, \dots, \ell_{\ft}$, where we write $\fm_{\ft} = (m^{\ft}_1, \dots, m^{\ft}_{\ell_{\ft}})$.

      If it were the first case, i.e.,  if
      \[ (1, q+r-1, \fm) = (\fn_{\ft}, 1, q+ r_{\ft}-1, \fm_{\ft}),\]
      then it follows that $\fn_{\ft} = \emptyset$, $r = r_{\ft}$, and $\fm = \fm_{\ft}$. This implies $\ft = \fs$, which contradicts the assumption $\ft > \fs$.

      If it were the second case, i.e., if
      \[(1, q+r-1, \fm) = (\fn_{\ft}, 1, r_{\ft}, m^{\ft}_1, \dots, m^{\ft}_{i-1},
      m^{\ft}_i + q-1, m^{\ft}_{i+1}, \dots, m^{\ft}_{\ell_{\ft}}
       )\]
       for some $i = 1, \dots, \ell_{\ft}$, then it follows that $\fn_{\ft}=\emptyset$, $q+r-1=r_{\ft}$ and $\fm=(m^{\ft}_1, \dots, m^{\ft}_{i-1}, m^{\ft}_i + q-1, m^{\ft}_{i+1}, \dots, m^{\ft}_{\ell_{\ft}})$. Since $r>1$ and $r_{\ft} \leq q+1$, it forces $r=2$ and $r_{\ft}=q+1$. Now recalling that $r_{\ft}+w(\fm_{\ft}) \leq q+1$, it follows that $\fm_{\ft}=\emptyset$. Therefore, $\fm=(q-1)$. Hence $\fs=(q+2,q-1)$, which is a contradiction.

       This proves the claim (A).
      \end{proof}

	 \noindent
      \textbf{Case 3.2: $r=1$ and $\fm=(m_1,\dots,m_\ell) \ne (\{1\}^\ell)$.}

      Let $1 \le k \le \ell$ be the smallest index such that $m_k > 1$. We can write
      \[\fs = (q+1, \{1\}^{k-1}, m_k,  \dots, m_{\ell}).\]
      We put
      \begin{align*}
      \widetilde{\ft} & = \left(\{1\}^{k+1}, m_k + q-1, m_{k+1}, \dots, m_\ell\right) .\end{align*}
      Note that $\widetilde{\ft} \ne (1, q+r-1, \fm)$ by comparing the second entry.

	 \medskip
	 \noindent
      \textbf{Claim (B).} We claim that $\widetilde{\ft}=\left(\{1\}^{k+1}, m_k + q-1, m_{k+1}, \dots, m_\ell\right) $ is not occurring in the expansion of
      \begin{align*}
      & \mathbf{1}_{\fn_\ft\ne\emptyset}\cdot({\frak n}_{\ft}^+,(q-1)*(r_{\ft},\frak m_{\ft})), \quad \text{nor}\\
      &  (\frak n_{\ft},1,(q-1)*(r_{\ft},\frak m_{\ft})),
      \end{align*}
      for any $\ft=(\fn_{\ft}, q+r_{\ft}, \fm_{\ft})$ with $\ft > \fs$.
      \begin{proof}[Proof of the claim (B)]
        We note that $\depth(\widetilde{\ft}) = \depth(\fs)+1$.

        Assume that $\widetilde{\ft}=\left(\{1\}^{k+1}, m_k + q-1, m_{k+1}, \dots, m_\ell\right)$ is found among $\mathbf{1}_{\fn_\ft\ne\emptyset}\cdot ({\frak n}_{\ft}^+,(q-1)*(r_{\ft},\frak m_{\ft}))$. In particular, $\fn_\ft \ne \emptyset$. Thus either $\widetilde{\ft} = (\fn_{\ft}^+, q+r_{\ft}-1, \fm_{\ft})$ or $\widetilde{\ft} = (\fn_{\ft}^+, r_{\ft}, m^{\ft}_1, \dots, m^{\ft}_{i-1}, q+m^{\ft}_i-1, m^{\ft}_{i+1}, \dots, m^{\ft}_{\ell_\ft})$.

\begin{enumerate}
\item Since $m_k>1$ and $w(\widetilde{\ft}) \leq 2q+1$, there is only one entry in $\widetilde{\ft}$, which is at least $q+1$, all entries to its left are $1$ and all entries to its right are at most $q-2$. Therefore, $(\fn_{\ft}^+, q+r_{\ft}-1, \fm_{\ft})$ cannot be a candidate as $\fn_{\ft}^+$ is either empty or consists of at least one entry greater than~$1$.

\item The remaining candidate is
        \begin{align*}
          & \left(\{1\}^{k+1}, m_k + q-1, m_{k+1}, \dots, m_\ell\right) \\
          &= (\fn_{\ft}^+, r_{\ft}, m^{\ft}_1, \dots, m^{\ft}_{i-1}, q+m^{\ft}_i-1, m^{\ft}_{i+1}, \dots, m^{\ft}_{\ell_\ft}).
        \end{align*}
        Therefore, $\fn_{\ft}= \emptyset$, $r_{\ft} = 1$, $\fm_{\ft} =
        (\{1\}^{k-1}, m_k, m_{k+1}, \dots, m_{\ell})$, that is,
        \[ \ft = (q+1, \{1\}^{k-1}, m_k, \dots, m_{\ell})= \fs.\]
        This contradicts to $\fn_\ft \ne \emptyset$.
\end{enumerate}

        Now we assume the remaining possibility, i.e., $\widetilde{\ft}=\left(\{1\}^{k+1}, m_k + q-1, m_{k+1}, \dots, m_\ell\right)$ is found
        among $({\frak n}_{\ft},1,(q-1)*(r_{\ft},\frak m_{\ft}))$. As $\depth(\widetilde{\ft}) = \depth(\fs)+1 \leq \depth(\ft)+1$, it implies that either 
        $\widetilde{\ft} = (\fn_{\ft},1, q+r_{\ft}-1, \fm_{\ft})$ or $\widetilde{\ft} = (\fn_{\ft},1, r_{\ft}, m^{\ft}_1, \dots, m^{\ft}_{i-1}, q+m^{\ft}_i-1, m^{\ft}_{i+1}, \dots, m^{\ft}_{\ell_\ft})$.

We note that 
        \begin{align*}
          \depth(\fn_{\ft},1, q+r_{\ft}-1, \fm_{\ft}) = \depth(\ft)+1,
        \end{align*}
        and similarly,
        \[ \depth(\fn_{\ft},1, r_{\ft}, m^{\ft}_1, \dots, m^{\ft}_{i-1}, q+m^{\ft}_i-1, m^{\ft}_{i+1}, \dots, m^{\ft}_{\ell_\ft}) = \depth(\ft)+1.\]
        Therefore, in either case, we conclude that $\depth(\ft) = \depth(\fs)$. Since we assume that $\ft > \fs$, we conclude that $\ft \succ \fs$ in the lexicographic order.

As $\fs = (q+1, \{1\}^{k-1}, m_k,  \dots, m_{\ell}) < \ft$, it follows that $\fn_{\ft}=\emptyset$. Thus $$r_{\ft} = 1, \quad \ft=(q+1,\fm_{\ft}) \quad \text{ and }$$
        $$\left(\{1\}^{k+1}, m_k + q-1, m_{k+1}, \dots, m_\ell\right) \\
         = (1,1,m^{\ft}_1, \dots, m^{\ft}_{i-1}, q+m^{\ft}_i-1, m^{\ft}_{i+1}, \dots, m^{\ft}_{\ell_\ft}).$$
         The weight equals $w \leq 2q+1$. We also note that $\depth(\fs)>2$ as otherwise $\fs=\ft$. It follows that $m^{\ft}_i < q$ for all $i$. As $m_k>1$, it implies that $m_k+q-1>q$. We deduce that $\fm_{\ft} =
        (\{1\}^{k-1}, m_k, m_{k+1}, \dots, m_{\ell})$, that is,
        \[ \ft = (q+1, \{1\}^{k-1}, m_k, \dots, m_{\ell})= \fs,\]
which is a contradiction. This completes the proof of the claim (B).
\end{proof}

\medskip
\noindent
      \textbf{Case 3.3: $r=1$ and $\fm = (\{1\}^{\ell})$.}

      We get $\ell = w-q-1$ and $\fs = (q+1, \{1\}^{w-q-1})$. We put
	 $$\widetilde{\ft} = (1, r, m_1, \dots, m_{\ell-1}, m_{\ell} + q-1) = (\{1\}^{w-q}, q).$$
	
\medskip
\noindent
      \textbf{Claim (C)}. We claim that $\widetilde{\ft} = (\{1\}^{w-q}, q)$ is not occurring in the expansion of
      \begin{align*}
      & \mathbf{1}_{\fn_\ft\ne\emptyset}\cdot({\frak n}_{\ft}^+,(q-1)*(r_{\ft},\frak m_{\ft})), \quad \text{nor}\\
      &  (\frak n_{\ft},1,(q-1)*(r_{\ft},\frak m_{\ft})),
       \end{align*}
       for any $\ft=(\fn_{\ft}, q+r_{\ft}, \fm_{\ft})$ with $\ft > \fs$.

       \begin{proof}[Proof of the claim (C)]
        Note that $\fs < \ft$ implies $\widetilde{\ft} \le \ft$. Therefore by Lemma~\ref{lemma: special case}, the claim (C) is true.\end{proof}

       We continue the proof of Proposition \ref{prop: small weights type 3}. From Cases 3.1, 3.2 and 3.3, there always exists  a tuple $\widetilde{\ft}$  which appears in Eq. \eqref{eq: revisted expression 1} among  the $D_1$-level tuples, i.e., among
$$\mathbf{1}_{\fn \ne \emptyset}\cdot (\fn^{+}, (q-1)*(r, \fm)) +  (\fn, 1, (q-1)*(r,\fm))$$
such that in the corresponding equation \eqref{eq: trivial induced relation}, the coefficient of $\widetilde{\ft}$ is nonzero. This contradicts the fact that Eq. \eqref{eq: trivial induced relation} is trivial.
\end{proof}

\subsection{Proof of Theorem \ref{thm: small weights}: \textsf{Type 4}} \ppar

\begin{proposition} \label{prop: small weights type 4}
We keep the previous notation. Then the tuple $\fs=s(\mathcal R)$ cannot be of \textsf{Type 4}.
\end{proposition}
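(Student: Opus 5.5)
We argue by contradiction, in the same way as for the other \textsf{Types}. Assume $\fs=s(\mathcal R)=(\fn,q+r,\fm)$ is of \textsf{Type 4}, so that $\fn\ne\emptyset$, $r\ge1$ and $\fm=(m_1,\dots,m_\ell)\ne\emptyset$. The weight bound \eqref{eq:bound for weight} gives $w(\fn)+r+w(\fm)\le q+1$. In particular every entry of $\fn$, of $\fm$, and $r$ itself is at most $q-1$; moreover $r+w(\fm)\le q$ and $w(\fn)\le q-1$.

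Our goal is to exhibit a $D_1$-level tuple $\widetilde{\ft}$ of Eq. \eqref{eq: revisted expression 1} whose coefficient in Eq. \eqref{eq: trivial induced relation} is nonzero. The natural candidate, mirroring Proposition~\ref{prop: small weights type 1}, is
$$\widetilde{\ft}=(\fn^+,q+r-1,\fm),$$
the first term of $(\fn^+,(q-1)*(r,\fm))$. We must first check that it is not cancelled by another term coming from $\fs$ itself. Every other term of $(\fn^+,(q-1)*(r,\fm))$ has depth $\depth(\fs)+1$, and every term of $(\fn,1,(q-1)*(r,\fm))$ has depth at least $\depth(\fs)+1$. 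Since $\depth(\widetilde{\ft})=\depth(\fs)$, no cancellation occurs there.

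The key step is to verify the hypothesis of Lemma~\ref{lemma: special case}: that $\ft>\fs$ implies $\widetilde{\ft}\le\ft$. Tuples of larger depth are automatically larger than $\widetilde{\ft}$, since $\depth(\widetilde{\ft})=\depth(\fs)$. So suppose $\depth(\ft)=\depth(\fs)$ and $\ft\succ\fs$ lexicographically. The tuples $\fs$ and $\widetilde{\ft}$ first differ at position $k-1$, the last entry of $\fn$, where $\widetilde{\ft}$ carries $s_{k-1}+1$. Let $j$ be the first position where $\ft$ differs from $\fs$.
\begin{itemize}
\item If $j<k-1$, then $\ft_j>s_j=\widetilde{\ft}_j$, so $\ft\succ\widetilde{\ft}$.
\item If $j=k-1$, then $\ft_{k-1}\ge s_{k-1}+1$. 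If the inequality is strict, we are done. If equality holds, we compare the entries at position $k$: either $\ft_k\ge q+r-1$, or $\ft\succ\widetilde{\ft}$ fails.
\item If $j\ge k$, then $\ft_{k-1}=s_{k-1}<\widetilde{\ft}_{k-1}$, and we would have $\ft<\widetilde{\ft}$.
\end{itemize}
So the lemma does not apply verbatim. Lemma~\ref{lemma: special case} really only needs the weaker statement that $\widetilde{\ft}$ cannot occur among the $D_1$-level tuples attached to any $\ft>\fs$ with $\ft\notin\mathcal J_w$. The plan is therefore to prove this statement directly, as in Claims (A)--(C).

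The direct proof runs as follows. Take $\ft=(\fn_\ft,q+r_\ft,\fm_\ft)>\fs$ and suppose $\widetilde{\ft}$ occurs among its $D_1$-level tuples. Since those tuples have depth at least $\depth(\ft)$, we get $\depth(\ft)\le\depth(\fs)$, hence $\depth(\ft)=\depth(\fs)$. The occurrence must then be a depth-preserving term, that is $\widetilde{\ft}=(\fn_\ft^+,q+r_\ft-1,\fm_\ft)$. The weight bound leaves $\widetilde{\ft}$ with exactly one entry $\ge q$, namely $q+r-1$, which sits at position $k$. The only exception is $r=1$, where that entry equals $q$; this must be handled by the observation that $\fn^+$ has all entries at most $q$, so the position of the big entry is still forced. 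Comparing positions then gives $\fn_\ft^+=\fn^+$, $r_\ft=r$ and $\fm_\ft=\fm$, so $\ft=\fs$, a contradiction. Hence the coefficient of $\widetilde{\ft}$ in Eq. \eqref{eq: trivial induced relation} is $1\ne0$, contradicting the triviality of $\Ind(\mathcal R)$.

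The main obstacle is the boundary case $r=1$. There $q+r-1=q$, so an entry equal to $q$ could also come from $\fn_\ft$, and the decomposition of $\widetilde{\ft}$ is not unique. The position of the first entry $\ge q+1$ in $\ft$ may therefore differ from that in $\fs$. I would first try to rule this out using the weight bound: $w(\fn)+1+w(\fm)\le q+1$ together with the requirement that $\ft$ have an entry $\ge q+1$. If that fails, I would instead switch, when $r=1$, to the alternative candidate $\widetilde{\ft}=(\fn,1,q,\fm)$ and run the same depth and lex comparison.
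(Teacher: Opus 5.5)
Your choice $\widetilde{\ft}=(\fn^+,q+r-1,\fm)$ for $r\ge 2$ is the paper's Case 4.1. The case $r=1$, however, is a genuine gap, and your candidate actually fails there.

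\textbf{A counterexample for $r=1$.} Take $\fs=(1,q+1,q-1)$. This is a \textsf{Type 4} tuple of weight $2q+1$ with $r=1$, so your candidate is $\widetilde{\ft}=(2,q,q-1)$.
\begin{itemize}
\item The tuple $\ft=(2,q-1,q)$ has the same depth as $\fs$, is lexicographically larger, and lies outside $\mathcal J_w$.
\item It is of \textsf{Type 1}, with $\fn_{\ft}=(2,q-1)$, $r_{\ft}=0$ and $\fm_{\ft}=\emptyset$.
\item Its $D_1$-level tuple $(\fn_{\ft}^+,(q-1)*\emptyset)=(\fn_{\ft}^+,q-1)$ is exactly $(2,q,q-1)$.
\end{itemize}
So the coefficient of $\widetilde{\ft}$ in $\Ind(\mathcal R)$ is $1+a_{\ft}$, and nothing prevents $a_{\ft}=-1$.

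Neither of your remedies repairs this. The weight bound cannot exclude it, since the weight is exactly $2q+1$. Your fallback $(\fn,1,q,\fm)=(1,1,q,q-1)$ collides in the same way: it equals the $D_1$-level tuple $(\fn_{\ft'}^+,q-1)$ of the \textsf{Type 1} tuple $\ft'=(1,1,q-1,q)$, which exceeds $\fs$ by depth. One can check that this is the only \textsf{Type 4} tuple with $r=1$ where $(\fn^+,q,\fm)$ collides: a \textsf{Type 1} source forces $m_\ell=q-1$, hence $\fn=(1)$ and $\fm=(q-1)$.

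\textbf{The missing idea.} When $r=1$, the paper (Cases 4.2 and 4.3) merges $q-1$ into an entry of $\fm$ rather than into $r$.
\begin{itemize}
\item If $k$ is the first index with $m_k>1$, take $\widetilde{\ft}=(\fn^+,\{1\}^k,m_k+q-1,m_{k+1},\dots,m_\ell)$. Its entry $m_k+q-1\ge q+1$ is the unique entry exceeding $q$, so it can be located in any putative source.
\item If $\fm=(\{1\}^\ell)$, take $\widetilde{\ft}=(\fn^+,\{1\}^\ell,q)$. Here the weight does force $\ft>\fs\Rightarrow\ft\ge\widetilde{\ft}$, so Lemma~\ref{lemma: special case} applies.
\end{itemize}

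\textbf{A secondary error in your general step.} You misdescribe the $*$-product.
\begin{itemize}
\item The depth-preserving terms of $(\fn_{\ft}^+,(q-1)*(r_{\ft},\fm_{\ft}))$ are not only $(\fn_{\ft}^+,q+r_{\ft}-1,\fm_{\ft})$. They also include $(\fn_{\ft}^+,r_{\ft},m^{\ft}_1,\dots,m^{\ft}_i+q-1,\dots,m^{\ft}_{\ell_{\ft}})$ for each $i$.
\item For a \textsf{Type 1} source the only such term is $(\fn_{\ft}^+,q-1)$. Its entry at position $\depth(\fn_{\ft})+1$ is just $q-1$, so a large entry of $\widetilde{\ft}$ may sit inside $\fn_{\ft}^+$. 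This is exactly how the collision above arises.
\item For the same reason, it is false that every other term of $(\fn^+,(q-1)*(r,\fm))$ has depth $\depth(\fs)+1$. Self-cancellation should instead be excluded by comparing $k$-th entries ($q+r-1$ versus $r$).
\end{itemize}
For $r\ge 2$ your conclusion survives, but you must add two arguments, as in the paper's Claim (A):
\begin{itemize}
\item A merged source forces $\fn^+=(\fn_{\ft}^+,r_{\ft},m^{\ft}_1,\dots,m^{\ft}_{i-1})$. Then $\fs$ begins with $\fn_{\ft}^+$ while $\ft$ begins with $\fn_{\ft}$, giving $\ft<\fs$.
\item A \textsf{Type 1} source would need $m_\ell=q-1$, which is impossible because $w(\fm)\le q-2$ when $r\ge 2$.
\end{itemize}
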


\begin{proof}
Suppose that $\fs$ is of \textsf{Type 4}. Then we express $\fs = (\fn, q+r, \fm)$ with $r\ge1$, $\fn$ a non-empty tuple consisting of integers $1, \dots, q$, and $\fm = (m_1, \dots, m_\ell)$ a non-empty tuple of positive integers. Then Eq. \eqref{eq: revisted expression 1} is
  \begin{align*}
  \begin{split}
  &\Li(\fn, q+r, \fm) + \mathbf{1}_{r\ne0}\cdot \Li(\fn, q, r, \fm) \\
  &+ D_1 \mathbf{1}_{\fn \ne \emptyset}\cdot \Li(\fn^{+}, (q-1)*(r, \fm)) + D_1 \Li(\fn, 1, (q-1)*(r,\fm))=0.\end{split}
  \end{align*}

By Eq. \eqref{eq:bound for weight} we recall that $w(\fn)+w(r,\fm) \leq q+1$. In particular, $w(\fm)<q$.

Recall also that the $\Fq$-linear relation $\Ind(\mathcal R)$
\begin{align*}
\begin{split}
  0=& \mathbf{1}_{\fn \ne \emptyset}\cdot \Li(\fn^{+}, (q-1)*(r, \fm)) +  \Li(\fn, 1, (q-1)*(r,\fm)) \\
  &+ \sum_{\ft>\fs, \, \ft \notin \mathcal J_w} a_{\ft} \left (\mathbf{1}_{\fn_{\ft} \ne \emptyset}\cdot \Li(\fn_{\ft}^{+}, (q-1)*(r_{\ft}, \fm_{\ft})) + \Li(\fn_{\ft}, 1, (q-1)*(r_{\ft},\fm_{\ft})) \right).\end{split}
\end{align*}
is trivial.

We will choose a tuple $\widetilde{\ft}$  which appears in Eq. \eqref{eq: revisted expression 1} with the nonzero coefficient of $D_1$-degree one. Then we show that in the previous relation $\Ind(\mathcal R)$, the coefficient of $\widetilde{\ft}$ is nonzero.

We distinguish three cases.

\medskip
\noindent
    \textbf{Case 4.1: $r>1$.}

    We put
    $$\widetilde{\ft} = (\fn^+, q+r-1, \fm).$$
    Among Eq. \eqref{eq: revisted expression 1}, $\widetilde{\ft}$ occurs once in the expansion in $(\fn^+, (q-1)*(r, \fm))$.

\medskip
\noindent
      \textbf{Claim (A).} We claim that $\widetilde{\ft}=(\fn^+, q+r-1, \fm)$ is not occurring in the expansion~of
      \begin{align}
        \begin{split}& \mathbf{1}_{\fn_\ft\ne\emptyset}\cdot({\frak n}_{\ft}^+,(q-1)*(r_{\ft},\frak m_{\ft})), \quad \text{nor}\\
      &  (\frak n_{\ft},1,(q-1)*(r_{\ft},\frak m_{\ft}))
        \end{split} \label{eqn: 2q-1 revisited type 4A}
      \end{align}
      for any $\ft=(\fn_{\ft}, q+r_{\ft}, \fm_{\ft})$ with $\ft > \fs$.

      \begin{proof}[Proof of the claim (A)]
      We adapt the proof of Claim (A) as in Case 3.1. Recall that we have $r>1$.

      First, we assume that $\widetilde{\ft}=({\frak n}^+, q+r-1, \fm)$ is found among $\mathbf{1}_{\fn_\ft\ne\emptyset}\cdot({\frak n}_{\ft}^+,(q-1)*(r_{\ft},\frak m_{\ft}))$. In particular, ${\frak n}_{\ft} \neq \emptyset$. Recall that $\fs<\ft$. By comparing the depth, we deduce that either $\widetilde{\ft} = (\fn_{\ft}^+, q+r_{\ft}-1, \fm_{\ft})$ or $\widetilde{\ft} = (\fn_{\ft}^+, r_{\ft}, m^{\ft}_1, \dots, m^{\ft}_{i-1}, q+m^{\ft}_i-1, m^{\ft}_{i+1}, \dots, m^{\ft}_{\ell_\ft})$.
      \begin{itemize}
      \item Suppose that $({\frak n}^+, q+r-1, \fm)=\widetilde{\ft} = (\fn_{\ft}^+, q+r_{\ft}-1, \fm_{\ft})$. As the depth is at least $3$, on the LHS, we have a unique entry $q+r-1>q$ and $w(\fm)<q$. It follows that on the RHS, we must have a unique entry $>q$. As $w(\fm)<q$, the only possibility for $\fs<\ft$ is when $r=2$, $(\fn^+,q+1)=\fn_{\ft}^+$, $(r_{\ft},\fm_{\ft})=\emptyset$ and $\fm=(q-1)$. We get $\fs=(1,q+2,q-1)$ which is of weight $2q+2$, thus we obtain a contradiction.

      \item Otherwise, for some $i$, we have $({\frak n}^+, q+r-1, \fm)= (\fn_{\ft}^+, r_{\ft}, m^{\ft}_1, \dots, m^{\ft}_{i-1}, q+m^{\ft}_i-1, m^{\ft}_{i+1}, \dots, m^{\ft}_{\ell_\ft})$. Thus $\depth(\fs)=\depth(\ft)$. As $\fs<\ft$, it follows that $\fs \prec \ft$ in the lexicographical order. On the LHS, we have a unique entry $q+r-1>q$ and $w(\fm)<q$. Thus there exists $i<\ell<\ell_{\ft}$ such that
      $$({\frak n}^+, q+r-1)=(\fn_{\ft}^+, r_{\ft}, m^{\ft}_1, \dots, m^{\ft}_{i-1}, q+m^{\ft}_i-1, m^{\ft}_{i+1}, \dots, m^{\ft}_{\ell}).$$
      Since $\fs=(\fn,q+r,\fm)<\ft=(\fn_{\ft},q+r_{\ft},\fm_{\ft})$, we get a contradiction.
      \end{itemize}

      Next, we assume that $\widetilde{\ft}= ({\frak n}^+, q+r-1, \fm)$ appears among $({\frak n}_{\ft},1,(q-1)*(r_{\ft},\frak m_{\ft}))$. By the depth equality, we get
      \[\depth(\fn_{\ft}) + \depth(\fm_{\ft})+2 \le \depth(\widetilde{\ft})\le \depth(\fn_{\ft}) + \depth(\fm_{\ft})+3.\]
      But
      \begin{align*}
      \depth(\fs) &= \depth(\widetilde{\ft}), \\
      \depth(\ft) &=\depth(\fn_{\ft}) + \depth(\fm_{\ft})+1.
      \end{align*}
      Therefore,
      \[\depth(\fs) > \depth(\ft),\] which is a contradiction.

       This proves the claim (A).
      \end{proof}

\medskip
\noindent
      \textbf{Case 4.2: $r=1$ and $\fm=(m_1,\dots,m_\ell) \ne (\{1\}^\ell)$.}

      Let $1 \le k \le \ell$ be the smallest index such that $m_k > 1$. We can write that
      \[\fs = (\fn, q+1, \{1\}^{k-1}, m_k,  \dots, m_{\ell}).\]
      By Eq. \eqref{eq:bound for weight}, we know that $w(\fn)+w(r,\fm) \leq q+1$. As $\fn \neq \emptyset$, $m_k>1$ and $r>1$, it follows that $w(\fn)<q$ and $1<w(\fm)<q$.

      We put
      \begin{align*}
      \widetilde{\ft} =& \left(\fn^+, 1, m_1, \dots, m_{k-1}, m_k + q-1, m_{k+1}, \dots, m_\ell\right)\\
      = & \left(\fn^+, \{1\}^{k}, m_k + q-1, m_{k+1},\dots, m_\ell\right).\end{align*}
      We note that $\depth(\widetilde{\ft}) = \depth(\fn) + \depth(\fm) + 1 = \depth(\fs)$.

      Among Eq. \eqref{eq: revisted expression 1}, $\widetilde{\ft}$ occurs once in the expansion in $(\fn^+, (q-1)*(r, \fm))$. Also, it is not occurring in $(\fn, 1, (q-1)*(r,\fm))$, which is clear by the depth comparison.

\medskip
\noindent
      \textbf{Claim (B).} We claim that $\widetilde{\ft}$ is not occurring in the expansion of
      \begin{align*}
      & \mathbf{1}_{\fn_\ft\ne\emptyset}\cdot({\frak n}_{\ft}^+,(q-1)*(r_{\ft},\frak m_{\ft})), \quad \text{nor}\\
      &  (\frak n_{\ft},1,(q-1)*(r_{\ft},\frak m_{\ft})),
      \end{align*}
      for any $\ft=(\fn_{\ft}, q+r_{\ft}, \fm_{\ft})$ with $\ft > \fs$.

      \begin{proof}[Proof of the claim (B)]
        By the depth comparison, we can narrow down the possible candidates of $\widetilde{\ft}$. We are to find $\widetilde{\ft}$ among one of the terms in $({\frak n}_{\ft}^+,(q-1)*(r_{\ft},\frak m_{\ft}))$ with condition that $\fn_{\ft}$ is non-empty. We also assert that $\depth(\widetilde{\ft})=\depth(\ft) = \depth(\fs)$.

        First we suppose that
        \begin{align*}
          \widetilde{\ft} = \left(\fn^+, \{1\}^{k}, m_k + q-1, m_{k+1},\dots, m_\ell\right) = (\fn_{\ft}^+, q+r_{\ft}-1, \fm_{\ft})
        \end{align*}
        for some $\fn_\ft \ne \emptyset$. We list the following cases:
        \begin{itemize}
          \item If $\depth(\fn_\ft) < \depth(\fn)$, then $\fn_{\ft}^+$ is a proper prefix of $\fn^+$. In this case, we have $\ft< \fs$, which is a contradiction.
          \item If $\depth(\fn_\ft) = \depth(\fn)$, then $\fn_\ft = \fn$ and $1=q+r_{\ft}-1$, which is a contradiction.
          \item If $\depth(\fn)<\depth(\fn_\ft) \leq \depth(\fn,\{1\}^{k})$, then the last entry of $\fn_\ft^+$ is greater than $1$, while the entry at the same index in $\widetilde{\ft}$ is $1$. We get a contradiction.
          \item If $\depth(\fn_\ft) > \depth(\fn,\{1\}^{k})$, then it forces $m_k=2$,
         $$\left(\fn^+, \{1\}^{k}, q+1\right) = \fn_{\ft}^+, \quad \text{ and } \quad 
         \left(m_{k+1},\dots, m_\ell\right) = (q+r_{\ft}-1, \fm_{\ft}).$$
         Thus $w(\fm)\geq 2+(q-1)=q+1$, which implies $w(\fs)>2q+1$ and we obtain a contradiction.
        \end{itemize}

         Next we suppose that for some $i$,
        \begin{align*}
          \widetilde{\ft} &= \left(\fn^+, \{1\}^{k}, m_k + q-1, m_{k+1},\dots, m_\ell\right) \\
          &= (\fn_{\ft}^+, r_\ft, m^\ft_1, \dots, m^\ft_{i-1}, q+m^\ft_i-1, m^\ft_{i+1}, \dots, m^\ft_{\ell_\ft})
        \end{align*}
        We list the following cases:
        \begin{itemize}
          \item If $\depth(\fn_\ft) < \depth(\fn)$, then $\fn_{\ft}^+$ is a proper prefix of $\fn^+$. In this case, we have $\ft< \fs$, which is a contradiction.
          \item If $\depth(\fn_\ft) = \depth(\fn)$, then
          \begin{align*}
             \fn_\ft = \fn, \quad r_\ft = 1, \quad \text{ and } \quad
          \end{align*}
          \begin{align*}
          \left(\{1\}^{k-1}, m_k + q-1, m_{k+1},\dots, m_\ell\right) = (m^\ft_1, \dots, m^\ft_{i-1}, q+m^\ft_i-1, m^\ft_{i+1}, \dots, m^\ft_{\ell_\ft})
          \end{align*}
          As $w(\fm)<q$, it implies that $\fm_\ft = \fm$, i.e., $\ft = \fs$. This contradicts the assumption $\ft > \fs$ in the (depth-lex) order.
          \item If $\depth(\fn)<\depth(\fn_\ft) \leq \depth(\fn,\{1\}^{k})$, then the last entry of $\fn_\ft^+$ is greater than $1$, while the entry at the same index in $\widetilde{\ft}$ is $1$. So we get a contradiction.
          \item If $\depth(\fn_\ft) > \depth(\fn,\{1\}^{k})$, then it forces $m_k=2$,
         $$\left(\fn^+, \{1\}^{k}, q+1\right) = \fn_{\ft}^+, \quad \text{ and } $$
         $$\left(m_{k+1},\dots, m_\ell\right) = (r_\ft, m^\ft_1, \dots, m^\ft_{i-1}, q+m^\ft_i-1, m^\ft_{i+1}, \dots, m^\ft_{\ell_\ft}).$$
         Thus $w(\fm)\geq 2+(q-1)=q+1$, which implies $w(\fs)>2q+1$ and we obtain a contradiction.
        \end{itemize}

        This proves the claim (B). 
        \end{proof}

\medskip
\noindent
        \textbf{Case 4.3: $r=1$ and $\fm = (\{1\}^{\ell})$.}

        In this case, we have $\fs = (\fn, q+1, \{1\}^{\ell})$. We put
        $$\widetilde{\ft} = \left(\fn^+, \{1\}^{\ell}, q\right).$$
	   Among Eq. \eqref{eq: revisted expression 1}, $\widetilde{\ft}$ occurs once in the expansion in $(\fn^+, (q-1)*(r, \fm))$. Also, it is not occurring in $(\fn, 1, (q-1)*(r,\fm))$.

\medskip
\noindent
      \textbf{Claim (C)}. We claim that $\widetilde{\ft} = \left(\fn^+, \{1\}^{\ell}, q\right)$ is not occurring in the expansion of
      \begin{align*}
        & \mathbf{1}_{\fn_\ft\ne\emptyset}\cdot({\frak n}_{\ft}^+,(q-1)*(r_{\ft},\frak m_{\ft})), \quad \text{nor}\\
        &  (\frak n_{\ft},1,(q-1)*(r_{\ft},\frak m_{\ft})),
       \end{align*}
       for any $\ft=(\fn_{\ft}, q+r_{\ft}, \fm_{\ft})$ with $\ft > \fs$.

       \begin{proof}[Proof of the claim (C)]
       We note that if $\fs < \ft$ then $\widetilde{\ft}\le \ft$. Thus, it follows from Lemma~\ref{lemma: special case}.
       \end{proof}

       We return to the proof of Proposition \ref{prop: small weights type 4}. From Cases 4.1, 4.2 and 4.3, there always exists  a tuple $\widetilde{\ft}$  which appears in Eq. \eqref{eq: revisted expression 1} among  the $D_1$-level tuples, i.e., among
$$\mathbf{1}_{\fn \ne \emptyset}\cdot (\fn^{+}, (q-1)*(r, \fm)) +  (\fn, 1, (q-1)*(r,\fm)),$$
such that in the corresponding equation \eqref{eq: trivial induced relation}, the coefficient of $\widetilde{\ft}$ is nonzero. This contradicts the fact that Eq. \eqref{eq: trivial induced relation} is trivial.
\end{proof}

We now prove the main theorem of this section.
\begin{proof}[Proof of Theorem \ref{thm: small weights}]
Theorem \ref{thm: small weights} follows immediately from Propositions \ref{prop: small weights type 1}, \ref{prop: small weights type 3} and \ref{prop: small weights type 4}.
\end{proof}


\section{Proofs of Theorems \ref{thm: weight 2q+1} and \ref{thm: weight up to 2q}} \label{sec: main results}

\subsection{Proof of Theorem \ref{thm: weight up to 2q}} \ppar

Let $w \in \mathbb N$ such that $w \leq 2q$. We consider the set of nontrivial $\Fq$-linear relations among CMPL's of weight $w$. Suppose that this set is non-empty. Then we denote by $\mathcal R$ the nontrivial $\Fq$-linear relation such that $s(\mathcal R)$ is the largest tuple. By Corollary~\ref{cor: small weights}, $s(\mathcal R)=(q+2,q-1)$ which is of weight $2q+1$ and we obtain a contradiction.

We have proved that the set of CMPL's of weight $w$ are linearly independent over $\Fq$. By Theorem \ref{thmx: new connection}, we conclude that the set of MZV's of weight $w$ are also linearly independent over $\Fq$ as desired.

\subsection{Proof of Theorem \ref{thm: weight 2q+1}} \ppar

Let $w=2q+1$. We consider the set of nontrivial $\Fq$-linear relations among CMPL's of weight $w$. Suppose that this set is non-empty. Then we denote by $\mathcal R$ the nontrivial $\Fq$-linear relation such that $s(\mathcal R)$ is the largest tuple. Then by Corollary~\ref{cor: small weights}, we conclude that $s(\mathcal R)=(q+2,q-1)$.

The existence and uniqueness of such $\mathcal R$ follows from Propositions \ref{prop: weight 2q+1 existence} and \ref{prop: weight 2q+1 uniqueness} below.

\begin{proposition} \label{prop: weight 2q+1 existence}
For all $q$, there exists an $\mathbb{F}_q$-linear relation $\mathcal R$ of CMPL's of weight $2q+1$ such that $s(\mathcal R)=(q+2,q-1)$:
\begin{align} \label{eq: relation 2q+1}
& \Li(q+2, q-1)+\Li(q,2,q-1) -\Li(1,q, q) \\
&-\Li(1,1, 2q-1)-\Li(1,1,q,q-1) -\Li(1,q-1, 1,q) \notag \\
& +\Li(q+1,1, q-1)+\Li(q,1,1,q-1) -\Li(1,1, q-1,q) =0. \notag
\end{align}

Further, this relation comes from a binary relation given by
\begin{align*}
& \Si_d(q+2, q-1)+\Si_d(q,2,q-1) -\Si_{d+1}(1,q, q) \\
&-\Si_{d+1}(1,1, 2q-1)-\Si_{d+1}(1,1,q,q-1) -\Si_{d+1}(1,q-1, 1,q) \\
& +\Si_d(q+1,1, q-1)+\Si_d(q,1,1,q-1) -\Si_{d+1}(1,1, q-1,q) =0.
\end{align*}
\end{proposition}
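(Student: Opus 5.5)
The plan is to prove the binary relation first and then obtain the $\Fq$-linear relation by summing over $d$. In the binary relation, the terms with index $d$ and those with index $d+1$ combine into a telescoping sum. Summing over all $d\ge -1$ turns every $\Si_d$ and every $\Si_{d+1}$ into the corresponding $\Li$ (note $\Si_{-1}=0$ and $\Si_{0}$ of any tuple of depth $\ge 2$ vanishes). This gives Eq.~\eqref{eq: relation 2q+1}. It remains to check that $(q+2,q-1)$ is the smallest tuple occurring, in the $(\depth,\lex)$-order. It is the only depth-two tuple whose first entry is not among the others. The other depth-two tuples... actually all remaining tuples have depth $3$ or $4$, so $s(\mathcal R)=(q+2,q-1)$ is immediate.

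To build the binary relation, I will combine Todd-type operators applied to $R_1$, following the derivation of Proposition \ref{prop: key expression}.

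First, apply $\mathcal C_{(2,q-1)}$ to $R_1$:
\[ \Si_d(q+2,q-1)+\Si_d(q,2,q-1)+D_1\Si_{d+1}\bigl(1,(q-1)*(2,q-1)\bigr)=0. \]
Second, apply $\mathcal C_{(1,1,q-1)}$ and $\mathcal C_{(1,q-1)}$ to $R_1$ (giving the $\Si_d(q+1,1,q-1)$ and $\Si_d(q,1,1,q-1)$ terms), and expand the $*$-products using Proposition \ref{prop: product CMPL} and Eq.~\eqref{eq:product depth1}.

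The resulting $D_1$-level terms $D_1\Si_{d+1}(1,\dots)$ cannot cancel among themselves over $\Fq$. Instead, I will rewrite them using a second relation of weight $2q+1$ containing $D_1$-multiples of the same tuples: namely, $R_1$ at level $d+1$, multiplied by $\Si_{d+1}(1)\Si_{<d+1}(q)$-type factors, i.e.\ $\mathcal B^*_1$ applied to $\mathcal C_{(q)}(R_1)$ or to $\mathcal C_{(q-1)}$-variants. The point is that $D_1\Si_{d+1}(1,q,q)$-type terms can be traded, via $R_1$, for $\Si_{d+1}(1,q,q)$ and $\Si_{d+1}(1,1,q-1,q)$ etc. Concretely, I expect
\[ D_1\Si_{d+1}(1,q,q-1,\ldots)=-\Si_{d+1}(1,\ldots)+\ldots, \]
obtained by using $\Si_{d}(q)=-D_1\Si_{d+1}(1,q-1)$ inside the inner sum. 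This works because $\Si_{<d+1}$ of a tuple starting with $q$ equals $-D_1$ times the corresponding $\Si_{<d+2}$ with $(1,q-1)$, minus a boundary term. The boundary terms at level $d+1$ are exactly what produce the $\Si_{d+1}$ summands in the claimed identity.

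Finally, I will take the appropriate $\Fq$-linear combination so that all $D_1$-terms cancel, and verify that what remains is the displayed binary relation.

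The main obstacle is the bookkeeping of the $*$-products $(q-1)*(2,q-1)$, $(q-1)*(1,1,q-1)$ and so on, which produce the coefficients of Chen-type carries. In particular, the term $\Si(1,1,2q-1)$ arises from $\Si(q-1)\Si(q)$-type merges. Checking that the $D_1$-coefficients vanish exactly, for all $q$ and with signs that are correct in characteristic $p$, requires care. I would first verify the identity numerically at small $d$ (e.g.\ $d=0,1,2$ with $q=2,3$) to pin down the signs before writing the general cancellation.
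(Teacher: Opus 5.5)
Your summation over $d$ and your identification of $s(\mathcal R)=(q+2,q-1)$ as the unique depth-two tuple are fine. The gap is that the binary relation itself, which is the entire content of the statement, is never constructed. You correctly name $\mathcal C_{(2,q-1)}(R_1)$ and $\mathcal C_{(1,1,q-1)}(R_1)$, but the remaining ingredients are left as ``$\Si_{d+1}(1)\Si_{<d+1}(q)$-type factors'' and an unspecified ``appropriate $\Fq$-linear combination''. The concrete guesses you do make do not fit: $\mathcal C_{(1,q-1)}(R_1)$ has weight $2q$, and $\mathcal B^*_1\circ\mathcal C_{(q)}(R_1)$ brings in $\Si_d(1,2q)$, which does not occur in the target. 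Checking $d=0,1,2$ for $q=2,3$ cannot establish an identity for all $d$ and all $q$. There are also no Chen-type carries to worry about: since $\Si_d(a)\Si_d(b)=\Si_d(a+b)$ exactly, $*$ is the plain stuffle product with all coefficients $1$. The required cancellation is therefore a formal identity, uniform in $q$.

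The missing step is to take the other four tuples of the statement lying outside $\mathcal J_{2q+1}$, namely $(1,q,q)$, $(1,1,2q-1)$, $(1,q-1,1,q)$ and $(1,1,q-1,q)$. For each, write its key expression from Proposition~\ref{prop: key expression} as a fixed relation evaluated at level $d+1$. These are $\mathcal B^*_{(1,q)}(R_1)$, $\mathcal B^*_{(1,1)}\circ\mathcal C_{(q-1)}(R_1)$, $\mathcal B^*_{(1,q-1,1)}(R_1)$ and $\mathcal B^*_{(1,1,q-1)}(R_1)$. For example, the first reads $\Si_{d+1}(1,q,q)+D_1\Si_{d+1}(1,q+1,q-1)+D_1\Si_{d+1}(1,q,1,q-1)=0$; this is your ``use $R_1$ inside the inner sum'' made precise. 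The displayed binary relation is $\mathcal C_{(2,q-1)}(R_1)(d)+\mathcal C_{(1,1,q-1)}(R_1)(d)$ minus these four relations at level $d+1$. The surviving non-$D_1$ terms are exactly the nine in the statement. Every $D_1$-term has the form $D_1\Si_{d+1}(1,\dots)$, so, contrary to your remark, they cancel among themselves, provided that after stripping the leading $1$
\begin{align*}
(q-1)*(2,q-1)+(q-1)*(1,1,q-1) ={}& (q+1,q-1)+(q-1,2,q-1)+(2,(q-1)*(q-1))+(q,1,q-1)\\
&+(q-1,1,1,q-1)+(1,q,q-1)+(1,q-1,1,q-1)+(1,1,(q-1)*(q-1)).
\end{align*}
This follows from $(s)*(s_1,\fs_-)=(s+s_1,\fs_-)+(s,s_1,\fs_-)+(s_1,(s)*\fs_-)$. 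Apply it to $(q-1)*(2,q-1)$ and $(q-1)*(1,1,q-1)$, and then to the inner $(q-1)*(1,q-1)$.
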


\begin{proof}
We use \S \ref{sec: key expression} for the modified key expression as binary relations. Thus we get
\begin{align*}
\Si_d(q+2, q-1)  = &-\Si_d(q,2,q-1) - D_1 \Si_{d+1}(1,(q-1)*(2, q-1)),\\
-\Si_{d+1}(1,q, q)  = & D_1 \Si_{d+1}(1,q+1,q-1) + D_1 \Si_{d+1}(1,q,1,q-1), \\
-\Si_{d+1}(1,1, 2q-1)  = &\Si_{d+1}(1,1,q,q-1) + D_1 \Si_{d+1}(1,2,(q-1)*(q-1)) \\
& + D_1 \Si_{d+1}(1,1,1,(q-1)*(q-1)), \\
-\Si_{d+1}(1,q-1, 1,q)  = &D_1 \Si_{d+1}(1,q-1,2,q-1) + D_1 \Si_{d+1}(1,q-1,1,1,q-1), \\
\Si_d(q+1,1, q-1)  = & -\Si_d(q,1,1,q-1) - D_1 \Si_{d+1}(1,(q-1)*(1,1,q-1)), \\
-\Si_{d+1}(1,1, q-1,q)  = & D_1 \Si_{d+1}(1,1,q,q-1)+D_1 \Si_{d+1}(1,1,q-1,1,q-1).
\end{align*}

We collect the tuples from the $D_1$-level tuples. The sum of those terms, removed the first entry $(1)$ and divided by $D_1$, is:
\begin{align*}
& -(q-1)*(2, q-1) + (q+1,q-1) + (q,1,q-1)  \\
& + (2,(q-1)*(q-1)) + (1,1,(q-1)*(q-1)) \\
& + (q-1,2,q-1) + (q-1,1,1,q-1) \\
&- (q-1)*(1,1,q-1) + (1,q,q-1)+(1,q-1,1,q-1).
\end{align*}
We claim that this formal sum equals $0$. In fact, we split it into two sums:
\begin{align*}
S_1 = & -(q-1)*(2, q-1) + (q+1,q-1) \\
& + (2,(q-1)*(q-1)) + (q-1,2,q-1),  \\
S_2  = & (q,1,q-1) + (1,1,(q-1)*(q-1)) + (q-1,1,1,q-1)  \\
& - (q-1)*(1,1,q-1) + (1,q,q-1)+(1,q-1,1,q-1).
\end{align*}

We observe that for $s \in \bN$ and tuple $\fs=(s_1,\fs_-)$,
\begin{align*}
  (s)*(s_1, \fs_-) = (s+s_1, \fs_-) + (s,s_1, \fs_-)+ (s_1, (s)*\fs_-).
\end{align*}
Applying this formula for $(q-1)*(2, q-1)$, $(q-1)*(1,1,q-1)$ and $(q-1)*(1,q-1)$ yields $S_1=S_2=0$. Thus taking the sum over $d \in \mathbb Z$ of the binary relation yields the $\Fq$-linear relation \eqref{eq: relation 2q+1} and we complete the proof.
\end{proof}

\begin{proposition} \label{prop: weight 2q+1 uniqueness}
There exists at most one $\mathbb{F}_q$-linear relation $\mathcal R$ of CMPL's of weight $2q+1$ such that $s(\mathcal R)=(q+2,q-1)$.
\end{proposition}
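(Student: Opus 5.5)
The plan is to argue by contradiction, using the maximality trick behind Proposition \ref{prop: trivial induced relation} together with Corollary \ref{cor: small weights}. The precise statement to prove is the following: if $\mathcal R$ and $\mathcal R'$ are two $\Fq$-linear relations of weight $2q+1$ with $s(\mathcal R)=s(\mathcal R')=(q+2,q-1)$, normalized so that the coefficient of $\Li(q+2,q-1)$ is $1$ in both, then $\mathcal R=\mathcal R'$ as formal $\Fq$-combinations of tuples.

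\textbf{Step 1.} Suppose $\mathcal R\neq\mathcal R'$ and form the difference $\mathcal R''=\mathcal R-\mathcal R'$. This is a nontrivial $\Fq$-linear relation among CMPL's of weight $2q+1$. The leading term $\Li(q+2,q-1)$ cancels, so every tuple occurring in $\mathcal R''$ with nonzero coefficient is strictly greater than $(q+2,q-1)$. Hence
\[
s(\mathcal R'')>(q+2,q-1).
\]

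\textbf{Step 2.} The set of nontrivial $\Fq$-linear relations of weight $2q+1$ is nonempty, since it contains $\mathcal R''$. Let $\mathcal R_{\max}$ be a nontrivial relation in this set for which $s(\mathcal R_{\max})$ is largest; this exists because there are finitely many tuples of weight $2q+1$. Then
\[
s(\mathcal R_{\max})\ge s(\mathcal R'')>(q+2,q-1).
\]
On the other hand, Corollary \ref{cor: small weights} gives $s(\mathcal R_{\max})=(q+2,q-1)$. This is a contradiction, so $\mathcal R=\mathcal R'$.

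\textbf{Main check.} The step needing care is the hypothesis of Corollary \ref{cor: small weights}: that $s(\mathcal R)$ is the largest tuple among all nontrivial relations of weight $w$. That corollary rests on Proposition \ref{prop: trivial induced relation} and Theorem \ref{thm: small weights}, both of which already cover $w=2q+1$, so no new input is required.

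\textbf{Interpretation of "relation".} If "relation" means the formal combination of tuples rather than the numerical identity, the argument above suffices as written. If one also wants uniqueness up to the identifications $\Li(\fs)=\Li(\ft)$ for distinct tuples, note that such identifications are themselves two-term $\Fq$-relations. Any such relation has its minimal tuple strictly above $(q+2,q-1)$, or equal to it. The first case is excluded by the same maximality argument. In the second case, subtracting it from $\mathcal R$ produces a relation with larger minimal tuple, which is again excluded.
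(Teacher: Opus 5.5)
Your proof is correct and is essentially the paper's argument: subtract the two relations normalized at $\Li(q+2,q-1)$, observe that the difference $\mathcal R''$ is nontrivial with $s(\mathcal R'')>(q+2,q-1)$, and contradict Corollary~\ref{cor: small weights}. The only difference is that you spell out the passage through a relation with maximal $s$, which the paper leaves implicit. Your closing paragraph on numerical identifications is not needed: in the paper an $\Fq$-linear relation is a formal combination of tuples, and that is exactly the sense your Steps 1 and 2 handle.
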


\begin{proof}
Suppose that there exist two distinct $\mathbb{F}_q$-linear relations $\mathcal R, \mathcal R'$ of CMPL's of weight $2q+1$ such that $s(\mathcal R)=s(\mathcal R')=(q+2,q-1)$. We put $\fs=(q+2,q-1)$.

Suppose that $\mathcal R$ is given by
\[ \Li(\fs)+\sum_{\fs<\fs_i} a_i \Li(\fs_i) =0, \]
and that $\mathcal R'$ is given by
\[ \Li(\fs)+\sum_{\fs<\fs_i'} a_i' \Li(\fs_i') =0. \]
We denote by $\mathcal R''$ the $\mathbb{F}_q$-linear relation
\[ \sum_{\fs<\fs_i} a_i \Li(\fs_i) -\sum_{\fs<\fs_i'} a_i' \Li(\fs_i')=0. \]
Then this is a nontrivial $\mathbb{F}_q$-linear relation satisfying $s(\mathcal R'')>\fs=(q+2,q-1)$, which is a contradiction by Corollary \ref{cor: small weights}.
\end{proof}

Finally, we show that there is no other $\mathbb{F}_q$-linear relations $\mathcal R$ of CMPL's of weight $2q+1$.

\begin{proposition} \label{prop: weight 2q+1 uniqueness 2}
Let $\mathcal R$ be a nontrivial $\mathbb{F}_q$-linear relation of CMPL's of weight $2q+1$. Then $s(\mathcal R)=(q+2,q-1)$.
\end{proposition}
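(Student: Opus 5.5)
The plan is to argue by contradiction, using the operator $\Ind$ together with Theorem~\ref{thm: small weights}, and a simple observation about which tuples can occur at the $D_1$-level.

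Suppose there exist nontrivial $\Fq$-linear relations of weight $2q+1$ whose smallest tuple is different from $(q+2,q-1)$. Among all of them I choose one, $\mathcal R$, for which $s(\mathcal R)$ is as large as possible in the $(\depth,\lex)$-order. This is possible because there are finitely many tuples of weight $2q+1$. Put $\fs=s(\mathcal R)$.

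\emph{Step 1.} If $\Ind(\mathcal R)$ is trivial, Theorem~\ref{thm: small weights} gives $\fs=(q+2,q-1)$, contradicting the choice of $\mathcal R$. Hence $\Ind(\mathcal R)$ is nontrivial. By the remark following Eq.~\eqref{eq: induced relation 2}, every tuple occurring in $\Ind(\mathcal R)$ is strictly greater than $\fs$. So $s(\Ind(\mathcal R))>\fs$, where $s$ is computed after collecting coefficients of equal tuples.

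\emph{Step 2.} By maximality of $\fs$, the relation $\Ind(\mathcal R)$ cannot have smallest tuple different from $(q+2,q-1)$. Therefore $s(\Ind(\mathcal R))=(q+2,q-1)$. In particular, the tuple $(q+2,q-1)$ must appear in the formal expression \eqref{eq: induced relation 2} with nonzero total coefficient.

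\emph{Step 3 (the main point).} I will show that $(q+2,q-1)$ never occurs among the $D_1$-level tuples
\[
\mathbf{1}_{\fn_{\ft}\ne\emptyset}\cdot(\fn_{\ft}^+,(q-1)*(r_{\ft},\fm_{\ft})) \quad\text{and}\quad (\fn_{\ft},1,(q-1)*(r_{\ft},\fm_{\ft})),
\]
for any tuple $\ft$ (including $\ft=\fs$). This contradicts Step 2 and proves the proposition. The check is on the first entry only.
\begin{itemize}
\item In the first family, $\fn_{\ft}$ is nonempty and its entries lie in $\{1,\dots,q\}$. So the first entry of $\fn_{\ft}^+$ is at most $q+1$.
\item In the second family, the first entry is either an entry of $\fn_{\ft}$, hence at most $q$, or equal to $1$ when $\fn_{\ft}=\emptyset$.
\end{itemize}
In both cases the first entry is never $q+2$. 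Hence $(q+2,q-1)$ has coefficient zero in $\Ind(\mathcal R)$.

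The only delicate point is bookkeeping: $s(\Ind(\mathcal R))$ must be read off after collecting terms, so that the comparison with Step 2 uses total coefficients. Since $(q+2,q-1)$ does not occur in any summand at all, no cancellation issue arises, and the contradiction is clean.
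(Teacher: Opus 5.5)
Your proof is correct and is essentially the paper's argument: you take a counterexample $\mathcal R$ with $s(\mathcal R)$ maximal, use Theorem~\ref{thm: small weights} to force $\Ind(\mathcal R)$ to be nontrivial, deduce $s(\Ind(\mathcal R))=(q+2,q-1)$ by maximality, and conclude because no $D_1$-level tuple can begin with $q+2$. Your first-entry check (at most $q+1$ in the first family; at most $q$, or equal to $1$, in the second) just makes explicit the paper's remark that $q+2$ cannot be a prefix of any tuple appearing in $\Ind(\mathcal R)$.
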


\begin{proof}
Recall that $w=2q+1$. We consider the set of all nontrivial $\mathbb{F}_q$-linear relations of CMPL's of weight $2q+1$ such that $s(\mathcal R) \ne (q+2,q-1)$. Suppose that this set is non-empty. Then we choose a nontrivial relation $\mathcal R$ in this set such that $s(\mathcal R)$ is the largest one.

Now we consider the induced relation $\Ind(\mathcal R)$. We claim that it is nontrivial. Otherwise, by Theorem \ref{thm: small weights}, $s(\mathcal R)=(q+2,q-1)$ which is a contradiction.

Since $\Ind(\mathcal R)$ is nontrivial, we know that
$$s(\Ind(\mathcal R))>s(\mathcal R).$$
By the choice of $\mathcal R$, we deduce that $s(\Ind(\mathcal R))=(q+2,q-1)$.

Recall that $\Ind(\mathcal R)$  is given as in Eq. \eqref{eq: induced relation 2} by
\begin{align*}
\begin{split}
  0=& \mathbf{1}_{\fn \ne \emptyset}\cdot \Li(\fn^{+}, (q-1)*(r, \fm)) +  \Li(\fn, 1, (q-1)*(r,\fm)) \\
  &+ \sum_{\ft>\fs, \, \ft \notin \mathcal J_w} a_{\ft} \left (\mathbf{1}_{\fn_{\ft} \ne \emptyset}\cdot \Li(\fn_{\ft}^{+}, (q-1)*(r_{\ft}, \fm_{\ft})) + \Li(\fn_{\ft}, 1, (q-1)*(r_{\ft},\fm_{\ft})) \right).\end{split}
\end{align*}
Here $\fs=(\fn,q+r,\fm)$. We obtain a contradiction as $q+2$ cannot be a prefix of any tuples appearing in the previous expression.
\end{proof}

We now write down explicitly the unique $\mathbb{F}_q$-linear relation among MZV's of weight $2q+1$.

\begin{proposition} \label{prop: MZV linear dependence in 2q+1 case}
For all $q$, there exists a unique $\mathbb{F}_q$-linear relations among MZV's of weight $2q+1$:
\begin{align*}
  & \zeta_A(q+2, q-1)+2\zeta_A(3, 2q-2)  +\zeta_A(q, 2, q-1) - \zeta_A(1, q, q) \\
  &- \zeta_A(1, 1, 2q-1)- \zeta_A(1, q-1, 1, q) + \zeta_A(q+1, 1, q-1) + \zeta_A(2, q-1, 1, q-1) \\
  & +\zeta_A(2, q, q-1) + \zeta_A(2, 1, 2q-2) + \zeta_A(q, 1, 1, q-1) - \zeta_A(1, 1, q-1, q)=0.
\end{align*}
\end{proposition}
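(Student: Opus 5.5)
The plan is to transport the CMPL relation \eqref{eq: relation 2q+1} of Proposition \ref{prop: weight 2q+1 existence} to MZV's term by term. Uniqueness will then follow from the CMPL side via Theorem \ref{thmx: new connection}.

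\textbf{Existence.} By Lemma \ref{lemma:identities S Si}, every tuple with all entries $\le q$ satisfies $\Li(\fs)=\zeta_A(\fs)$. In \eqref{eq: relation 2q+1} this covers $(q,2,q-1)$, $(1,q,q)$, $(1,1,q,q-1)$, $(1,q-1,1,q)$, $(q,1,1,q-1)$ and $(1,1,q-1,q)$. The remaining CMPL's are $\Li(q+2,q-1)$, $\Li(1,1,2q-1)$ and $\Li(q+1,1,q-1)$, and I will convert each into MZV's using the level-$d$ identities of Propositions \ref{prop: CMPL} and \ref{prop: MZV}.

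\begin{itemize}
\item \emph{Depth one.} Since $\Si_d(a)=\Si_d(1)^a$ and $S_d(1)=\Si_d(1)$, I will compute $\Si_d(q+2)=S_d(q)S_d(2)$ and $\Si_d(q+1)=S_d(q)S_d(1)$ via Chen's formula \eqref{eq: sum MZV}. Here $\Si_d(q)=S_d(q)$ by Lemma \ref{lemma:identities S Si}. Only $i=q-1$ contributes to $\Delta^i$. For example, $\Delta^{q-1}_{q,1}=(-1)^{q-1}\binom{q-2}{q-1}+\binom{q-2}{0}=1$, so $S_d(q)S_d(1)=S_d(q+1)+S_d(2,q-1)$. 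In the same way I will compute $\Delta^{q-1}_{q,2}$ and also handle the possible $i=2(q-1)$ term when $q=2,3$.
\item \emph{Depth two and higher.} Writing $\Si_d(s_1,\fs_-)=\Si_d(s_1)\Si_{<d}(\fs_-)$, the factor $\Si_{<d}(q-1)$ or $\Si_{<d}(1,q-1)$ already equals the corresponding $S_{<d}$. The product $S_d(a)S_{<d}(\fb)$ is then expanded by Proposition \ref{prop: product MZV}(3). Concretely, this is a stuffle-type expansion in which each depth-one product $S_d(a)S_d(b)$ is again evaluated by Chen's formula.
\item \emph{The term $(1,1,2q-1)$.} This needs $\Si_{<d}(2q-1)$, i.e. $\Si_d(2q-1)=S_d(q)S_d(q-1)$. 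Chen's formula gives $S_d(2q-1)$ plus terms $S_d(2q-1-i,i)$ with $i\in\{q-1,2q-2\}$.
\end{itemize}

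Substituting all of these and collecting terms should reproduce the twelve-term identity of Proposition \ref{prop: MZV linear dependence in 2q+1 case}. The new tuples $(3,2q-2)$, $(2,q-1,1,q-1)$, $(2,q,q-1)$ and $(2,1,2q-2)$ should arise exactly from the $\Delta$-correction terms, for instance $\Si_d(q+2)=S_d(q+2)+(\ldots)S_d(3,q-1)$, followed by further stuffles with $\Si_{<d}(q-1)$.

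\textbf{Main obstacle.} I expect the hardest step to be the bookkeeping of the binomial coefficients $\Delta^i_{s,t}$ modulo $p$. In particular I must verify the coefficient $2$ on $\zeta_A(3,2q-2)$ and check that the intermediate $S_d(2q-1)$-type terms cancel. Small $q$ (such as $q=2,3$, where $2q-2=q-1$ or extra $\Delta$'s appear) must be checked separately, or shown to give the same formal output.

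\textbf{Uniqueness.} Suppose two MZV relations of weight $2q+1$ are not proportional. By Proposition \ref{prop: MZV}, applying the same level-$d$ substitution to each gives CMPL relations. These CMPL relations are nontrivial because $\Fq$-spans coincide, so a nonzero $\Fq$-combination of MZV symbols that vanishes gives a nonzero element of the kernel of the map from formal symbols to values. By Propositions \ref{prop: weight 2q+1 uniqueness} and \ref{prop: weight 2q+1 uniqueness 2}, the space of CMPL relations of weight $2q+1$ is one-dimensional.

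To conclude, I will argue at the level of formal spans. Let $V_{\mathrm{MZV}}$ and $V_{\mathrm{Li}}$ be the free $\Fq$-spaces on tuples of weight $2q+1$, each mapping onto the same span $W$. The two spaces have the same dimension (the number of tuples), so the kernels have equal dimension. Hence the MZV relation space is also one-dimensional, spanned by the explicit relation.
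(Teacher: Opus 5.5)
Your proposal is correct and follows the paper's route. The paper likewise reduces to the three CMPL's $\Li(q+2,q-1)$, $\Li(q+1,1,q-1)$, $\Li(1,1,2q-1)$ and uses Chen's formula to obtain $\Si_d(q+1)=S_d(q+1)+S_d(2,q-1)$, $\Si_d(q+2)=S_d(q+2)+2S_d(3,q-1)$ and $\Si_d(2q-1)=S_d(2q-1)-S_d(q,q-1)$. It also uses $S_{<d}(q-1)S_{<d}(q-1)=S_{<d}(2q-2)$, and this is the cancellation your stuffle step actually needs: the $(q-1,q-1)$-terms drop out because $\Delta^{q-1}_{q-1,q-1}=-2$ in $\Fq$, not $S_d(2q-1)$-type terms as you guessed.

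For uniqueness, your closing dimension count is exactly what the paper's appeal to Theorem~\ref{thmx: new connection} implicitly uses. Both formal spaces have the same number of generators and surject onto the same $\Fq$-span, so their kernels have equal dimension. That count stands on its own. The preceding remark that the substituted CMPL relations are automatically nontrivial is not justified, since nothing shows the formal substitution map is injective, but you do not need it.
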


\begin{proof}
By Eq. \eqref{eq: relation 2q+1},
\begin{align*}
& \Li(q+2, q-1)+\Li(q,2,q-1) -\Li(1,q, q) \\
&-\Li(1,1, 2q-1)-\Li(1,1,q,q-1) -\Li(1,q-1, 1,q) \notag \\
& +\Li(q+1,1, q-1)+\Li(q,1,1,q-1) -\Li(1,1, q-1,q) =0. \notag
\end{align*}

We note that for all $d \in \mathbb Z$ and $1 \leq s \leq q$,
\begin{align*}
S_d(s)&=\Si_d(s).
\end{align*}
Thus we have to express only the terms $\Li(q+2, q-1), \Li(1,1, 2q-1),\Li(q+1, 1, q-1)$ as linear combinations of MZV's as in Theorem \ref{thm: new connection}.

By Eqs. \eqref{eq: sum MZV} and \eqref{eq:product depth1}, for all $d \in \mathbb Z$, we have
\begin{align*}
\Si_d(q+1)&=\Si_d(q-1) \Si_d(2)=S_d(q-1) S_d(2) \\
&=S_d(q+1)+S_d(2,q-1), \\
\Si_d(q+2)&=\Si_d(q) \Si_d(2)=S_d(q) S_d(2) \\
&=S_d(q+2)+2 S_d(3,q-1), \\
\Si_d(2q-1)&=\Si_d(q-1) \Si_d(q)=S_d(q-1) S_d(q) \\
&=S_d(2q-1)- S_d(q,q-1), \\
S_{<d}(2q-2)&=S_{<d}(q-1) S_{<d}(q-1).
\end{align*}
Thus
\begin{align*}
\Si_d(q+2, q-1)&=S_d(q+2, q-1)+2S_d(3, 2q-2), \\
\Si_d(q+1,1, q-1)&=S_d(q+1, 1, q-1)+S_d( 2, q-1, 1,q-1)\\
&\quad+S_d(2, q,q-1)+S_d(2, 1,2q-2), \\
\Si_d(1,1, 2q-1)&=S_d(1,1, 2q-1)-S_d(1,1,q,q-1).
\end{align*}

Putting these altogether yields the corollary.
\end{proof}

We return to the proof of Theorem \ref{thm: weight 2q+1}. From Propositions \ref{prop: weight 2q+1 existence}, \ref{prop: weight 2q+1 uniqueness} and \ref{prop: weight 2q+1 uniqueness 2}, it follows that there exists a unique and explicit linear relations over $\Fq$​ among CMPL's of weight $2q+1$. Theorem \ref{thm: weight 2q+1} follows from this result and Theorem \ref{thmx: new connection}. Further, the unique $\Fq$-linear relation among MZV's of weight $2q+1$ is given as in Proposition~\ref{prop: MZV linear dependence in 2q+1 case}.


\appendix

\section{BCP relations} \label{sec: BCP relations}

In the addendum to our work, we discuss findings beyond the threshold $w=2q+1$. In this section we construct $\Fq$-linear relations called BCP relations in \S \ref{sec: BCP relations} and use them to get all the $\Fq$-linear relations of weight $w<3q$ (see Theorem \ref{thm: relations from P, B, C}), extending Theorems \ref{thm: weight 2q+1} and \ref{thm: weight up to 2q}. We also present speculations for general $w$ in \S \ref{sec: speculations}. The proof of Theorem \ref{thm: relations from P, B, C} is given in \S \ref{sec: w<3q Part 1} and \S \ref{sec: w<3q Part 2}.

\subsection{Binary relations and the Todd's operators revisited} \ppar

We define the set $\mathfrak{LC}_w$ of linear combinations as follows. Let $d$ be an integer. A binary linear combination at level $d$ is an $\mathbb{F}_q(D_1)$-linear combination of $\Si_{<d}(\fs_i)$'s and $\Si_{<d+1}(\ft_i)$'s, where weights of $\fs_i$'s and $\ft_j$'s are all $w$. In other words, the following expression is said to be a binary linear combination of weight $w$: \[l(d) = \sum_{i}a_i \Si_{<d} (\fs_i) + \sum_j b_j \Si_{<d+1}(\ft_j), \quad a_i, b_j \in \mathbb{F}_q(D_1),\ w(\fs_i) =w(\ft_j) = w.\]

If $l\in \mathfrak{LC}_w$, then $l(d)$ is the realization of the binary linear combination at level $d$. For $l, l' \in \mathfrak{LC}_w$ and $c \in \mathbb{F}_q(D_1)$ we define $l + cl' \in \mathfrak{LC}_w$ as a linear combination, whose realization at level $d$ is $l(d) + cl'(d)$. If $l(d) = 0$ for all $d$, then we write $l=0$; it is the binary relations as we previously defined in \S \ref{sec: binary relations}. Also, if $l-l'=0$ is the binary relation, we denote $l=l'$.

Let $l$ be a binary linear combination given by
\[l(d) = \sum_{i}a_i \Si_{<d} (\fs_i) + \sum_j b_j \Si_{<d+1}(\ft_j).\]
We define $l$ to be a fixed binary linear combination when $b_j =0$ for all $j$. We define $l$ to be a binary $\mathbb{F}_q$-linear combination when $a_i$'s and $b_j$'s are elements of $\mathbb{F}_q$ for all $i$'s and $j$'s.

For example, from the fundamental relation
\[\Si_d(q) = - D_1 \Si_{d+1}(1, q-1),\] 
we obtain
\[\Si_{<d}(q) = - D_1 \Si_{<d+1}(1, q-1).\]

\begin{definition} \label{defn: A and B}
Let $A(d) = \Si_{<d}(q)$ and $B(d) = \Si_{<d+1}(1, q-1)$. 
\end{definition}
Then $A$ and $B$ are binary $\mathbb{F}_q$-linear combination, and $A$ is a fixed binary linear combination. We write $A = -D_1 \cdot B$.

\subsection{Operators $\mathcal{D}$ and $\mathcal{D} \circ \iota$} \label{sec: operator D} \ppar

Let $l \in \mathfrak{LC}_w$ given by \[l(d) = \sum_{i}a_i \Si_{<d} (\fs_i) + \sum_j b_j \Si_{<d+1}(\ft_j).\]
We define $\mathcal{D}_n(l)\colon \mathfrak{LC}_w \to\mathfrak{LC}_{w+n}$ to be
\begin{align*}
  \mathcal{D}_n(l)(d) := \sum_{\ell<d}\Si_\ell(n)\cdot l(\ell).
\end{align*}

Recall that $A$ is a linear combination given by $A(d) = \Si_{<d}(q)$ (see Definition \ref{defn: A and B}), then
$\Si_d(1) \cdot A(d) = \Si_d(1)\Si_{<d}(q) = \Si_d(1, q)$ thus
\begin{align*}
  \mathcal{D}_1(A)(d) = \Si_{<d}(1, q).
\end{align*}
We define $\mathcal{D}_{(n_1,\dots, n_r)} = \mathcal{D}_{n_1}\circ \dots \circ \mathcal{D}_{n_r}$.

We also define the binary linear combination $\iota(l)$ when a fixed binary linear combination $l$ is given. Namely, if
    \[l(d) = \sum_{i}a_i \Si_{<d} (\fs_i)\]
then we define $\iota(l)$ to be a binary linear combination with
    \[\iota(l)(d) := \sum_{i}a_i \Si_{<d+1} (\fs_i).\]
That is, it replaces $\Si_{<d}$ into $\Si_{<d+1}$ for a fixed binary linear combination. For example, $\iota(A)(d)= \Si_{<d+1}(q)$.

Let $R\in \mathfrak{BR}$ be a binary linear relation given by
\[ R(d): \quad\sum a_i \Si_{d}(\fs_i) + \sum b_j \Si_{d+1}(\ft_j)=0\]
It induces a binary linear combination $l \in \mathfrak{LC}$ with
\[ l(d) = \sum a_i \Si_{<d}(\fs_i) + \sum b_j \Si_{<d+1}(\ft_j).\]
The process to obtain $\mathcal{B}^*_n(R)(d)$ and $\mathcal{D}_n(l)(d)$ is exactly the same. Precisely, \[\mathcal{D}_n(\ell)=0\] will be written as
\[\sum_{\ell<d}\mathfrak{B}^*_n(R)(\ell).\]

Another building block of the recipe is $\mathcal{D}_n \circ \iota$. We mention that $(\mathcal{D}_n\circ\iota) \left(\Si_{<d}(\mathfrak{s})\right)$ is related to $\mathcal{C}_{\mathfrak{s}} \left(\Si_d(n)\right)$ as
\begin{align*}
  & \mathcal{C}_{\mathfrak{s}} \left(\Si_d(n)\right) = \Si_d(n) \Si_{<d+1}(\fs).
\end{align*}

\subsection{Operator $\mathcal{D}^*$} \label{sec: operator Dstar} \ppar

Let $l$ be a fixed linear combination given by
\[ l(d) = \sum a_i \Si_{<d}(\fs_i).\]
We recall that
    \[\iota(l)(d) = \sum_{i}a_i \Si_{<d+1} (\fs_i),\]
and then
\begin{align*}
(\mathcal{D}_q\circ \iota)(l)(d)& =\sum_{\ell<d}\Si_\ell(q)\cdot \iota(l)(\ell) \\
& =\sum_{\ell<d}\Si_\ell(q)\cdot \sum_{i}a_i \Si_{<\ell+1} (\fs_i).
\end{align*}

As $\Si_\ell(q)=-D_1 \Si_{\ell+1}(1,q-1)$, we simply define
\begin{align*}
(\mathcal{D}^*_q\circ \iota)(l)(d)& :=-D_1 \sum_{\ell<d} \Si_{\ell+1}(1,q-1)\cdot \iota(l)(\ell) \\
& =-D_1 \sum_{\ell<d} \Si_{\ell+1}(1,q-1) \cdot \sum_{i}a_i \Si_{<\ell+1} (\fs_i) \\
& =-D_1 \sum_{\ell<d} \sum_{i}a_i \Si_{\ell+1} (1,(q-1)*\fs_i)\\
& =-D_1 \sum_{i}a_i \Si_{<d+1} (1,(q-1)*\fs_i).
\end{align*}
Then $(\mathcal{D}^*_q\circ \iota)(l)$ is a binary linear combination.

To summarize, we have
\begin{align*}
(\mathcal{D}_q\circ \iota)=(\mathcal{D}^*_q\circ \iota)
\end{align*}
but we obtain two different linear combinations. The RHS has a $D_1$ factor.

\subsection{BCP relations} \ppar
\label{subsubsec: a machine P}

By Definition \ref{defn: A and B}, we recall $A(d)=\Si_d(q)$ and $B(d)={\Si_{d+1}(1,q-1)}$. 

We define the following fixed binary $\mathbb{F}_q$-linear relation of weight $2q + w(\fs)$, for a nonempty tuple $\fs  = (s_1, \fs_-)$ as
\begin{align*}
P(\fs): \quad \frac{1}{D_1}\mathcal{D}^*_q\circ \iota \circ \mathcal{D}_{\fs}(A) + \mathcal{D}_q\circ\iota \circ\mathcal{D}_\fs (B)=0.
\end{align*}

Note that
\begin{align*}\mathcal{D}_q^* \circ \iota \circ \mathcal{D}_\fs(A)(d) = - D_1 \Si_{<d+1}(1, (q-1)*(\fs, q)),
\end{align*}
and
\begin{align*}
  \mathcal{D}_\fs(B)(d) &= \Si_{<d}(\fs^+, q-1)+\Si_{<d}(\fs, 1, q-1), \\
  \mathcal{D}_q\circ\iota\circ\mathcal{D}_\fs(B)(d) &= \Si_{<d}(q+(\fs^+)_1,(\fs^+)_-, q-1)+\Si_{<d}(q,\fs^+, q-1)\\
  & \quad + \Si_{<d}(q+s_1, \fs_-,1, q-1)+\Si_{<d}(q,\fs, 1, q-1).
\end{align*}

Therefore, for a nonempty tuple $\fs = (s_1, \fs_-)$ we have
\begin{align*}
  \begin{split}
  P(\fs): \quad &\Si_{<d}(q+(\fs^+)_1,(\fs^+)_-, q-1)+\Si_{<d}(q,\fs^+, q-1)\\
  &+ \Si_{<d}(q+s_1, \fs_-,1, q-1)+\Si_{<d}(q,\fs, 1, q-1)\\
  & - \Si_{<d+1}(1, (q-1)*(\fs, q))=0\end{split}
\end{align*}
Its $\depthlex$-smallest tuple is $(q+(\fs^+)_1,(\fs^+)_-, q-1)$.

By abuse of notation, we also denote by $P(\fs)$ the binary relation induced by the above $\Fq$-linear relation: 
\begin{align}
  \label{eqn: P relations}
  \begin{split}
  P(\fs): \quad &\Si_{d}(q+(\fs^+)_1,(\fs^+)_-, q-1)+\Si_{d}(q,\fs^+, q-1)\\
  &+ \Si_{d}(q+s_1, \fs_-,1, q-1)+\Si_{d}(q,\fs, 1, q-1)\\
  & - \Si_{d+1}(1, (q-1)*(\fs, q))=0\end{split}
\end{align}

\begin{proposition} \label{prop: P1}
The binary relation in Prop. \ref{prop: weight 2q+1 existence} is given by $P(1)$.
\end{proposition}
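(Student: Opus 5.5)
The plan is to specialize the general formula \eqref{eqn: P relations} to $\fs=(1)$ and then compare it term by term with the binary relation of Proposition~\ref{prop: weight 2q+1 existence}. For $\fs=(1)$ we have $s_1=1$, $\fs_-=\emptyset$ and $\fs^+=(2)$, so $(\fs^+)_1=2$ and $(\fs^+)_-=\emptyset$. Substituting into \eqref{eqn: P relations}, the four $\Si_d$-terms become
\[
\Si_d(q+2,q-1)+\Si_d(q,2,q-1)+\Si_d(q+1,1,q-1)+\Si_d(q,1,1,q-1),
\]
and the remaining term is $-\Si_{d+1}(1,(q-1)*(1,q))$. These four $\Si_d$-terms are exactly the $\Si_d$-terms appearing in Proposition~\ref{prop: weight 2q+1 existence}, all with coefficient $+1$.

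The only real step is to expand the star product $(q-1)*(1,q)$ from Eq.~\eqref{eq: star}.

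\emph{The recursion for $*$.} I will use the recursion already stated in the proof of Proposition~\ref{prop: weight 2q+1 existence}:
\[
(s)*(s_1,\fs_-)=(s+s_1,\fs_-)+(s,s_1,\fs_-)+(s_1,(s)*\fs_-).
\]
To justify it, split $\Si_{<d}(s)\Si_{<d}(s_1,\fs_-)$ according to whether the top index of the first factor is equal to, larger than, or smaller than that of the second. Then use \eqref{eq:product depth1} in the equal case, together with $\Si_{<d}(\emptyset)=1$.

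\emph{Computing the product.} In depth one the recursion gives
\[
(q-1)*(q)=(2q-1)+(q-1,q)+(q,q-1).
\]
Applying the recursion once more gives
\[
(q-1)*(1,q)=(q,q)+(q-1,1,q)+(1,2q-1)+(1,q-1,q)+(1,q,q-1).
\]

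\emph{Conclusion.} Prefixing each of these tuples by $1$ and attaching the sign $-$, the $\Si_{d+1}$-part of $P(1)$ is
\[
-\Si_{d+1}(1,q,q)-\Si_{d+1}(1,q-1,1,q)-\Si_{d+1}(1,1,2q-1)-\Si_{d+1}(1,1,q-1,q)-\Si_{d+1}(1,1,q,q-1).
\]
These are precisely the five $\Si_{d+1}$-terms of Proposition~\ref{prop: weight 2q+1 existence}, each with coefficient $-1$. Together with the four $\Si_d$-terms above, this shows that the two binary relations coincide term by term.

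The only point requiring care is the correct bookkeeping of the star product, specifically the depth-one case $(q-1)*(q)$ and its placement in the recursion. I do not expect a genuine obstacle beyond that.
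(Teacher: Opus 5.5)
Your proposal is correct and follows the paper's own proof, which consists of setting $\fs=(1)$ in \eqref{eqn: P relations} and reading off the terms. You also write out the expansion $(q-1)*(1,q)=(q,q)+(q-1,1,q)+(1,2q-1)+(1,q-1,q)+(1,q,q-1)$ via the recursion from the proof of Proposition~\ref{prop: weight 2q+1 existence}, which the paper leaves implicit; the term-by-term match with that binary relation checks out.
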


\begin{proof}
We set $\fs=(1)$. Then the Proposition follows from the explicit formula \eqref{eqn: P relations}, 
\end{proof}

We give another example with $\fs=(1,1)$. Then $P(1,1)$ is given by
\begin{align*}
  & \Si_{<d}(q+1,2, q-1)+\Si_{<d}(q,1,2,q-1) \\
  &+\Si_{<d}(q+1,1,1, q-1) +\Si_{<d}(q,1,1,1,q-1)\\
  & -\Si_{<d+1}(1, (q-1)*(1,1,q))=0.
\end{align*}

\begin{definition} \label{defn: BCP relations}
For any tuples $\fm, \fn$ which may be empty, we get an $\Fq$-linear relation $\mathcal{B}^*_\fm \circ \mathcal{C}_\fn(P(\fs))$ which is called a BCP relation.
\end{definition}

We write down the expression of $\mathcal{B}^*_\fm \circ \mathcal{C}_\fn (P(\fs))$. 
By \eqref{eqn: P relations}, we get
\begin{align}
  \label{eqn: CP relations}
  \begin{split}
  \mathcal{C}_{\fn} (P(\fs)): \quad &\Si_{d}(q+(\fs^+)_1,(\fs^+)_-, q-1) \Si_{\leq d}(\fn)+\Si_{d}(q,\fs^+, q-1)\Si_{\leq d}(\fn) \\
  &+ \Si_{d}(q+s_1, \fs_-,1, q-1)\Si_{\leq d}(\fn) +\Si_{d}(q,\fs, 1, q-1)\Si_{\leq d}(\fn) \\
  & - \Si_{d+1}(1, (q-1)*(\fs, q))\Si_{<d+1}(\fn) =0.
  \end{split}
\end{align}
Then we can write down the expression of $\mathcal{B}^*_\fm \circ \mathcal{C}_\fn (P(\fs))$. 

In Sections \ref{sec: w<3q Part 1} and \ref{sec: w<3q Part 2} we prove the following theorem which extends Theorems \ref{thm: weight 2q+1} and \ref{thm: weight up to 2q}:

\begin{theorem} \label{thm: relations from P, B, C}
For $w<3q$,
\begin{enumerate}
\item The relations $\mathcal{B}^*_\fm \circ \mathcal{C}_\fn(P(\fs))$'s of weight $w$ are linearly independent over $\Fq$.

\item $P(\fs)$, $\mathcal{B}^*$, and $\mathcal{C}$ generates all possible $\mathbb{F}_q$-linear relations. That means all $\mathbb{F}_q$-linear relations of weight $w$ are linear combinations of $\mathcal{B}^*_\fm \circ \mathcal{C}_\fn(P(\fs))$'s over $\Fq$.
\end{enumerate}
\end{theorem}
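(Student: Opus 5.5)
The plan is to run the same ``largest smallest tuple'' argument as in Sections \ref{sec: small weights} and \ref{sec: main results}, now with the BCP relations as the reference family. For each BCP relation $\mathcal{B}^*_\fm \circ \mathcal{C}_\fn(P(\fs))$ of weight $w<3q$ we first compute its $\depthlex$-smallest tuple. From \eqref{eqn: CP relations} the smallest tuple of $\mathcal{C}_\fn(P(\fs))$ comes from $\Si_{d}(q+(\fs^+)_1,(\fs^+)_-, q-1)\Si_{\leq d}(\fn)$. Since $\Si_{\le d}(\fn)$ multiplies the $\Si_{<d}$ tail, it contributes $(q+(\fs^+)_1,(\fs^+)_-,(q-1)*\fn)$ together with a boundary term at level $d$. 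The smallest depth term should be $(q+(\fs^+)_1,(\fs^+)_-,q-1+n_1,\fn_-)$ or a similar merged tuple. Applying $\mathcal{B}^*_\fm$ then prepends $\fm$.

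The first step is therefore to define a leading tuple
\[
L(\fm,\fn,\fs)=(\fm,\,q+(\fs^+)_1,\,(\fs^+)_-,\,\star),
\]
where $\star$ is the explicit smallest tuple determined by $q-1$ and $\fn$, and to check that $(\fm,\fn,\fs)\mapsto L(\fm,\fn,\fs)$ is injective for $w<3q$. Injectivity should follow because $\fm$ consists of entries at most $q$ up to the first entry exceeding $q$, which equals $q+(\fs^+)_1$. It is easiest if $\fm$ is constrained to entries $\le q$; otherwise I would restrict to a subfamily indexed by such $\fm$. The weight bound $w<3q$ forces $w(\fm)+w(\fs)+w(\fn)<q$, which keeps all entries after the big one below $q$. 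That makes the tail $\star$ readable back into $\fn$ and $\fs$. Part (1) then follows by triangularity: distinct leading tuples with coefficient $1$ give linear independence.

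For part (2), let $\mathcal{R}$ be an $\Fq$-relation of weight $w<3q$ that is not in the span of the BCP relations. Subtract BCP relations to push its smallest tuple up, and take such an $\mathcal{R}$ with $s(\mathcal{R})$ maximal and not of the form $L(\fm,\fn,\fs)$. If $s(\mathcal{R})$ were of the form $L(\fm,\fn,\fs)$, we could subtract the corresponding BCP relation and increase $s(\mathcal{R})$. Proposition \ref{prop: trivial induced relation} adapted to this setting (applying $\Ind$, whose nontriviality would produce a larger relation and so contradict the choice) shows that $\Ind(\mathcal{R})$ may be assumed to lie in the BCP span. The main work is then a generalization of Theorem \ref{thm: small weights}. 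If $w<3q$ and $\Ind(\mathcal R)$ lies in the span of BCP relations, with induced relations tracked modulo BCP relations, then $s(\mathcal R)$ must be some $L(\fm,\fn,\fs)$. To make this workable, one should first verify that $\Ind$ maps BCP relations to $\Fq$-combinations of BCP relations. I would try to prove this by commuting $\Ind$ past $\mathcal{B}^*$ and $\mathcal{C}$, using the binary nature of Proposition \ref{prop: key expression}.

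The main obstacle is this generalized case analysis. As in Propositions \ref{prop: small weights type 1}, \ref{prop: small weights type 3} and \ref{prop: small weights type 4}, for each Type of $\fs=s(\mathcal R)$ not of leading form we must exhibit a $D_1$-level tuple $\widetilde{\ft}$ whose coefficient in $\Ind(\mathcal R)$ cannot be cancelled. It now has to survive cancellation both by the other $D_1$-level contributions and by BCP relations. Since $w<3q$ allows up to two entries exceeding $q$, the clean ``unique large entry'' arguments of Claims (A) and (B) break. I would first treat the case of a single entry exceeding $q$ by mimicking those claims, with $L$ replacing $(q+2,q-1)$. Then I would handle the two-large-entry case by choosing $\widetilde{\ft}$ through the leftmost large entry and using Lemma \ref{lemma: special case} wherever the minimality trick applies.
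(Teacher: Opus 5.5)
\textbf{Part (1) has a genuine gap.} Your triangularity argument needs $L(\fm,\fn,\fs)$ to be the $\depthlex$-smallest tuple of $\mathcal{B}^*_\fm\circ\mathcal{C}_\fn(P(\fs))$, and it is not. The factor $\Si_{\le d}(\fn)$ multiplies the whole of $\Si_d(\fa)=\Si_d(a_1)\Si_{<d}(\fa_-)$, so $\Si_d(\fa)\Si_{\le d}(\fn)=\Si_d(a_1,\fa_-*\fn)+\Si_d(a_1+n_1,\fa_-*\fn_-)$. Thus $\fn$ is stuffled against the whole tail $((\fs^+)_-,q-1)$, and the boundary term absorbs $n_1$ into the first entry, which lowers the depth.

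Concretely, take $q\ge 4$ and $w=2q+3$. In $\mathcal{C}_{(1,1)}(P(1))$ the only depth-two tuple is $(q+3,q)$, coming from $(q+3,(q-1)*(1))$ with coefficient $1$. So every candidate of the form $(q+2,\star)$ has depth at least $3$ and is not minimal. In $\mathcal{C}_{(1)}(P(2))$ the depth-two tuples are $(q+3,q)$ and $(q+4,q-1)$, so its minimum is again $(q+3,q)$, which is also its $L$. Hence the true minimal tuples of distinct BCP relations collide. Your $L$'s are distinct but not minimal, so ordering by them proves nothing: the smaller one, $(q+3,q)$, occurs in both relations.

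The paper uses essentially your tuple $(\fm,q+(\fs^+)_1,(\fs^+)_-,q-1+n_1,\fn_-)$, but it orders the \emph{relations} by their indices. It chooses the relation with smallest $\fm$ and, among those, largest $\depth(\fn)$. This tuple has no entry equal to $q-1$ and exactly $\depth(\fn)-1$ entries after its last entry exceeding $q-1$. In any other $\mathcal{B}^*_{\fm_i}\circ\mathcal{C}_{\fn_i}(P(\fs_i))$, such a tuple must have the final $q-1$ of $P(\fs_i)$ merged into an entry of $\fn_i$. It therefore has at most $\depth(\fn_i)-1\le\depth(\fn)-1$ trailing entries, and equality forces $\fn_i=\fn$ and then $\fs_i=\fs$.

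\textbf{Part (2) inherits this problem and has its own gap.} Subtracting the BCP relation attached to $s(\mathcal R)=L(\fm,\fn,\fs)$ can introduce smaller tuples and lower $s(\mathcal R)$. Since minimal tuples collide, the pivots of the BCP span are not explicitly indexed by $(\fm,\fn,\fs)$, so ``$s(\mathcal R)$ must be some $L$'' is not the right target.

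More seriously, the decisive step is only announced: a version of Theorem~\ref{thm: small weights} under the hypothesis that $\Ind(\mathcal R)$ lies in the BCP span. Under that weaker hypothesis, a nonzero coefficient of $\widetilde{\ft}$ in $\Ind(\mathcal R)$ is no longer a contradiction, so Claims (A)--(C) do not transfer. Your auxiliary claim that $\Ind$ maps BCP relations into their span also has no mechanism behind it. $\Ind$ acts on $\Fq$-linear relations among the $\Li$'s, whereas $\mathcal{B}^*$ and $\mathcal{C}$ act on binary relations, so ``commuting'' them requires a lift you do not provide.

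The paper proves spanning by counting instead of elimination. It shows that every nonzero relation of weight $w<3q$ has a nonzero coefficient on some tuple of the shape $(\fm,(q)+\fs^+,(q-1)+\fn)$; these shapes are in bijection with the BCP indices. It does this by tracking the invariant $\alpha$, the depth of the tail starting at the last entry $\ge q-1$, along the terminating chain $R,\Ind R,\Ind^2R,\dots$ with a descending induction. Restricting coefficients to these tuples is then injective on the space of relations, which bounds its dimension by the number of BCP relations, and Part (1) gives equality. Neither the action of $\Ind$ on BCP relations nor the pivot structure of their span is ever needed.
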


\subsection{Some speculations} \ppar
\label{sec: speculations}

We report some speculations following Theorem \ref{thm: relations from P, B, C} and its proof which suggest that the set of BCP relations
\[\{\mathcal{B}^*(\fm)\circ \mathcal{C}(\fn) \left(P(\fs)\right) : w(\fm) + w(\fn) + w(\fs) = w - 2q, \fs \ne\emptyset\},\]
spans the vector space of all $\Fq$-linear relations.

The dimensions of the vector space spanned by BCP relations of weight $w$ are presented in the Table below (we wrote the Python and SageMath code by ourselves). 
\begin{table}[htbp] 
    \centering
  \caption{Number of all $\mathbb{F}_q$-linear relations of BCP types for $w > 2q$.}
\begin{tabular}{|c||c|c|c|c|c|c|c|c|c|c|c|c|c|} 
  \hline
$w-2q$& $1$ & $2$ & $3$ & $4$ & $5$&$6$& $7$ & $8$ & 9 & 10 & 11 & 12 \\
\hline\hline
$q=2$     & 1 & 4 & 13 & 36 & 92 & 222 & 515 & 1160 & 2555 & 5530 & 11804 & 24916\\
\hline
$q=3$    & 1 & 4 & 13 & 38 & 102 & 260 & 638& 1520&3539&8088&18203&40446 \\
\hline
$q=4$  & 1   & 4   &  13   & 38  &  104 & 270 & 676 & 1646 & 3920 & 9168 & 21123 & 48056 \\
\hline
$q=5$     & $1$   & $4$   & $13$   & $38$      & $104$   & $272$  \\
\hline
$q=7$     &  1  &  4  &  13   & 38  & 104   \\
\hline
\end{tabular}
\label{table: number of independent BCP relations}
\end{table}

By observing this table we give one possible formula to explain the numerical data. We define the following two-dimensional array. 
\begin{align*}
    \begin{matrix}
      \delta^{(1)}_1& \delta^{(1)}_2 &\delta^{(1)}_3 & \delta^{(1)}_4 & \delta^{(1)}_5 & \delta^{(1)}_6 & \delta^{(1)}_7 & \delta^{(1)}_8 & \delta^{(1)}_9 & \delta^{(1)}_{10} & \delta^{(1)}_{11} & \delta^{(1)}_{13} & \dots \\ 
      1 & 4 & 13 & 38 & 104 & 272 & 688 & 1696 & 4096 & 9728 & 22784 & 52736 & \dots \\
      \hline
\delta^{(2)}_1 & \delta^{(2)}_2 &\delta^{(2)}_3 & \delta^{(2)}_4 & \delta^{(2)}_5 & \delta^{(2)}_6 & \delta^{(2)}_7 & \delta^{(2)}_8 & \delta^{(2)}_9 & \dots \\
0 & 1&  6&  25&  88& 280 &832 & 2352 & 6400& \dots \\
\hline 
 \delta^{(3)}_1 & \delta^{(3)}_2 & \delta^{(3)}_3& \delta^{(3)}_4& \delta^{(3)}_5& \delta^{(3)}_6& \dots \\
 0 & 0 & 1 & 8 & 41 & 170& \dots \\
 \hline 
 \delta^{(4)}_1 & \delta^{(4)}_2 & \delta^{(4)}_3& \delta^{(4)}_4& \dots \\
 0 & 0 & 0 &1 & \dots \\
 \hline
 \vdots & \vdots & \vdots & 
    \end{matrix}
\end{align*}
\begin{itemize}
\item Initial data: $\delta^{(1)}_n = (n^2+5n+2) 2^{n-4}$ with $\delta^{(1)}_1 = 1$ is the number of all $BCP$'s of weight $w = 2q+n$. Also, $\delta^{(k)}_{1}=0$ for all $k>1$. For the sake of the convenience, let's put $\delta^{(k)}_n =0$ for all $n\le 0$ for all $k\ge1$.
\item Recurrence formula: $\delta^{(k)}_n = \delta^{(k-1)}_{n-1} + 2 \cdot \delta^{(k)}_{n-1}$
\item Generating function: For each $k\ge1$, the corresponding row have the generating function  $\sum_{n\ge1}\delta^{(k)}_n x^n= \frac{x^{k}(1-x)^2}{(1-2x)^{k+2}}$. Here $x^{k}$ factor from the numerator is for the initial $0$'s; when reindexed (to be indexed from the zeroth term), for each $k$'s, the sequence starting with $1$ has the generating function $\frac{(1-x)^2}{(1-2x)^{k+2}}$. 
\end{itemize}
Then for each finite $w\ge2q+1$, we expect that the dimension is given by the following finite sum: 
\[\dim_{\Fq}\langle \{BCPs\text{ of weight $w$}\}\rangle = \delta^{(1)}_{w-2q} - 2 \delta^{(2)}_{w-3q} + 3 \delta^{(3)}_{w-4q} - 4 \delta^{(4)}_{w-5q} + \dots.\]
This formula is supported by Theorem \ref{thm: relations from P, B, C} and the data presented in the above Table~\ref{table: number of independent BCP relations}.

Assume that this formula is true. Then 
\begin{align*}
  \dim_{\Fq}\langle \{BCPs\text{ of weight $w$}\}\rangle
   &= \sum_{j\ge 1}(-1)^{j+1}\, j\, \delta^{(j)}_{w-q-jq}.
\end{align*}
We expect the generating series:
\begin{align*}
  \sum_{w\ge 2q+1}\dim_{\Fq}\langle \{BCPs\text{ of weight $w$}\}\rangle x^w 
   &= \sum_{j\ge1}(-1)^{j+1} j\, x^{(j+1)q}\sum_{n\ge1}\delta^{(j)}_n x^n\\
   &= \frac{x^q(1-x)^2}{(1-2x)^2}\sum_{j\ge1}(-1)^{j+1} j
      \left(\frac{x^{q+1}}{1-2x}\right)^{j}\\
   &= \frac{x^q(1-x)^2}{(1-2x)^2}\cdot
      \frac{x^{q+1}(1-2x)}{\left(1-2x+x^{q+1}\right)^2}\\
   &= \frac{x^{2q+1}(1-x)^2}{(1-2x)\left(1-2x+x^{q+1}\right)^{2}}.
\end{align*}
In summary, expanding the generating function $\frac{x^{2q+1}(1-x)^2}{(1-2x)\left(1-2x+x^{q+1}\right)^{2}}$ as a Taylor series of $x$ at $x=0$ yields the sequence $\left(\dim_{\Fq}\langle \{BCPs\text{ of weight $w$}\}\rangle\right)_{w\ge1}$.


\section{Proof of Theorem \ref{thm: relations from P, B, C}, Part (1)} \label{sec: w<3q Part 1}

\subsection{Independence of CP relations} \ppar

We prove
\begin{proposition} \label{prop: strategy step 1}
Suppose that $w<3q$. Then $\mathcal{C}_\fn (P(\fs))$'s with $w(\fn)+w(\fs)=w-2q$ are linearly independent over $\Fq$.
\end{proposition}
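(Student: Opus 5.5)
We will prove the proposition with a leading-term argument in the $\depthlex$-order. Each relation is viewed as a formal $\Fq$-linear combination of tuples, obtained by summing the binary relation over $d$. The point is that the relations $\mathcal{C}_\fn(P(\fs))$ with $w(\fn)+w(\fs)=w-2q$ have pairwise distinct smallest tuples. Since $w<3q$, we have $w(\fn)+w(\fs)<q$. In particular every entry of $\fs$ and of $\fn$ is at most $q-1$, and $w(\fs)\le q-1$.

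The first step is to compute the $\depthlex$-smallest tuple of $\mathcal{C}_\fn(P(\fs))$ from \eqref{eqn: CP relations}. Consider the first term $\Si_d(q+(\fs^+)_1,(\fs^+)_-,q-1)\Si_{\le d}(\fn)$. We write $\Si_{\le d}(\fn)=\Si_d(\fn)+\Si_{<d}(\fn)$ and expand by Proposition \ref{prop: product CMPL}. The tuples that appear are of two kinds: stuffle-type merges of $(q+(\fs^+)_1,(\fs^+)_-,q-1)$ with $\fn$ in which the first entries are added, and shuffles of $\fn$ into $((\fs^+)_-,q-1)$. The minimal depth among them is $\max$ of the two depths, and it is attained only by entrywise merges. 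We will check the following claim: the unique smallest tuple overall is obtained by adding $\fn$ entrywise onto the front of $((q+(\fs^+)_1,(\fs^+)_-,q-1))$ when $\depth(\fn)\le \depth(\fs)+1$. In the other case one gets the analogous front-loaded merge. Call this tuple $\ft(\fs,\fn)$.

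Next we must check that none of the other four terms of \eqref{eqn: CP relations} produces a tuple at most $\ft(\fs,\fn)$.
\begin{itemize}
\item The terms with $(q,\dots)$ have larger depth, or else a smaller first entry, which is lex-larger in our order convention.
\item The $\Si_{d+1}(1,\dots)$ term has first entry $1$ and depth at least as large.
\end{itemize}
The coefficient of $\ft(\fs,\fn)$ should be $1$, because the merge is forced once the depth is minimal.

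The second step, which we expect to be the main obstacle, is injectivity of $(\fs,\fn)\mapsto \ft(\fs,\fn)$. If the smallest tuples are distinct, a triangularity argument finishes the proof. Take a nontrivial $\Fq$-combination of the relations, and let $(\fs,\fn)$ be the index whose leading tuple is smallest among those with nonzero coefficient. That leading tuple then appears in no other relation with index of larger leading tuple, so its coefficient in the combination is nonzero. Hence the combination is nonzero as a formal relation. Note that linear independence here means independence as formal combinations of tuples, i.e. as elements of the relation space; this is the meaningful sense.

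To recover $(\fs,\fn)$ from $\ft=\ft(\fs,\fn)$ we use the weight bound. The entries $q-1$ and $q+(\fs^+)_1$ are large compared with all entries of $\fs$ and $\fn$, which are at most $q-1$ and sum to less than $q$.
\begin{itemize}
\item The position of the big last-block entry $\ge q-1$ and the first entry $\ge q+1$ locate the boundary, which determines $\depth(\fs)$.
\item Subtracting gives $\fn$, since the first entry determines $(\fs^+)_1+n_1$ but the remaining entries disentangle.
\end{itemize}
Here we expect a genuine difficulty: the entrywise sum $(a_i+n_i)$ may not determine $a_i$ and $n_i$ separately. If the map fails to be injective, we will instead use a finer invariant. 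One option is to pick, for each relation, the smallest tuple of a specific depth, namely the depth of $\fs$ plus that of $\fn$ plus two coming from the concatenation term $\Si_d(\dots)\Si_{<d}(\fn)$. Its leading concatenated tuple $(q+(\fs^+)_1,(\fs^+)_-,q-1,\fn)$ visibly determines $(\fs,\fn)$, because the entry $q-1$ is the unique entry $\ge q-1$ after the first position. We would then run the triangularity argument on that stratum, ordering by $\depthlex$ restricted to the tuples ending in this pattern.
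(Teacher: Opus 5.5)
\textbf{There is a genuine gap: the primary plan fails, and the fallback has the right tuple but the wrong ordering.}

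\textbf{The primary plan.} It fails for exactly the reason you feared: the extremal tuple does not determine $(\fs,\fn)$. With your front-loaded merge, $(\fs,\fn)=((1),(2))$ and $((2),(1))$ both give $(q+4,q-1)$, and these relations have weight $2q+3<3q$ once $q\ge 4$.

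You also have the orientation reversed. In the paper's order, at equal depth $\fs<\ft$ iff $\ft\succ\fs$, so a smaller first entry makes a tuple \emph{smaller}. With the correct orientation the minimal tuple changes: for $\depth(\fn)\le\depth(\fs)$ it is the merge of $\fn$ onto the \emph{last} entries, keeping the first entry $q+(\fs^+)_1$. Injectivity fails there too: $((1),(1,1))$ and $((2),(1))$ both have $\depthlex$-smallest tuple $(q+3,q)$. So no argument via the $\depthlex$-minimum of each relation can work.

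\textbf{The fallback.} The tuple $T(\fs,\fn)=(q+(\fs^+)_1,(\fs^+)_-,q-1,\fn)$ is a sound choice: it has coefficient $1$ in $\mathcal C_\fn(P(\fs))$ and it determines $(\fs,\fn)$. What is missing is an ordering of the relations that makes triangularity work, together with a check of where $T(\fs,\fn)$ occurs in the other relations.

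The $\depthlex$-order on these tuples fails in both directions:
\begin{itemize}
\item For $q=5$, $T((1,1,1),(1))=(6,1,2,4,1)$ occurs with coefficient $3$ in $\mathcal C_{(1,1)}(P((1,1)))$. That relation's own tuple $(6,2,4,1,1)$ is $\depthlex$-larger.
\item For $q=7$, $T((1,3,1),(1))=(8,3,2,6,1)$ occurs with coefficient $1$ in $\mathcal C_{(3,1)}(P((1,1)))$. That relation's own tuple $(8,2,6,3,1)$ is $\depthlex$-smaller.
\end{itemize}

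\textbf{The missing idea: order by $\depth(\fn)$.} For $\fn\neq\emptyset$, every entry of $T(\fs,\fn)$ after the first is at most $q-1$, with equality only at position $\depth(\fs)+1$.

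Now look at any $\mathcal C_{\fn'}(P(\fs'))$. The fifth term starts with $1$, so only the first four terms matter. In each of them, the final $q-1$ of the first factor cannot be merged, since that would create an entry $\ge q$ after position $1$. So it lands unmerged at position $\depth(\fs)+1$, followed only by a tail of $\fn'$. This forces $\fn$ to be a suffix of $\fn'$.

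If $\depth(\fn')=\depth(\fn)$, then $\fn'=\fn$ and nothing of $\fn'$ precedes the $q-1$. The terms then behave as follows:
\begin{itemize}
\item The first term gives $\fs'^+=\fs^+$, hence $\fs'=\fs$.
\item The second and fourth terms start with $q<q+(\fs^+)_1$, so they cannot match.
\item The third term would make $\fs^+$ end in $1$, which is impossible.
\end{itemize}
Hence choose a relation with $\depth(\fn)$ maximal among the nonzero coefficients; its tuple occurs in no other relation of the combination. If that maximal depth is $0$, the relations are the $P(\fs)$, which are separated by their distinct smallest tuples $(q+(\fs^+)_1,(\fs^+)_-,q-1)$.

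This is precisely the paper's mechanism. The paper uses the merged tuple $(q+(\fs^+)_1,(\fs^+)_-,q-1+n_1,n_2,\dots,n_k)$, which has no entry equal to $q-1$ and exactly $\depth(\fn)-1$ entries after its last entry exceeding $q-1$. It compares this tuple only against relations with $\depth(\fn_i)\le\depth(\fn)$.
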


The rest of this section is devoted to proving this proposition.

Suppose that we have an equality
\begin{equation} \label{eq: only C 1}
\mathcal{C}_\fn (P(\fs)) + \sum_i a_i \mathcal{C}_{\fn_i} (P(\fs_i))=0
\end{equation}
where
\begin{itemize}
\item $\fn, \fs$ such that $w(\fn)+w(\fs)=w-2q$,
\item $a_i \in \Fq$,
\item $\fn_i, \fs_i$ verifying $w(\fn_i)+w(\fs_i)=w-2q$ and $\depth(\fn_i) \leq \depth(\fn)$.
\end{itemize}
It is clear that $\depth(\fn)>0$.

Let $\fn=(n_1,\dots,n_k)$. We will show that the tuple
\[ (q+(\fs^+)_1,(\fs^+)_-, q-1+n_1, n_2, \dots, n_k)\]
has a non-zero coefficient in Eq. \eqref{eq: only C 1}. In fact, it follows from the lemmas below.

\begin{lemma} \label{lem: special tuple}
Recall that $\fn, \fs$ are non-empty tuples and $w(\fn)+w(\fs)=w-2q<q$. Then the tuple
\[ (q+(\fs^+)_1,(\fs^+)_-, q-1+n_1, n_2, \dots, n_k)\]
verifies the following properties:
\begin{enumerate}
\item All the entries are different from $q-1$.
\item There are exactly two entries which are strictly greater than $q-1$: the first entry $q+(\fs^+)_1$ and $q-1+n_1$.
\item The other entries are strictly smaller than $q-1$.
\item There are exactly $\depth(\fn)-1$ terms after the entry $q-1+n_1$ which is the last entry strictly greater than $q-1$.
\end{enumerate}
\end{lemma}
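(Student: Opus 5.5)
This is a direct check from the weight bound. We use that all entries of $\fs$ and $\fn$ are positive integers and that $w(\fn)+w(\fs)<q$. The latter bound forces every individual entry $s_j$ and $n_j$ to be at most $q-2$. It also gives $w(\fs^+)=w(\fs)+1\le q-1$. Here $\fs^+$ denotes $\fs$ with its last entry increased by one, as in the definition of $\fn^+$.

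We first treat the two large entries. The first entry is $q+(\fs^+)_1$. Since $(\fs^+)_1\ge 1$, it is at least $q+1>q-1$, so it is $\neq q-1$. The entry $q-1+n_1$ satisfies $q-1+n_1\ge q>q-1$, since $n_1\ge 1$. These two entries are therefore strictly greater than $q-1$.

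Next we treat the remaining entries. These are the entries of $(\fs^+)_-$ and the entries $n_2,\dots,n_k$. Each entry $n_j$ satisfies $n_j\le w(\fn)-n_1\le w(\fn)-1<q-1$. For the entries of $(\fs^+)_-$ there are two cases.

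If $\depth(\fs)\ge 2$, then each entry of $(\fs^+)_-$ is at most $w(\fs^+)-(\fs^+)_1\le w(\fs^+)-1$. Now $w(\fs^+)-1=w(\fs)\le w(\fs)+w(\fn)-1<q-1$, using $w(\fn)\ge 1$.

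If $\depth(\fs)=1$, then $(\fs^+)_-$ is empty and there is nothing to check.

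Hence all remaining entries are strictly smaller than $q-1$. This gives (3), and together with the two large entries it gives (1) and (2).

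For (4), the entries after $q-1+n_1$ are exactly $n_2,\dots,n_k$. There are $k-1=\depth(\fn)-1$ of them, and by (3) they are all $<q-1$. So $q-1+n_1$ is the last entry exceeding $q-1$.

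No real obstacle arises. The only point needing care is the bound on $(\fs^+)_-$ when the last entry of $\fs$ is incremented. This is handled by using $w(\fn)\ge1$ to absorb the extra $+1$.
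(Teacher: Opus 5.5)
Your proposal is correct and follows the same approach as the paper: the paper only says ``The proof is immediate,'' and your entry-by-entry verification from $w(\fn)+w(\fs)\le q-1$ is exactly that check written out. You also correctly handle the extra $+1$ on the last entry of $\fs^+$ by using $w(\fn)\ge 1$.
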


\begin{proof}
The proof is immediate.
\end{proof}

We note that Properties (1) and (4) will be very useful in the sequel.

\begin{lemma}
Let $(\fn_i, \fs_i) \neq (\fn, \fs)$ verifying $w(\fn_i)+w(\fs_i)=w-2q$ and $\depth(\fn_i) \leq \depth(\fn)$. Then the tuple
\[ (q+(\fs^+)_1,(\fs^+)_-, q-1+n_1, n_2, \dots, n_k)\]
has a zero coefficient in $\mathcal{C}_{\fn_i} (P(\fs_i))$.
\end{lemma}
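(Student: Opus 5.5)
Write $T:=(q+(\fs^+)_1,(\fs^+)_-, q-1+n_1, n_2, \dots, n_k)$. The plan is to expand $\mathcal{C}_{\fn_i}(P(\fs_i))$ explicitly from \eqref{eqn: CP relations} and to show that no tuple in the expansion equals $T$ unless $(\fn_i,\fs_i)=(\fn,\fs)$. The only tools are the structural properties of $T$ in Lemma \ref{lem: special tuple}, together with weight and depth bookkeeping.

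First we list the tuples occurring in $\mathcal{C}_{\fn_i}(P(\fs_i))$. Write $\Si_{\le d}(\fn_i)=\Si_d(\fn_i)+\Si_{<d}(\fn_i)$. By Proposition \ref{prop: product CMPL}, each fixed term $\Si_d(\fa, q-1)\Si_{\le d}(\fn_i)$, with $\fa$ one of $(q+(\fs_i^+)_1,(\fs_i^+)_-)$, $(q,\fs_i^+)$, $(q+s_{i,1},\fs_{i,-},1)$ or $(q,\fs_i,1)$, becomes a sum of $\Si_d$ of stuffle-type tuples. Their first entry is $a_1$ or $a_1+n_{i,1}$, and their later entries are either single entries of $(\fa_-,q-1)$ or of $\fn_i$, or sums of one entry of each. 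The $D_1$-free binary term $-\Si_{d+1}(1,(q-1)*(\fs_i,q))\Si_{<d+1}(\fn_i)$ contributes only tuples beginning with $1$ (summing over $d$). Since $T$ begins with $q+(\fs^+)_1\ge q+1$, these tuples are discarded immediately.

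Next we exclude the remaining candidates with Lemma \ref{lem: special tuple}. The total weight $w(\fn_i)+w(\fs_i)=w-2q<q$ is small. Hence any stuffle tuple coming from $\fa=(q,\fs_i^+)$ or $(q,\fs_i,1)$ whose first entry does not absorb $n_{i,1}$ has first entry exactly $q$, which differs from $q+(\fs^+)_1$ because $(\fs^+)_1\ge 1$. If the first entry does absorb $n_{i,1}$, the entry $q-1$ either survives or is merged with some $n_{i,j}$. Property (1) says no entry of $T$ equals $q-1$, so the $q-1$ must be merged. Property (2) then forces the first entry $q+n_{i,1}$ (or $q+(\fs_i^+)_1+n_{i,1}$) together with the merged entry $q-1+n_{i,j}$ to be the only entries $>q-1$. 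Property (4) pins down the number of entries after the merged one as $k-1$, i.e.\ the number of $\fn_i$-entries remaining after $n_{i,j}$. Because $\depth(\fn_i)\le\depth(\fn)=k$, this forces $j=1$ and $\depth(\fn_i)=k$. That contradicts the first entry having absorbed $n_{i,1}$, since $n_{i,1}$ would then be used twice. So the first entry cannot absorb anything, and we are left with tuples of the form $(\fa_1,\dots, q-1+n_{i,1}, n_{i,2},\dots,n_{i,k})$ in which all entries of $\fa$ before the $q-1$ stay unmerged.

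Finally, comparing with $T$ gives $n_{i,j}=n_j$ for all $j$, so $\fn_i=\fn$. The prefix equals $(q+(\fs^+)_1,(\fs^+)_-)$, and among the four shapes of $\fa$ only the first can match: the others start with $q$ or contain a final entry $1$ placed before the $q-1$, which gives a different length or content by the weight constraint. This yields $\fs_i^+=\fs^+$, hence $\fs_i=\fs$, a contradiction. The main obstacle is the middle step: carefully enumerating the stuffle merges between $(\fa_-,q-1)$ and $\fn_i$, and in particular the merges of $q-1$ with a later $n_{i,j}$, so that Properties (1), (2) and (4) genuinely rule out every alternative. I would handle it by tracking the position of the unique merged entry $\ge q$ and counting the tail length.
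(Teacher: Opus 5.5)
Your proposal is correct and follows essentially the same route as the paper. Property (1) forces the final $q-1$ to merge with an entry of $\fn_i$. Property (4), together with $\depth(\fn_i)\le\depth(\fn)$, forces that entry to be $n_{i,1}$, which rules out absorption into the first entry and gives $\fn_i=\fn$. Comparing prefixes then leaves only $\fs_i=\fs$ or $\fs^+=(\fs_i,1)$. The latter is impossible because the last entry of $\fs^+$ is at least $2$; this, rather than a weight constraint, is why the shape $(q+s_{i,1},\fs_{i,-},1)$ fails.
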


\begin{proof}
By \eqref{eqn: CP relations},
\begin{align*}
  \begin{split}
  \mathcal{C}_{\fn_i} (P(\fs_i)): \quad &\Si_{d}(q+(\fs^+_i)_1,(\fs^+_i)_-, q-1) \Si_{\leq d}(\fn_i)+\Si_{d}(q,\fs^+_i, q-1)\Si_{\leq d}(\fn_i) \\
  &+ \Si_{d}(q+s_{i1}, \fs_{i-},1, q-1)\Si_{\leq d}(\fn_i) +\Si_{d}(q,\fs_i, 1, q-1)\Si_{\leq d}(\fn_i) \\
  & - \Si_{d+1}(1, (q-1)*(\fs_i, q))\Si_{<d+1}(\fn_i) =0.
  \end{split}
\end{align*}

Suppose that the tuple
\[ (q+(\fs^+)_1,(\fs^+)_-, q-1+n_1, n_2, \dots, n_k)\]
appears in the previous expression of $\mathcal{C}_{\fn_i} (P(\fs_i))$. By Lemma \ref{lem: special tuple}, we are looking for tuples satisfying Property (1) of Lemma \ref{lem: special tuple}. They will be of the form
\[ (\dots, q-1+\fn_{i,m}, \fn_{i,m+1}, \dots, \fn_{i,\ell}) \]
where $\fn_i=(\fn_{i,1},\dots,\fn_{i,\ell})$ and $1 \leq m \leq \ell$. As $\ell=\depth(\fn_i) \leq \depth(\fn)$, Property (4) forces $k=1$ and $\ell=\depth(\fn_i)=\depth(\fn)$. This proves $\fn = \fn_i$.

We claim that $\fs = \fs_i$. In fact, among the candidates in $\mathcal{C}_\fn(P(\fs_i))$, the tuple that starts with an element $>q$ are one of the following:
\begin{align*}
  (q + (\fs_i^+)_1 + n_1,\quad& ((\fs_i^+)_-, q-1)*\fn_-)& \\
  (q + (\fs_i^+)_1,\quad& ((\fs_i^+)_-, q-1)*\fn)& \\
  (q + n_1,\quad& (\fs_i^+, q-1)*\fn_-)& \\
  (q + s_{i1} + n_1,\quad& ((\fs_i)_-, 1, q-1)*\fn_-)& \\
  (q + s_{i1},\quad& ((\fs_i)_-, 1, q-1)*\fn,) & \\
  (q + n_1,\quad& (\fs_i, 1, q-1) * \fn_-).
\end{align*}
Note that in the expansion of $(\fa, q-1) * \fb$, the term $(q-1)$ from the first factor must be merged with any of $\fb$'s entries, not be written standalone, due to Property (1) of Lemma~\ref{lem: special tuple}. Therefore, the above candidates can be narrowed; we can get remove those that has $\fn_-$ as the right factor of the last $*$ product, and the remainings will be written as
\begin{align*}
  &(q + (\fs_i^+)_1, (\fs_i^+)_-, q-1+n_1, \fn_-),\\
  &(q + s_{i1}, (\fs_i)_-, 1, q-1 + n_1, \fn_-).\\
\end{align*}
If it were the first case, we obtain that $\fs_i = \fs$, which is a contradiction. If it were the second case, we have
\[ \fs^+ = (s_{i1}, (\fs_i)_-, 1),\]
which is again a contradiction since the last element of $\fs^+$ must be greater than $1$.
\end{proof}

\begin{lemma}
The tuple
\[ (q+(\fs^+)_1,(\fs^+)_-, q-1+n_1, n_2, \dots, n_k)\]
has a non-zero coefficient in $\mathcal{C}_\fn (P(\fs))$.
\end{lemma}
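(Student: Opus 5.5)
We need to show that the special tuple $\widetilde{\ft}:=(q+(\fs^+)_1,(\fs^+)_-, q-1+n_1, n_2, \dots, n_k)$ occurs with nonzero total coefficient in the expansion \eqref{eqn: CP relations} of $\mathcal{C}_\fn(P(\fs))$. The plan is to expand each of the five products in \eqref{eqn: CP relations} into $\Fq$-combinations of $\Si_d$ or $\Si_{d+1}$ terms, and to check which terms can equal $\widetilde{\ft}$. Since all other terms cancel in pairs or not at all, it suffices to show that $\widetilde{\ft}$ arises from exactly one term with coefficient $\pm1$.

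First, rewrite $\Si_{\leq d}(\fn)=\Si_d(\fn)+\Si_{<d}(\fn)$. For a tuple $\fa=(a_1,\fa_-)$ we have $\Si_d(\fa)\Si_{<d}(\fn)=\Si_d(a_1,\fa_-*\fn)$, and $\Si_d(\fa)\Si_d(\fn)=\Si_d(a_1+n_1,\fa_-*\fn_-)$ by \eqref{eq:product depth1}. The stuffle $*$ has all coefficients $1$: it is the quasi-shuffle built from the recursion $(s)*(s_1,\fs_-)=(s+s_1,\fs_-)+(s,s_1,\fs_-)+(s_1,(s)*\fs_-)$. So every tuple appearing in the product of a single term comes with coefficient a nonnegative integer. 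The candidate list in the previous lemma already records which expansions can begin with an entry $>q$.

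Second, match $\widetilde{\ft}$ against the four $\Si_d$ terms. Its first entry is $q+(\fs^+)_1\ge q+1$ and differs from $q$. This excludes the two terms starting with $(q,\dots)$ except through the $\Si_d(\fn)$ part, which gives a first entry $q+n_1$.

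We then use Property (1) of Lemma \ref{lem: special tuple}: every $q-1$ must be merged with an entry of $\fn$. Combined with Property (4), this leaves the same two shapes as in the previous lemma, now with $\fs_i=\fs$:
\[(q+(\fs^+)_1,(\fs^+)_-,q-1+n_1,\fn_-) \quad\text{and}\quad (q+s_1,\fs_-,1,q-1+n_1,\fn_-).\]
The second would require $\fs^+=(\fs,1)$ entrywise, which is impossible since the last entry of $\fs^+$ is at least $2$. Hence only the first term $\Si_d(q+(\fs^+)_1,(\fs^+)_-,q-1)\Si_{<d}(\fn)$ contributes.

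Third, compute the coefficient of $\widetilde{\ft}$ in $((\fs^+)_-,q-1)*\fn$. We need to be careful that no other shuffle path yields the same tuple, for instance by merging $n_1$ with a different entry that happens to be equal. Such a path would produce an entry $q-1+n_j$ placed elsewhere, or would leave $q-1$ unmerged. The former violates Property (2)/(4) and the latter violates Property (1), since all entries of $(\fs^+)_-$ and $\fn$ are $<q-1$ because $w(\fn)+w(\fs)<q$. So the coefficient is exactly $1$.

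Finally, the $\Si_{d+1}$ term $-\Si_{d+1}(1,(q-1)*(\fs,q))\Si_{<d+1}(\fn)$ only produces tuples with first entry $1$, and it lives at level $d+1$, so it cannot contain $\widetilde{\ft}$. After summing over $d$, or equivalently working in the binary expansion, the coefficient of $\widetilde{\ft}$ is therefore $1\neq0$.

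The main obstacle is the third step: ruling out alternative quasi-shuffle paths producing the same tuple. I would handle it by arguing on the positions of the unique entries $\ge q$ and on the absence of any entry equal to $q-1$, as in Lemma \ref{lem: special tuple}.
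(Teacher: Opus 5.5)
Your proposal is correct and is essentially the paper's argument: expand the five products in \eqref{eqn: CP relations}, and use Properties (1) and (4) of Lemma \ref{lem: special tuple} to rule out every contribution except the $\Si_{<d}(\fn)$-part of $\Si_d(q+(\fs^+)_1,(\fs^+)_-,q-1)\Si_{\le d}(\fn)$; the level-$(d+1)$ term is discarded because all its tuples have first entry $1$. Your explicit check that the quasi-shuffle path producing the tuple is unique, so that its coefficient is $1$ rather than a possibly vanishing multiple of $p$, makes precise what the paper compresses into ``appears once.''
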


\begin{proof}
By \eqref{eqn: CP relations},
\begin{align*}
  \begin{split}
  \mathcal{C}_\fn (P(\fs)): \quad &\Si_{d}(q+(\fs^+)_1,(\fs^+)_-, q-1) \Si_{\leq d}(\fn)+\Si_{d}(q,\fs^+, q-1)\Si_{\leq d}(\fn) \\
  &+ \Si_{d}(q+s_1, \fs_-,1, q-1)\Si_{\leq d}(\fn) +\Si_{d}(q,\fs, 1, q-1)\Si_{\leq d}(\fn) \\
  & - \Si_{d+1}(1, (q-1)*(\fs, q))\Si_{<d+1}(\fn) =0.
  \end{split}
\end{align*}

By Lemma \ref{lem: special tuple}, we see that the tuple
\[ (q+(\fs^+)_1,(\fs^+)_-, q-1+n_1, n_2, \dots, n_k)\]
appears once in
\[ \Si_{d}(q+(\fs^+)_1,(\fs^+)_-, q-1) \Si_{\leq d}(\fn). \]
However, it does not appear in
\begin{align*}
  \begin{split}
  &\Si_{d}(q,\fs^+, q-1)\Si_{\leq d}(\fn) \\
  &+ \Si_{d}(q+s_1, \fs_-,1, q-1)\Si_{\leq d}(\fn) +\Si_{d}(q,\fs, 1, q-1)\Si_{\leq d}(\fn) \\
  & - \Si_{d+1}(1, (q-1)*(\fs, q))\Si_{<d+1}(\fn).
  \end{split}
\end{align*}
We get the lemma.
\end{proof}

\subsection{Independence of BCP relations} \ppar

\begin{proposition} \label{prop: lower bound} 
Suppose that $w<3q$. Then $\mathcal{B}^*_\fm \circ \mathcal{C}_\fn (P(\fs))$'s with $w(\fm)+w(\fn)+w(\fs)=w-2q$ are linearly independent over $\Fq$.
\end{proposition}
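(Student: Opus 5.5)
The plan is to reduce to Proposition~\ref{prop: strategy step 1} with the same ``leading tuple'' argument, now with a prefix coming from $\mathcal{B}^*_\fm$. Since $w<3q$ forces $w(\fm)+w(\fn)+w(\fs)<q$, all entries of $\fm$ are at most $q-1$. For $\fm=(m_1,\dots,m_j)$, applying $\mathcal{B}^*_\fm$ to a fixed relation only prepends the entries of $\fm$ to every tuple: $\mathcal{B}^*_{m}$ multiplies $\sum_{\ell<d} R(\ell)$ by $\Si_d(m)$, and $\Si_d(m)\Si_{<d}(\ft)=\Si_d(m,\ft)$. The relations $\mathcal{C}_\fn(P(\fs))$ are fixed after summation in the sense of \S\ref{sec: operator D}. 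So $\mathcal{B}^*_\fm\circ\mathcal{C}_\fn(P(\fs))$ is obtained from $\mathcal{C}_\fn(P(\fs))$ by prepending $\fm$ to every tuple. For the terms coming from $\Si_{d+1}(1,\dots)$, prepending $\fm$ merges the last entry of $\fm$ with the leading $1$ after the summation shift, giving an $\fm^+$-type term as in the $\mathcal{D}_\fs(B)$ computation.

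Suppose we have a nontrivial relation $\sum_i a_i\,\mathcal{B}^*_{\fm_i}\circ\mathcal{C}_{\fn_i}(P(\fs_i))=0$. I will choose a distinguished index and a distinguished tuple, namely
\[
\ft^\sharp=(\fm,\,q+(\fs^+)_1,(\fs^+)_-,\,q-1+n_1,n_2,\dots,n_k).
\]
The index is chosen by first maximizing $\depth(\fm)$, then maximizing $\depth(\fn)$ among those, and breaking any remaining ties arbitrarily.

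The argument then has three steps.

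\begin{enumerate}
\item I will locate $\ft^\sharp$ by the weight bound. Since all entries of $\fm$ are below $q-1$, Lemma~\ref{lem: special tuple} transfers to $\ft^\sharp$. Its first entry exceeding $q-1$ is at position $\depth(\fm)+1$. There are exactly two entries larger than $q-1$, no entry equals $q-1$, and exactly $\depth(\fn)-1$ entries follow the last large one.
\item I will show that $\ft^\sharp$ has nonzero coefficient in its own relation. Its own relation is $\mathcal{C}_\fn(P(\fs))$ with $\fm$ prepended, so this follows from the last lemma of the previous subsection.
\item I will show that $\ft^\sharp$ is absent from every other relation $\mathcal{B}^*_{\fm_i}\circ\mathcal{C}_{\fn_i}(P(\fs_i))$. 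Any tuple in such a relation containing an entry $\ge q$ has the shape $(\fm_i,\text{large entry},\dots)$ or $(\fm_i^+,\dots)$. The $\fm_i^+$ terms come from the $(1,(q-1)*\cdots)$ part and contain the entry $q-1$ or $q+\ast$ unless merged, and Property (1) excludes the unmerged cases.
\end{enumerate}

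For step 3, consider the possible shapes. If the shape is $(\fm_i,\text{large},\dots)$, the position of the first large entry forces $\depth(\fm_i)=\depth(\fm)$, hence $\fm_i=\fm$. The remaining suffix is then a tuple of $\mathcal{C}_{\fn_i}(P(\fs_i))$, and the previous subsection's lemma gives $(\fn_i,\fs_i)=(\fn,\fs)$, using the depth maximality of $\fn$. If the shape is $(\fm_i^+,\dots)$, then either $\depth(\fm_i)=\depth(\fm)$, in which case the last entry of $\fm_i^+$ would have to equal a large entry, which is impossible by weight, or $\depth(\fm_i)>\depth(\fm)$, which is excluded by the maximal choice.

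I expect the main obstacle to be this bookkeeping of the $D_1$-free terms $\Si_{d+1}(1,(q-1)*(\fs_i,q))\,\Si_{<d+1}(\fn_i)$ after applying $\mathcal{B}^*$. The merged prefix $\fm_i^+$ and the $*$-product of $(q-1)$ with entries of $\fn_i$ could conceivably produce a tuple whose first large entry sits at position $\depth(\fm)+1$. I would first try to rule this out via Property (1): such terms always contain either an unmerged $q-1$ or a single large entry carrying the $q$ from $(\fs_i,q)$. Two large entries plus $\fm$ would then exceed the weight $w<3q$.
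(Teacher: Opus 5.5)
Your proposal has a genuine gap in step 3, and the cause is your selection rule (maximize $\depth(\fm)$). After $\mathcal{B}^*_{\fm_i}$, the $\Si_{d+1}$-part $-\Si_{d+1}(1,(q-1)*(\fs_i,q)*\fn_i)$ produces two kinds of tuples: $(\fm_i,1,\fu)$ and $(\fm_i^+,\fu)$, or just $(1,\fu)$ when $\fm_i=\emptyset$, with $\fu$ in $(q-1)*(\fs_i,q)*\fn_i$. Such a $\fu$ can begin with small entries. It can also carry exactly two large entries: the $q-1$ merged into an entry of $\fs_i$ or $\fn_i$, together with the $q$. Your weight heuristic cannot exclude this, since $\ft^\sharp$ itself has two large entries and weight $w<3q$.

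So these tuples can equal $\ft^\sharp$ when $\depth(\fm_i)<\depth(\fm)$, a case your dichotomy for the $\fm_i^+$ shape omits. They can also equal $\ft^\sharp$ when $\depth(\fm_i)=\depth(\fm)$ and $\fm_i^+=\fm$.

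Two concrete failures:
\begin{itemize}
\item Take $\fm=\fn=\fs=(1)$ and $q\ge 4$, so $\ft^\sharp=(1,q+2,q)$. It has coefficient $+1$ in $\mathcal{B}^*_{(1)}\circ\mathcal{C}_{(1)}(P(1))$, coming from $\Si_d(q+2,q-1)\Si_{\le d}(1)$. It has coefficient $-1$ in $P(3)$, coming from the term $(q+2,q)$ of $(q-1)*(3,q)$. For the combination $\mathcal{B}^*_{(1)}\circ\mathcal{C}_{(1)}(P(1))+P(3)$, your rule selects $((1),(1),(1))$, yet the coefficient of $\ft^\sharp$ is $0$.
\item Take $q\ge 5$ and the triple $((2),(1),(1))$, so $\ft^\sharp=(2,q+2,q)$. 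This tuple also occurs in $\mathcal{B}^*_{(1)}(P(3))$ as $(\fm_i^+,q+2,q)$ with $\fm_i=(1)$. Your rule picks $((2),(1),(1))$ over $((1),\emptyset,(3))$ because the $\fn$ is deeper. This refutes your claim that the equal-depth $\fm_i^+$ case is impossible by weight.
\end{itemize}

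The missing idea is to order the other way, as the paper does. Choose $\fm$ minimal for the $\depthlex$-order, breaking ties by maximal $\depth(\fn)$, and compare initial tuples (Lemma~\ref{lemma: initial tuple}).
\begin{itemize}
\item Every tuple of $\mathcal{B}^*_{\fm_i}\circ\mathcal{C}_{\fn_i}(P(\fs_i))$ begins with $\fm_i$ or $\fm_i^+$, whose entries are $<q$. Hence its initial tuple is $\ge\fm_i$.
\item Equality holds only for tuples $(\fm_i,\ft)$ where $\ft$ lies in the $\Si_d$-part of $\mathcal{C}_{\fn_i}(P(\fs_i))$ and begins with an entry $>q$.
\item On the other hand $\Init(\ft^\sharp)=\fm\le\fm_i$. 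This forces $\fm_i=\fm$ and eliminates at once every $(\fm_i,1,\fu)$ and $(\fm_i^+,\fu)$ contribution.
\item The same check is needed inside the relation's own terms, which your step 2 glosses over by treating $\mathcal{B}^*_\fm$ as pure prepending.
\item After stripping the prefix $\fm$, the argument of Proposition~\ref{prop: strategy step 1} applies verbatim.
\end{itemize}
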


The rest of this section is devoted to proving this proposition.

For a tuple $\fn$, we write $\fn=(n_1,\dots,n_\ell)$. Recall that the initial tuple of $\fn$ denoted by $\Init(\fn)$ is defined to be the tuple $(n_1,\dots,n_k)$ where $n_1,\dots,n_k \leq q$ and $n_{k+1}>q$.

By \eqref{eqn: CP relations}, recall that
\begin{align*}
  \begin{split}
  \mathcal{C}_{\fn} (P(\fs)): \quad &\Si_{d}(q+(\fs^+)_1,(\fs^+)_-, q-1) \Si_{\leq d}(\fn)+\Si_{d}(q,\fs^+, q-1)\Si_{\leq d}(\fn) \\
  &+ \Si_{d}(q+s_1, \fs_-,1, q-1)\Si_{\leq d}(\fn) +\Si_{d}(q,\fs, 1, q-1)\Si_{\leq d}(\fn) \\
  & - \Si_{d+1}(1, (q-1)*(\fs, q))\Si_{<d+1}(\fn) =0.
  \end{split}
\end{align*}
Then we can write down the expression of $\mathcal{B}^*_\fm \circ \mathcal{C}_\fn (P(\fs))$. As $w(\fm)+w(\fn)+w(\fs)=w-2q<q$, it follows that

\begin{lemma} \label{lemma: initial tuple}
Any tuple appearing in $\mathcal{B}^*_\fm \circ \mathcal{C}_\fn (P(\fs))$ has the initial tuple which is greater than $\fm$.
\end{lemma}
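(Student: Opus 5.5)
The plan is to unwind the definition $\mathcal{B}^*_\fm=\mathcal{B}^*_{m_1}\circ\dots\circ\mathcal{B}^*_{m_j}$ and read off the prefix of every tuple that appears. Write $\fm=(m_1,\dots,m_j)$ and $R=\mathcal{C}_\fn(P(\fs))$. Applying $\mathcal{B}^*_{m_j}$ to the binary relation $R$ gives
\[
\mathcal{B}^*_{m_j}(R)(d)=\Si_d(m_j)\sum_{i<d}R(i).
\]
Here $\sum_{i<d}R(i)$ turns each $\Si_i(\ft)$ into $\Si_{<d}(\ft)$ and each $\Si_{i+1}(\ft)$ into $\Si_{<d+1}(\ft)=\Si_{<d}(\ft)+\Si_d(\ft)$. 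The $\Si_{<d}$-terms give $\Si_d(m_j,\ft)$. The extra $\Si_d(\ft)$-terms, which come only from the $D_1$-level part $-\Si_{d+1}(1,(q-1)*(\fs,q))\Si_{<d+1}(\fn)$, give $\Si_d(m_j)\Si_d(\ft)=\Si_d(m_j+t_1,\ft_-)$ by \eqref{eq:product depth1}. The resulting relation is fixed. Each further $\mathcal{B}^*_{m_i}$ with $i<j$ acts on a fixed relation, so it only prepends $m_i$. Consequently, every tuple in $\mathcal{B}^*_\fm\circ\mathcal{C}_\fn(P(\fs))$ has one of two forms: either $(m_1,\dots,m_j,\ft)$ with $\ft$ a tuple of $\mathcal{C}_\fn(P(\fs))$, or $(m_1,\dots,m_{j-1},m_j+t_1,\ft_-)$ with $\ft$ a $D_1$-level tuple of $\mathcal{C}_\fn(P(\fs))$.

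Next I will use the weight bound $w(\fm)+w(\fn)+w(\fs)=w-2q<q$. This gives $m_i\le q-1$ for every $i$, and in the merged case $m_j+t_1$ is small because $t_1=1$. Indeed, the $D_1$-level tuples are of the form $(1,\dots)$ coming from $\Si_{<d+1}(1,(q-1)*(\fs,q))\Si_{<d+1}(\fn)$ via Proposition \ref{prop: product CMPL}. The one point to check is that the stuffle with $\fn$ never makes the first entry $1$ absorb a part of $\fn$. If it did, the first entry would be $1+n_i\le q$, which is still at most $q$; so either way the first entry stays $\le q$. Hence in both forms the first $j$ entries of the tuple are all $\le q$, and the $(j)$-th entry equals $m_j$ in the first form and $m_j+t_1>m_j$ in the second.

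From this it follows that $\Init$ of the tuple contains $\fm$ (or $\fm$ with its last entry increased) as a prefix. The initial tuple is either a proper extension of that prefix or the prefix itself with a strictly larger last entry. In both cases $\Init(\text{tuple})\succeq\fm$ in the lexicographic sense used here. To get strictness I will check that at least one more entry $\le q$ always follows in the first form. The tuples of $\mathcal{C}_\fn(P(\fs))$ begin with $q$, with $q+(\cdots)$, or with $1$. When the entry is $q+(\cdots)>q$, the initial tuple stops exactly at $\fm$, and I will have to interpret "greater" in the sense needed later, namely containing $\fm$ as a prefix.

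The main obstacle is precisely this bookkeeping: pinning down which order "greater than $\fm$" means, and handling the boundary case of a leading entry $q$ versus $q+r$. I would settle it by fixing the convention that "greater" means the initial tuple has $\fm$ as a prefix, or agrees with $\fm$ except for a larger last entry. I would then verify, case by case over the five summands of \eqref{eqn: CP relations}, that this holds using only $w-2q<q$.
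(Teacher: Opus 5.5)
Your proposal is correct and follows the paper's argument. Every tuple in $\mathcal{B}^*_\fm\circ\mathcal{C}_\fn(P(\fs))$ has prefix $\fm$ or $\fm^+$; the latter arises only from the $\Si_{d+1}(1,\dots)\Si_{<d+1}(\fn)$ part, whose leading $1$ is in fact never merged with $\fn$. The bound $w(\fm)<q$ then keeps all these prefix entries at most $q$. You are also right that ``greater'' can only mean $\Init(\cdot)\succeq\fm$, since equality occurs, e.g.\ for $(\fm,q+(\fs^+)_1,\dots)$. That non-strict inequality is exactly what the paper's own proof gives and what Proposition~\ref{prop: lower bound} uses.
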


\begin{proof}
We note that any tuple appearing in $\mathcal{B}^*_\fm \circ \mathcal{C}_\fn (P(\fs))$ has the prefix either $\fm$ or $\fm^+$. As $w(\fm)<q$, all the entries of $\fm_+$ are smaller than $q$. Thus we are done.
\end{proof}

Suppose that we have an equality
\begin{equation} \label{eq: B C}
\mathcal{B}^*_\fm \circ \mathcal{C}_\fn (P(\fs)) + \sum_i a_i \mathcal{B}^*_{\fm_i} \circ \mathcal{C}_{\fn_i} (P(\fs_i))=0
\end{equation}
where
\begin{itemize}
\item $\fm, \fn, \fs$ such that $w(\fm)+w(\fn)+w(\fs)=w-2q$,
\item $a_i \in \Fq$,
\item $(\fm_i,\fn_i, \fs_i) \neq (\fm,\fn,\fs)$ verifying $w(\fn_i)+w(\fs_i)=w-2q$ and either $\fm<\fm_i$ or ($\fm=\fm_i$ and $\depth(\fn_i) \leq \depth(\fn)$).
\end{itemize}

Then by Lemma \ref{lemma: initial tuple} and similar arguments as in the proof of Proposition \ref{prop: strategy step 1}, we show that  the tuple
\[ (\fm,q+(\fs^+)_1,(\fs^+)_-, q-1+n_1, n_2, \dots, n_k)\]
has a non-zero coefficient in Eq. \eqref{eq: B C}.


\section{Proof of Theorem \ref{thm: relations from P, B, C}, Part (2)} \label{sec: w<3q Part 2}

In what follows, a tuple $\fs$ is either empty or a sequence of integers in $\{1,2,\dots\}$. Let $\fs, \fs'$ be two tuples. Then we denote by $\fs + \fs'$ the tuple obtained by adding entry-wise.

\subsection{Auxiliary lemmas for arbitrary weights} \ppar

\subsubsection{The quantity $\alpha$} \ppar

Let $\fs$ be a tuple. If it can be expressed as 
\[ \fs=(\fm',(q-1)+\fn')=(\fm',q-1+n'_1,\dots,n'_k)  \] 
such that  $n_1' \geq 0$ and $1 \leq n'_2,\dots,n'_k < q-1$, then we define $\alpha(\fs):=\depth((q-1)+\fn')$. Otherwise, we define $\alpha(\fs):=-\infty$.

Let $R$ be an $\Fq$-linear relation of any weight $w$. We denote by $\mathcal{I}(R)$ the set of non-basis tuples $\fs$ in $R$. Let $\fs \in \mathcal I(R)$, we have the first expression
\begin{equation*}
\fs=(\fm,(q)+ \fn)
\end{equation*}
where $\fm$ and $\fn$ are tuples which may be empty. We need the second expression 
\[ \fs=(\fm',(q-1)+\fn')=(\fm',q-1+n'_1,\dots,n'_k)  \] 
such that $n_1' \geq 0$ and $1 \leq n'_2,\dots,n'_k < q-1$. We see that $\fm$ is a prefix of $\fm'$. Thus we get
\[ \fs=(\fm,(q)+\frak p,(q-1)+\fn'). \]
We emphasize that if $\frak p=\emptyset$, then $\fm=\fm'$ and $\fn'=(1)+\fn$. With this notation we define
\begin{align*}
\alpha(R) &:= \max_{\fs \in \mathcal I(R)} \alpha(\fs),
\end{align*}
and set
\begin{align*}
\mathcal S_{\max}(R) &:= \{ \fs \in \mathcal I(R): \alpha(\fs)=\alpha(R) \}.
\end{align*}
If $R=0$, then we set $\alpha(R):=-\infty$.

We prove some auxiliary lemmas. Let $\fs \in \mathcal I(R)$. Then we can write $\fs=(\fm,(q)+\fn)$ where $\fm=(m_1,\dots)$ and $\fn$ are tuples which may be empty and $m_i \leq q$ for all $i$. Then the tuples in $\Ind R$ coming from $\fs$ are given by
\begin{align*}
\begin{split}
\mathbf{1}_{\{\fm \ne \emptyset\}}\cdot (\fm^{+}, (q-1)*\fn) + (\fm, 1, (q-1)*\fn).
\end{split} 
\end{align*}

\begin{lemma} \label{lemma: bound for alpha}
We have $\alpha(R) \geq \alpha(\Ind R)$.
\end{lemma}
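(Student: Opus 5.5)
The plan is to bound $\alpha$ of every tuple that can appear in $\Ind R$ by $\alpha(R)$. By Eq.~\eqref{eq: induced relation 2}, every tuple of $\Ind R$ lies in the $D_1$-level expansion of some non-basis tuple $\ft$. These $\ft$ are either tuples of $\mathcal I(R)$ or tuples produced from them by iterating the $\Fq$-level step of Proposition~\ref{prop: key expression}. So the proof splits into two steps.

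\textbf{Step 1 (the $\Fq$-level step does not increase $\alpha$).} The $\Fq$-level step sends $\fs=(\fm,q+r,\fn_-)$ to $(\fm,q,r,\fn_-)$. The final block $(q-1)+\fn'$ is determined by the last entry $\geq q-1$ together with the trailing entries $<q-1$. Splitting $q+r$ into $(q,r)$ never introduces a new entry $\geq q-1$ to the right of the old one. It either leaves that block unchanged, when the last large entry lies in $\fn_-$, or replaces it by a block starting at $q$ that is followed by $r$ and $\fn_-$. In the second case all the entries after it are small unless $r\geq q-1$, and if $r\geq q-1$ the block starts at $r$ instead. Comparing depths in each subcase shows that $\alpha$ is at most that of $\fs$, up to the bookkeeping that the split adds one entry. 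To handle this I will define $\alpha$ of a tuple uniformly, so that it counts only the entries after the last entry $\geq q-1$ plus one. Then the new small entry $r<q-1$ is balanced against the loss of $q+r$ as the marked entry. Since the marked entry becomes $q$ itself, the count is unchanged.

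\textbf{Step 2 ($D_1$-level terms).} For $\fs=(\fm,(q)+\fn)\in\mathcal I(R)$ with $\fn=(n_1,\dots)$, I will examine the terms of $\mathbf 1_{\fm\ne\emptyset}(\fm^+,(q-1)*\fn)+(\fm,1,(q-1)*\fn)$. Each term of $(q-1)*\fn$ is one of three kinds: a merge $(\dots,q-1+n_j,\dots)$, an insertion $(\dots,q-1,n_j,\dots)$, or an append $(\fn,q-1)$. In each case the last entry $\geq q-1$ is either the inserted or merged one at position $j$, or an entry of $\fn$ further right that was already $\geq q-1$. Counting the trailing small entries then gives $\alpha\le \depth(\fn)-j+1$ or less. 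Meanwhile $\fs$ itself has $q+n_1-\dots$ in its final block. More precisely, $\alpha(\fs)\geq$ the number of entries after the last entry $\geq q-1$ of $(q,\fn)$, plus one. The prefix $\fm^+$ or $(\fm,1)$ has entries $\le q+1$, but all of them sit to the left and so do not matter.

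\textbf{Obstacle.} The main obstacle is the merge case $q-1+n_j$ when $j=1$ and the marked entry of $\fs$ was $q+n_1$. Here I will check that the trailing counts coincide, which gives equality rather than a strict inequality. The other delicate point is the case where entries of $\fn$ equal exactly $q-1$, because these shift which entry is marked. I will handle this by taking the marked entry to be the last entry $\geq q-1$ in both tuples and noting that merging with $q-1$ only moves that marker to the right or leaves it fixed. Taking the maximum over all contributing tuples then yields $\alpha(\Ind R)\le\alpha(R)$. If $\Ind R=0$, then $\alpha(\Ind R)=-\infty$ and the inequality is trivial.
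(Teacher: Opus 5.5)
Your proof has a genuine gap in Step 2, and a fixable error in Step 1.

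\textbf{Step 2: the per-tuple bound fails for the insertion of $q-1$ in front of $\fn$.} Take $\fs=(\fm,(q)+\fn)=(\fm,q+n_1,n_2,\dots,n_k)$ with $n_1,\dots,n_k<q-1$, which is possible once $q\ge 3$. Since $q+n_1$ is a single entry, $\alpha(\fs)=k$. Your lower bound ``$\alpha(\fs)\ge$ the count for $(q,\fn)$'' treats $q$ and $n_1$ as separate entries and so overcounts by one.

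On the other side, an insertion $(\dots,n_{j-1},q-1,n_j,\dots)$ can have $\alpha=\depth(\fn)-j+2$, not $\depth(\fn)-j+1$. Hence the term $(\fm^+,q-1,n_1,\dots,n_k)$ of $(\fm^+,(q-1)*\fn)$ has $\alpha=k+1>\alpha(\fs)$. The analogous term $(\fm,1,q-1,\fn)$ is harmless because it is a basis tuple. But if the last entry of $\fm$ is $q$, then $\fm^+$ ends in $q+1$, and $(\fm^+,q-1,\fn)$ is a non-basis tuple occurring with coefficient $1$ in the expansion of $\fs$. For example, $\fs=(q,q+1)$ has $\alpha(\fs)=1$, while its $D_1$-level tuple $(q+1,q-1,1)$ is non-basis with $\alpha=2$.

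So ``bound every contributing tuple by $\alpha(\fs)$, then take the maximum'' cannot prove the lemma. You need an additional argument, using that $R$ is a genuine relation. It must show that whenever such an $\fs$ survives in $R$ (after the $\Fq$-level reduction), either this term cancels in $\Ind R$ or $R$ already contains a non-basis tuple with $\alpha\ge k+1$. The paper's one-line proof asserts exactly the per-tuple equality $\alpha(\fm,(q)+\fn)=\alpha(\fm^+,(q-1)*\fn)$, which fails on this example. Following the paper therefore does not close the gap either.

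\textbf{Step 1: the critical subcase is miscounted, but the step can be repaired.} This step treats the non-basis tuples created at the $\Fq$ level, which the paper's short argument does not discuss, so it is worth having. Let $\fs=(\fm,q+r,\fn_-)$ with $r<q-1$ and all entries of $\fn_-$ below $q-1$. In $(\fm,q,r,\fn_-)$ the marked entry stays in the same position (it is now $q$), and $r$ is an extra trailing entry. So $\alpha$ increases by one; nothing is ``balanced''.

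The correct reason this is harmless is that this output is a basis tuple: the entries of $\fm$ are at most $q$, then comes $q$, then entries below $q-1$. It therefore falls into the $\Fq$-level basis part, which vanishes in the construction of $\Ind$, and it generates no $D_1$-level terms. In the other subcases (last large entry inside $\fn_-$, or $r\ge q-1$), $\alpha$ is indeed preserved. So Step 1 is repaired by this observation, whereas Step 2 does not go through as written.
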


\begin{proof}
Let $\fs=(\fm,(q)+\fn)$ as above. Then in $\Ind R$, we get the tuples 
\begin{align*}
\begin{split}
\mathbf{1}_{\{\fm \ne \emptyset\}}\cdot (\fm^{+}, (q-1)*\fn) + (\fm, 1, (q-1)*\fn).
\end{split} 
\end{align*}
We see that
\begin{align*}
\alpha(\fm,(q)+\fn) = \alpha(\fm^{+}, (q-1)*\fn) \\
\alpha(\fm,(q)+\fn) = \alpha(\fm,1, (q-1)*\fn).
\end{align*}
For the RHS, we understand that $\alpha$  stands for the max of all the $\alpha(\fs')$ where $\fs'$ belongs to the RHS. The lemma follows.

We mention that the inequality comes from possible cancellations in $\Ind R$ and also from the fact that in the definition of $\alpha(\Ind R)$ we only consider the non basis tuples in $\Ind R$. 
\end{proof}

\subsubsection{The quantity $\depth^+$} \ppar

Let $\fs$ be a non basis tuple of any weight. Then we can write
\begin{equation*}
\fs=(\fm,(q)+ \fn)
\end{equation*}
where $\fm$ and $\fn$ are tuples which may be empty such that all the entries of $\fm$ are at most $q$. By convention, if $\fm=\emptyset$, then $\fm^+:=(1)$. We define \[\depth^+(\fs) = 
\begin{cases}
\depth(\fs) & \text{if } \fm \ne \emptyset, \\
\depth(\fs)+1 & \text{if } \fm = \emptyset.
\end{cases}\]
In other words, $\depth^+(\fs)=\depth(\fm^+, (q)+ \fn)$.

\begin{lemma} \label{lemma: depth plus}
Let $\fs\le \fs_0$ be two non basis tuples of the same weight. Suppose that $\fs\le \fs_0$ with respect to the $\depthlex$-order, then $\depth^+(\fs) \le \depth^+(\fs_0)$.
\end{lemma}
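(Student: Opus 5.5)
The plan is to compare the two tuples first by depth and then, when the depths agree, by their leading entries, using only the definition of the $\depthlex$-order (Definition \ref{defn: order}) and of $\depth^+$.

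First I record when $\fm$ is empty in the decomposition $\fs=(\fm,(q)+\fn)$ of a non basis tuple. There are two ways $\fs$ can fail to lie in $\mathcal J_w$.
\begin{itemize}
\item Some entry exceeds $q$. Then $\fm$ consists of the entries before the first such entry, so $\fm=\emptyset$ exactly when $s_1>q$.
\item All entries are at most $q$ and $s_\ell=q$. Then $\fm=(s_1,\dots,s_{\ell-1})$, which is empty only for $\fs=(q)$.
\end{itemize}
Hence $\fm=\emptyset$ if and only if $s_1>q$ or $\fs=(q)$. In all cases
\[\depth(\fs)\le \depth^+(\fs)\le \depth(\fs)+1.\]

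Now let $\fs\le\fs_0$. If $\fs=\fs_0$ there is nothing to prove.

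Suppose $\depth(\fs)<\depth(\fs_0)$. Then
\[\depth^+(\fs)\le\depth(\fs)+1\le\depth(\fs_0)\le\depth^+(\fs_0),\]
and we are done.

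Suppose instead $\depth(\fs)=\depth(\fs_0)$ and $\fs_0\succ\fs$ lexicographically. The only way the inequality could fail is if $\depth^+(\fs)=\depth(\fs)+1$ while $\depth^+(\fs_0)=\depth(\fs_0)$. That is, the $\fm$-part of $\fs$ is empty while the $\fm$-part of $\fs_0$ is not. I would rule this out using the characterization above.
\begin{itemize}
\item If $\fs=(q)$, then $\fs_0$ has depth $1$ and weight $q$, so $\fs_0=(q)=\fs$. This contradicts $\fs_0\succ\fs$.
\item Otherwise $s_1>q$. Since $\fs_0\succ\fs$ lexicographically, its first entry satisfies $(\fs_0)_1\ge s_1>q$. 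So the $\fm$-part of $\fs_0$ is empty as well, and $\depth^+(\fs_0)=\depth(\fs_0)+1=\depth^+(\fs)$.
\end{itemize}
In every remaining combination the inequality holds trivially: either both $\fm$-parts are empty, both are nonempty, or only the one for $\fs_0$ is empty.

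The only delicate point is keeping the direction of the lexicographic comparison straight. In Definition \ref{defn: order}, $\fs<\ft$ at equal depth means $\ft\succ\fs$, so the larger tuple has a larger (or equal) first entry. I should also not overlook the degenerate non basis tuple $(q)$, whose $\fm$-part is empty even though its first entry is not greater than $q$; I dispose of it by the weight and depth argument above.
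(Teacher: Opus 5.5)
Your proof is correct and uses the same case analysis as the paper: compare depths, and in the equal-depth case with $\fm=\emptyset$ use the lexicographic comparison to force $\fm_0=\emptyset$. Your characterization ($\fm=\emptyset$ if and only if $s_1>q$ or $\fs=(q)$) makes explicit the step ``it follows that $\fm_0=\emptyset$'', which the paper asserts without justification, and it also handles the degenerate tuple $(q)$.
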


\begin{proof}
We write $\fs=(\fm,(q)+ \fn)$ and $\fs_0=(\fm_0,(q)+ \fn_0)$ as above. As $\fs\le \fs_0$, we get $\depth(\fs) \le \depth(\fs_0)$.

First, suppose that $\fm \neq \emptyset$, then 
\begin{align*}
\depth^+(\fs)=\depth(\fs) \leq \depth(\fs_0) \leq \depth^+(\fs_0).
\end{align*} 
Second, suppose that  $\fm = \emptyset$ and $\depth(\fs) < \depth(\fs_0)$, then
\begin{align*}
\depth^+(\fs)=\depth(\fs)+1 \leq \depth(\fs_0) \leq \depth^+(\fs_0).
\end{align*} 
Finally, we assume that  $\fm = \emptyset$ and $\depth(\fs) = \depth(\fs_0)$. Then $\fs \prec \fs_0$. It follows that $\fm_0=\emptyset$. We get
\begin{align*}
\depth^+(\fs)=\depth(\fs)+1 \leq \depth(\fs_0)+1 = \depth^+(\fs_0).
\end{align*} 
\end{proof}

\subsection{Some results for weights $w<3q$} \ppar

From now on, we suppose that $w<3q$.

\begin{proposition} \label{prop: proof of Conjecture PBC}
Let $R$ be an $\Fq$-linear relation of weight $w<3q$. Then in $\mathcal S_{\max}(R)$, there exists a tuple of the form $(\fm,(q)+\fs^+,(q-1)+\fn)$ where $\fs$ is a nonempty tuple, $\fm$, $\fn$ are tuples which may be empty.
\end{proposition}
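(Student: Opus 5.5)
Implicitly $R$ is a nontrivial $\Fq$-linear relation; for $w\le 2q$ there is nothing to prove by Theorem~\ref{thm: weight up to 2q}. The plan is a descending induction on $\alpha(R)$ and, for fixed $\alpha$, on $s(R)$ in the $\depthlex$-order, driven by the operator $\Ind$ and Lemma~\ref{lemma: bound for alpha}. Suppose for contradiction that no tuple of $\mathcal S_{\max}(R)$ has the form $(\fm,(q)+\fs^+,(q-1)+\fn)$ with $\fs\ne\emptyset$. Write each $\fs_0\in\mathcal S_{\max}(R)$ in the double form $\fs_0=(\fm,(q)+\frak p,(q-1)+\fn')$ introduced before Lemma~\ref{lemma: bound for alpha}. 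The assumption then says: either $\frak p=\emptyset$, or $\frak p$ is not of the form $\fs^+$. Since $\frak p$ is a nonempty tuple and $\fs^+$ only requires the last entry to be at least $2$, the second alternative means $\frak p$ ends in $1$.

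The key computation is to track what $\Ind$ does to such tuples. For $\fs_0=(\fm,(q)+\fn)$, the $D_1$-level tuples are $(\fm^+,(q-1)*\fn)$ and $(\fm,1,(q-1)*\fn)$. I will single out the term of $(q-1)*\fn$ in which $q-1$ is merged into the entry sitting at the position of the final $(q-1)+\fn'$ block. This is the term that realises $\alpha(\fs_0)$ in the proof of Lemma~\ref{lemma: bound for alpha}.

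The weight constraint is essential here. Since $w<3q$, a tuple of weight $w$ has at most two entries that are $\ge q$, and every entry after the block $(q)+\frak p$ is $<q$. So in $\mathcal S_{\max}(R)$ the shapes are very restricted: $\frak p$ must be short and the tail entries small. I will then show the following. If $\frak p=\emptyset$ or $\frak p$ ends in $1$, then $\Ind$ sends the $\depthlex$-smallest element $\fs_0$ of $\mathcal S_{\max}(R)$ to a tuple $\widetilde{\ft}$ with $\alpha(\widetilde{\ft})=\alpha(R)$ whose coefficient cannot cancel. The cancellation argument imitates Lemma~\ref{lemma: special case} and the Claims (A)--(C) of Section~\ref{sec: small weights}: any other source $\ft>\fs_0$ producing $\widetilde{\ft}$ would either have $\alpha(\ft)>\alpha(R)$, which is impossible, or lie in $\mathcal S_{\max}(R)$ below $\fs_0$, which contradicts minimality. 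Lemma~\ref{lemma: depth plus} will control the depths in this comparison. It follows that $\Ind R$ is nontrivial with $\alpha(\Ind R)=\alpha(R)$ and $s$ strictly larger. The ``ends in $1$'' condition is what makes $\widetilde{\ft}$ again have no block of the form $(q)+\fs^+$, because the merge of $\fm^+$ or the extra $1$ does not create an entry $\ge 2$ at the end of $\frak p$.

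Iterating, the ascending chain of smallest tuples must terminate, since there are finitely many tuples of weight $w$. The chain can only stop when $\Ind$ yields a trivial relation. By the analogue of Theorem~\ref{thm: small weights} for $w<3q$, a relation with trivial $\Ind$ has $s(R)$ equal to a leading tuple $(q+(\fs^+)_1,(\fs^+)_-,q-1,\dots)$ of some $P(\fs)$-type relation, and such a tuple is of the required form. This contradicts the assumption propagated along the chain. Proving that analogue for $w<3q$ (the type-by-type case analysis of Section~\ref{sec: small weights}, with weight bound $3q$ instead of $2q+1$) is a lemma I will establish first.

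The main obstacle is the non-cancellation step for $\widetilde{\ft}$. The bound $w(\fn)+w(r,\fm)\le q+1$ from Section~\ref{sec: small weights} is replaced by $<2q$, so more collisions among the expansions $(q-1)*(\cdot)$ are possible. My first attempt is to choose $\widetilde{\ft}$ to maximise $\alpha$ and then the length of the tail of entries $<q-1$ (Property (4)-style counting, as in Lemma~\ref{lem: special tuple}). This should force any colliding source to share the same $(\fm,\frak p)$.
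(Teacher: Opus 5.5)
Your architecture is the contrapositive of the paper's. The paper runs a descending induction along $R,\Ind R,\dots,\Ind^kR$ with $\Ind^{k+1}R=0$: it applies Lemma~\ref{lemma: proof of Conjecture, 1st case} when $\alpha$ drops and Lemma~\ref{lemma: proof of Conjecture, 2nd case} when it does not. Those two lemmas carry the whole argument, and they are exactly what you leave open. Moreover, the special tuple you propose for the first one does not work. If $\widetilde{\ft}$ is obtained by merging $q-1$ into the first entry of the final $(q-1)+\fn'$ block, minimality of $\fs$ cannot rule out cancellation against a \emph{larger} element of $\mathcal S_{\max}(R)$. For example, take $q\ge 3$ and:
\begin{itemize}
\item $\fs=(q+2,1,q-1)$ as the $\depthlex$-smallest element, so $\fp=(2,1)$ ends in $1$ and $\alpha=1$;
\item $\fs_0=(1,1,q+1,q-1)$, which is also bad, has $\alpha=1$, and satisfies $\fs_0>\fs$ because its depth is larger.
\end{itemize}
Both produce $(1,2,1,2q-2)$, so minimality gives nothing. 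This example also refutes your heuristic that a colliding source must share $(\fm,\fp)$.

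The paper chooses its special tuple $\ft$ differently, and the choice is essential. If $\fp=(\{1\}^k,p_{k+1},\dots,p_\ell)$ with $p_{k+1}\ge 2$, it merges $q-1$ into $p_{k+1}$, creating an entry $\ge q+1$ inside the $\fp$-part. The merge into the final block is used only when $\fp=(\{1\}^k)$. It then notes $\depth(\ft)=\depth^+(\fs)$ and uses Lemma~\ref{lemma: depth plus} to restrict the competing terms of another source $\fs_0\in\mathcal S_{\max}(R)$ to merged terms of $(\fm_0^+,(q-1)*\fn_0)$. A case analysis using $w<3q$ and the order excludes every collision except Case 2a. That case occurs exactly when $\fp=(\{1\}^k,2)$, i.e.\ when $\fs$ is good.

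Your propagation step only checks that $\widetilde{\ft}$ is again bad. What must be inherited is that \emph{no} tuple of $\mathcal S_{\max}(\Ind R)$ is good, and those tuples come from all elements of $\mathcal S_{\max}(R)$ and all terms of the products $(q-1)*\fn$. The statement you need is the backward one, Lemma~\ref{lemma: proof of Conjecture, 2nd case}. When $\alpha(\Ind R)=\alpha(R)$, Lemma~\ref{lemma: bound for alpha} gives every $\ft_1\in\mathcal S_{\max}(\Ind R)$ a source $\fs_0\in\mathcal S_{\max}(R)$. A separate weight argument then shows that a good $\ft_1$ forces a good $\fs_0$:
\begin{itemize}
\item nothing good arises when $\fp_0=\emptyset$;
\item a merge into $p_{0,i+1}$ forces $p_{0,\ell'}>1$;
\item the $2(q-1)$-merge is excluded because it would give $w\ge 3q$.
\end{itemize}

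The endpoint lemma you invoke, an analogue of Theorem~\ref{thm: small weights} for $w<3q$, is unproven. It would also not give the contradiction. It concerns $s(R)$, while your propagated hypothesis concerns $\mathcal S_{\max}(R)$, and nothing forces $s(R)$ to have maximal $\alpha$. For instance, for $q\ge 3$, in $\mathcal C_{(1)}(P(1))$ the smallest tuple $(q+2,q)$ has $\alpha=1$, while $(q+2,q-1,1)$ occurs with $\alpha=2$.

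The endpoint lemma is also superfluous. If your non-cancellation claim held, a bad relation would always have nonzero $\Ind$. The strictly increasing chain $s(\Ind^iR)$ would then contradict finiteness directly. In the paper, the terminal relation $\Ind^kR$ is handled by Lemma~\ref{lemma: proof of Conjecture, 1st case}, since $\alpha(\Ind^{k+1}R)=-\infty$.
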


We first show that this proposition implies Theorem \ref{thm: relations from P, B, C}. 
\begin{proof}[Proof of Theorem \ref{thm: relations from P, B, C}]
Proposition \ref{prop: proof of Conjecture PBC} implies that the $\Fq$-vector space of $\Fq$-linear relations of weight $w<3q$ has dimension bounded by the cardinality of the set $\{(\fm,\fs,\fn): \fs \neq \emptyset, w(\fn)+w(\fs)+w(\fn)=w-2q\}$. Prop. \ref{prop: lower bound} shows that the $\Fq$-vector space of $\Fq$-linear relations of weight $w<3q$ has dimension greater than the cardinality of the set $\{(\fm,\fs,\fn): \fs \neq \emptyset, w(\fn)+w(\fs)+w(\fn)=w-2q\}$. Thus we get a proof of Theorem \ref{thm: relations from P, B, C}.
\end{proof}

The rest of this section is devoted to proving this proposition. As we know that there exists an order such that $R < \Ind(R)$, the proof is by induction with respect to this order.

\subsubsection{First case: $\alpha(R)>\alpha(\Ind R)$.} \ppar

\begin{lemma} \label{lemma: proof of Conjecture, 1st case}
Suppose that $w<3q$ and $\alpha(R)>\alpha(\Ind R)$. Then the $\depthlex$-smallest tuple in $\mathcal S_{\max}(R)$ equals $(\fm,(q)+(\{1\}^{k+1})^+,(q-1)+\fn)$ for some $k \geq 0$ and some tuples $\fm$ and $\fn$.
\end{lemma}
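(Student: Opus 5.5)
Let $\fs_0$ be the $\depthlex$-smallest tuple of $\mathcal S_{\max}(R)$. Write $\fs_0=(\fm,(q)+\fp,(q-1)+\fn')$ in the notation of the definition of $\alpha$, so that $\alpha(\fs_0)=\alpha(R)=\depth((q-1)+\fn')$. The idea is to push $\fs_0$ through $\Ind$ and read off its contributions there. Since $\alpha(\Ind R)<\alpha(R)$, every tuple of $\Ind R$ whose $\alpha$-value equals $\alpha(R)$ must either cancel or be a basis tuple.

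By the proof of Lemma \ref{lemma: bound for alpha}, $\fs_0$ contributes the tuples $\mathbf{1}_{\fm\neq\emptyset}(\fm^+,(q-1)*\fn)$ and $(\fm,1,(q-1)*\fn)$, where $(q)+\fn=(q)+\fp,(q-1)+\fn'$. Among these I would select a distinguished tuple $\widetilde{\ft}$ with $\alpha(\widetilde{\ft})=\alpha(R)$: the one in which the $q-1$ is merged into the first entry of $\fn$. When $\fp\ne\emptyset$ this is $\widetilde{\ft}=(\fm^{(+)},(q-1)+\fp,(q-1)+\fn')$ (or the version with $1$ inserted when $\fm=\emptyset$). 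This mirrors the choices of $\widetilde{\ft}$ in \S\ref{sec: small weights}.

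Next, as in Lemma \ref{lemma: special case} and Claims (A)--(C), I would show that $\widetilde{\ft}$ cannot come from any other $\ft\in\mathcal I(R)$, using $w<3q$, which allows at most two entries $\ge q-1$ outside the prefix. Tuples $\ft$ with $\alpha(\ft)<\alpha(R)$ produce only tuples with smaller $\alpha$. For $\ft\in\mathcal S_{\max}(R)$ with $\ft>\fs_0$, compare $\depth^+$ via Lemma \ref{lemma: depth plus} and then lexicographically: the prefix of $\widetilde{\ft}$ up to the entry $q-1+p_1$ determines $\fm_\ft$ and $\fp_\ft$, forcing $\ft=\fs_0$. Hence $\widetilde{\ft}$ survives in $\Ind R$ with nonzero coefficient. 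Since $\alpha(\widetilde{\ft})=\alpha(R)>\alpha(\Ind R)$, $\widetilde{\ft}$ must therefore be a basis tuple, i.e.\ lie in $\mathcal J_w$.

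The remaining step is to read off the shape of $\fs_0$ from this basis condition. The condition that $(\ldots,(q-1)+\fp,(q-1)+\fn')$ has all entries $\le q$ with last entry $<q$ forces $p_1\le 1$. The remaining entries of $\fp$ must be $\le q$, and since $w<3q$ they are small. Also, $\fp$ cannot be empty: otherwise $\widetilde{\ft}$ ends with $q-1+n_1'$ where $\fn'=(1)+\fn$, giving an entry $\ge q$ or a final $q$. I would then run the same argument with the alternative choice in which the $q-1$ merges with a later entry of $\fp$, to conclude that every entry of $\fp$ except the last equals $1$ and that the last entry is at least $2$. This says exactly that $\fp=(\{1\}^{k+1})^+$ for some $k\ge0$.

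\textbf{The main obstacle} is this final step. One must pick, for each potential failure of the shape $\fp=(\{1\}^{k+1})^+$, a contributed tuple of $\alpha$-value $\alpha(R)$ that is non-basis and show it is uncancelled. This requires a careful case analysis with the bound $w<3q$, modeled on Cases 3.1--3.3 and 4.1--4.3. I would first try taking the tuple where $q-1$ merges into the first entry $p_j>1$ with $j$ not last, which yields an entry $>q$ and hence a non-basis tuple.
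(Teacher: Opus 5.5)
Your argument runs in the wrong direction, and its central step is false.

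\textbf{The uniqueness claim fails.} You assert that $\widetilde{\ft}=(\fm^{+},(q-1)+\fp,(q-1)+\fn')$ cannot arise from any other $\ft\in\mathcal I(R)$, because the prefix of $\widetilde{\ft}$ up to the entry $q-1+p_1$ would determine $\fm_\ft$ and $\fp_\ft$. That prefix can instead equal $\fm_\ft^{+}$ for a tuple $\ft$ with $\fp_\ft=\emptyset$. For instance, when $\fp=(2)$ one has $(\fm^+,q+1)=\fm_\ft^+$ with $\fm_\ft=(\fm^+,q)$.

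The relation of Proposition \ref{prop: weight 2q+1 existence} is a concrete instance satisfying the hypotheses of the lemma:
\begin{itemize}
\item $\alpha(R)=1$ and $\Ind R=0$.
\item The $\depthlex$-smallest tuple of $\mathcal S_{\max}(R)$ is $(q+2,q-1)$, with $\fm=\emptyset$ and $\fp=(2)$.
\item Your $\widetilde{\ft}=(1,q+1,q-1)$ is non-basis.
\item It is cancelled in $\Ind R$ by the contribution of $(1,q,q)\in\mathcal S_{\max}(R)$; see the first two displayed lines in the proof of that proposition.
\end{itemize}
So $\widetilde{\ft}$ need not survive. Your next deduction, ``$\widetilde{\ft}$ is basis, hence $p_1\le 1$'', would then exclude $k=0$. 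That is exactly the shape $(\fm,q+2,(q-1)+\fn)$ realized by $(q+2,q-1)$, so your argument would prove something false.

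There are two further problems. Your final target, ``last entry of $\fp$ at least $2$'', is not the lemma's shape $(\{1\}^{k+1})^+=(\{1\}^k,2)$. And for $\fp=\emptyset$ your tuple $(\fm^+,(q-1)+\fn)$ can perfectly well be a basis tuple, e.g.\ $(\fm^+,q-1)$.

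\textbf{The missing idea.} The shape of $\fs$ is forced by the \emph{existence} of a cancelling partner, not by its absence. The paper picks $\ft$ as follows:
\begin{itemize}
\item if $\fp=(\{1\}^k,p_{k+1},\dots,p_\ell)$ with $p_{k+1}\ge 2$, it merges $q-1$ into $p_{k+1}$, the first entry of $\fp$ that is at least $2$;
\item if $\fp=(\{1\}^k)$ with $k\ge1$, it merges $q-1$ into the $\alpha$-block;
\item if $\fp=\emptyset$, it takes $\ft=(\fm^+,(q-1)+\fn)$.
\end{itemize}
This guarantees $\alpha(\ft)=\alpha(R)$ and $\depth(\ft)=\depth^+(\fs)$. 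Since $\alpha(\Ind R)<\alpha(R)$, this $\ft$ must also come from some $\fs_0\in\mathcal S_{\max}(R)$ with $\fs_0>\fs$.

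The paper then enumerates the three ways $\ft$ can come from $\fs_0$: $\fp_0=\emptyset$, merging into some $p_{0,i+1}$, or merging into the $\alpha$-block. Using minimality of $\fs$, Lemma \ref{lemma: depth plus}, equality of the $\alpha$-blocks and $w<3q$, it kills every combination except Case 2a. There $(\fm^+,\{1\}^k,(q-1)+p_{k+1},\dots,p_\ell)=\fm_0^+$, and since the entries of $\fm_0$ are at most $q$, this forces $\ell=k+1$ and $p_{k+1}=2$. That is how $\fp=(\{1\}^k,2)$, including $k=0$, emerges. An ``uncancelled, hence basis'' argument cannot produce it.
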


\begin{proof}
We consider $\mathcal S_{\max}(R)$ and choose the smallest tuple $\fs$ in this set with respect to the (depth, lex) order. We write $\fs=(\fm,(q)+\fn)$ where $\fm$ and $\fn$ are tuples which may be empty and all the entries of $\fm$ are at most $q$. We also write 
\[ \fs=(\fm',(q-1)+\fn')=(\fm',q-1+n'_1,\dots,n'_{k'})  \] 
such that $n_1' \geq 0$ and $1 \leq n'_2,\dots,n'_{k'} < q-1$. We emphasize that $\fn'$ is not necessarily a tuple as we allow $n_1'=0$. We recall that $\fm$ is a prefix of $\fm'$. Thus we get 
\[ \fs=(\fm,(q)+\frak p,(q-1)+\fn'). \]
More concretly,
\[ \fs=\begin{cases}(\fm,(q)+\frak p,(q-1)+\fn') & \text{if $\mathfrak p \neq \emptyset$,} \\
  (\fm, (q-1) + \fn') & \text{if $\mathfrak{p} = \emptyset$}
  .\end{cases} \]

We consider the special tuple $\ft$ appearing in 
\begin{align*}
\begin{split}
\mathbf{1}_{\{\fm \ne \emptyset\}}\cdot (\fm^{+}, (q-1)*\fn) + (\fm, 1, (q-1)*\fn)
\end{split} 
\end{align*}
defined as follows:
\begin{itemize}
\item[1)] If $\frak p=\emptyset$ which means $\fm=\fm'$ and $(1)+\fn=\fn'$, then 
\[ \ft:=(\fm^+,(q-1)+\fn).\]

\item[2)] If $\frak p \neq \emptyset$ and we can write $\frak p=(\{1\}^k,p_{k+1},\dots,p_\ell)$ with $k \geq 0$ and $p_{k+1} \geq 2$, then we put
\[ \ft:=(\fm^+,\{1\}^k,(q-1)+p_{k+1},\dots,p_\ell,(q-1)+\fn').\]

\item[3)] Otherwise, $\frak p =(\{1\}^k)$ with $k \geq 1$, then we put
\[ \ft:=(\fm^+,\{1\}^k,2(q-1)+\fn').\]
\end{itemize}
Here recall that if $\fm=\emptyset$, then $\fm^+:=(1)$. 

We note that $\alpha(\ft)=\alpha(\fs)=\alpha(R)$. Further, $\depth(\ft)=\depth^+(\fs)$. 

Since $\alpha(R)>\alpha(\Ind R)$ and $\alpha(\ft)=\alpha(R)$, $\ft$ does not appear in $\Ind R$. It follows that $\ft$ also comes from another non basis tuple $\fs_0$ in $R$. Since $\alpha(R)=\alpha(\ft) \leq \alpha(\fs_0) \leq \alpha(R)$, it follows that $\alpha(\fs_0)=\alpha(\ft)=\alpha(R)$, that means $\fs_0 \in \mathcal S_{\max}(R)$. Thus $\fs \lneq \fs_0$ (with respect to the $\depthlex$-order). Now we write two expressions of $\fs_0$:
\[ \fs_0=(\fm_0',(q-1)+\fn_0')=(\fm_0,(q)+\frak p_0,(q-1)+\fn_0'). \]
Here $\fm'_0, \fm_0, \mathfrak p_0$ are tuples. And $\fn'_0$ may not be a tuple and for $i \ge 2$, $1 \leq n'_{0, i}<q-1$. Combining the fact that $\alpha(\fs_0)=\alpha(\ft)=\alpha(R)$ and Lemma \ref{lemma: depth plus} yields:
\begin{itemize}
\item If $\frak p_0=\emptyset$, then 
\begin{itemize}
\item[a)] $\ft=(\fm_0^+,(q-1)+\fn_0)$.
\end{itemize}

\item Otherwise, $\frak p_0 \neq \emptyset$. We write $\frak p_0=(p_{0,1},\dots,p_{0,\ell'})$. Then either there exists $1 \leq i \leq \ell'$ such that 
\begin{itemize}
\item[b)] $\ft=(\fm_0^+,p_{0,1},\dots,p_{0,i},(q-1)+p_{0,i+1},\dots,p_{0,\ell'},(q-1)+\fn_0')$,
\end{itemize}
or 
\begin{itemize}
\item[c)] $\ft=(\fm_0^+,p_{0,1},\dots,p_{0,\ell'},2(q-1)+\fn_0')$. 
\end{itemize}

\end{itemize}

We distinguish three cases:
\begin{itemize}
\item[{\bf Case 1.}] Suppose that $\frak p=\emptyset$. Recall that 
\[ \ft=(\fm^+,(q-1)+\fn).\] 

\noindent {\bf Case 1a.} 
First, suppose that $\mathfrak p_0=\emptyset$. So $\ft=(\fm_0^+,(q-1)+\fn_0)$. Since $\alpha(\fs_0)=\alpha(\fs)=\alpha(\ft)$, it follows that $\depth((q-1)+\fn)=\depth((q-1)+\fn_0)$. Thus $\fm^+=\fm^+_0$ and $\fn=\fn_0$. It implies that $\fs=\fs_0$, which is a contradiction.

\medskip
\noindent {\bf Case 1b.} 
Next, suppose that $\mathfrak p_0=(p_{0,1},\dots,p_{0,\ell'}) \neq \emptyset$ and there exists $1 \leq i \leq \ell'$ such that 
\[ \ft=(\fm_0^+,p_{0,1},\dots,p_{0,i},(q-1)+p_{0,i+1},\dots,p_{0,\ell'},(q-1)+\fn_0').\]
As $\alpha(\fs_0)=\alpha(\fs)=\alpha(\ft)$, it follows that $\depth((q-1)+\fn')=\depth((q-1)+\fn_0')$. It implies that $\fn'=\fn_0'$ and $\fm^+=(\fm_0^+,p_{0,1},\dots,p_{0,i},(q-1)+p_{0,i+1},\dots,p_{0,\ell'})$. Thus $\fm_0 \prec \fm_0^+ \preceq \fm$. It follows that $\fm \neq \emptyset$ which implies $\depth(\fs) = \depth(\fs_0)$. Since $\fm_0 \prec \fm$, we conclude that $\fs=(\fm,(q)+\fn)>\fs_0=(\fm_0,(q)+\frak p_0,(q-1)+\fn_0')$, which is a contradiction. 

\medskip
\noindent {\bf Case 1c.} Finally, we  suppose that $\mathfrak p_0=(p_{0,1},\dots,p_{0,\ell'}) \neq \emptyset$ and
  \[ \ft=(\fm_0^+,\mathfrak{p}_0,2(q-1)+\fn_0').\]
As $\alpha(\ft) = \alpha(\fs) = \alpha(\fs_0)$, we have 
  \[ \fn = (q-1) + \fn'\]
and 
  \[ \fm^+ = (\fm_0^+, \mathfrak{p}_0). \]
It follows that $\fm_0 \prec \fm_0^+ \preceq \fm$. Again, It follows that $\fm \neq \emptyset$ which implies $\depth(\fs) = \depth(\fs_0)$. Since $\fm_0 \prec \fm$, we conclude that $\fs=(\fm,(q)+\fn)>\fs_0=(\fm_0,(q)+\frak p_0,(q-1)+\fn_0')$, which is a contradiction. 

\item[{\bf Case 2.}] If $\frak p \neq \emptyset$ and we can write $\frak p=(\{1\}^k,p_{k+1},\dots,p_\ell)$ with $k \geq 0$ and $p_{k+1} \geq 2$. We recall
\[ \ft=(\fm^+,\{1\}^k,(q-1)+p_{k+1},\dots,p_\ell,(q-1)+\fn').\]

\noindent {\bf Case 2a.} 
First, suppose that $\mathfrak p_0=\emptyset$, $\ft=(\fm_0^+,(q-1)+\fn_0)$. Since $\alpha(\fs_0)=\alpha(\fs)=\alpha(\ft)$, it follows that $\depth((q-1)+\fn')=\depth((q-1)+\fn_0)$. Thus $\fn'=\fn_0$ and
\[ (\fm^+,\{1\}^k,(q-1)+p_{k+1},\dots,p_\ell)=\fm_0^+. \]
Since $(q-1)+p_{k+1} \geq q+1$ and we note that in $\fm_0^+$, all the entries except the last one are at most $q$, it forces $\ell=k+1$ and $p_{k+1}=2$. Thus $(\fm^+,\{1\}^k,q)=\fm_0$ and $\fs=(\fm,(q)+(\{1\}^{k+1})^+,(q-1)+\fn')$. Finally, we note that $\fn'=\fn_0$. Since $\fn_0$ is a tuple, $\fn'$ is also a tuple which may be empty.

\medskip
\noindent {\bf Case 2b.} 
Next, suppose that $\mathfrak p_0 \neq \emptyset$ and we write $\frak p_0=(p_{0,1},\dots,p_{0,\ell'})$. Then there exists $1 \leq i \leq \ell'$ such that 
\[ \ft=(\fm_0^+,p_{0,1},\dots,p_{0,i},(q-1)+p_{0,i+1},\dots,p_{0,\ell'},(q-1)+\fn_0').\]
As $\alpha(\fs_0)=\alpha(\fs)=\alpha(\ft)$, it follows that $\depth((q-1)+\fn')=\depth((q-1)+\fn_0')$. It implies that $\fn'=\fn_0'$ and 
\begin{equation*} 
(\fm^+,\{1\}^k,(q-1)+p_{k+1},\dots,p_\ell)=(\fm_0^+,p_{0,1},\dots,p_{0,i},(q-1)+p_{0,i+1},\dots,p_{0,\ell'}).
\end{equation*}
We denote this tuple  by $\ft'$. If $(q-1)+p_{k+1}$ and $(q-1)+p_{0,i+1}$ are not in the same position, then we get a lower bound for the weight of $\ft'$:
\begin{align*}
w(\ft') &\geq 1+(q-1)+p_{k+1}+(q-1)+p_{0,i+1} \\
& \geq 1+(q+1)+q=2q+2.
\end{align*}
As $\fs=(\ft',(q-1)+\fn')$, it follows that $w(\fs) \geq (2q+2)+(q-1)=3q+1$, which is a contradiction. We conclude that $(q-1)+p_{k+1}$ and $(q-1)+p_{0,i+1}$ are in the same position. Thus 
\begin{equation} \label{eq: Ind=0 eq1}
(\fm^+,\{1\}^k)=(\fm_0^+,p_{0,1},\dots,p_{0,i}).
\end{equation}
and 
\begin{equation*} 
(p_{k+1},\dots,p_\ell)=(p_{0,i+1},\dots,p_{0,\ell'}).
\end{equation*}

If $\fm = \emptyset$, then $\fm_0$ is empty (otherwise $\fm_0^+$ contain an entry $>1$), thus $\fp = \fp_0$, and thus $\fs = \fs_0$, a contradiction.

Therefore, we assume that $\fm \ne \emptyset$. Then we write 
\[ (\fm^+,\{1\}^k)=(m_1, \dots, m_{j-1}, m_{j}+1, \{1\}^k) = (\fm_0^+, p_{0,1}, \dots, p_{0, i-1}). \] 
Since $m_j+1\ge2$, we have $\depth(\fm_0)\le \depth(\fm)$. If $\depth(\fm) = \depth(\fm_0)$, then $\fm = \fm_0$ and $\fp = \fp_0$, which is a contradiction. We conclude that
\[ \depth(\fm_0) < \depth(\fm). \] 
Therefore, $\fm_0 \prec \fm_0^+ \preceq \fm$. It follows that $\fs_0 < \fs$ as $\depth(\fs_0)=\depth(\fs)$ and $\fs_0 \prec \fs$, which is a contradiction.

\medskip 
\noindent {\bf Case 2c.} Now we suppose the following remaining case where $\fp_0=(p_{0, 1}, \dots, p_{0, \ell'}) \ne \emptyset$ and $ \ft = (\fm_0^+, \fp_0 , 2(q-1) + \fn_0')$. Thus
\begin{align*}
  \ft &= (\fm_0^+, \fp_0 , 2(q-1) + \fn_0'),\\
  & = (\fm^+, \{1\}^k, (q-1) + p_{k+1}, \dots, p_{\ell}, (q-1) + \fn').
\end{align*} 
It follows that
\begin{align*}
  (2(q-1) + \fn_0') &= ((q-1) + \fn') \\
  (\fm_0^+, \fp_0) &= (\fm^+, \{1\}^k, (q-1) + p_{k+1}, \dots, p_{\ell}).
\end{align*} 
Since $p_{k+1}\ge2$, we have the inequality 
\begin{align*}
w = w(\ft) & \geq w(\fm^+, \{1\}^k, (q-1) + p_{k+1}, \dots, p_{\ell}) + w(2(q-1) + \fn_0') \\
& \geq 1 + (q-1) + 2 + 2(q-1) = 3q,
\end{align*}
which contradicts to the assumption $w< 3q$. 

\item[{\bf Case 3.}] Otherwise, $\frak p =(\{1\}^k)$ with $k \geq 1$, then we recall
\[ \ft:=(\fm^+,\{1\}^k,2(q-1)+\fn').\]

\noindent {\bf Case 3a.} 
First, suppose that $\mathfrak p_0=\emptyset$, $\ft=(\fm_0^+,(q-1)+\fn_0)$ Since $\alpha(\fs_0)=\alpha(\fs)=\alpha(\ft)$, it follows that $\depth(2(q-1)+\fn')=\depth((q-1)+\fn_0)$. Thus $(q-1)+\fn'=\fn_0$ and
\[ (\fm^+,\{1\}^k)=\fm_0^+. \]
The last entry of $\fm_0^+$ cannot be $1$ unless $\fm_0 = \emptyset$. But when $\fm_0 = \emptyset$, then we find a contradiction again since $1=\depth(\fm_0^+) < k+1=\depth(\fm^+, \{1\}^k)$.

\medskip
\noindent {\bf Case 3b.} 
Next, suppose that $\mathfrak p_0=(p_{0,1},\dots,p_{0,\ell'}) \neq \emptyset$ and there exists $1 \leq i \leq \ell'$ such that 
\[ \ft=(\fm_0^+,p_{0,1},\dots,p_{0,i},(q-1)+p_{0,i+1},\dots,p_{0,\ell'},(q-1)+\fn_0').\]
As $\alpha(\fs_0)=\alpha(\fs)=\alpha(\ft)$, it follows that $\depth((q-1)+\fn')=\depth((q-1)+\fn_0')$. It implies that $(q-1)+\fn'=\fn_0'$ and 
\begin{equation*} 
(\fm^+,\{1\}^k)=(\fm_0^+,p_{0,1},\dots,p_{0,i},(q-1)+p_{0,i+1},\dots,p_{0,\ell'}).
\end{equation*}
Thus $p_{0,\ell'}=1$. Hence the weight of this tuple is at least $1+q+1=q+2$. It follows that $w=w(\ft)=w(\fm^+,\{1\}^k,2(q-1)+\fn')$ is at least $q+2+2(q-1)=3q$, which is a contradiction.

\medskip
\noindent {\bf Case 3c.} Now we suppose the following remaining case of $\fp_0= (p_{0, 1}, \dots, p_{0, \ell'}) \ne \emptyset$ and $\ft = (\fm_0^+, \fp_0 , 2(q-1) + \fn_0')$. Thus
\begin{align*}
  \ft &= (\fm_0^+, \fp_0 , 2(q-1) + \fn_0'),\\
  & = (\fm^+, \{1\}^k, 2(q-1) + \fn')
\end{align*}  
We have $\fn' = \fn'_0$ and 
\[ (\fm_0^+, \fp_0) = (\fm^+, \{1\}^k). \]

If $\fm = \emptyset$, then $\fm_0 = \emptyset$ because otherwise the last entry of $\fm_0^+$ is at least $2$. Therefore, $\fp = \fp_0 = \{1\}^k$ and thus $\fs = \fs_0$, which is a contradiction. 

We assume that $\fm \ne \emptyset$. If $\depth(\fm) < \depth(\fm_0)$;  the last entry of $\fm_0^+$ is $1$ which is contradiction. If $\depth(\fm) = \depth(\fm_0)$, then we conclude that $\fm = \fm_0$ and $\fp = \fp_0$, thus $\fs = \fs_0$ which is also contradiction. 

If $\depth(\fm) > \depth(\fm_0)$, then $\fm_0 \prec \fm_0^+ \prec \fm$. As $\depth(\fs)=\depth(\fs_0$ and $\fs_0=(\fm_0,(q)+\frak p_0,(q-1)+\fn_0') \prec \fs=(\fm,(q)+\{1\}^k,(q-1)+\fn')$, it follows that $\fs_0 < \fs$, which is a contradiction. 
\end{itemize}

To summarize, only Case 2a) can happen and we conclude that $\fs=(\fm,(q)+(\{1\}^{k+1})^+,(q-1)+\fn')$ where $\fm$, $\fn'$ are tuples which may be empty.
\end{proof}

\subsubsection{Second case: $\alpha(R)=\alpha(\Ind R)$.} \ppar

\begin{lemma} \label{lemma: proof of Conjecture, 2nd case}
Suppose that $w<3q$ and $\alpha(R)=\alpha(\Ind R)$. Further, suppose that in $\mathcal S_{\max}(\Ind R)$ there exists a tuple of the form $(\fm_1,(q)+\fs_1^+,(q-1)+\fn_1')$ where $\fs_1 \neq \emptyset$ and $\fm_1$, $\fn_1'$ are tuples.

Then in $\mathcal S_{\max}(R)$ there exists a tuple of the form $(\fm,(q)+\fs^+,(q-1)+\fn')$ where $\fs \neq \emptyset$ and $\fm$, $\fn'$ are tuples.
\end{lemma}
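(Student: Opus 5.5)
The plan is to trace the given tuple $\ft_1:=(\fm_1,(q)+\fs_1^+,(q-1)+\fn_1')\in\mathcal S_{\max}(\Ind R)$ back to a tuple of $R$ that produces it under $\Ind$, and to show that this tuple already has the required shape.

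\emph{Step 1: find a source tuple.} By construction (Eq.~\eqref{eq: induced relation 2}), every tuple appearing in $\Ind R$ lies in
\[
\mathbf{1}_{\{\fm\ne\emptyset\}}\cdot(\fm^+,(q-1)*\fn)+(\fm,1,(q-1)*\fn)
\]
for some non basis tuple $\fs_0=(\fm,(q)+\fn)$ of $R$. So $\ft_1$ occurs in this expression for some such $\fs_0$. By the computation in the proof of Lemma~\ref{lemma: bound for alpha}, every tuple produced from $\fs_0$ has $\alpha\le\alpha(\fs_0)$. Together with $\alpha(\ft_1)=\alpha(\Ind R)=\alpha(R)$ this gives $\alpha(\fs_0)=\alpha(R)$, hence $\fs_0\in\mathcal S_{\max}(R)$.

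\emph{Step 2: pin down where the $q-1$ went.} The tuple $\ft_1$ is obtained from $(\fm^+,\fn)$ or $(\fm,1,\fn)$ by adding $q-1$ to one entry of $\fn$, or by inserting $q-1$ as a new entry. Since $\alpha$ is preserved, the last block $(q-1)+\fn_1'$ of $\ft_1$ must correspond to the last block $(q-1)+\fn_0'$ of $\fs_0$, where $\fs_0=(\fm,(q)+\fp_0,(q-1)+\fn_0')$ in the notation of \S A. Comparing the two tails gives $\fn_1'=\fn_0'$.

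\emph{Step 3: case analysis on the prefix.} We then compare prefixes, using that $\ft_1$ has the entry $q+(\fs_1^+)_1\ge q+2$, which the weight bound $w<3q$ makes very restrictive. Any entry $>q$ in $\ft_1$ must either
\begin{itemize}
\item come from an entry $q+p$ already sitting inside $\fn$ (an entry of $\fp_0$ beyond the first), or
\item be a $(q-1)$-merged entry, which, as in Cases 2b/2c of Lemma~\ref{lemma: proof of Conjecture, 1st case}, would force a second large entry and hence weight $\ge 3q$.
\end{itemize}
We would aim to show that the large entry $q+(\fs_1^+)_1$ already appears in $\fs_0$ with an unchanged tail $(\fs_1^+)_-$. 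That is, $\fs_0=(\fm_0,(q)+\fs^+,(q-1)+\fn_1')$ with $\fs$ nonempty, derived from $\fs_1$ by possibly prepending entries, and with the $q$-shift living in $\fm$ or $\fp_0$. In the degenerate subcases, such as $\fm=\emptyset$ with the inserted leading $1$, or $\fm^+$ absorbing the large entry, we would either read off the required shape directly or reach a contradiction via weight counting ($\ge 3q$), exactly as in Lemma~\ref{lemma: proof of Conjecture, 1st case}.

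\emph{Main obstacle.} The hardest part is the bookkeeping in Step 3. The source tuple $\fs_0$ is not unique, and cancellations in $\Ind R$ mean we cannot simply pick the $\depthlex$-smallest source. The first thing I would try is to take $\fs_0$ minimal among the sources in $\mathcal S_{\max}(R)$, and to invoke Lemma~\ref{lemma: depth plus} to compare depths. This should rule out the configurations where the $(q)+\fs_1^+$ block straddles the boundary between $\fm^+$ and $(q-1)*\fn$. If that fails, I would enumerate where $(q)+\fs_1^+$ sits relative to the $+1$ of $\fm^+$ or the inserted $1$, and dispose of each position by the weight bound.
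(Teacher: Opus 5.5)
Your Step~1 is exactly how the paper begins. Any tuple $\fs_0$ of $R$ that produces $\ft_1$ satisfies $\alpha(\fs_0)=\alpha(R)$, so it lies in $\mathcal S_{\max}(R)$. In fact \emph{any} such source works, so neither a minimal choice of $\fs_0$ nor Lemma~\ref{lemma: depth plus} is needed.

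The genuine gap is Step~3, which is where the whole proof lives: your sketch discards precisely the configuration that actually occurs. Write $\fs_0=(\fm_0,(q)+\fp_0,(q-1)+\fn_0')$ with $\fp_0\neq\emptyset$. Then $w<3q$ gives $w(\fm_0)+w(\fp_0)+w(\fn_0')\le q$, so every entry of $\fm_0^+$ and of $\fp_0$ is at most $q$. Consequently:
\begin{itemize}
\item Your first alternative (a large entry already sitting in $\fp_0$) is vacuous.
\item The entry $q+(\fs_1^+)_1\ge q+1$ of $\ft_1$ must be a merged entry $(q-1)+p_{0,i+1}$. (This entry is $\ge q+2$ only when $\depth(\fs_1)=1$.)
\end{itemize}
This merging does \emph{not} produce a second large entry. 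Passing from $\fs_0$ to $\ft_1$ strips the $q$ off $q+p_{0,1}$ and redistributes it as $1$ (into $\fm_0^+$, or as the inserted entry $1$) plus $q-1$. So the weight is unchanged, and the merged entry is the only entry $>q$ before the final block. The analogy with Cases~2b/2c of Lemma~\ref{lemma: proof of Conjecture, 1st case} is misplaced: there, two \emph{different} sources place merged entries into the same tuple.

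Nor does the large entry ``already appear in $\fs_0$''. For $q\ge 4$, the tuple $\fs_0=(q+1,3,q-1)$ produces $\ft_1=(1,1,q+2,q-1)$, with $\fm_1=(1,1)$ and $\fs_1=(1)$, yet $q+2$ is not an entry of $\fs_0$.

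What is missing is the read-off the paper performs in its case~(b). Aligning $(q-1)+p_{0,i+1}$ with $q+(\fs_1^+)_1$ and comparing what follows gives $(\fs_1^+)_-=(p_{0,i+2},\dots,p_{0,\ell'})$. Hence $p_{0,\ell'}\ge 2$: it is the last entry of $\fs_1^+$ if $\depth(\fs_1)\ge 2$, and it equals $s_1+2$ if $\depth(\fs_1)=1$. Therefore $\fp_0=\fs^+$ with $\fs=(p_{0,1},\dots,p_{0,\ell'}-1)\neq\emptyset$, so the source $\fs_0$ itself has the required shape. Note that $\fs^+$ is $\fs_1^+$ with $(p_{0,1},\dots,p_{0,i})$ prepended \emph{and} its first entry raised by $1$; it is not simply $\fs_1$ with entries prepended.

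The paper dismisses the remaining configurations by short entry-size and weight comparisons. These are the case $\fp_0=\emptyset$, and the case where $q-1$ is merged into the leading entry of the final block. In the latter case your Step~2 identity $\fn_1'=\fn_0'$ is false, since one gets $\fn_1'=(q-1)+\fn_0'$. The weight is then at least $(q+2)+2(q-1)=3q$, a contradiction.
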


\begin{proof}
By assumption, in $\mathcal S_{\max}(\Ind R)$ there exists a tuple $\ft_1=(\fm_1,(q)+\fs_1^+,(q-1)+\fn_1')$ with $\fs_1 \neq \emptyset$ and $\fm_1$, $\fn_1'$ are tuples. It follows that this tuple comes from some non basis tuple $\fs_0$ in $R$. By Lemma \ref{lemma: bound for alpha}, $\alpha(R)=\alpha(\ft_1) \leq \alpha(\ft_1) \leq\alpha(\fs_0) \leq \alpha(R)$, it follows that $\alpha(\fs_0)=\alpha(\ft_1)=\alpha(R)$. Thus $\fs_0 \in \mathcal S_{\max}(R)$. 

We write two expressions of $\fs_0$:
\[ \fs_0=(\fm_0',(q-1)+\fn_0')=(\fm_0,(q)+\frak p_0,(q-1)+\fn_0'). \]
We distinguish three cases:
\begin{itemize}
\item[a)] First, suppose that $\frak p_0=\emptyset$ and either 
\[ \ft_1=(\fm_0^+,(q-1)+\fn_0) \] 
or 
\[ \ft_1=(\fm_0,1,(q-1)+\fn_0). \] 
As $\alpha(\fs_0)=\alpha(\ft_1)$, it follows that $\fn_1'=\fn_0$. Thus either $(\fm_1,(q)+\fs_1^+)=\fm_0^+$ or $(\fm_1,(q)+\fs_1^+)=(\fm_0,1)$. The latter case cannot happen as all the entries of $(\fm_0,1)$ are at most $q$. It follows that $(\fm_1,(q)+\fs_1^+)=\fm_0^+$. But in $\fm_0^+$, all the entries except the last one are at most $q$. Thus $\depth(\fs_1)=1$, but we get a contradiction by considering the last entry: the last entry of the LHS equals $q+s_1+1 \geq q+2$ but the one on the RHS is at most $q+1$.

\medskip
\item[b)] Second, $\frak p_0=(p_{0,1},\dots,p_{0,\ell'}) \neq \emptyset$. Suppose that there exists $1 \leq i \leq \ell'$ such that either
\[ \ft_1=(\fm_0^+,p_{0,1},\dots,p_{0,i},(q-1)+p_{0,i+1},\dots,p_{0,\ell'},(q-1)+\fn_0'), \]
or 
\[ \ft_1=(\fm_0,1,p_{0,1},\dots,p_{0,i},(q-1)+p_{0,i+1},\dots,p_{0,\ell'},(q-1)+\fn_0'), \]
As $\alpha(\fs_0)=\alpha(\ft_1)$, it follows that $\fn_1'=\fn_0'$. Thus either 
\[ (\fm_1,(q)+\fs_1^+)=(\fm_0^+,p_{0,1},\dots,p_{0,i},(q-1)+p_{0,i+1},\dots,p_{0,\ell'}), \] 
or 
\[ (\fm_1,(q)+\fs_1^+)=(\fm_0,1,p_{0,1},\dots,p_{0,i},(q-1)+p_{0,i+1},\dots,p_{0,\ell'}). \]

Considering the weight yields $w(\fm_0)+w(\frak p_0)+w(\fn_0') \leq q$. Since $\frak p_0 \neq 0$, $w(\fm_0) \leq q-1$. Thus all the entries of $\fm_0^+$ are at most $q$. 

We note that there is at least one entry greater than $q+1$ on the LHS, but on the RHS, there are at most one such entry, i.e., $(q-1)+p_{0,i+1}$. It follows that $p_{0,\ell'}>1$ so that $\frak p_0=(p_{0,1},\dots,p_{0,\ell'}-1)^+$. Thus we obtain $\fs_0 \in \mathcal S_{\max}(R)$ of the desired form $\fs_0=(\fm_0,(q)+(p_{0,1},\dots,p_{0,\ell'}-1)^+,(q-1)+\fn_0')$.

\medskip
\item[c)] Finally, $\frak p_0=(p_{0,1},\dots,p_{0,\ell'}) \neq \emptyset$ and either
\[ \ft_1=(\fm_0^+,p_{0,1},\dots,p_{0,\ell'},2(q-1)+\fn_0'), \]
or 
\[ \ft_1=(\fm_0,1,p_{0,1},p_{0,\ell'},2(q-1)+\fn_0'), \]
As $\alpha(\fs_0)=\alpha(\ft_1)$, it follows that $\fn_1'=(q-1)+\fn_0'$. Thus either 
\[ (\fm_1,(q)+\fs_1^+)=(\fm_0^+,p_{0,1},\dots,p_{0,\ell'}), \] 
or 
\[ (\fm_1,(q)+\fs_1^+)=(\fm_0,1,p_{0,1},\dots,p_{0,\ell'}). \]

Considering the weight yields 
\begin{align*}
w &=w(\ft_1) \\
&\geq w(\fm_1,(q)+\fs_1^+) + w(2(q-1)+\fn_0') \\
&\geq (q+2)+(2q-2)=3q.
\end{align*}
We obtain a contradiction in this case.
\end{itemize}
\end{proof}

\subsubsection{Proof of Prop. \ref{prop: proof of Conjecture PBC}} \ppar

Let $R$ be an $\Fq$-linear relation of weight $w<3q$. Then there exists $k \geq 1$ such that $\Ind^k(R) \neq 0$ but $\Ind^{k+1}(R)=0$.

We prove by descending induction on $i$ that for all $0 \leq i \leq k$, in $\mathcal S_{\max}(\Ind^i(R))$, there exists a tuple of the form $(\fm,(q)+\fs^+,(q-1)+\fn)$ where $\fs$ is a nonempty tuple, $\fm$, $\fn$ are tuples which may be empty.

For $i=k$, as $\Ind^k(R) \neq 0$ and $\Ind^{k+1}(R)=0$, $\alpha(\Ind^k(R))>\alpha(\Ind^{k+1}(R))$. We are done by Lemma \ref{lemma: proof of Conjecture, 1st case}.

Suppose that the claim holds for $1 \leq i+1 \leq k$. We show that the claim also holds for $i$. If $\alpha(\Ind^i(R))>\alpha(\Ind^{i+1}(R))$, it is done by Lemma \ref{lemma: proof of Conjecture, 1st case}. Otherwise, $\alpha(\Ind^i(R))=\alpha(\Ind^{i+1}(R))$, it is done by Lemma \ref{lemma: proof of Conjecture, 2nd case}.

Finally, we take $i=0$ which implies that  in $\mathcal S_{\max}(R)$, there exists a tuple of the form $(\fm,(q)+\fs^+,(q-1)+\fn)$ where $\fs$ is a nonempty tuple, $\fm$, $\fn$ are tuples which may be empty. We are done.


\end{document}